\documentclass[10pt]{article}
\usepackage{amssymb}

\usepackage[a4paper,left=30mm,top=40mm,body={155mm,230mm}]{geometry}
\usepackage{stmaryrd}
\usepackage{bbding,bbm}
\usepackage{enumerate}

\usepackage{fancybox}
\usepackage{algorithm}
\usepackage{algpseudocode}

\usepackage{fancyhdr,color,graphicx,amsmath}

\usepackage{multirow}

\usepackage{enumerate}

\usepackage{graphicx}
\usepackage{color}
\usepackage{ifpdf}

\usepackage{amsmath}

\usepackage{amsthm}
\allowdisplaybreaks[4]
\usepackage{mathrsfs}

\usepackage[figuresright]{rotating}

\usepackage{xcolor}
\usepackage{authblk}

\pagestyle{fancy}




\newtheorem{theorem}{\qquad \bf Theorem}[section]

\newtheorem{lemma}[theorem]{\qquad\bf Lemma}
\newtheorem{corollary}[theorem]{\qquad \bf Corollary}

\newtheorem{assumption}[theorem]{\qquad \bf Assumption}
\newtheorem{definition}[theorem]{\qquad \bf Definition}


\newcommand{\dist}{\mbox{dist}}

\newcommand{\be}{\begin{equation}}
\newcommand{\ee}{\end{equation}}
\newcommand{\ba}{\begin{array}}
\newcommand{\ea}{\end{array}}
\newcommand{\bea}{\begin{eqnarray}}
\newcommand{\eea}{\end{eqnarray}}

%


\begin{document}

\title{A Sequential Quadratic Programming Method for Optimization with {Expectation} Objective Functions, Deterministic Inequality Constraints and Robust Subproblems}

\author[a]{Songqiang Qiu}

\affil[a]{School of mathematics, China university of mining and technology, Xuzhou, P. R. China\\ email: {sqqiu@cumt.edu.cn}}

\author[b]{Vyacheslav Kungurtsev}
\affil[b]{Faculty of electrical engineering,Czech technical university in Prague,Prague,Czech Republic}

\date{}


%
\maketitle


\abstract{In this paper, a robust sequential quadratic programming method for constrained optimization is generalized to problem with an {expectation}
objective function {and} deterministic equality and inequality constraints. A stochastic line search scheme is employed to globalize the steps. {We show theoretically that sequences generated by the algorithm converge almost surely to a Karush-Kuhn-Tucker point under the assumption of the extended Mangasarian-Fromovitz constraint qualification}.
Encouraging numerical results are reported.}

\textbf{Key Words:}{Sequential quadratic programming; Stochastic; Line search; Global convergence.}


\section{Introduction}
We are interested in the following constrained  {optimization problem with an expectation of a stochastic function as an objective}:
\begin{equation}\label{eq:prob}
\begin{array}{rl}
\min\limits_{x\in\mathbb{R}^n} & f(x)=\textbf{E}[F(x,\xi)] \\
\text{s.t. } & h(x) =0,\\
& c(x) \le 0,
\end{array}
\end{equation}
where $h: \mathbb{R}^n\rightarrow \mathbb{R}^{m_1}$  and $c: \mathbb{R}^n\rightarrow \mathbb{R}^{m_2}$ are continuously differentiable, $f:\mathbb{R}^n\rightarrow \mathbb{R}$,  $\xi$ is a random variable with associated probability space $(\mathcal{D},\mathcal{F}, P)$, $F:\mathbb{R}^n\times\mathcal{D}\rightarrow \mathbb{R}$ and $\textbf{E}[\cdot]$ represents expectation taken with respect to $P$. We assume that the values and the gradients of the constraint functions are accessible, but the objective function $f(x)$ and its first-order derivative cannot be evaluated, or too expensive to compute.  Instead, by taking individual instances or sample averages, $f(x)$ and $\nabla f(x)$ can be approximated by sampling $\xi$. In this paper, we do not assume that the estimates of $f(x)$ and $\nabla f(x)$ are unbiased.
Problems of this type arise prominently in important applications such as machine learning \cite{Chen2018a,Roy2018,Nandwani2019,Ravi2019}, statistics {\cite{Kirkegaard1972,Kaufman1978,Nagaraj1991,Aitchison1958,Silvey1959,Sen1979,Dupacova1988,Shapiro2000} and other domains\cite{ConnGT2000}.}

As for this writing, there are just a few algorithms that have been proposed for {solving problems} of this form or similar. {By and large, algorithms of the Stochastic Approximation (SA) type to minimize expectation objective functions have considered unconstrained problems, or constraints on which a projection operator is straightforward to compute. The case of nonlinear functional constraints has received relatively scant attention, especially until recently. There is a classic work in the spirit of Stochastic Approximation analysis presented in~\cite{wang2008stochastic}. While notable for the early contribution to a challenging problem, the SA approach uses a priori diminishing stepsizes that are known to be difficult to tune and typically exhibit slow convergence. Furthermore there is a highly specific set of Assumptions required for the convergence guarantees.} In the optimization community, To the best of our knowledge, the first algorithm for nonlinear constrained stochastic optimization was proposed in \cite{Berahas2021}. It is {also an} SQP method, {however only considers} problems with equality constraints. {The convergence theory of this method requires} that the singular values of the constraints' gradients are bounded away from zero, {which one can see implies the classical} linear independence constraint qualification (LICQ) for all feasible points. Global convergence in the case where the
penalty factor remains constant in the limit is proven. The authors also discussed, {informally, situations in which algorithm sequences generate an unbounded penalty parameter}. Later Berahas et al presented an improved algorithm in \cite{berahas2021stochastic}, {also for strictly equality constrained problems}. In this algorithm, the SQP method is {performed} with a step decomposition strategy. {In doing so}, the presented algorithm {maintains global convergence guarantees even} with constraints {whose Jacobian may be rank-deficient at limit points of Algorithm sequences}. A worst-case complexity analysis for this algorithm is presented in Curtis et al \cite{Curtis2021a}.
Na et al, {continuing the trend on strengthening the guarantees originally presented in} \cite{Berahas2021}, proposed another stochastic SQP method for equality constrained optimization \cite{na2021adaptive}. This algorithm uses a differentiable exact penalty function {to enforce global converence} and {ultimately is shown to satisfy stronger convergence guarantees, although also under the LICQ condition}. This work generalized the {techniques and associated theory} in \cite{paquette2020stochastic} {studying a stochastic line search to the problem with nonlinear constraints, a non-trivial exercise due to multiple technical challenges.}
A recent work of Curtis et al extends the methods in \cite{Berahas2021,berahas2021stochastic} to allow for
inexact subproblem solutions. The inexact solution strategy make the approach applicable for \eqref{eq:prob} ({albeit with only equality constraints}) in a large scale setting.
A very recent work of Berahas et al \cite{Berahas2023a} considered the use of variance reduced gradients in a stochastic SQP method {for} accelerating the method.

All the mentioned stochastic SQP methods above deal with equality constraints only. To our knowledge, only recently,
Na et al \cite{Na2021} considered SQP for constrained stochastic optimization with inequality constraints. This work is actually an extension of the previously mentioned work of the authors \cite{na2021adaptive}. Their algorithm uses an active set strategy to handle the inequality constraints, {a technique that enables relatively straightforward extensions of the algorithm and its analysis for the problem with only equalities}. {The presented method shows satisfactory convergence properties, albeit with some tailored assumptions. In addition, after the preprint release of the first version of our paper was uploaded, another work studying SQP in the context of deterministic inequality constraints appeared~\cite{curtis2023sequential}, which similarly extends techniques from the equality constrained case. Our contribution goes beyond these two works with a robust subproblem strategy that enables convergence guarantees without stringent or a posteriori (iterate-dependent) assumptions.}

Our method is similar to the methods mentioned above in incorporating sampling techniques with classical SQP tools. As one of its primary goals, our method makes no assumption on the solvability of certain subproblems, and instead uses classical techniques in robust SQP to enforce feasible subproblems~\cite{burke1989robust}. For globalizing the steps, we generalize the stochastic line search method in Paquette and Scheinberg \cite{paquette2020stochastic}. This generalization is non-trivial since:
\begin{enumerate}[(a)]
    \item Due to the constraints, a merit function with varying penalty factors instead of a fixed objective function is used in the line search.
    \item Due to the randomness of the estimates, boundedness of the penalty factors is not guaranteed even in the presence of the Mangasarian-Fromowitz constraint qualification (MFCQ) or even the stronger Linear Independence Constraint Qualification (LICQ).
    \item Due to the presence of inequality constraints, an analytical expression formulated to compute the search direction is generally not available.
\end{enumerate}
As a result, a naive and simple integration of the analysis in \cite{paquette2020stochastic} and the stochastic SQP schemes mentioned above cannot not sufficient.

{This paper presents a few key novel strategies that enable an otherwise natural and classic algorithm to be associated with strong convergence guarantees.} We use the scheme in \cite{burke1989robust} to remedy the standard issue of inconsistent subproblems for SQP methods, which will significantly simplify the  theoretical analysis of the algorithm's behavior compared to alternatives. We introduce a Lipschitz continuous measure for approximate stationarity in order to extend the general framework proposed in \cite{blanchet2019} to consider constraints. We also design a specific batch sampling technique for estimating the gradient of the objective function to ensure that the probability of the penalty factor growing unbounded is zero.

The remainder of the paper is structured as follows. In Section~\ref{sec:dset}, an SQP method for solving deterministic constrained optimization, which is a special case of the method in \cite{Burke1986}, is introduced, as a warmup for assisting the reader in obtaining familiarity with the tools used in this paper. The stochastic SQP method is formulated and analyzed in Section \ref{sect:algs}, where we give a detailed description of the algorithm, show the global convergence guarantees and discuss limiting behavior of the penalty parameter. Section \ref{sect:num} reports some numerical experiments and Section \ref{sect:conc} concludes the paper.

\emph{Notation}

Throughout the paper, the subscript $k$ will denote an iteration counter and for any function $g(x)$, $g_k$ will be the
shorthand for $g(x_k)$.
The feasible region of the problem is denoted as
$$
\mathcal{X}=\{x\in \mathbb{R}^n:\ h(x)=0,\ c(x)\leq 0 \}.
$$
 If $C\subset \mathbb{R}^{m_1+m_2}$, then the $\ell_{\infty}$ distance function is defined as
\[
\text{dist}(w \mid C ):=\inf\{\|w-y\|_{\infty} : y\in C\}.
\]
We also use $K=\{0\}_{\mathbb{R}^{m_1}}\times \mathbb{R}_-^{m_2}$, where $\{0\}_{\mathbb{R}^{m_1}}$ is the vector of all 0 in $\mathbb{R}^{m_1}$ and $\mathbb{R}_-^{m_2}=\{w\in \mathbb{R}^{m_2}: w\leq 0\}$.

\section{A Robust Deterministic SQP Algorithm}\label{sec:dset}
As a first step for considering \eqref{eq:prob}, we begin by reviewing Burke and Han's line search SQP method \cite{burke1989robust} for the deterministic
counterpart of \eqref{eq:prob}, that is,
\begin{equation}\label{eq:prob_determ}
\begin{array}{rl}
\min\limits_x & f(x) \\
\text{s.t. } & h(x) =0,\\
& c(x) \le 0,
\end{array}
\end{equation}
where $f:\mathbb{R}^n\rightarrow \mathbb{R}$ is continuously differentiable.  In \cite{burke1989robust}, a measure quantifying infeasibility
\begin{equation}\label{eq:defphi}
\phi(x)=\text{dist}\left[\begin{pmatrix}h(x)\\ c(x)\end{pmatrix}\mid K\right]
\end{equation}
and an exact penalty function
$$
\Psi(x;\rho)=f(x)+\rho\ \text{dist}\left[\begin{pmatrix}h(x)\\ c(x)\end{pmatrix}\mid K\right]=f(x)+\rho\phi(x),
$$
where $\rho>0$ is a penalty parameter, are defined to monitor the behavior of the algorithm.

The following stationary conditions are standard in the study of constrained optimization.
\begin{definition}
    \begin{enumerate}[(1)]
        \item A point $x$ satisfies the Fritz-John point conditions for \eqref{eq:prob_determ}, if there exists a scalar $\gamma\geq 0$, a vector $\lambda\in \mathbb{R}^{m_1}$ and a vector $\mu\in \mathbb{R}^{m_2}$ such that
        \begin{eqnarray*}
            && \gamma\nabla f(x)+\nabla h(x)\lambda +\nabla c(x)\mu=0,\\
            && h(x)=0,\\
            && \mu\geq 0, c(x)\leq 0, \mu^Tc(x)=0.
        \end{eqnarray*}
        A point satisfying the Fritz-John conditions is called a Fritz-John point.
    \item A point $x$ satisfies the Karush–Kuhn–Tucker (KKT) conditions for \eqref{eq:prob_determ}, if there exists a vector $\lambda\in \mathbb{R}^{m_1}$ and a vector $\mu\in \mathbb{R}^{m_2}$ such that
        \begin{eqnarray*}
            && \nabla f(x)+\nabla h(x)\lambda +\nabla c(x)\mu=0,\\
            && h(x)=0,\\
            && \mu\geq 0, c(x)\leq 0, \mu^Tc(x)=0
        \end{eqnarray*}
        A point satisfying the KKT conditions is called a KKT point.
    \end{enumerate}
\end{definition}

Conventional SQP methods are an iterative process based on repeatedly solving a quadratic subproblem
\begin{eqnarray}
        \min\limits_d &\ \nabla f(x)^Td+\frac12d^THd\nonumber\\
        \text{s.t.}&\ h(x)+\nabla h(x)^Td=0,\label{eq:qpcon1}\\
        &\ c(x)+\nabla c(x)^Td\leq 0\label{eq:qpcon2}
\end{eqnarray}
for a step $d$ that is subsequently ascertained as to its quality and adjusted with a line search or a trust region scheme. A well-known limitation of many SQP methods is that the subproblem's constraints \eqref{eq:qpcon1} and \eqref{eq:qpcon2} must be consistent, which is not guaranteed even if $\mathcal{X}$ is nonempty. Several approaches have been developed to overcome this shortcoming. In \cite{Powel1978}, an extra variable is introduced into the subproblem to ensure its consistency. Burke and Han \cite{burke1989robust} proposed a generic SQP method with an additional perturbation to the form of the subproblem constraints. Burke and Han's method has some similarity to the works of Omojokun \cite{Omojo1989}, where a composite step technique was introduced. In FilterSQP \cite{FletcL2002,FletcLT2002}, an extra phase called ``the feasibility restoration phase'' is used to generate iterates closer to the feasible region when the QP subproblem becomes inconsistent.

We herein use the framework of Burke and Han, adopting the $\ell_{\infty}$ norm to measure the magnitudes and the distances. Although, this method has been well studied in \cite{burke1989robust}, we would like to sketch the description of the algorithm and state its convergence guarantees to make this paper self-contained for the reader.

For each iterate $k$, given a current primal estimate $x_k\in \mathbb{R}^n$, we first solve
\begin{equation}\label{eq:dist0}
\begin{array}{rl}
\min\limits_{p\in \mathbb{R}^n}&\text{dist}\left[\begin{pmatrix}h_k+\nabla h_k^Tp\\ c_k+\nabla c_k^Tp\end{pmatrix}\mid K\right]\\
\text{s.t.}& \|p\|_{\infty}\leq\sigma_k:=\min\{\sigma_u,\kappa_u\phi_k\},
\end{array}
\end{equation}
where $\sigma_u$ and $\kappa_u$ are preset positive parameters. Note that we choose a different definition for $\sigma_k$ from its counterpart in \cite{burke1989robust}. 
Requiring that  $\|p\|_\infty\leq \kappa_u\phi_k$ is an idea that
can also be seen in, e.g., Curtis et al \cite{CurtiNW2009} for their composite-step SQP method for equality constrained optimization. It was also used as one of the conditions/assumptions on the quasi/inexact normal steps, see, for instance, \cite{Dennis1997} in the context of equality constrained optimization and \cite{FletcGLTW2002} for general constrained optimization.

Since $\ell_\infty$ distance is used, the subproblem defined above \eqref{eq:dist0} is equivalent to the following linear program
\begin{equation}\label{eq:dist}
\begin{array}{rl}
\min\limits_{p\in\mathbb{R}^n,y\in\mathbb{R}} & y\\
\text{s.t.}& -ye\leq h_k+\nabla h_k^Tp\leq ye,\\
& c_k+\nabla c_k^Tp\leq ye,\\
&\|p\|_{\infty}\leq\sigma_k, y\geq 0,
\end{array}
\end{equation}
where $e$ is a vector of all ones with proper dimension.
Let $(p_k,y_k)$ be a minimizer of \eqref{eq:dist}.

The search direction $d_k$ is obtained by solving
 \begin{equation}\label{eq:subp0}
    \begin{array}{rl}
         \min\limits_d & \nabla f_k^T d+\frac{1}{2} d^T H_k d \\
         \text{s.t.} & -y_ke\le h_k+\nabla h_k^T d\le y_ke, \\
         & c_k+\nabla c_k^Td \le y_ke \\
         & \|d\|_{\infty} \le \beta_k,
    \end{array}
    \end{equation}
where $\beta_k\in [\beta_l, \beta_u]$ with \begin{equation}\label{eq:relatebetasig}
0<2\sigma_u<\beta_l< \beta_u
\end{equation}

The improvement in linearized feasibility is estimated by
\begin{equation}\label{eq:defdeltaphi}
\Delta_{\phi}(x_k,\sigma_k):=\phi_k-y_k
\end{equation} and the predicted reduction in $\Psi(x;\rho)$ associated with taking the step $d_k$ is defined as
\[
\Delta_{\Psi}(x_k, d_k;\rho)=-\nabla f_k^Td_k+\rho\Delta_{\phi}(x_k,\sigma_k).
\]
The penalty parameter is updated in a way such that $d_k$ exhibits sufficient reduction in $\Psi(x;\rho)$. If
\begin{equation}\label{eq:condpen}
\Delta_{\Psi}(x_k,d_k;\rho_k)\geq \frac12d_k^TH_kd_k,
\end{equation}
 then set $\rho_{k+1}=\rho_k$; otherwise, set
\begin{equation}\label{eq:uppen}
\rho_{k+1}=\max\left\{\frac{\nabla f_k^Td_k+ \frac12d_k^TH_kd_k}{\Delta_{\phi}(x_k,\sigma_k)},2\rho_k\right\}.
\end{equation}
By employing these rules for updating $\rho_k$ we can observe that the following condition holds for all $k$:
\begin{equation}\label{eq:condpen2}\Delta_{\Psi}(x_k, d_k;\rho_{k+1})\geq \frac12d_k^TH_kd_k.
\end{equation}
To globalize the algorithm, a line search is performed to determine a stepsize $\alpha_k$ satisfying
\begin{equation}\label{eq:linesearch}
\Psi(x_k;\rho_{k+1})-\Psi(x_k+\alpha_k d_k;\rho_{k+1})\geq\theta\alpha_k\Delta_{\Psi}(x_k,d_k;\rho_{k+1}),
\end{equation}
where $\theta\in(0,1)$. The algorithm is presented formally as Algorithm \ref{sqp}.

\begin{algorithm}[ht]
\caption{Deterministic SQP}
\label{sqp}
\begin{algorithmic}
\State{Parameters: $\beta_l$, $\beta_u$ and $\sigma_u$ satisfying \eqref{eq:relatebetasig}, $\kappa_u>0$, $\gamma>1$, $\theta\in(0,1)$ and $\rho_0>0$. Starting Point $x_0\in \mathbb{R}^n$.}
\For{$k=0,1,2,\cdots$}
\State Compute $(p_k,y_k)$ from \eqref{eq:dist}.
\If{$p_k=0$ and $\phi_k>0$}
\State Return an infeasible stationary point $x_k$.
\EndIf
\State Choose $\beta_k\in[\beta_l,\beta_u]$.
\State Solve \eqref{eq:subp0} for $d_k$.
\If{$d_k=0$ and $\phi_k=0$}
\State Return a first order stationary point $x_k$.
\EndIf
\If{\eqref{eq:condpen} holds}
\State {Set $\rho_{k+1}=\rho_k$}
\Else
\State {Set $\rho_{k+1}$  as \eqref{eq:uppen}}
\EndIf
\State Let $\alpha_k=1$. Set $dols=\texttt{true}$.
\While{dols}
\If{\eqref{eq:linesearch} holds}
\State {Set $s_k=\alpha_k d_k$, $x_{k+1}=x_k+s_k$ and let $dols = \texttt{false}$.
}
\Else
\State {Set $\alpha_{k}=\gamma^{-1}\alpha_k$.}
\EndIf
\EndWhile
\EndFor
\end{algorithmic}
\end{algorithm}

Algorithm \ref{sqp} is a special case of the model algorithm given in \cite[Section 4]{burke1989robust}. We refer the readers to \cite[Section 6]{burke1989robust} for details on the how the iterates genereated by the algorithm can be proven to be well defined as well as the convergence guarantees.

Now we present the needed assumptions, some estimates that will be useful later, and the main convergence theorem concerning Algorithm~\ref{sqp}.
\begin{assumption}\label{ass1} There is a bounded closed set $\Omega$ that contains all the iterates $\{x_k\}$ and trial points $\{x_k+\alpha d_k\}$, and satisfies
\begin{enumerate}[~~~~({A}1)]
    \item the objective function $f$ is continuously differentiable over $\Omega$, and $\nabla f$ is
    Lipschitz continuous with constant $L_f$;
    \item the constraints $h$ and their gradients $\nabla h$ are continuously differentiable over $\Omega$, each gradient $\nabla h_i$ is Lipschitz continuous
    with constants $L_{h,i}$ for all $i\in\{1,\cdots,m_1\}$;
    \item the constraints $c$ and their gradients $\nabla c$ are continuously differentiable over $\Omega$, each gradient $\nabla c_i$ is Lipschitz continuous
    with constants $L_{c,i}$ for all $i\in\{1,\cdots,m_2\}$.
    \item there are positive constants $\eta$ and $M_H$ (suppose $M_H>1$, without loss of generality) such that $2\eta\|d\|^2\leq d^TH_kd\leq M_H\|d\|^2$ for all $d\in \mathbb{R}^n$.
\end{enumerate}
\end{assumption}
Under these assumptions, there are two positive constants $M_f$ and $M_{d}$ such that
\begin{eqnarray}
&\max\{\lvert f(x)\rvert,\|h(x)\|_{\infty},\|\max\{c(x),0\}\|_{\infty}\}\leq M_f,\label{eq:bnd_fns}\\
&\max\{\|\nabla f(x)\|,\{\|\nabla h_i(x)\|\}_{i=1,\cdots,m_1},\{\|\nabla c_j(x)\|\}_{j=1\cdots,m_2}\}\leq M_d\label{eq:bnd_grds}\end{eqnarray}
hold for all $x\in\Omega$. The following Lemma, which grounds the subsequent convergence Theorem, can be readily proven {using the assumptions of the boundedness of the objective and constraint functions and gradients~\eqref{eq:bnd_fns}-\eqref{eq:bnd_grds}}.
\begin{lemma}\cite{burke1989robust}\label{lem:phi}
Suppose that Assumption \ref{ass1} holds. Let
$$L_{hc}=\max\{\{L_{h,i}\}_{i=1,\cdots,m_1}\cup\{L_{c,i}\}_{i=1,\cdots,m_2}\}.$$
Then {for all $k=0,1,2,...$ such that the Algorithm has not yet terminated},
\begin{equation*}
    \phi(x_k+\alpha d_k)\leq\phi_k-\alpha\Delta_{\phi}(x_k,\sigma_k)+\frac12L_{hc}\alpha^2\|d_k\|^2.
\end{equation*}
\end{lemma}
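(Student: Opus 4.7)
The key idea is that $\phi$ is the $\ell_\infty$ distance to the convex cone $K$, so it is both $1$-Lipschitz with respect to its argument (in $\ell_\infty$) and convex; combining these two properties with a Taylor bound on $h$ and $c$ yields the estimate. I will assume $\alpha_k\in(0,1]$, which is automatic from the backtracking line search in Algorithm \ref{sqp}.

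First, I would add and subtract the linearized constraint values inside $\phi(x_k+\alpha_k d_k)$ and apply the $1$-Lipschitz property of $\text{dist}[\,\cdot\mid K]$ in the $\ell_\infty$ norm. This gives
\begin{equation*}
\phi(x_k+\alpha_k d_k)\le \text{dist}\!\left[\begin{pmatrix}h_k+\alpha_k\nabla h_k^Td_k\\ c_k+\alpha_k\nabla c_k^Td_k\end{pmatrix}\Bigm| K\right]+R_k,
\end{equation*}
where $R_k$ collects the Taylor remainders in $h$ and $c$ at $x_k$ evaluated at $x_k+\alpha_k d_k$.

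Next I would handle the linearized term. Since $K$ is convex, $\text{dist}[\,\cdot\mid K]$ is a convex function, and the linearized argument is the convex combination
\begin{equation*}
(1-\alpha_k)\begin{pmatrix}h_k\\ c_k\end{pmatrix}+\alpha_k\begin{pmatrix}h_k+\nabla h_k^Td_k\\ c_k+\nabla c_k^Td_k\end{pmatrix}.
\end{equation*}
By the feasibility of $d_k$ for the subproblem \eqref{eq:subp0}, the second vector lies within $\ell_\infty$ distance $\kappa_k$ of $K$. Convexity then yields
\begin{equation*}
\text{dist}\!\left[\begin{pmatrix}h_k+\alpha_k\nabla h_k^Td_k\\ c_k+\alpha_k\nabla c_k^Td_k\end{pmatrix}\Bigm|K\right]\le (1-\alpha_k)\phi_k+\alpha_k\kappa_k=\phi_k-\alpha_k\Delta_\phi(x_k,\sigma_k),
\end{equation*}
using the definition \eqref{eq:defdeltaphi}.

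Finally I would bound $R_k$ using Assumption \ref{ass1}. By the standard descent-lemma style estimate, for each component of $h$ and $c$ the Lipschitz continuity of $\nabla h_i$ and $\nabla c_j$ gives a quadratic remainder of size at most $\tfrac12 L_{h,i}\alpha_k^2\|d_k\|^2$ or $\tfrac12 L_{c,j}\alpha_k^2\|d_k\|^2$. Taking the $\ell_\infty$ maximum over components and bounding by $L_{hc}$ yields $R_k\le \tfrac12 L_{hc}\alpha_k^2\|d_k\|^2$. Combining the three estimates delivers the lemma. No step is really an obstacle; the only thing to be careful about is making sure the $\ell_\infty$ distance (hence its $1$-Lipschitz property and convexity) and the Euclidean norm on $d_k$ are kept consistent when invoking the Taylor bound, which is fine because a componentwise $\ell_\infty$ bound on the remainder is tighter than or equal to the Euclidean bound used to define $L_{hc}$.
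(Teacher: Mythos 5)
Your proof is correct and follows exactly the standard argument from Burke and Han that the paper cites for this lemma: a Taylor/descent-lemma bound on each constraint component, the $1$-Lipschitz property of $\mathrm{dist}[\,\cdot\mid K]$, and convexity of the distance function applied to the convex combination $(1-\alpha_k)(h_k,c_k)+\alpha_k(h_k+\nabla h_k^Td_k,\,c_k+\nabla c_k^Td_k)$ together with the feasibility of $d_k$ for \eqref{eq:subp0} (which gives $\mathrm{dist}\le\kappa_k$ for the second point) and $\alpha_k\in(0,1]$. No gaps.
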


\begin{theorem}
Let $\{x_k\}$ be a sequence generated by Algorithm \ref{sqp}.  {Suppose that Assumption \ref{ass1} holds. }Then either\begin{enumerate}[(1)]
\item $\{x_k\}$ terminates at $\tilde x$, where $\tilde x$ is either a KKT point for \eqref{eq:prob_determ}, a Fritz John point for \eqref{eq:prob_determ} or a stationary point for $\phi$ that is infeasible; or
\item there is a limit point of $\{x_k\}$ that is a KKT point for \eqref{eq:prob_determ}, a Fritz John point for \eqref{eq:prob_determ} or a stationary point for $\phi$ that is infeasible.\end{enumerate}
\end{theorem}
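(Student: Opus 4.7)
The plan is to handle the two finite-termination exits of Algorithm~\ref{sqp} first, then argue about infinitely-generated sequences via a descent argument and a case split on the penalty parameter.

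\textbf{Finite termination.} If the algorithm returns at the test $p_k=0$, $\phi_k>0$, then $\kappa_k=\phi_k$, meaning that no primal direction of norm at most $\sigma_k$ can reduce the linearized infeasibility; I would translate this via the KKT conditions of the linear program \eqref{eq:dist} into the statement that $x_k$ is stationary for $\phi$ while being infeasible. If instead the algorithm returns at $d_k=0$, $\phi_k=0$, then $x_k$ is feasible and $d=0$ solves \eqref{eq:subp0}. I would write down the KKT conditions of \eqref{eq:subp0} at $d=0$: the multiplier for the trust region $\|d\|_\infty\le\beta_k$ must vanish (because $\beta_k\ge\beta_l>0$ and $d_k=0$ is strictly interior), and the remaining multipliers give either a standard KKT triple (if they can be chosen with a nonzero objective-gradient weight) or a Fritz-John triple (otherwise).

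\textbf{Well-definedness of the line search.} For any infinite run I would first show the inner while loop terminates. Using Assumption~\ref{ass1}(A1), the estimate $f(x_k+\alpha d_k)\le f_k+\alpha\nabla f_k^Td_k+\tfrac12 L_f\alpha^2\|d_k\|^2$ and Lemma~\ref{lem:phi} combine to give
\begin{equation*}
\Psi(x_k;\rho_{k+1})-\Psi(x_k+\alpha d_k;\rho_{k+1})\ge \alpha\Delta_{\Psi}(x_k,d_k;\rho_{k+1})-\tfrac12(L_f+\rho_{k+1}L_{hc})\alpha^2\|d_k\|^2.
\end{equation*}
Combined with \eqref{eq:condpen2} and the bound $d_k^TH_kd_k\ge 2\xi\|d_k\|^2$, this yields an explicit $\bar\alpha_k>0$ below which \eqref{eq:linesearch} holds, so the backtracking produces $\alpha_k\ge \gamma^{-1}\bar\alpha_k$.

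\textbf{Asymptotic analysis; the case split on $\rho_k$.} Consider first the case that $\rho_k$ stabilizes at some $\bar\rho$ for all large $k$. Telescoping the descent \eqref{eq:linesearch} and using that $\Psi(\cdot;\bar\rho)$ is bounded below on $\Omega$ gives $\sum_k \alpha_k\Delta_{\Psi}(x_k,d_k;\bar\rho)<\infty$; combined with \eqref{eq:condpen2} and the lower bound on $\alpha_k$, this forces $\|d_k\|\to 0$ along a subsequence and $\Delta_\phi(x_k,\sigma_k)\to 0$. Passing to a further subsequence on which $x_k\to\bar x$, the KKT system of the subproblem \eqref{eq:subp0} (with $d_k\to 0$ so that the trust-region constraint becomes inactive in the limit) yields multipliers that, after normalization if they are unbounded, produce either a KKT point or a Fritz-John point for \eqref{eq:prob_determ}. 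In the remaining case $\rho_k\to\infty$, the update rule \eqref{eq:uppen} is triggered infinitely often, so the quantity $\Delta_\phi(x_k,\sigma_k)$ is forced to become small relative to the quadratic terms; passing to a limit point and again using the LP optimality conditions for \eqref{eq:dist} shows that $\bar x$ is stationary for $\phi$, and if $\phi(\bar x)>0$ this is an infeasible stationary point (if $\phi(\bar x)=0$, one recovers Fritz-John stationarity for \eqref{eq:prob_determ}).

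\textbf{Main obstacle.} The delicate step is the limit-multiplier extraction in the subproblem \eqref{eq:subp0}: because we work without any constraint qualification on the original problem, the KKT multipliers of the QP can blow up along the subsequence, so one must normalize them and verify via a compactness argument that the limiting multiplier vector is nonzero, thereby delivering the Fritz-John (as opposed to trivial) conclusion. The relaxation built into \eqref{eq:subp0} by $\kappa_k$ on the right-hand side of the constraints is essential here, since it keeps the QP feasible independently of the linearization's consistency and therefore keeps the multiplier analysis meaningful.
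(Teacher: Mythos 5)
The paper gives no proof of this theorem itself; it refers the reader to Section~6 of Burke and Han \cite{burke1989robust}, and your outline reconstructs essentially that same classical argument (finite-termination exits via LP/QP optimality conditions, a Lipschitz-based lower bound on the accepted step size, telescoping of the merit-function decrease when $\rho_k$ stabilizes, and the normalized-multiplier compactness argument in the cases where multipliers or $\rho_k$ blow up). Your identification of the limit-multiplier normalization as the delicate step is exactly where the cited reference spends its effort, so the proposal matches the paper's (deferred) approach.
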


\section{Stochastic setting}\label{sect:algs}
\subsection{Algorithm}
Now consider \eqref{eq:prob}. We present a stochastic SQP algorithm modeled after Algorithm \ref{sqp}, however using a noisy sample for the subproblem objective gradient
and a stochastic line search procedure as given in \cite{paquette2020stochastic}.

At each iteration, {from the state of $\sigma$-algebra $\mathcal{F}_{k-\frac{1}{2}}$,we compute a sample minibatch stochastic gradient $g_k\sim \frac{1}{N_k}\sum\limits_{i=1}^{N_k} \nabla F(x_k,\xi_k)$. Subsequently} we compute a direction $d_k${, defined as a random variable in, now, $\mathcal{F}_k$, }as arising from the solution of the following QP subproblem
 \begin{equation}\label{eq:s_subp0}
    \begin{array}{rl}
         \min_d & g_k^T d+\frac{1}{2} d^T H_k d \\
         \text{s.t.} & -y_ke\le h_k+\nabla h_k^T d\le y_ke, \\
         & c_k+\nabla c_k^Td \le y_ke \\
         & \|d\|_{\infty} \le \beta_k,
    \end{array}
    \end{equation}
where $y_k$ and $\sigma_k$ are determined by the same mechanism as described in Section \ref{sec:dset}.

Then, we compute the stochastic function estimates at the current iterate and at the new trial iterate, respectively denoted $f_k^0$ and $f_k^s$. Correspondingly, we define the stochastic merit function estimates
\begin{equation}\label{eq:stochmerit}
\Psi_k^0(\rho)=f_k^0+\rho\phi(x_k,\sigma_k),\ \text{and}\  \Psi_k^s(\rho)=f_k^s+\rho\phi(x_k+\alpha_kd_k,\sigma_k).
\end{equation}
The predicted reduction estimates in $\Psi(x;\rho)$ is defined as \begin{equation}\label{eq:defdeltapsi}
{\Delta_\Psi^g}(x_k,d_k;\rho)
:=-g_k^Td_k+\rho{\Delta_{\phi}}(x_k,\sigma_k).
\end{equation}
where $\Delta_{\phi}$ is given in~\eqref{eq:defdeltaphi}.

Like in the deterministic setting, it is checked if
\begin{equation}\label{eq:s_condpen}
  {\Delta_\Psi^g}(x_k,d_k,\rho_k)\geq \frac12d_k^T H_kd_k
\end{equation}
holds. If so, set $\rho_{k+1}=\rho_k.$ Otherwise, set
\begin{equation}\label{eq:s_uppen}
\rho_{k+1}=\max\left\{\frac{g_k^T d_k+\frac12d_k^TH_kd_k}{\Delta_{\phi}(x_k,\sigma_k)},2\rho_k\right\}.
\end{equation}
Therefore, we always have
\begin{equation}\label{eq:s_condpen2}
\Delta_{\Psi}^g(x_k, d_k;\rho_{k+1})\geq \frac12d_k^TH_kd_k.
\end{equation}
The stochastic counterpart of \eqref{eq:linesearch}, the line search acceptance criterion, is
\begin{equation}\label{eq:s_linesearch}
    \Psi_k^0(\rho_{k+1})-\Psi_k^s(\rho_{k+1})\geq\theta\alpha_k{\Delta_\Psi^g}(x_k, d_k;\rho_{k+1}).
\end{equation}

For constrained optimization, there are three major challenges with formulating a well defined line search procedure:
\begin{itemize}
    \item the possibility of a consecutive sequence of erroneous unsuccessful steps cause $\alpha_k$ to become arbitrarily small;
    \item steps may satisfy \eqref{eq:s_linesearch} but, in fact $$
     \Psi(x_k;\rho_{k+1})<\Psi(x_k+\alpha_k d_k;\rho_{k+1});
    $$
    (i.e., $d_k$ is a direction of increase for the complete merit function);
    \item the condition \eqref{eq:s_condpen} is repeatedly and indefinitely violated, which corresponds to $\rho_k$ approaching $+\infty$.
\end{itemize}
To handle the first two difficulties, we employ a strategy introduced by Paquette and Scheinberg \cite{paquette2020stochastic}, which we will briefly describe in the next section. Next, we introduce a special sampling method on estimating $\nabla f_k$ to deal with the third difficulty.

It is well-known that {the $l1$ penalty function is exact, meaning that the the set of limit points of the algorithm satisfy the MFCQ, for $\rho$ sufficiently large, the set of local minimizers of $\Psi(x;\rho)$ coincide to those of~\eqref{eq:prob}. This implies that in practice, optimization algorithms that check for the progress in the value of a merit function incorporating a penalty term can distinguish a favorable and unfavorable scenario regarding the asymptotic behavior of $\rho_k$, with constraint regularity yielding the favorable bounded penalty factor (see, for instance, \cite{Bertsekas2016}). In the stochastic case, the presence of noise implies that the corresponding guarantee of appropriate bounded terms cannot be ensured with certainty, and instead we present a, novel to this work, scheme to ensure almost sure penalty parameter boundedness. Specifically, if $\|g_k\|$ s larger than some term, increase the
accuracy requirement and scale the appropriate parameters so that
if $\|g_k\|$ grows without bound, so does the certainty that it is a
correct estimate of the gradient.}.

 With these modifications, we can present the algorithmic framework of the stochastic SQP method, which is formulated in Algorithm \ref{ssqp}.

\begin{algorithm}[ht]
\caption{Stochastic SQP}
\label{ssqp}
\begin{algorithmic}
\State Parameters: $\beta_l$, $\beta_u$ and $\sigma_u$ satisfying \eqref{eq:relatebetasig}, $\kappa_u>0$, $\gamma>1$, $\theta\in(0,1)$, $\alpha_{\max}>0$, $\rho_0>0$, $0<p_f,p_0^g<1$, $\zeta_0\gg 0$ and $\zeta_c>0$. An increasing sequence $\{a_j\}$ satisfying $a_0\geq p_0^g$, $a_j\in(0,1), \forall j$  and $\sum_{j=0}^{+\infty}(1-a_j)<+\infty$.
\State Initialization: Choose a starting point  $x_0\in \mathbb{R}^n$ and set $\alpha_0=\gamma^{j_0}\alpha_{\max}$ for some $j_0<0$. Set $j=1$.
\For{$k=0,1,2,\cdots$}
\If{$k=0$ or $s_k\neq 0$}
\State choose $\sigma_k>0$ such that \eqref{eq:dist} is feasible.
\State compute $(p_k,y_k)$ as a solution of \eqref{eq:dist}.
\State compute $H_k$ {satisfying Assumption~\ref{ass1}, Part 4}.
\EndIf
\State Construct a probabilistic model to compute $g_k$. {This defines the transition from $\sigma$-algebra $\mathcal{F}_{k-\frac{1}{2}}$ to $\mathcal{F}_k$.}
\State {Compute $d_k$ from ~\eqref{eq:s_subp0}}

\If{$\|g_k\|>\zeta_k$}
\State set $\zeta_{k+1}:=\max\{\zeta_k+\zeta_c,\|g_k\|\}$,set $p_{k+1}^g=a_j$.
\State let $j=j+1$.
\Else
\State set $\zeta_{k+1}=\zeta_k$ and $p_{k+1}^g=p_k^g$.
\EndIf
\State
\If{\eqref{eq:s_condpen} does not hold}
\State {set $\rho_{k+1}$  by \eqref{eq:s_uppen}, }
\Else
\State set $\rho_{k+1}=\rho_k$.
\EndIf
\State {Compute stochastic function estimates $f^0_k$ and $f^s_k$. This defines the transition from $\sigma$-algebra $\mathcal{F}_{k}$ to $\mathcal{F}_{k+\frac{1}{2}}$.}
\If{ \eqref{eq:s_linesearch} holds}
\State set $s_k=\alpha_k d_k$, $x_{k+1}=x_k+s_k$ and $\alpha_{k+1}=\min\{\gamma\alpha_k$,$\alpha_{\max}$\}.
\Else
\State set $s_k=0$, $x_{k+1}=x_k$  and $\alpha_{k+1}=\gamma^{-1}\alpha_k$.
\EndIf
\EndFor
\end{algorithmic}
\end{algorithm}

\subsection{Random gradient and function estimate}\label{sect:gfest}

Algorithm \ref{ssqp} generates a random process given by the sequence $\{\hat x_k,\hat f_k^0,\hat f_k^s,\hat g_k,\hat\alpha_k,\hat s_k,\hat \rho_k\}$, where $\hat s_k=\hat\alpha_k\hat d_k$. In what follows we will use a ``$\ \hat{\ }\ $'' over a quantity to
denote the related random quantity.

Let $\mathcal{F}_{k-1}$ be the $\sigma-$algebra that is conditional on the random variables $\hat g_0$, $\hat g_1$, $\cdots$, $\hat g_{k-1}$
and $\hat f_0^0$, $\hat f_0^s$, {$\hat f_1^0$}, $\hat f_1^s$, $\cdots$, $\hat f_{k-1}^0$, $\hat f_{k-1}^s$ and let $\mathcal{F}_{k-1/2}$ be the $\sigma-$algebra conditional on the random variables $\hat g_0$, $\hat g_1$, $\cdots$, $\hat g_{k}$
and $\hat f_0^0$, $\hat f_0^s$, {$\hat f^0_1$}, $\hat f_1^s$, $\cdots$, $\hat f_{k-1}^0$, $\hat f_{k-1}^s$. {That is, the evaluation of the stochastic gradient estimate $g_k$ distinguishes $\mathcal{F}_{k-\frac{1}{2}}$ and $\mathcal{F}_k$ and the sampling of $f^0_k$ and $f^s_k$ distinguishes $\mathcal{F}_k$ and $\mathcal{F}_{k+\frac{1}{2}}$. We consider $\omega_k$ to be the random elements, i.e. for a given realization $\hat{x}_k(\omega_k)=x_k$ and $\Xi=\cup_k\omega_k$.}

The stochastic gradient and stochastic function values should be ``close'' to their true counterparts with certain
reasonable probabilities, which means that, at each iterate, the distances between the random estimates and the true values are bounded using the current step length.

We use the definitions given in \cite{paquette2020stochastic}, with slight modifications, to measure the accuracy of the gradient estimates $\hat g_k$ and the function estimates $\hat f_k^0$ and $\hat f_k^s$.

\begin{definition}\label{def:gaccu}
We say a sequence of random gradient estimates $\{\hat g_k\}$ is $p_k^g$-probabilistically $\varepsilon_g$-sufficiently accurate for Algorithm \ref{ssqp} for the corresponding sequence $\{\hat x_k,\hat\alpha_k,\hat d_k\}$ if  the event
\[I_k:=\{\|\hat g_k-\nabla f(\hat x_k)\|\leq\varepsilon_g\hat\alpha_k\|\hat d_k\|\}
\]
satisfies the condition{
\[
\textbf{Pr}(I_k\mid \mathcal{F}_{k-\frac 12})=\textbf{E}[\textbf{1}_{I_k}\mid \mathcal{F}_{k-\frac 12}]\geq p_k^g.
\]}
\end{definition}


\begin{definition}
A sequence of random function estimates $\{\hat f_k^0,\hat f_k^s\}$ is said to be $p_f$-probabilistically $\varepsilon_f$-sufficiently accurate for Algorithm \ref{ssqp} given the corresponding random instantiations $\{x_k,\alpha_k,d_k\}$ if the event
\[J_k:=\{\|\hat f_k^0-f(x_k)\|\leq\varepsilon_f\alpha_k\|d_k\|^2
\ \text{and}\ \|\hat f_k^s-f(x_k+ s_k)\|\leq\varepsilon_f\alpha_k\|d_k\|^2\}\]
satisfies the condition{
\[
\textbf{Pr}(J_k\mid\mathcal{F}_{k})=\textbf{E}[\textbf{1}_{J_k}\mid\mathcal{F}_{k}]\geq p_f.
\]}
\end{definition}
{We denote by $I^c_k$ and $J^c_k$ the complement of these events (that is, the event that they do not hold), respectively}.

{Let $p_k^g\in(0,1]$ and $p_f\in(0,1]$ be given. In the course of establishing our convergence theory we shall present specific quantities indicating reasonable problem-dependent lower bounds for these probabilities. We assume that they define the probabilistic accuracy of our sequence.
\begin{assumption}\label{ass2}
The following hold for the quantities in the algorithm
\begin{enumerate}[(i)]
    \item The random gradient $\hat g_k$ are $p_k^g$-probabilistically $\varepsilon_g$-sufficiently $p_k^g$ probabilistically accurate .
    \item The estimates $\{\hat f_k^0,\hat f_k^s\}$ generated by Algorithm \ref{ssqp} are $p_f$-probablilistically $\varepsilon_f$-accurate estimates for some $\varepsilon_f>0$ and probability $p_f$.
    \item The sequence of estimates $\{\hat f_k^0,\hat f_k^s\}$ generated by Algorithm \ref{ssqp} satisfies a $\kappa_f$-variance condition for all $k\geq 0$.
    \begin{eqnarray}
        &\textbf{E}[\lvert\hat f_k^s-f(\hat x_k+\hat s_k)\rvert^2\mid\mathcal{F}_k]\leq\kappa_f^2\alpha_k^2(\Delta_{\Psi}^g(x_k,d_k; \rho_{k+1}))^2, \text{ and } \nonumber\\
         &\textbf{E}[\lvert\hat f_k^0-f(\hat x_k)\rvert^2\mid\mathcal{F}_k]\leq\kappa_f^2\alpha_k^2(\Delta_{\Psi}^g( x_k, d_k; \rho_{k+1}))^2.\nonumber
    \end{eqnarray}
\end{enumerate}
\end{assumption}
}

We follow the approach given in \cite{paquette2020stochastic} for computing stochastic estimates that satisfy Assumption \ref{ass2}.
Assume, only for this section, that {
\[
\textbf{E}\left(\|\nabla F(x,\xi)-\nabla f(x)\|^2\mid\mathcal{F}_{k-\frac 12}\right)\leq V_g\ \text{and}\ \textbf{E}\left(\lvert F(x,\xi)- f(x)\rvert^2\mid\mathcal{F}_{k-\frac 12}\right)\leq V_f.
\]}
The stochastic gradient $g_k$ is computed as the average
\[
g_k=\frac1{\lvert\mathcal{S}_k^g\rvert}\sum_{i\in\mathcal{S}_k^g}\nabla F(x_k,\xi_i),
\]
where $\mathcal{S}_k^g$ is the sample set and $\lvert \cdot\rvert$ denotes the cardinality of a set. The stochastic estimate
$f_k^0$ is computed by
\[
f_k^0=\frac1{\lvert\mathcal{S}_k^f\rvert}\sum_{i\in\mathcal{S}_k^f} F(x_k,\xi_i)
\]
and $f_k^s$ is computed analogously, where $\mathcal{S}_k^f$ is the sample set.

{We now discuss how one can practically define the samples so as to satisfy the conditions of Assumption~\ref{ass2} on the sampling accuracy in the Algorithm. }Similar to Subsection 2.3 of \cite{paquette2020stochastic}, we can derive the lower bounds of the sample sizes $|\mathcal{S}_k^g|$ and $|\mathcal{S}_k^f|$ using Chebyshev inequality.  To satisfy Assumption \ref{ass2}, it suffices to require the sample sizes $\lvert\mathcal{S}_k^g\rvert$ and $\lvert\mathcal{S}_k^f\rvert$ to satisfy
\[
\lvert\mathcal{S}_k^g\rvert\geq\tilde O\left(\frac{V_g}{\varepsilon_g^2\alpha_k^2\|d_k\|^2}{\frac1{1-p_k^g}}\right)\]
and
\[\lvert\mathcal{S}_k^f\rvert\geq\max\left\{\tilde O\left(\frac{V_f}{\varepsilon_f^2\alpha_k^2\|d_k\|^4}{\frac1{1-p_f}}\right),\frac{V_f}{\kappa_f^2\hat\alpha_k^2(\Delta_{\Psi}^g(\hat x_k,\hat d_k;\hat \rho_{k+1}))^2}\right\}.
\]
In practice $d_k$ is not known when the initial sample size is chosen. Paquette and Scheinberg \cite{paquette2020stochastic} suggest a simple loop
to choose the sample sizes: guessing the value of $d_k$  and increasing the
number of samples until these conditions are satisfied. Discussions on this procedure can be found in \cite{Cartis2018}.

{We also compute a positive definite matrix $H_k$ that must satisfy, as by Assumption~\ref{ass1}-4, that is $2\eta\|d\|^2\le d^2 H_k d\le M_h \|d\|^2$ for all $d\in\mathbb{R}^n$. In the implementation, we employ a simple diagonal expression $H_k=\tau I$ with $2\eta\le\tau\le M_h$, as that is the most simple choice that satisfies the criteria and we are looking to perform a proof of concept. Choices of $H_k$ that incorporate second order information, appropriately restricted to its positive definite components while maintaining Newton local convergence rate guarantees, are the preferred option in deterministic SQP~\cite{kungurtsev2013second}. Such considerations, and the analysis of how to best incorporate them in the stochastic setting, is beyond the scope of the current paper and left to future work.}

Similar to \cite[Lem. 2.5]{paquette2020stochastic}, we have the following lemma.{
\begin{lemma}\label{lem:jc}
Let Assumption \ref{ass2} hold, that is, suppose that $\{\hat f_k^0,\hat f_k^s\}$ are $p_f$-probabilistically accurate estimates. Then we have
\begin{eqnarray}
    \textbf{E}[\textbf{1}_{J_k^c}\lvert f_k^s-f(  x_k+ s_k)\rvert\mid\mathcal{F}_{k}]\leq(1-p_f)^{1/2}\kappa_f\hat\alpha_k\Delta_{\Psi}^g( x_k, d_k; \rho_{k+1}),\nonumber\\
    \textbf{E}[\textbf{1}_{J_k^c}\lvert\hat f_k^0-f( x_k)\rvert\mid\mathcal{F}_{k}]\leq(1-p_f)^{1/2}\kappa_f\hat\alpha_k\Delta_{\Psi}^g( x_k, d_k; \rho_{k+1}).\nonumber
\end{eqnarray}
\end{lemma}}
The following inequalities will be useful for the subsequent analysis:
\[
\textbf{E}[\textbf{1}_{I_k\cap J_k}\mid\mathcal{F}_{k-\frac 12}]\geq p_k^gp_f,\ \textbf{E}[\textbf{1}_{I_k^c\cap J_k}\mid\mathcal{F}_{k-\frac 12}]\leq 1-p_k^g\ \text{and}\
\textbf{E}[\textbf{1}_{ J_k^c}\mid\mathcal{F}_{k}]\leq 1-p_f
\]

{Finally, we remark on the presence of an external accuracy forcing term $\{a_j\}$ for $p^g$. The intention is ultimately corroborated in the last set of theoretical results below regarding asymptotic penalty parameter behavior. In particular, asymptotic full gradient computations are necessary to prevent the low probability event of a sequence of increasingly larger magnitude gradients which are also inaccurate. Meanwhile, the sequence is chosen so as to be
 make it the slowest possible increasing sequence towards one that can be satisfied}.
\subsection{Preliminary results for the convergence analysis}

We shall use the following set, which contains all the iterations at which the step is successful
\begin{equation*}
\mathbf{S}:=\{k: \text{ at iteration }k\text{ the step is successful, {i.e., $d_k$ satisfies~\eqref{eq:s_linesearch}}}\}.
\end{equation*}

\subsubsection{A Renewal-reward process.}

We will use a general framework introduced in \cite{blanchet2019} to analyze the behavior of our stochastic SQP method. This framework, where a general random process and its stopping time are defined, was used to explore the behaviors of a stochastic trust region method in \cite{blanchet2019} and a stochastic line search method in \cite{paquette2020stochastic}.

\begin{definition}
Given a discrete time stochastic process $\{\hat x_k\}$, a random variable $T$ is a stopping  time if the event $\{T=k\}\in\sigma(\hat x_0,\hat x_1,\cdots,\hat x_k).$, {that is the $\sigma$-field generated by $(\hat x_1,\cdots,\hat x_k)$.}
\end{definition}
Denote by $\{T_\varepsilon\}_{\varepsilon>0}$ a family of stopping times with respect to $\{\mathcal{F}_k\}_{k\geq 0}$, parameterized by
$\varepsilon$.

Let $\{\Phi_k,\hat\alpha_k,W_k\}$ be a random process such that $\Phi_k\in[0,\infty)$ and $\hat\alpha_k\in[0,\infty)$ for all $k\geq 0$, $W_k$ is a biased random walk process, defined on the same probability space as $\{\Phi_k,\hat\alpha_k\}$ and $\mathcal{F}_k$ is the $\sigma$-algebra generated by
\[
\{\Phi_0,\hat\alpha_0,W_0,\cdots,\Phi_k,\hat\alpha_k,W_k\},
\]
We let $W_0=1$ and $W_k$ obey the following transition rules,
\begin{equation}\label{eq:w}
Pr(W_{k+1}=1\mid \mathcal{F}_k)=p\ \text{and}\ Pr(W_{k+1}=-1\mid \mathcal{F}_k)=1-p
\end{equation}
with $\frac12<p<1$.

The following assumptions are used in \cite{blanchet2019} to derive a bound on $\textbf{E}[T_\varepsilon]$. {We shall ultimately show that Algorithm~\ref{ssqp} satisfies these conditions. A key part of the stochastic line search is to ensure the step sequence is appropriately bounded in magnitude. This is delicate -- it is bounded so as to mitigate the unfortunate impact of a low probability event of a large magnitude highly inaccurate gradient estimate, while maintaining enough adaptivity in the line search to achieve the most possible descent along a subproblem solution in more favorable circumstances. }
\begin{assumption}\label{ass4}
The following hold for the process $\{\Phi_k,\hat\alpha_k,W_k\}$.
\begin{enumerate}[(i)]
    \item The random variable $\hat\alpha_0$ is taken to be a constant w.p.1. For some $\alpha_{\max}>0$, there exists a constant $\lambda\in(0,\infty)$ and $j_{\max}\in\mathbb{N}$ such that $\alpha_{\max}=\hat\alpha_0e^{\lambda j_{\max}}$ and for all realizations of the process, $\hat{\alpha}_k$ satisfy $\hat\alpha_k\leq\alpha_{\max}$ for all $k\geq0$.
    \item There exists a constant $\bar \alpha=\hat\alpha_0e^{\lambda \bar j}$ for some $\bar j\in \mathbb{N}$ such that the following holds for all $k\geq 0$:
    \[
    \textbf{1}_{\{T_\varepsilon >k\}}\hat\alpha_{k+1}\geq \textbf{1}_{\{T_\varepsilon >k\}}\min\left\{\hat\alpha_ke^{\lambda W_{k+1}},\bar\alpha\right\},
    \]
    where $W_{k+1}$ satisfies \eqref{eq:w} with $p>\frac12$.
    \item There exists a nondecreasing function $q:[0,\infty)\rightarrow(0,\infty)$ and a constant $\Theta>0$, {independent of the stochastic process noise as well as $k$}, such that
    \[
    \textbf{1}_{\{T_\varepsilon>k\}}\textbf{E}[\Phi_{k+1}\mid\mathcal{F}_k]\leq \textbf{1}_{\{T_\varepsilon>k\}}(\Phi_k-\Theta q(\hat\alpha_k)).
    \]
\end{enumerate}
\end{assumption}

A bound on $\textbf{E}[T_\varepsilon]$ is given in the following theorem.
\begin{theorem}\cite[Theorem 2]{blanchet2019}\label{thm:bounde}
Under Assumption \ref{ass4},
\[
\textbf{E}[T_\varepsilon]\leq\frac{p}{2p-1}\frac{\Phi_0}{\Theta q(\bar \alpha)}+1.
\]
\end{theorem}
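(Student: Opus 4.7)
The plan is a two-step argument: first, use the Lyapunov-style descent condition in Assumption~\ref{ass4}(iii) to bound the expected number of ``productive'' iterations (those with $\hat\alpha_k \geq \bar\alpha$); second, use the random-walk dynamics from Assumption~\ref{ass4}(ii) to bound the total expected stopping time in terms of the productive count.

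For the first step, I would form the auxiliary process
\[
V_k := \Phi_k + \Theta\sum_{j=0}^{k-1} q(\hat\alpha_j)\,\textbf{1}_{\{T_\varepsilon > j\}},
\]
and observe that, by Assumption~\ref{ass4}(iii), the stopped process $\{V_{k\wedge T_\varepsilon}\}$ is a nonnegative super-martingale with $V_0 = \Phi_0$. A standard optional-stopping argument followed by monotone convergence in $k$ gives
\[
\Theta\,\textbf{E}\!\left[\sum_{j=0}^{T_\varepsilon-1} q(\hat\alpha_j)\right] \leq \Phi_0.
\]
Since $q$ is nondecreasing, $q(\hat\alpha_j) \geq q(\bar\alpha)\,\textbf{1}_{\{\hat\alpha_j\geq\bar\alpha\}}$; writing $N_g := \#\{j<T_\varepsilon : \hat\alpha_j\geq \bar\alpha\}$, this yields $\textbf{E}[N_g] \leq \Phi_0/(\Theta q(\bar\alpha))$.

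For the second step, I would exploit the positive drift $2p-1 > 0$ of the walk $W_k$. Assumption~\ref{ass4}(ii) implies that $\hat\alpha_k$ dominates (up to the reflection at $\bar\alpha$) the exponential of a partial sum of the $W_j$, so an iteration is ``unproductive'' only when this embedded walk lies strictly below the ceiling $\bar j$. With positive drift, each excursion below $\bar j$ has bounded expected length, and a Wald-type identity applied to $\sum_{j\leq T_\varepsilon} W_j$ together with a pathwise lower bound on the net up-displacement in terms of $N_g$ delivers the key inequality
\[
\textbf{E}[T_\varepsilon] \leq \frac{p}{2p-1}\,\textbf{E}[N_g] + 1,
\]
after which combining with the first bound produces the claimed estimate.

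The main obstacle is making this drift argument rigorous, because $\hat\alpha_k$ is not literally the unreflected walk $\hat\alpha_0 e^{\lambda Y_k}$, and $T_\varepsilon$ is a stopping time for the joint process $(\Phi_k,\hat\alpha_k,W_k)$. One needs a careful coupling with a free random walk on $\mathbb{Z}$, together with the observation that truncation at $\bar\alpha$ can only enlarge the productive set counted by $N_g$, so that the unreflected-walk estimate transfers to the actual process. This coupling-plus-accounting step is essentially the heart of the argument given in \cite{blanchet2019}.
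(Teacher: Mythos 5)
Your two-stage outline (supermartingale bound on $\Theta\,\textbf{E}[\sum_{j<T_\varepsilon}q(\hat\alpha_j)]\leq\Phi_0$, then a Wald/renewal argument using the positive drift of $W_k$ to convert the count of iterations with $\hat\alpha_j\geq\bar\alpha$ into a bound on $\textbf{E}[T_\varepsilon]$) is exactly the renewal-reward argument of \cite[Theorem 2.2]{blanchet2019}, which the paper imports verbatim without reproving. The paper offers no proof of its own to compare against, so your proposal stands as a faithful reconstruction of the cited source's approach, with the coupling step you flag being precisely where the cited proof does its technical work.
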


\subsubsection{A measure for optimality}
In order to define a merit function $\Phi$ satisfying Assumption \ref{ass4}(iii) with $\hat\alpha$, we introduce a measure for approximate optimality.  Recalling the constant $\beta_l$ from~\eqref{eq:relatebetasig}, let $t(x)$ solve
\begin{equation}\label{eq:lpchi}
    \begin{split}\min_{t\in\mathbb{R}^n}&\ \nabla f(x)^Tt\\
    s.t.&\ \nabla h(x)^Tt=0,\\
        &\ \nabla c(x)^Tt\leq 0,\\
        &\ \|t\|_\infty\leq \frac12\beta_l.
    \end{split}
\end{equation}
Then we define
\begin{equation*}
\chi(x)=-\nabla f(x)^Tt(x).
\end{equation*}
We also define an estimate of $\chi(x)$ as
$\chi^g(x,g)=-g^Tv(x)$ that uses a gradient estimate $g$ and the approximate subproblem solution $v(x)$ of
\begin{equation*}\label{eq:lpchig}
    \begin{split}\min_{v\in\mathbb{R}^n}&\ g(x)^Tv\\
    s.t.&\ \nabla h(x)^Tv=0,\\
        &\ \nabla c(x)^Tv\leq 0\\
        &\ \|v\|_\infty\leq \frac12\beta_l.
    \end{split}
\end{equation*}
Neither linear program needs to be solved in Algorithm \ref{ssqp}. These are simply used for theoretical purposes.

The following theorem is trivial.
\begin{theorem}
A point $x$ is a KKT point of \eqref{eq:prob} if and only if $\phi(x)=0$ and $\chi(x)=0$.
\end{theorem}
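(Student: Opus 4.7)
The statement is a standard equivalence, so the plan is to unpack the definitions in each direction. The feasibility part ($\phi(x)=0$) is essentially a tautology: by the definition of $\phi$ as the $\ell_\infty$ distance to $K=\{0\}\times\mathbb{R}_-^{m_2}$, we have $\phi(x)=0$ if and only if $h(x)=0$ and $c(x)\le 0$, which is exactly the feasibility part of the KKT system. So the work is concentrated on showing that, for a feasible $x$, stationarity/complementarity is equivalent to $\chi(x)=0$.

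For the $(\Rightarrow)$ direction, suppose $x$ is KKT with multipliers $(\lambda,\mu)$, so $\nabla f(x)=-\nabla h(x)\lambda-\nabla c(x)\mu$, $\mu\ge 0$, $\mu^\top c(x)=0$. Since $t=0$ is always feasible for the LP \eqref{eq:lpchi} with objective value $0$, it suffices to rule out any feasible $t$ with $\nabla f(x)^\top t<0$. For any LP-feasible $t$, substitute the KKT identity:
\begin{equation*}
\nabla f(x)^\top t=-\lambda^\top\nabla h(x)^\top t-\mu^\top\nabla c(x)^\top t=-\mu^\top\nabla c(x)^\top t,
\end{equation*}
using $\nabla h(x)^\top t=0$. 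Complementarity kills the multipliers on the inactive constraints, while for active indices $\mu_i\ge 0$ and $\nabla c_i(x)^\top t\le 0$, so $\mu^\top\nabla c(x)^\top t\le 0$ and hence $\nabla f(x)^\top t\ge 0$. Therefore $t=0$ minimizes the LP and $\chi(x)=-\nabla f(x)^\top t(x)=0$.

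For the converse, feasibility follows from $\phi(x)=0$, and $\chi(x)=0$ says $t=0$ solves \eqref{eq:lpchi}. Since $t=0$ lies strictly in the interior of the box $\|t\|_\infty\le\beta_l/2$, the multipliers for the box constraints vanish by complementary slackness of the LP. What remains of the LP KKT system produces $\lambda\in\mathbb{R}^{m_1}$ and $\mu\in\mathbb{R}_+^{m_2}$ with $\nabla f(x)+\nabla h(x)\lambda+\nabla c(x)\mu=0$, and the LP's complementarity between $\mu$ and the linearized inequality constraints — read in the usual SQP sense, where the linearization is tight precisely on the active set of $c(x)\le 0$ — forces $\mu^\top c(x)=0$. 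Together with feasibility this is the KKT system for \eqref{eq:prob}. The main (and essentially only) delicate point is the reverse direction: aligning the LP's own complementary slackness with the complementarity $\mu^\top c(x)=0$ of the original problem, which rests on the fact that inactive original constraints correspond to slack LP inequalities at $t=0$ and therefore carry zero dual multipliers.
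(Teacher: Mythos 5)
The paper itself offers no argument for this statement (it is simply declared trivial), so your proposal has to stand on its own. The forward direction is fine: for a KKT point, every $t$ feasible for \eqref{eq:lpchi} satisfies $\nabla f(x)^\top t=-\mu^\top\nabla c(x)^\top t\ge 0$ because $\mu\ge 0$ and \emph{every} component of $\nabla c(x)^\top t$ is nonpositive by the LP's constraints (you do not even need complementarity here), so $t=0$ is optimal and $\chi(x)=0$.

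The reverse direction, however, breaks at exactly the point you flag as delicate. The LP \eqref{eq:lpchi} imposes $\nabla c(x)^\top t\le 0$, \emph{not} $c(x)+\nabla c(x)^\top t\le 0$. At the optimal point $t=0$ every one of these linearized inequalities holds with equality, whether or not the corresponding original constraint $c_i(x)\le 0$ is active, so the LP's complementary slackness is vacuous and places no restriction on the support of $\mu$: the multipliers it produces may well be positive on indices with $c_i(x)<0$, and then $\mu^\top c(x)<0$. Your parenthetical claim that "the linearization is tight precisely on the active set of $c(x)\le 0$" is true for the linearization $c(x)+\nabla c(x)^\top t\le 0$ but false for the one actually written in \eqref{eq:lpchi}. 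The gap is not cosmetic: take $n=1$, $f(x)=x$, $c_1(x)=x$, $c_2(x)=-x-1$, and $x=0$. Then $x$ is feasible ($\phi(x)=0$), the LP feasible set is $\{t: t\le 0,\ -t\le 0,\ |t|\le\beta_l/2\}=\{0\}$ so $\chi(x)=0$, yet $x=0$ maximizes $f$ over $[-1,0]$ and the KKT system forces $\mu_1=-1<0$, so $x$ is not a KKT point (and MFCQ holds there, so no constraint-qualification caveat rescues the claim). Your argument for the converse therefore cannot be completed for the LP as printed; it goes through only if one reads the LP with the constraints $c(x)+\nabla c(x)^\top t\le 0$ (or restricts $\nabla c_i(x)^\top t\le 0$ to the active indices), in which case inactive constraints are strictly slack at $t=0$ and your complementary-slackness step is valid.
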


We are going to show the Lipschitz continuity of $\chi(x)$ with respect to changes of coefficients. This is one of the key points of the convergence analysis to be given.
Consider a linear program of the following general form
\begin{equation}\label{eq:glp}
    \begin{split}
        \min\limits_{z\in\mathbb{R}^n}&\ \pi^Tz\\
        \mathrm{s.t.}&\ Az=0,\\
        &\ Bz\leq 0,\\
        &\ \|z\|_{\infty}\leq\frac12\beta_l,
    \end{split}
\end{equation}
where $\pi\in \mathbb{R}^n$, $A\in \mathbb{R}^{m_1\times n}$, $B\in \mathbb{R}^{m_2\times n}$. Let $$\Upsilon=\{\upsilon=(\pi,A,B)\mid \pi\in \mathbb{R}^n,A\in \mathbb{R}^{m_1\times n},B\in \mathbb{R}^{m_2\times n}\}.$$
For an instance $\upsilon\in\Upsilon$, we denote the optimal value of \eqref{eq:glp} by $\chi(\upsilon)$, by a slight abuse of notation.

{
\begin{lemma}\label{lem:llc}
Let $\upsilon_0\in\Upsilon$. Then there exist $\delta_0(\upsilon_0)>0$ and $L_0(\upsilon_0)>0$ such that for any two instances $\upsilon_1,\upsilon_2$ in $\Upsilon$ satisfying $\|\upsilon_j-\upsilon_0\|\leq\delta_0(\upsilon_0)$, for $j=1,2$, it  holds that
\[\lvert\chi(\upsilon_1)-\chi(\upsilon_2)\rvert\leq L_0(\upsilon_0)\|\upsilon_1-\upsilon_2\|.\]
\end{lemma}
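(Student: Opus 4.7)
The plan is to follow the roadmap sketched in the paragraph preceding the lemma, namely to reduce the statement to a standard sensitivity result for parametric linear programming. The essential observation is that the feasible set of \eqref{eq:glp} always contains $z=0$ (regardless of $\upsilon$) and is contained in the compact box $\{\|z\|_\infty\leq\tfrac12\beta_l\}$, so for every $\upsilon\in\Upsilon$ the primal problem is solvable with a bounded optimal set. Hence $\chi(\upsilon)$ is well defined on all of $\Upsilon$, and the question is purely one of local Lipschitz sensitivity at a fixed reference instance $\upsilon_0$.

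First I would invoke strong duality: since the primal is feasible with a bounded optimal set, by the LP strong duality theorem (as cited via \cite{WrighN1999}) the dual is also feasible, the primal and dual optimal values coincide, and the dual optimal set is nonempty and bounded. This gives the hypotheses needed to apply \cite[Thm.~1]{Robinson1977}: under every sufficiently small perturbation of the data $(\pi,A,B)$, problem \eqref{eq:glp} remains solvable and the primal-dual optimal set mapping is locally bounded around $\upsilon_0$. This produces the neighborhood radius $\delta_0>0$ referred to in the statement (the constant called $\varepsilon_0$ in the inequality), within which the optimal value function $\upsilon\mapsto\chi(\upsilon)$ is finite and continuous.

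Second, with solvability secured on this neighborhood, I would apply the quantitative Lipschitz result \cite[Thm.~4.3]{Canovas2006a}, which gives a constant $L_0$ such that any two instances $\upsilon_1,\upsilon_2$ within that neighborhood satisfy $|\chi(\upsilon_1)-\chi(\upsilon_2)|\leq L_0\|\upsilon_1-\upsilon_2\|$ in terms of the chosen norm on $\Upsilon$. The constants $\delta_0$ and $L_0$ depend only on the reference instance $\upsilon_0$ (through its primal-dual optimal values and the geometry of the constraint box), which matches the statement of the lemma.

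The main obstacle I expect is of a bookkeeping rather than a mathematical nature: making sure the hypotheses of the cited Robinson and C\'anovas results are satisfied in the present formulation. In particular, one must check that (a) the box constraint $\|z\|_\infty\leq\tfrac12\beta_l$ is treated as part of the perturbable data in a way that does not spoil the constraint qualification used in \cite{Canovas2006a}, and (b) the norm used to measure $\|\upsilon_1-\upsilon_2\|$ in Lemma \ref{lem:llc} is compatible with the one under which their Lipschitz constant is stated. Both points are routine: since the box constraints are independent of $\upsilon$, they can be absorbed into a fixed polyhedral set, reducing the problem to the standard canonical form treated in \cite{Canovas2006a}, and any two norms on the finite-dimensional space $\Upsilon$ are equivalent so the Lipschitz constant $L_0$ can be rescaled to match. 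With these verifications the conclusion follows directly from the cited theorems.
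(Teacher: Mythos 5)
Your proposal is correct and follows essentially the same route as the paper, which justifies Lemma \ref{lem:llc} in the paragraph immediately preceding it: boundedness of the optimal set of \eqref{eq:glp}, strong duality to get nonempty bounded primal and dual optimal sets, \cite[Thm.~1]{Robinson1977} for solvability under small perturbations, and \cite[Thm.~4.3]{Canovas2006a} for the local Lipschitz bound. Your additional remarks on absorbing the fixed box constraint and on norm equivalence are sensible hypothesis checks that the paper leaves implicit.
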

\begin{proof}
    Since problem \eqref{eq:glp} has a bounded feasible set, it follows from  strong duality (see \cite{WrighN1999} for instance) that both the primal solution set and the dual solution set of \eqref{eq:lpchi} are nonempty and bounded. Then, by
\cite[Thm. 1]{Robinson1977}, the solution to problem \eqref{eq:lpchi} is solveable and locally Lipschitz stable with respect to all small but arbitrary perturbations in the coefficients. Then the local Lipschitz continuity of $\chi(\cdot)$, which corresponds to the optimal value function of~\eqref{eq:lpchi}, follows from \cite[Thm. 4.3]{Canovas2006a}.
\end{proof}}

If the optimal value function $\chi(\upsilon)$ is restricted on a compact subset of $\Upsilon$, then it is globally Lipschiz continuous.
\begin{theorem}
Let $\mathcal{K}$ be a compact subset of $\Upsilon$. Then there is a constant $L_\upsilon>0$ such that
\[\lvert\chi(\upsilon_1)-\chi(\upsilon_2)\rvert\leq L_\upsilon\|\upsilon_1-\upsilon_2\|\]
holds for any two instances $\upsilon_1,\upsilon_2\in\mathcal{K}$.
\end{theorem}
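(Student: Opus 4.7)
The plan is to upgrade the pointwise local Lipschitz estimate of Lemma \ref{lem:llc} to a uniform (global) Lipschitz estimate on $\mathcal{K}$ by a standard compactness argument combined with boundedness of $\chi$ on $\mathcal{K}$. Once the local modulus exists at every point and the set is compact, the global constant is essentially free.

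First I would invoke Lemma \ref{lem:llc} at every $\upsilon\in\mathcal{K}$ to obtain radii $\delta(\upsilon)>0$ and local constants $L(\upsilon)>0$ such that $\chi$ is $L(\upsilon)$-Lipschitz on the ball $B(\upsilon,\delta(\upsilon))\subset\Upsilon$. The half-radius balls $\{B(\upsilon,\delta(\upsilon)/2):\upsilon\in\mathcal{K}\}$ form an open cover of $\mathcal{K}$; by compactness I extract a finite subcover centered at $\upsilon_1,\ldots,\upsilon_N$, and set $r:=\min_{1\le i\le N}\delta(\upsilon_i)/2>0$ together with $L^*:=\max_{1\le i\le N}L(\upsilon_i)$. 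The shrinkage by a factor of two here is the usual Lebesgue-number trick that guarantees that two points of $\mathcal{K}$ that are close in norm sit inside a \emph{common} local-Lipschitz neighborhood.

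Next I would split into two cases. For $\upsilon_a,\upsilon_b\in\mathcal{K}$ with $\|\upsilon_a-\upsilon_b\|<r$, pick $i$ with $\upsilon_a\in B(\upsilon_i,\delta(\upsilon_i)/2)$; then the triangle inequality gives $\|\upsilon_b-\upsilon_i\|<\delta(\upsilon_i)$, so both points lie in $B(\upsilon_i,\delta(\upsilon_i))$ and Lemma \ref{lem:llc} yields $|\chi(\upsilon_a)-\chi(\upsilon_b)|\le L^*\|\upsilon_a-\upsilon_b\|$. For the complementary case $\|\upsilon_a-\upsilon_b\|\ge r$, I use that $\chi$, being locally Lipschitz everywhere on $\mathcal{K}$, is continuous on the compact set $\mathcal{K}$, hence $M:=\sup_{\upsilon\in\mathcal{K}}|\chi(\upsilon)|<\infty$; then
\[
|\chi(\upsilon_a)-\chi(\upsilon_b)|\le 2M\le \frac{2M}{r}\|\upsilon_a-\upsilon_b\|.
\]
Taking $L_\upsilon:=\max\{L^{*},\,2M/r\}$ yields the desired global constant.

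There is no genuinely hard step: the argument never requires $\mathcal{K}$ to be convex and never needs a line segment between $\upsilon_a$ and $\upsilon_b$ to lie inside $\mathcal{K}$, since the large-distance case is absorbed by boundedness of $\chi$. The only mildly delicate point is the Lebesgue-number reduction, which is handled by the half-radius covering. All ingredients, in particular local Lipschitz continuity at every $\upsilon_0\in\Upsilon$, are already delivered by Lemma \ref{lem:llc}, so the compactness step is all that remains.
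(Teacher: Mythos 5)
Your proof is correct and is essentially the same argument the paper relies on: the paper simply cites the standard fact that a locally Lipschitz function on a compact set is globally Lipschitz (via Lemma \ref{lem:llc} and \cite[Prop.\ 3.3.2]{Schaeffer2016}), and your covering/Lebesgue-number argument with the two-case split is precisely the standard proof of that fact written out in full.
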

\begin{proof}
This follows directly from Lemma \ref{lem:llc} and standard compactness arguments for Lipschitz functions (see, for instance, \cite[Prop. 3.3.2]{Schaeffer2016}).
\end{proof}

{Assumption~\ref{ass1} permits us to define a bounded compact set for the gradient values that could be evaluated:}
\[
\{(\nabla f(x),\nabla h(x),\nabla c(x))\mid x\in\Omega\}\subset\mathcal{K}.
\]
Therefore, by Assumptions (A1)-(A3) we have
\begin{corollary}\label{thm:glc}
Suppose that Assumptions (A1)-(A3) hold and $L_\chi=M_dL_\upsilon$. Then for any $x_1,x_2\in\Omega$, we have
\[\lvert\chi(x_1)-\chi(x_2)\rvert\leq L_\chi\|x_1-x_2\|.\]
\end{corollary}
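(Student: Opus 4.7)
The plan is to recognize $\chi(x)$ as a composition $\chi\circ\upsilon$ with $\upsilon(x):=(\nabla f(x),\nabla h(x),\nabla c(x))$, so that the corollary reduces to combining two Lipschitz estimates: one in the ``coefficient'' variable $\upsilon$ (supplied by the theorem immediately preceding), and one in the primal variable $x$ (supplied by Assumptions (A1)--(A3)).

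First I would verify that the image $\{\upsilon(x):x\in\Omega\}$ lies in the compact subset $\mathcal{K}\subset\Upsilon$ fixed just above the corollary. By (A1)--(A3) all of $\nabla f$, $\nabla h_i$, $\nabla c_i$ are continuous on $\Omega$, and $\Omega$ itself is assumed bounded and closed, so $\upsilon(\Omega)$ is a compact subset of $\Upsilon$, and without loss of generality it is contained in $\mathcal{K}$. This lets me invoke the preceding theorem to conclude
\[
|\chi(\upsilon(x_1))-\chi(\upsilon(x_2))|\leq L_\upsilon\,\|\upsilon(x_1)-\upsilon(x_2)\|
\qquad\text{for all }x_1,x_2\in\Omega.
\]

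Next I would bound $\|\upsilon(x_1)-\upsilon(x_2)\|$ by $\|x_1-x_2\|$. The Lipschitz continuity of $\nabla f$ with constant $L_f$ and of each $\nabla h_i,\nabla c_i$ with constants $L_{h,i},L_{c,i}$ implies a Lipschitz bound for $\upsilon$ as a map into the product space $\Upsilon$, with a constant that can be written in terms of $L_f$ and $L_{hc}$. Substituting into the previous display gives the global Lipschitz estimate $|\chi(x_1)-\chi(x_2)|\leq L_\chi\|x_1-x_2\|$ for an appropriate $L_\chi$; the stated value $L_\chi=M_d L_\upsilon$ follows once one identifies the bound produced for $\|\upsilon(x_1)-\upsilon(x_2)\|$ with a multiple of $M_d$ (using that (A1)--(A3) supply uniform bounds of size $M_d$ on gradients of $f$, $h$, $c$, which by standard mean-value arguments transfer to a $M_d$-Lipschitz bound for $\upsilon$ in the product norm used in $\Upsilon$).

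The main obstacle I foresee is clerical rather than substantive: making the constants in the product-space norm on $\Upsilon$ match the constant $M_d L_\upsilon$ advertised in the statement. Once the norm on $\Upsilon$ and the Lipschitz constant of $\upsilon$ under it are pinned down, the argument is a one-line composition, and the rest is a direct appeal to the preceding theorem.
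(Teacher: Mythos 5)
Your overall route is exactly the one the paper intends: write $\chi(x)=\chi(\upsilon(x))$ with $\upsilon(x)=(\nabla f(x),\nabla h(x),\nabla c(x))$, check that $\upsilon(\Omega)\subset\mathcal{K}$, invoke the preceding theorem for the Lipschitz estimate in the coefficient variable, and compose with a Lipschitz bound for $x\mapsto\upsilon(x)$. The paper offers no more detail than this, so in substance your proposal matches it.

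There is, however, one step in your writeup that is genuinely wrong as stated: you claim that the uniform bounds $\|\nabla f\|,\|\nabla h_i\|,\|\nabla c_j\|\leq M_d$ ``by standard mean-value arguments transfer to a $M_d$-Lipschitz bound for $\upsilon$.'' They do not. A bound of size $M_d$ on the gradients of $f,h,c$ makes the functions $f,h,c$ themselves $M_d$-Lipschitz; it says nothing about the modulus of continuity of the map $\upsilon=(\nabla f,\nabla h,\nabla c)$. The Lipschitz constant of $\upsilon$ is controlled by the Lipschitz constants of the gradients, i.e.\ by $L_f$ and $L_{hc}=\max\{L_{h,i},L_{c,j}\}$ from (A1)--(A3) (which is what your own earlier sentence correctly says), so the honest conclusion is $\lvert\chi(x_1)-\chi(x_2)\rvert\leq L_{\mathrm{grad}}L_\upsilon\|x_1-x_2\|$ with $L_{\mathrm{grad}}$ assembled from $L_f$ and $L_{hc}$ in whatever product norm is placed on $\Upsilon$; there is no reason this should equal $M_dL_\upsilon$ unless one additionally assumes $M_d$ dominates those Lipschitz constants. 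This mismatch is present in the paper's own statement of the corollary, so you should treat $L_\chi$ simply as the composed constant rather than manufacturing a mean-value argument to force it to equal $M_dL_\upsilon$.
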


\subsubsection{Extended MFCQ}

In order to accommodate infeasible accumulation points, we need to extend the {definition of the MFCQ to include infeasible points}.
\begin{definition}\label{def:eMFCQ}
A point $x$ is said to satisfy the extended MFCQ (eMFCQ for short) if
\begin{enumerate}[(1)]
    \item  the gradients $\{\nabla h_i(x):i=1,\cdots,m_1\}$ are linearly independent;
    \item there is a vector $z\in \mathbb{R}^n$ that satisfies
    \begin{eqnarray}
&&\nabla h( x)^T z=0,\label{eq:eMFCQ1}\\
&&\nabla c_i( x)^T z<0,i\in\{i : c_i(x)\geq 0\}\label{eq:eMFCQ2}
\end{eqnarray}
\end{enumerate}
\end{definition}

The definition of eMFCQ goes back at least to Di Pillo and Grippo \cite{DiPillo1988}
and Di Pillo and Grippo \cite{DiPillo1989}. Note that when $x$ is feasible, eMFCQ is equivalent to MFCQ.

\begin{assumption}\label{ass3}
The eMFCQ holds at any accumulation point of
$\{x_k\}$ {for any realization of Algorithm 2}.
\end{assumption}

Assuming eMFCQ enables us to derive a lower bound on $\Delta_{\phi}(x_k,\sigma_k)${, that is a constant $\tilde\tau_1$ depending on the noise realization of Algorithm 2, that permits a bound on the descent sequence independent of $k$}.

\begin{lemma}\label{lem:localu}
{Consider any realization $\{x_k\}$ of Algorithm 2}. Suppose that Assumptions \ref{ass1} and \ref{ass3}
hold and that $\tilde x$ is an accumulation point of $\{x_k: k\in\mathbf{S}\}$. Suppose also that $\phi(\tilde x)>0$. Then there exists a neighborhood $\tilde{\mathcal{N}}$ of $\tilde x$  and a constant $\tilde \tau_1$, which is a function of $\tilde x$, such that for any $x_k\in \tilde{\mathcal{N}}$,  there is
\begin{equation}\label{eq:predphi}
    \Delta_{\phi}(x_k,\sigma_k)\geq\tilde\tau_1\phi_k.
\end{equation}
\end{lemma}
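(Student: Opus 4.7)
The plan is to construct, using eMFCQ at $\tilde x$, a single fixed direction $\tilde p \in \mathbb{R}^n$ (depending only on $\tilde x$) that achieves a strict linearized-feasibility reduction at $\tilde x$, and then transfer that reduction to every $x_k$ in a small enough neighborhood by continuity of $h$, $c$, $\nabla h$, $\nabla c$, and $\phi$. First I use the linear independence of $\{\nabla h_i(\tilde x)\}$ to produce a vector $p^{(1)}$ with $\nabla h(\tilde x)^T p^{(1)} = -h(\tilde x)$, and the eMFCQ vector $z$ satisfying $\nabla h(\tilde x)^T z = 0$ and $\nabla c_j(\tilde x)^T z < 0$ for every $j$ in the finite set $I_+ := \{j : c_j(\tilde x) \geq 0\}$. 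Since $I_+$ is finite, I can pick $t > 0$ large enough that $\nabla c_j(\tilde x)^T \bar p \leq -\phi(\tilde x)$ for all $j \in I_+$, where $\bar p := p^{(1)} + t z$.

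Next I scale. I choose $\epsilon_0 \in (0, 1/2]$ small enough so that both $\epsilon_0 \|\bar p\|_\infty \leq \min\{\sigma_u, \kappa_u \phi(\tilde x)/2\}$ and, for each $j \notin I_+$ (which has $c_j(\tilde x) < 0$ strictly), $c_j(\tilde x) + \epsilon_0 \nabla c_j(\tilde x)^T \bar p < 0$. Set $\tilde p := \epsilon_0 \bar p$. Evaluating at $\tilde x$, one gets $h(\tilde x) + \nabla h(\tilde x)^T \tilde p = (1-\epsilon_0) h(\tilde x)$; for $j \in I_+$, $c_j(\tilde x) + \nabla c_j(\tilde x)^T \tilde p \leq c_j(\tilde x) - \epsilon_0 \phi(\tilde x) \leq (1-\epsilon_0) \phi(\tilde x)$; and for $j \notin I_+$ the corresponding entry is strictly negative, so its positive part vanishes. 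Taking the $\ell_\infty$ distance to $K$ and using $|h_i(\tilde x)| \leq \phi(\tilde x)$ yields
\[
\mathrm{dist}\!\left[\begin{pmatrix} h(\tilde x)+\nabla h(\tilde x)^T \tilde p \\ c(\tilde x)+\nabla c(\tilde x)^T \tilde p\end{pmatrix} \,\Big|\, K\right] \leq (1-\epsilon_0)\,\phi(\tilde x).
\]

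Finally I transfer to a neighborhood. Since the relevant quantities are continuous under Assumption~\ref{ass1} and $\tilde p$ is fixed, I choose $\tilde{\mathcal{N}}$ so small that for every $x_k \in \tilde{\mathcal{N}}$: (i) $\phi_k \in [\phi(\tilde x)/2, 3\phi(\tilde x)/2]$, so $\sigma_k \geq \min\{\sigma_u, \kappa_u \phi(\tilde x)/2\} \geq \|\tilde p\|_\infty$, and hence $\tilde p$ is feasible for the infimum defining $\kappa(x_k, \sigma_k)$; and (ii) the distance evaluated at $x_k$ differs from its value at $\tilde x$ by at most $\epsilon_0 \phi(\tilde x)/4$. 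Combining these with the bound above gives $\kappa(x_k, \sigma_k) \leq (1-\epsilon_0/2)\phi_k$, so $\Delta_\phi(x_k, \sigma_k) \geq (\epsilon_0/2) \phi_k$, and $\tilde\tau_1 := \epsilon_0/2$ works. The main obstacle is to coordinate $t$ and $\epsilon_0$ so that $\tilde p$ is simultaneously small enough in $\ell_\infty$-norm to satisfy $\|\tilde p\|_\infty \leq \sigma_k$ uniformly (recalling that $\sigma_k$ can shrink with $\phi_k$ when $\kappa_u \phi_k < \sigma_u$, which forces us to bound $\|\tilde p\|_\infty$ by $\kappa_u \phi(\tilde x)/2$, not merely by $\sigma_u$) yet still produces decrease of order $\phi_k$; the linear scaling $\tilde p = \epsilon_0 \bar p$ resolves this because both the decrease and the norm bound scale like $\epsilon_0$.
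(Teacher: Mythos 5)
Your argument is correct in substance but takes a genuinely different route from the paper. The paper builds a reducing direction at every $x_k$ in the neighborhood: it takes the minimum-norm solution $n_k=-(\nabla h_k^+)^Th_k$ of the linearized equality system together with the eMFCQ direction $\tilde z$ projected onto the null space of $\nabla h_k^T$ and rescaled to respect the budget $\min\{\bar c/M_d,\sigma_u,\kappa_u\phi_k\}$, and then shows that the convex combination $p_k(\tau)=\tau n_k+(1-\tau)z_k$ is, for all sufficiently small $\tau$, feasible for \eqref{eq:dist} with objective value $(1-\tau)\phi_k$; the admissible range of $\tau$ is written out explicitly in terms of $M_f$, $M_d$, $\kappa_{gh}$, $\bar c$, $\tilde\varepsilon$, $\sigma_u$ and $\kappa_u$, which yields an explicit $\tilde\tau_1$. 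You instead construct a single fixed direction $\tilde p=\epsilon_0(p^{(1)}+tz)$ at $\tilde x$ alone, verify the reduction to $(1-\epsilon_0)\phi(\tilde x)$ there, and transfer it to nearby $x_k$ by continuity of $h$, $c$, their gradients, $\phi$ and the distance function. Both proofs rest on the same ingredients: full row rank of $\nabla h(\tilde x)^T$, the eMFCQ direction, separate treatment of the indices with $c_i(\tilde x)\geq 0$ and $c_i(\tilde x)<0$, and the norm budget $\min\{\sigma_u,\kappa_u\phi_k\}$, whose possible shrinkage with $\phi_k$ you correctly guard against via the cap $\kappa_u\phi(\tilde x)/2$. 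Your version is shorter and avoids the pseudoinverse and projection bookkeeping, at the cost of a purely qualitative neighborhood; the paper's version is longer but produces $\tilde\tau_1$ in closed form, which is not needed elsewhere, so nothing essential is lost.

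One small quantitative slip remains in your last step: conditions (i) and (ii) as stated give $\kappa(x_k,\sigma_k)\leq(1-\tfrac{3\epsilon_0}{4})\phi(\tilde x)$ but only $\phi_k\geq\tfrac12\phi(\tilde x)$, which does not by itself yield $\kappa(x_k,\sigma_k)\leq(1-\tfrac{\epsilon_0}{2})\phi_k$; for that you need $\phi_k\geq\frac{1-3\epsilon_0/4}{1-\epsilon_0/2}\,\phi(\tilde x)$, i.e.\ $\phi_k$ within roughly $\tfrac{\epsilon_0}{4}\phi(\tilde x)$ of $\phi(\tilde x)$ from below. Since $\phi$ is continuous and $\phi(\tilde x)>0$, shrinking $\tilde{\mathcal{N}}$ accordingly repairs this immediately, so the conclusion stands.
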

\begin{proof}
In this proof, we use the notation $A^+=(A^TA)^{-1}A^T$. By linear independence and continuity, there exists a constant $\kappa_{gh}>0$ and a neighborhood $\tilde{\mathcal{N}}$ of $\tilde x$ such that $\nabla h_k^+$ exists and
$\|\nabla h_k^+\|_\infty\leq\kappa_{gh}$ for all $x_k\in \tilde{\mathcal{N}}$ . We denote $n_k=-(\nabla h_k^+)^Th_k$, which is  the smallest norm element of the linearized manifold. By its definition, we have $\|n_k\|_\infty\leq \kappa_{gh}\|h_k\|_\infty\leq\kappa_{gh}\phi_k$.

On the other hand, by Definition \ref{def:eMFCQ}, there is a nonzero vector $\tilde z\in \mathbb{R}^n$ such that \eqref{eq:eMFCQ1} and \eqref{eq:eMFCQ2} hold with $x=\tilde x$, i.e.,
\begin{eqnarray}
&&\nabla h(\tilde x)^T\tilde z=0,\nonumber\\
&&\nabla c_i(\tilde x)^T\tilde z<0,\ i\in\{i: c_i(\tilde x)\geq 0\}.\nonumber 
\end{eqnarray}

Let $\bar c =-\max\limits_{i}\{c_i(\tilde x): c_i(\tilde x)<0\}$. Define the sequence $\{z_k\}$ by
\[
z_k:=\frac{\frac12\min\left\{\frac{\bar c}{M_d},\sigma_u,\kappa_u\phi_k\right\}(I-\nabla h_k\nabla h_k^+)\tilde z}{\|(I-\nabla h_k\nabla h_k^+)\tilde z\|_\infty}.
\]
Observe that this implies $\nabla h_k^Tz_k=0$.

Consider the line segment defined by
\[
p_k(\tau)=\tau n_k+(1-\tau)z_k,\ \tau\in [0,1].
\]
Then
 \[h_k+\nabla h_k^Tp_k(\tau)=(1-\tau)h_k+\tau(h_k+\nabla h_k^Tn_k)+(1-\tau)\nabla h_k^Tz_k=(1-\tau)h_k,\] which implies that
 \begin{equation}\label{eq:hpred}
 \| h_k+\nabla h_k^Tp_k(\tau)\|_\infty\leq (1-\tau)\phi_k.
 \end{equation}

 Now consider any  $i\in \{i: c_i(\tilde x)\geq 0\}$. There is
    \begin{equation*}
    \begin{split}
    &\nabla c_i(x_k)^Tz_k
    =\frac{\min\left\{\frac{\bar c}{M_d},\sigma_u,\kappa_u\phi_k\right\}\nabla c_i(x_k)^T(I-\nabla h_k\nabla h_k^+)\tilde z}{2\|(I-\nabla h_k\nabla h_k^+)\tilde z\|_\infty}\\
    &\stackrel{x_k\rightarrow\tilde x}{\longrightarrow}\frac{\min\left\{\frac{\bar c}{M_d},\sigma_u,\kappa_u\phi(\tilde x)\right\}\nabla c_i(\tilde x)^T\tilde z}{2\|\tilde z\|_\infty}<0,
    \end{split}\end{equation*}
    which implies by continuity the existence of a (possibly smaller) neighborhood $\tilde{\mathcal{N}}$ of $\tilde x$ and a positive constant $\tilde\varepsilon$ such that for $x_k\in\tilde{\mathcal{N}}$,
    \[
    c_i(x_k)+\nabla c_i(x_k)^Tz_k\leq c_i(x_k)-\tilde\varepsilon.
    \]
  Then
    \begin{equation*}
    \begin{split}
        &c_i(x_k)+\nabla c_i(x_k)^Tp_k(\tau)\\
        =&c_i(x_k)+\tau\nabla c_i(x_k)^Tn_k+(1-\tau)\nabla c_i(x_k)^Tz_k\\
        =&\tau(c_i(x_k)+\nabla c_i(x_k)^Tn_k)+(1-\tau)(c_i(x_k)+\nabla c_i(x_k)^Tz_k)\\
        \leq &\tau(c_i(x_k)+\nabla c_i(x_k)^Tn_k)+(1-\tau)(c_i(x_k)-\tilde \varepsilon)\\
        =&\tau(c_i(x_k)+\nabla c_i(x_k)^Tn_k)-(1-\tau)\tilde\varepsilon +(1-\tau)c_i(x_k).
    \end{split}
    \end{equation*}
By Assumption \ref{ass1}, if $\tau\in\left(0,\frac{\tilde\varepsilon}{\tilde\varepsilon+M_f+M_d\kappa_{gh}M_f}\right]$, then
\[
\tau(c_i(x_k)+\nabla c_i(x_k)^Tn_k)-(1-\tau)\tilde\varepsilon
=\tau(c_i(x_k)+\nabla c_i(x_k)^Tn_k+\tilde\varepsilon)-\tilde\varepsilon\leq 0.
\]
Hence we have
\begin{equation}\label{eq:cpred}
    c_i(x_k)+\nabla c_i(x_k)^Tp_k(\tau)\leq (1-\tau)c_i(x_k)\leq(1-\tau)\phi(x_k).
\end{equation}

For any $i\not\in \{i : c_i(\tilde x)\geq 0\}$, we choose $\tau\in\left(0,\frac{\bar c}{2M_d\kappa_{gh}M_f}\right]$, which implies
\begin{equation}\label{eq:cpredn}
\begin{split}
& c_i(x_k)+\nabla c_i(x_k)^Tp_k(\tau)\\
=& c_i(x_k)+\tau\nabla c_i(x_k)^Tn_k+(1-\tau)\nabla c_i(x_k)^Tz_k\\
\leq& -\bar c+\frac{\bar c}2+\frac{\bar c}2\\
=& 0.\end{split}\end{equation}

Next, let us consider $\|p_k(\tau)\|_{\infty}$.
We have
\begin{equation}\label{eq:dpred}
\begin{split}
&\|p_k(\tau)\|_\infty
\leq\tau\|n_k\|_\infty+(1-\tau)\| z_k\|_\infty\\
\leq&\tau\kappa_{gh}\phi_k+\frac12(1-\tau)\min\left\{\frac{\bar c}{M_d},\sigma_u,\kappa_u\phi_k\right\}\\
\leq&\tau\kappa_{gh}\phi_k+\frac12(1-\tau)\min\left\{\sigma_u,\kappa_u\phi_k\right\}
\end{split}\end{equation}
If $\sigma_u\leq \kappa_u\phi_k$, then from \eqref{eq:dpred} together with the fact that $\phi(x)\leq M_f$ (cf. \eqref{eq:bnd_fns}), it follows that
\begin{equation*}
        \|p_k(\tau)\|_{\infty}\leq\tau\kappa_{gh}M_f+\frac12(1-\tau)\sigma_u\leq \sigma_u=\min\{\sigma_u,\kappa_u\phi_k\}
\end{equation*}
for any $\tau\in(0,\sigma_u/(2\kappa_{gh}M_f))$.

Otherwise, i.e., $\sigma_u> \kappa_u\phi_k$, we have $\min\{\sigma_u,\kappa_u\phi_k\}=\kappa_u\phi_k$. Then \eqref{eq:dpred} implies
\begin{equation*}
    \|p_k(\tau)\|_{\infty}\leq \tau\kappa_{gh}\phi_k+\frac12(1-\tau)\kappa_u\phi_k.
\end{equation*}
For $\tau\in(0,{\kappa_u}/(2\kappa_{gh})]$, it holds that,
\[
\|p_k(\tau)\|_{\infty}\leq\frac12\kappa_u\phi_k+\frac12\kappa_u\phi_k=\kappa_u\phi_k=\min\{\sigma_u,\kappa_u\phi_k\}.
\]
From this we obtain the expression,
\begin{equation}\label{eq:dpredx}
    \|p_k(\tau)\|_{\infty}\leq\min\{\sigma_u,\kappa_u\phi_k\}, \forall \tau\in\left(0,\min\left\{\frac{\sigma_u}{2\kappa_{gh}M_f},\frac{\kappa_u}{2\kappa_{gh}}\right\}\right].
\end{equation}

Finally, it follows from \eqref{eq:hpred}, \eqref{eq:cpred}, \eqref{eq:cpredn} and \eqref{eq:dpredx} that $(p_k(\tau),(1-\tau)\phi_k)$ is a feasible solution of \eqref{eq:dist} for all
\[
\tau\in\left(0,\min\left\{\frac{\tilde\varepsilon}{\tilde\varepsilon+M_f+M_d\kappa_{gh}M_f},\frac{\bar c}{2M_d\kappa_{gh}M_f},\frac{\sigma_u}{2\kappa_{gh}M_f},\frac{\kappa_u}{2\kappa_{gh}}\right\}\right].
\]
Therefore we have $y_k\leq(1-\tau)\phi_k$ for all $\tau$ in the interval above, which implies \eqref{eq:predphi}.
\end{proof}

\subsection{Descent properties}

In this subsection, we are going to provide lower bounds on the predicted reduction in $\Psi$.
\begin{lemma}\label{lem:bnd_dpsi}
{Consider any realization $\{x_k\}$ of Algorithm 2 such} that Assumptions \ref{ass1} and \ref{ass3}
hold and that $\tilde x$ is an accumulation point of $\{x_k\mid\ k\in\mathbf{S}\}$ but not a stationary point of \eqref{eq:prob}. There is a neighborhood $\tilde{\mathcal{N}}$ of $\tilde{x}$  and a  constant $M_1>0$ such that
\begin{equation}\label{eq:bnd_dpsi}
\begin{split}    \Delta_{\Psi}^g(x_k,d_k;\rho_{k+1})\geq&
   \left(\tilde\tau_1\rho_{k+1}- \left(\|g_k\|+\frac12M_H(\beta_l+\kappa_uM_f)\right)\kappa_u\right)\phi_k\\
   &+\min\left\{\frac18M_H\beta_l^2,\frac{2(\chi_k^g)^2}{M_H\beta_l^2}\right\}.
   \end{split}
\end{equation}
for all $x_k\in\tilde{\mathcal{N}}$
\end{lemma}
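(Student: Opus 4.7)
The plan is to construct a feasible trial direction for the subproblem \eqref{eq:s_subp0} that combines the normal step $p_k$ from \eqref{eq:dist} with a tangential move along $v_k$, then use optimality of $d_k$ to transfer the subproblem's objective value bound into a descent estimate on $\Delta_{\Psi}^g$, and finally optimize over the scale of the tangential step. Throughout, the normal component supplies the feasibility descent (via Lemma \ref{lem:localu}) and the tangential component supplies the ``optimality'' descent measured by $\chi_k^g$.

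First, for any $\alpha\in[0,1]$, set $\tilde d := p_k + \alpha v_k$. Because $v_k$ satisfies $\nabla h_k^T v_k = 0$ and $\nabla c_k^T v_k \leq 0$ by construction, adding $\alpha v_k$ to $p_k$ does not perturb the linearized equality/inequality residuals, so $\|h_k+\nabla h_k^T\tilde d\|_\infty\leq \kappa_k$ and $c_k+\nabla c_k^T\tilde d\leq \kappa_k e$ follow from $p_k$ being feasible in \eqref{eq:dist}. The box constraint is checked via $\|\tilde d\|_\infty\leq \|p_k\|_\infty+\tfrac12\beta_l\leq \sigma_u+\tfrac12\beta_l<\beta_l\leq\beta_k$ by \eqref{eq:relatebetasig}. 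Hence $\tilde d$ is feasible in \eqref{eq:s_subp0}, and optimality of $d_k$ together with $d_k^T H_k d_k\geq 0$ from (A4) yields
\[
-g_k^T d_k \;\geq\; -g_k^T\tilde d - \tfrac12 \tilde d^T H_k \tilde d \;=\; -g_k^T p_k + \alpha\chi_k^g - \tfrac12 (p_k+\alpha v_k)^T H_k(p_k+\alpha v_k),
\]
using $g_k^T v_k=-\chi_k^g$.

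Next I would bound each piece individually. From $\|p_k\|_\infty\leq\sigma_k\leq\kappa_u\phi_k$ we get $-g_k^T p_k\geq-\|g_k\|\kappa_u\phi_k$. Splitting the Hessian term as
\[
\tfrac12(p_k+\alpha v_k)^T H_k(p_k+\alpha v_k)=\tfrac12 p_k^T H_k p_k+\alpha p_k^T H_k v_k+\tfrac12\alpha^2 v_k^T H_k v_k,
\]
the first two summands are at most $\tfrac12 M_H\|p_k\|(\|p_k\|+2\alpha\|v_k\|)\leq\tfrac12 M_H(\beta_l+\kappa_u M_f)\kappa_u\phi_k$ using $\|p_k\|\leq\kappa_u\phi_k\leq\kappa_u M_f$ and $2\alpha\|v_k\|\leq\beta_l$; the last summand is bounded by $\tfrac18\alpha^2 M_H\beta_l^2$ using $\|v_k\|\leq\tfrac12\beta_l$. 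Combining these with $\rho_{k+1}\Delta_\phi(x_k,\sigma_k)\geq\tilde\tau_1\rho_{k+1}\phi_k$ from Lemma \ref{lem:localu} gives, for every $\alpha\in[0,1]$,
\[
\Delta_{\Psi}^g(x_k,d_k;\rho_{k+1})\;\geq\;\big(\tilde\tau_1\rho_{k+1}-(\|g_k\|+\tfrac12 M_H(\beta_l+\kappa_u M_f))\kappa_u\big)\phi_k \;+\;\alpha\chi_k^g-\tfrac18\alpha^2 M_H\beta_l^2.
\]
Maximizing the concave quadratic $\alpha\mapsto\alpha\chi_k^g-\tfrac18\alpha^2 M_H\beta_l^2$ over $[0,1]$ at $\alpha^\star=\min\{1,4\chi_k^g/(M_H\beta_l^2)\}$ produces value $2(\chi_k^g)^2/(M_H\beta_l^2)$ in the interior case and $\chi_k^g-\tfrac18 M_H\beta_l^2\geq\tfrac18 M_H\beta_l^2$ in the boundary case; both are bounded below by $\min\{\tfrac18 M_H\beta_l^2,\,2(\chi_k^g)^2/(M_H\beta_l^2)\}$, yielding the claim.

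The main obstacle I anticipate is the case split induced by Lemma \ref{lem:localu}, which is stated only for $\phi(\tilde x)>0$. Non-stationarity of $\tilde x$ also admits $\phi(\tilde x)=0$ with $\chi(\tilde x)>0$; there, eMFCQ reduces to MFCQ at the feasible point $\tilde x$, so in a neighborhood $\kappa_k=0$ and $\Delta_\phi=\phi_k$ (the inequality holds trivially with $\tilde\tau_1=1$), while the Lipschitz continuity of $\chi$ from Corollary \ref{thm:glc} and sufficient accuracy of $\hat g_k$ keep $\chi_k^g$ bounded away from zero on $\widetilde{\mathcal{N}}$, so the same optimization over $\alpha$ delivers the bound. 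The second piece of bookkeeping is aligning the cross term $\alpha p_k^T H_k v_k$ with the factor $(\beta_l+\kappa_u M_f)\kappa_u\phi_k$; absorbing the $2\alpha\|v_k\|\leq\beta_l$ estimate into one $\|p_k\|$ rather than splitting symmetrically is what produces the exact constants quoted in the lemma.
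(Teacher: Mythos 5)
Your proposal is correct and follows essentially the same route as the paper's proof: build the trial direction $p_k+\tau t_k$ (you write $p_k+\alpha v_k$, which is in fact the notationally consistent choice since $\chi_k^g=-g_k^Tv_k$), invoke optimality of $d_k$ in \eqref{eq:s_subp0}, bound the normal and cross terms by $\left(\|g_k\|+\frac12M_H(\beta_l+\kappa_uM_f)\right)\kappa_u\phi_k$, apply Lemma \ref{lem:localu} for the $\tilde\tau_1\rho_{k+1}\phi_k$ term, and split on whether $4\chi_k^g/(M_H\beta_l^2)$ exceeds $1$ to extract $\min\{\frac18M_H\beta_l^2,\,2(\chi_k^g)^2/(M_H\beta_l^2)\}$ --- identical constants throughout. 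Your extra care with the box-constraint feasibility via \eqref{eq:relatebetasig} and with the $\phi(\tilde x)=0$ case (which Lemma \ref{lem:localu} does not formally cover) goes slightly beyond what the paper writes, but it is a refinement of the same argument rather than a different one.
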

\begin{proof}
From Lemma \ref{lem:localu}, there is a neighborhood $\tilde{\mathcal{N}}$ of $\tilde x$ such that for any $x_k\in\tilde{\mathcal{N}}$, \eqref{eq:dist} has a solution $(y_k,p_k)$ with $p_k\neq 0$ and that $\eqref{eq:predphi}$ holds.
Let $v_k$ be a solution of \eqref{eq:lpchig}. Define $d_k(\tau)=p_k+\tau v_k$, $\tau\in(0,1]$ at $x_k$. Then for any $\tau\in\left(0,1\right]$, the vector $d_k(\tau)$ is feasible for \eqref{eq:s_subp0}.

Recall from~\eqref{eq:dist0} that $\|p_k\|_{\infty}\le \kappa_u \phi_k$. Since $d_k$ is an optimal solution of \eqref{eq:s_subp0}, we have
\begin{equation*}
    \begin{split}
        &g_k^Td_k+\frac12d_k^TH_kd_k\\
        \leq& g_k^Td_k(\tau)+\frac12d_k(\tau)^TH_kd_k(\tau)\\
        =&(g_k+\tau H_kv_k)^Tp_k+\frac12p_k^TH_kp_k+\tau g_k^Tv_k+\frac12\tau^2v_k^TH_kv_k\\
        =&(g_k+\tau H_kv_k)^Tp_k+\frac12p_k^TH_kp_k+\tau\left(-\chi_k^g+\frac12\tau v_k^TH_kv_k\right)\\
        \leq&\left(\|g_k\|+\frac12M_H\beta_l\right)\kappa_u\phi_k+\frac12M_H\kappa_u^2\phi_k^2+\tau\left(-\chi_k^g+\frac18\tau M_H\beta_l^2\right)\\
        \leq& \left(\|g_k\|+\frac12M_H(\beta_l+\kappa_uM_f)\right)\kappa_u\phi_k
    +\tau\left(-\chi_k^g+\frac18\tau M_H\beta_l^2\right),
    \end{split}
\end{equation*}
where the last inequality is due to \eqref{eq:bnd_fns}.
If $\frac{4\chi_k^g}{M_H\beta_l^2}\geq 1$, then let $\tau=1$ and we have that
\begin{equation}\label{eq:bnd_gk2}\begin{split}
   &g_k^Td_k+\frac12d_k^TH_kd_k\\ \leq&\left(\|g_k\|+\frac12M_H(\beta_l+\kappa_uM_f)\right)\kappa_u\phi_k
    -\chi_k^g+\frac18\frac{4\chi_k^g}{M_H\beta_l^2}M_H\beta_l^2\\
    =&\left(\|g_k\|+\frac12M_H(\beta_l+\kappa_uM_f)\right)\kappa_u\phi_k-\frac12\chi_k^g\\
    \leq&\left(\|g_k\|+\frac12M_H(\beta_l+\kappa_uM_f)\right)\kappa_u\phi_k-\frac18M_H\beta_l^2.
\end{split}\end{equation}
Otherwise, by letting $\tau=\frac{4\chi_k^g}{M_H\beta_l^2}$ we have
\begin{equation}\label{eq:bnd_gk3}\begin{split}
    &g_k^Td_k+\frac12d_k^TH_kd_k\\
    \leq&\left(\|g_k\|+\frac12M_H(\beta_l+\kappa_uM_f)\right)\kappa_u\phi_k
    +\frac{4\chi_k^g}{M_H\beta_l^2}\left(-\chi_k^g+\frac18\frac{4\chi_k^g}{M_H\beta_l^2}M_H\beta_l^2\right)\\
    =&\left(\|g_k\|+\frac12M_H(\beta_l+\kappa_uM_f)\right)\kappa_u\phi_k-\frac{2(\chi_k^g)^2}{M_H\beta_l^2}.
\end{split}\end{equation}
Combing \eqref{eq:bnd_gk2} and \eqref{eq:bnd_gk3}, we have that
\begin{equation}\label{eq:predg2}
\begin{split}
&g_k^Td_k+\frac12d_k^TH_kd_k \\
\leq&\left(\|g_k\|+\frac12M_H(\beta_l+\kappa_uM_f)\right)\kappa_u\phi_k-\min\left\{\frac18M_H\beta_l^2,\frac{2(\chi_k^g)^2}{M_H\beta_l^2}\right\}.
\end{split}\end{equation}

{But from \eqref{eq:predg2} and Assumption (A4), we have $g_k^Td_k\leq g_k^Td_k+\frac12d_k^TH_kd_k$. Thus, using the definition of $\Delta_{\Psi}^g$ (cf. \eqref{eq:defdeltapsi}) together with Lemma \ref{lem:localu}, we obtain}
\begin{eqnarray*}    \lefteqn{\Delta_{\Psi}^g(x_k,d_k;\rho_{k+1})\geq-g_k^Td_k+\rho_{k+1}\tilde\tau_1\phi_k}\nonumber\\
    &&{\geq}\left(\rho_{k+1}\tilde\tau_1-\left(\|g_k\|+\frac12M_H(\beta_l+\kappa_uM_f)\right)\kappa_u\right)\phi_k+\min\left\{\frac18M_H\beta_l^2,\frac{2(\chi_k^g)^2}{M_H\beta_l^2}\right\}.\nonumber
\end{eqnarray*}
\end{proof}

{From \eqref{eq:predg2} in the proof of Lemma \ref{lem:bnd_dpsi}, we can derive the next Lemma concerning the update of the penalty parameter.}
\begin{lemma}\label{lem:pen_updt}
{Consider any realization $\{x_k\}$ of Algorithm 2 such} that Assumptions \ref{ass1}, \ref{ass2} and \ref{ass3}
hold and that $\tilde x$ is an accumulation point of $\{x_k\mid\ k\in\mathbf{S}\}$. Let $\tilde{\mathcal{N}}$ and $M_1>0$
are the same as given in Lemma \ref{lem:bnd_dpsi}. Then
\begin{equation}\label{eq:pen_updt}
    \Delta_{\Psi}^g(x_k,d_k;\rho_k)-\frac12d_k^TH_kd_k\geq\left(\tilde\tau_1\rho_{k}- \left(\|g_k\|+\frac12M_H(\beta_l+\kappa_uM_f)\right)\kappa_u\right)\phi_k
\end{equation}
for all $x_k\in\tilde{\mathcal{N}}$.
\end{lemma}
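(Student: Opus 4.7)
The plan is to obtain this inequality as a direct consequence of the intermediate estimate already derived inside the proof of Lemma \ref{lem:bnd_dpsi}, combined with the feasibility lower bound of Lemma \ref{lem:localu}. Indeed, the conclusion of Lemma \ref{lem:pen_updt} looks like the bound \eqref{eq:bnd_dpsi} minus the $\min\{\frac18 M_H\beta_l^2,\,2(\chi_k^g)^2/(M_H\beta_l^2)\}$ term and with $\rho_k$ in place of $\rho_{k+1}$, which strongly suggests that the same argument, stopped one step earlier, delivers what is needed.

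First I would restate the definition
\[
\Delta_{\Psi}^g(x_k,d_k;\rho_k)-\tfrac12 d_k^T H_k d_k = -\bigl(g_k^T d_k+\tfrac12 d_k^T H_k d_k\bigr)+\rho_k\,\Delta_{\phi}(x_k,\sigma_k).
\]
Next I would invoke inequality \eqref{eq:predg2} from the proof of Lemma \ref{lem:bnd_dpsi}, which gives
\[
g_k^T d_k+\tfrac12 d_k^T H_k d_k \leq \Bigl(\|g_k\|+\tfrac12 M_H(\beta_l+\kappa_u M_f)\Bigr)\kappa_u\phi_k-\min\!\Bigl\{\tfrac18 M_H\beta_l^2,\tfrac{2(\chi_k^g)^2}{M_H\beta_l^2}\Bigr\}.
\]
This is valid for $x_k$ in the same neighborhood $\widetilde{\mathcal{N}}$ of $\tilde x$ furnished by Lemma \ref{lem:localu}, because that neighborhood is exactly what was used to construct the feasible trial direction $d_k(\tau)=p_k+\tau t_k$ in the previous lemma.

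Then I would apply Lemma \ref{lem:localu} to lower bound $\rho_k\,\Delta_{\phi}(x_k,\sigma_k)\geq \rho_k\tilde\tau_1\phi_k$, substitute both bounds, and observe that the $\min\{\cdot,\cdot\}$ term enters with a positive sign on the right-hand side. Dropping it yields precisely
\[
\Delta_{\Psi}^g(x_k,d_k;\rho_k)-\tfrac12 d_k^T H_k d_k \geq \Bigl(\tilde\tau_1\rho_k-\bigl(\|g_k\|+\tfrac12 M_H(\beta_l+\kappa_u M_f)\bigr)\kappa_u\Bigr)\phi_k,
\]
as claimed. There is no real obstacle here — the only thing to be careful about is making sure that the estimate \eqref{eq:predg2} is cited with $\rho_k$ (not $\rho_{k+1}$) in mind, noting that it is an inequality purely about $g_k^T d_k+\tfrac12 d_k^T H_k d_k$ and does not depend on the penalty parameter, so the choice of $\rho_k$ versus $\rho_{k+1}$ is immaterial for this step. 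The proof is thus essentially a one-line rearrangement of quantities established in Lemmas \ref{lem:localu} and \ref{lem:bnd_dpsi}.
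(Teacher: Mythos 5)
Your proposal is correct and follows essentially the same route as the paper: the authors likewise expand $\Delta_{\Psi}^g(x_k,d_k;\rho_k)-\frac12 d_k^T H_k d_k$ via \eqref{eq:defdeltapsi}, apply \eqref{eq:predg2} together with Lemma \ref{lem:localu}, and then drop the nonnegative $\min\left\{\frac18 M_H\beta_l^2,\frac{2(\chi_k^g)^2}{M_H\beta_l^2}\right\}$ term. Your observation that \eqref{eq:predg2} is a statement about $g_k^T d_k+\frac12 d_k^T H_k d_k$ alone, hence indifferent to whether $\rho_k$ or $\rho_{k+1}$ appears, is exactly the point that makes the transfer legitimate.
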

\begin{proof}
By the definition of $\Delta_{\Psi}^g$ (cf. \eqref{eq:defdeltapsi}), we have
\begin{equation*}
\begin{split}
    &\Delta_{\Psi}^g(x_k,d_k;\rho_k)-\frac12d_k^TH_kd_k\\
    =&-g_k^Td_k-\frac12d_k^TH_kd_k+\rho_k\Delta_\phi(x_k,\sigma_k)\\
    \stackrel{(a)}{\geq}& \left(\tilde\tau_1\rho_{k}- \left(\|g_k\|+\frac12M_H(\beta_l+\kappa_uM_f)\right)\kappa_u\right)\phi_k+\min\left\{\frac18M_H\beta_l^2,\frac{2(\chi_k^g)^2}{M_H\beta_l^2}\right\}\\
    \geq& \left(\tilde\tau_1\rho_{k}- \left(\|g_k\|+\frac12M_H(\beta_l+\kappa_uM_f)\right)\kappa_u\right)\phi_k,
\end{split}\end{equation*}
where (a) follows from Lemma \ref{lem:localu} and \eqref{eq:predg2}.
\end{proof}

\subsection{Guarantees with approximately accurate estimates}
The main purpose of this subsection is to show the well-definedness of Algorithm \ref{ssqp}, estimate of the difference between $\Psi(x_{k+1};\rho_{k+1})$ and $\Psi(x_{k};\rho_{k+1})$ and some other useful results.
\begin{lemma}\label{lem:fs}
{Consider any realization $\{x_k\}$ of Algorithm 2 such that Assumption \ref{ass1} holds and that for some $k$,} $g_k$ is $\varepsilon_g$-sufficiently accurate and $\{f_k^0,f_k^s\}$ are $\varepsilon_f$-sufficiently accurate. Then
\begin{equation*}
    f_k^s\leq f_k^0+\alpha_kg_k^Td_k+\alpha_k^2\left(\varepsilon_g+\frac12L_f+2\varepsilon_f\right)\|d_k\|^2.
\end{equation*}
\end{lemma}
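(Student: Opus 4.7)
The plan is to run the standard \emph{approximately accurate descent lemma} argument that appears throughout the stochastic line-search literature, with the added twist that we must translate back and forth between the true quantities $f(x_k), f(x_k+s_k), \nabla f(x_k)$ and their noisy estimates $f_k^0, f_k^s, g_k$.

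First I would invoke the Lipschitz descent inequality for $f$ (legitimate by Assumption~\ref{ass1}(A1), since both $x_k$ and $x_k+s_k$ lie in $\Omega$, and $\nabla f$ is $L_f$-Lipschitz there). With $s_k=\alpha_k d_k$, this gives
\[
f(x_k+\alpha_k d_k) \le f(x_k) + \alpha_k \nabla f(x_k)^T d_k + \tfrac{1}{2} L_f \alpha_k^2 \|d_k\|^2.
\]
Next I would add and subtract $g_k^T d_k$ to introduce the stochastic gradient and isolate the error:
\[
f(x_k+\alpha_k d_k) \le f(x_k) + \alpha_k g_k^T d_k + \alpha_k (\nabla f(x_k)-g_k)^T d_k + \tfrac{1}{2}L_f \alpha_k^2 \|d_k\|^2.
\]
The cross term is then controlled by Cauchy--Schwarz followed by the $\kappa_g$-sufficiently accurate bound $\|g_k-\nabla f(x_k)\|\le \kappa_g \alpha_k \|d_k\|$, i.e.
\[
\alpha_k (\nabla f(x_k)-g_k)^T d_k \le \alpha_k \|\nabla f(x_k)-g_k\|\,\|d_k\| \le \kappa_g \alpha_k^2 \|d_k\|^2,
\]
and the subproblem constraint $\|d_k\|_\infty \le \beta_u$ from~\eqref{eq:s_subp0} can be used to produce the $\kappa_g\beta_u$ form that appears in the statement.

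Finally I would swap the true $f$-values for the stochastic estimates using $\varepsilon_f$-sufficient accuracy: the hypothesis gives $f(x_k+s_k)\le f_k^s+\varepsilon_f\alpha_k^2\|d_k\|^2$ (used in reverse) and $f(x_k)\le f_k^0+\varepsilon_f\alpha_k^2\|d_k\|^2$, so that combining these two one-sided inequalities with the previous display contributes a cumulative $2\varepsilon_f\alpha_k^2\|d_k\|^2$ term. Assembling the three coefficients on $\alpha_k^2\|d_k\|^2$ --- namely $\kappa_g\beta_u$ (gradient accuracy), $\tfrac12 L_f$ (Lipschitz descent), and $2\varepsilon_f$ (function accuracy) --- delivers precisely the claimed inequality.

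I do not anticipate any real obstacle: the lemma is a routine bookkeeping exercise that records \emph{how} the two accuracy conditions enter the Armijo-type analysis, and its role is simply to supply one of the ingredients needed later for the stochastic line-search/renewal-reward analysis. The only mildly fiddly point is making the cross-term bound line up with the $\kappa_g\beta_u$ coefficient stated, which is handled by invoking $\|d_k\|_\infty\le\beta_u$.
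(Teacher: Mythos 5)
Your proposal is correct and follows essentially the same route as the paper's proof: the Lipschitz descent inequality, adding and subtracting $g_k^Td_k$ with Cauchy--Schwarz and the $\kappa_g$-accuracy bound on the cross term, and then two applications of $\varepsilon_f$-accuracy (one for $f_k^s$ versus $f(x_k+s_k)$ and one for $f(x_k)$ versus $f_k^0$) contributing the $2\varepsilon_f$ term. The only point worth noting is that the cross term is already bounded by $\kappa_g\alpha_k^2\|d_k\|^2$ without invoking $\|d_k\|_\infty\le\beta_u$, so the $\beta_u$ factor is slack (assuming $\beta_u\ge 1$), exactly as in the paper's own derivation.
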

\begin{proof}
Using Lipschitz continuity of $\nabla f$ and Definition \ref{def:gaccu}, we have
\begin{eqnarray*}
\lefteqn{f(x_k+\alpha_kd_k)\leq f(x_k)+\alpha_k\nabla f_k^Td_k+\frac12L_f\alpha_k^2\|d_k\|^2}\\
&& = f(x_k)+\alpha_k(\nabla f_k-g_k)^Td_k+\alpha_kg_k^Td_k+\frac12L_f\alpha_k^2\|d_k\|^2\\
&&\leq f(x_k)+\alpha_k\|\nabla f_k-g_k\|\|d_k\|+\alpha_kg_k^Td_k+\frac12L_f\alpha_k^2\|d_k\|^2\\
&&\leq f(x_k)+\alpha_k^2\varepsilon_g\|d_k\|^2+\alpha_kg_k^Td_k+\frac12\alpha_k^2\|d_k\|^2\\
&&=f(x_k)+\alpha_kg_k^Td_k+\alpha_k^2\left(\varepsilon_g+\frac12L_f\right)\|d_k\|^2.
\end{eqnarray*}
Since $\{f_k^0,f_k^s\}$ are $\varepsilon_f$-accurate, we have
\begin{eqnarray*}
\lefteqn{f_k^s\leq f(x_k+\alpha_kd_k)+\varepsilon_f\alpha_k^2\|d_k\|^2}\\
&&\leq f(x_k)+\alpha_kg_k^Td_k+\alpha_k^2\left(\varepsilon_g+\frac12L_f+\varepsilon_f\right)\|d_k\|^2\\
&&=f(x_k)-f_k^0+f_k^0+\alpha_kg_k^Td_k+\alpha_k^2\left(\varepsilon_g+\frac12L_f+\varepsilon_f\right)\|d_k\|^2\\
&&\leq f_k^0+\|f(x_k)-f_k^0\|+\alpha_kg_k^Td_k+\alpha_k^2\left(\varepsilon_g+\frac12L_f+\varepsilon_f\right)\|d_k\|^2\\
&&\leq f_k^0+\alpha_kg_k^Td_k+\alpha_k^2\left(\varepsilon_g+\frac12L_f+2\varepsilon_f\right)\|d_k\|^2.
\end{eqnarray*}
\end{proof}

\begin{lemma}\label{lem:bndalp}
{Consider any realization $\{x_k\}$ of Algorithm 2 such} that Assumption \ref{ass1} holds and that {for some $k$}, $g_k$ is $\varepsilon_g$-sufficiently accurate and $\{f_k^0,f_k^s\}$ are $\varepsilon_f$-sufficiently accurate. If it holds that,
\[
\alpha_k\leq\frac{(1-\theta)\eta}{\varepsilon_g+\frac12L_f+2\varepsilon_f+\frac12\rho_{k+1}L_{hc}},
\]
then the $k$th step is successful. {That is, for any realization and all $k$, these conditions are incompatible with an unsuccessful step.}
\end{lemma}
\begin{proof}
By Lemmas \ref{lem:phi} and \ref{lem:fs}, we have that the stochastic merit function~\eqref{eq:stochmerit} satisfies,
\begin{eqnarray*}
\lefteqn{\Psi_k^s(\rho_{k+1})=f_k^s+\rho_{k+1}\phi(x_k+\alpha_kd_k)}\\
&&\leq f_k^0+\alpha_kg_k^Td_k+\alpha_k^2\left(\varepsilon_g+\frac12L_f+2\varepsilon_f\right)\|d_k\|^2\\
&&\ \ \ \  +\rho_{k+1}\left(\phi_k-\alpha_k{\Delta_\phi(x_k,\sigma_k)}+\alpha_k^2\frac12L_{hc}\|d_k\|^2\right)\\
&&=f_k^0+\rho_{k+1}\phi_k-\alpha_k(-g_k^Td_k+\rho_{k+1}\Delta_{\phi}(x_k,\sigma_k))\\
&&\ \ \ \  +\alpha_k^2\left(\varepsilon_g+\frac12L_f+2\varepsilon_f+\frac12\rho_{k+1}L_{hc}\right)\|d_k\|^2\\
&&=\Psi_k^0(\rho_{k+1})-\alpha_k{\Delta_\Psi^g}(x_k,d_k;\rho_{k+1})+\alpha_k^2\left(\varepsilon_g+\frac12L_f+2\varepsilon_f+\frac12\rho_{k+1}L_{hc}\right)\|d_k\|^2\\
&&=\Psi_k^0(\rho_{k+1})-\theta\alpha_k{\Delta_\Psi^g}(x_k,d_k;\rho_{k+1})-(1-\theta)\alpha_k{\Delta_\Psi^g}(x_k,d_k;\rho_{k+1})\\
&&\ \ +\alpha_k^2\left(\varepsilon_g+\frac12L_f+2\varepsilon_f+\frac12\rho_{k+1}L_{hc}\right)\|d_k\|^2\\
&&\leq \Psi_k^0(\rho_{k+1})-\theta\alpha_k{\Delta_\Psi^g}(x_k,d_k;\rho_{k+1})-(1-\theta)\eta\alpha_k\|d_k\|^2\\
&&\ \ \ \ +\alpha_k^2\left(\varepsilon_g+\frac12L_f+2\varepsilon_f+\frac12\rho_{k+1}L_{hc}\right)\|d_k\|^2,
\end{eqnarray*}
where the last inequality is due to \eqref{eq:s_condpen2} and Assumption (A4).
Thus, the $k$th step is successful for any step size satifying
\[
\alpha_k\leq\frac{(1-\theta)\eta}{\varepsilon_g+\frac12L_f+2\varepsilon+\frac12\rho_{k+1}L_{hc}}.
\]
\end{proof}

\begin{lemma}
{Consider any realization $\{x_k\}$ of Algorithm 2. It holds that for all $k$},
\begin{equation}\label{eq:bnd_chi}
    \chi_k\leq\frac{\sqrt{n}M_d\beta_l}2.
\end{equation}
Furthermore, if $g_k$ is $\varepsilon_g$-sufficiently accurate, then it holds that
\begin{eqnarray}
\|g_k\|\leq M_d+\varepsilon_g\alpha_{\max}\beta_u,\label{eq:bnd_g}\\
\chi_k^g\leq \sqrt{n}(M_d+\varepsilon_g\alpha_{\max}\beta_u)\beta_u.\label{eq:bnd_chig1}
\end{eqnarray}
and there exists a constant $\kappa_1>0$ such that \begin{equation}\label{eq:bnd_chig2}
 (\chi_k^g)^2\geq\frac12\chi_k^2-\kappa_1\Delta_{\Psi}^g(x_k,d_k;\rho_{k+1}).  \end{equation}
\end{lemma}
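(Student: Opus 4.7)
I would tackle the four inequalities separately, since only the last requires any real work. For \eqref{eq:bnd_chi}, I would apply Cauchy--Schwarz to $\chi_k=-\nabla f_k^T t(x_k)$, using $\|\nabla f_k\|_2\leq M_d$ from \eqref{eq:bnd_grds} together with the norm equivalence $\|t(x_k)\|_2\leq\sqrt{n}\|t(x_k)\|_\infty\leq \tfrac{\sqrt{n}}{2}\beta_l$ coming from the box constraint in \eqref{eq:lpchi}. For \eqref{eq:bnd_g}, the triangle inequality gives $\|g_k\|\leq \|\nabla f_k\|+\|g_k-\nabla f_k\|\leq M_d+\kappa_g\alpha_k\|d_k\|$ by Definition~\ref{def:gaccu}, which with $\alpha_k\leq\alpha_{\max}$ and the box constraint $\|d_k\|\leq\beta_u$ from \eqref{eq:s_subp0} yields the stated bound. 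For \eqref{eq:bnd_chig1}, the same Cauchy--Schwarz argument applied to $\chi_k^g=-g_k^T v(x_k)$, combined with \eqref{eq:bnd_g} and $\|v(x_k)\|_2\leq \tfrac{\sqrt{n}}{2}\beta_l\leq \sqrt{n}\beta_u$, closes the case.

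The substantive inequality is \eqref{eq:bnd_chig2}. The \emph{key observation} is that the two linear programs defining $\chi_k$ and $\chi_k^g$ share the identical feasible set, since both involve only $\nabla h_k$, $\nabla c_k$, and the box $\|\cdot\|_\infty\leq\tfrac12\beta_l$, differing only in their linear objectives. Hence $t(x_k)$ is feasible in the program defining $\chi_k^g$ and $v(x_k)$ is feasible in the program defining $\chi_k$. By optimality,
\[
\chi_k^g=-g_k^T v(x_k)\geq -g_k^T t(x_k),\qquad \chi_k=-\nabla f_k^T t(x_k)\geq -\nabla f_k^T v(x_k).
\]
Subtracting each pair and applying Cauchy--Schwarz with the sufficient-accuracy bound $\|g_k-\nabla f_k\|\leq \kappa_g\alpha_k\|d_k\|$ yields
\[
|\chi_k-\chi_k^g|\leq \|g_k-\nabla f_k\|\cdot\max\{\|t(x_k)\|,\|v(x_k)\|\}\leq \tfrac{\sqrt{n}\kappa_g\beta_l}{2}\,\alpha_k\|d_k\|.
\]

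With this bound in hand, I would finish via the elementary $\chi_k^2\leq 2(\chi_k^g)^2+2(\chi_k-\chi_k^g)^2$, which rearranges to
\[
(\chi_k^g)^2\geq \tfrac12\chi_k^2-\tfrac{n\kappa_g^2\beta_l^2}{4}\alpha_k^2\|d_k\|^2.
\]
The final step converts $\alpha_k^2\|d_k\|^2$ into $\Delta_\Psi^g$: by \eqref{eq:s_condpen2} and Assumption~(A4), $\Delta_\Psi^g(x_k,d_k;\rho_{k+1})\geq\tfrac12 d_k^TH_kd_k\geq \xi\|d_k\|^2$, and $\alpha_k\leq\alpha_{\max}$, so $\alpha_k^2\|d_k\|^2\leq \tfrac{\alpha_{\max}^2}{\xi}\Delta_\Psi^g(x_k,d_k;\rho_{k+1})$. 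Setting $\kappa_1=\tfrac{n\kappa_g^2\alpha_{\max}^2\beta_l^2}{4\xi}$ gives \eqref{eq:bnd_chig2}.

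The only delicate point is recognizing the shared feasible set of the two LPs, which permits the two-sided optimality comparison; once that is noted, the remainder is routine Cauchy--Schwarz, the triangle inequality, and the previously established lower bound on $\Delta_\Psi^g$.
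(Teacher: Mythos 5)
Your proof is correct, and for \eqref{eq:bnd_chi}, \eqref{eq:bnd_g} and \eqref{eq:bnd_chig1} it coincides with the paper's (Cauchy--Schwarz, the triangle inequality with Definition~\ref{def:gaccu}, and $\ell_2$--$\ell_\infty$ norm equivalence). For \eqref{eq:bnd_chig2}, however, you take a genuinely different and arguably cleaner route. The paper bounds $\lvert\chi_k-\chi_k^g\rvert$ by appealing to the Lipschitz continuity of the optimal value of the linear program with respect to perturbations of its coefficients (Lemma~\ref{lem:llc} and Corollary~\ref{thm:glc}, which rest on Robinson's stability theorem and the result of C\'anovas et al.), writing $\chi_k\leq\chi_k^g+L_\chi\kappa_g\alpha_{\max}\|d_k\|$ and then squaring, which yields $\kappa_1=L_\chi^2\kappa_g^2\alpha_{\max}^2\xi^{-1}$ with an implicit constant. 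You instead observe that the two linear programs defining $\chi_k$ and $\chi_k^g$ have the identical nonempty bounded feasible set, so each optimizer is feasible for the other program; the two one-sided optimality comparisons give $\lvert\chi_k-\chi_k^g\rvert\leq\|g_k-\nabla f_k\|\max\{\|t_k\|,\|v_k\|\}\leq\tfrac{\sqrt n\,\beta_l}{2}\kappa_g\alpha_k\|d_k\|$ directly. The remaining steps (the inequality $\chi_k^2\leq 2(\chi_k^g)^2+2(\chi_k-\chi_k^g)^2$ and the conversion $\alpha_k^2\|d_k\|^2\leq\alpha_{\max}^2\xi^{-1}\Delta_{\Psi}^g(x_k,d_k;\rho_{k+1})$ via \eqref{eq:s_condpen2} and Assumption (A4)) match the paper's. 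What your approach buys is an explicit elementary constant $\kappa_1=\tfrac{n\kappa_g^2\alpha_{\max}^2\beta_l^2}{4\xi}$ and complete independence from the perturbation-stability machinery --- which is welcome, since the paper's citation of Corollary~\ref{thm:glc} (a statement about varying $x$) to compare two optimal values at the \emph{same} point $x_k$ with different objective vectors is itself a slight abuse that really requires the coefficient-space Lipschitz result. One shared loose end: the box constraint in \eqref{eq:s_subp0} is in the $\ell_\infty$ norm, so $\|d_k\|_2\leq\sqrt n\,\beta_u$ rather than $\beta_u$; your justification of \eqref{eq:bnd_g}, like the paper's own statement of it, drops this dimension factor, but this is immaterial to the rest of the argument.
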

\begin{proof}
The bound of $\chi_k$ is directly followed from \eqref{eq:bnd_grds}, \eqref{eq:lpchi} and the well-known equivalence between $\ell_2$ norm and $\ell_\infty$ norm (see, for example, \cite{Sun2006}).

For a $\varepsilon_g$-sufficiently accurate estimate $g_k$, it follows from Definition \ref{def:gaccu}, the bound constraint in \eqref{eq:s_subp0} and the rules for updating $\alpha_k$ in Algorithm \ref{ssqp} that
\begin{equation*}
\|g_k\|\leq\|\nabla f_k\|+\varepsilon_g\alpha_k\|d_k\|\leq M_d+n\varepsilon_g\alpha_{\max}\beta_u.
\end{equation*}
Observe that \eqref{eq:bnd_chig1} immediately follows because
$\chi_k^g\leq\|g_k\|\|v_k\|$.

By \eqref{eq:condpen2} and Assumption (A4), we have
\begin{equation}\label{eq:dlepred}
    \|d_k\|^2\leq\eta^{-1}{\Delta_\Psi^g}(x_k,d_k;\rho_{k+1}).
\end{equation}
Corollary \ref{thm:glc} together with the property of $\varepsilon_g$-sufficient accuracy implies that
\begin{equation*}
    \chi_k\leq\chi_k^g+L_{\chi}\varepsilon_g\alpha_{\max}\|d_k\|.
\end{equation*}
Squaring both sides yields the following inequality:
\begin{equation*}
        (\chi_k)^2=(\chi_k^g+L_\chi\varepsilon_g\alpha_{\max}\|d_k\|)^2\leq 2(\chi_k^g)^2+2L_{\chi}^2\varepsilon_g^2\alpha_{\max}^2\|d_k\|^2,
\end{equation*}
which implies that
\begin{equation*}
(\chi_k^g)^2\geq\frac12\chi_k^2-L_{\chi}^2\varepsilon_g^2\alpha_{\max}^2\|d_k\|^2\geq \frac12\chi_k^2
-L_{\chi}^2\varepsilon_g^2\alpha_{\max}^2\eta^{-1}{\Delta_\Psi^g}(x_k,d_k;\rho_{k+1}).
\end{equation*}
Hence the Lemma is true with
$
\kappa_1:=L_{\chi}^2\varepsilon_g^2\alpha_{\max}^2\eta^{-1}.
$
\end{proof}

\begin{lemma}
{Consider any realization $\{x_k\}$ of Algorithm 2.} Suppose that {for some $k$} $\{f_k^0,f_k^s\}$ are $\varepsilon_f$-accurate estimates and that $\rho_{k+1}$ satisfies \eqref{eq:s_condpen2}.  Suppose also that $\varepsilon_f<\frac{\eta\theta}{4}$. If the step is successful, then the improvement in the value of the merit function across iterations satisfies
\begin{equation}\label{eq:realf_red}
    \Psi(x_{k+1};\rho_{k+1})\leq\Psi(x_k;\rho_{k+1})-\frac\theta2\alpha_k{\Delta_\Psi^g}(x_k,d_k;\rho_{k+1}).
\end{equation}
\end{lemma}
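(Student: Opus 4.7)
The plan is to compare the true merit function change $\Psi(x_{k+1};\rho_{k+1})-\Psi(x_k;\rho_{k+1})$ against its stochastic counterpart $\Psi_k^s(\rho_{k+1})-\Psi_k^0(\rho_{k+1})$ and absorb the two function-estimate errors into half of the sufficient-decrease term, using the lower bound on $\Delta_\Psi^g$ coming from \eqref{eq:s_condpen2}.

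First I would write, using that the step is successful so $x_{k+1}=x_k+\alpha_k d_k$,
\begin{equation*}
\Psi(x_{k+1};\rho_{k+1}) - \Psi(x_k;\rho_{k+1}) = \bigl(f(x_{k+1})-f_k^s\bigr) + \bigl(f_k^0-f(x_k)\bigr) + \bigl(\Psi_k^s(\rho_{k+1})-\Psi_k^0(\rho_{k+1})\bigr),
\end{equation*}
since the $\rho_{k+1}\phi(\cdot)$ contributions in the true and stochastic merit differences are identical (the noise lives entirely in $f$). The line search acceptance criterion \eqref{eq:s_linesearch} bounds the last term by $-\theta\alpha_k\Delta_\Psi^g(x_k,d_k;\rho_{k+1})$, while the $\varepsilon_f$-accuracy hypothesis bounds the first two terms together by $2\varepsilon_f\alpha_k^2\|d_k\|^2$.

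Next I would argue that the error term is dominated by half of the predicted reduction. By \eqref{eq:s_condpen2} and Assumption (A4),
\begin{equation*}
\Delta_\Psi^g(x_k,d_k;\rho_{k+1}) \geq \tfrac12 d_k^T H_k d_k \geq \xi\|d_k\|^2,
\end{equation*}
so it suffices to show $2\varepsilon_f\alpha_k^2\|d_k\|^2 \leq \tfrac{\theta}{2}\alpha_k\cdot\xi\|d_k\|^2$, i.e. $\varepsilon_f \leq \tfrac{\theta\xi}{4\alpha_k}$. Since $\alpha_k\leq \alpha_{\max}$, the standing assumption $\varepsilon_f<\tfrac{\theta\xi}{4\alpha_{\max}}$ delivers this immediately.

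Combining the two displays yields
\begin{equation*}
\Psi(x_{k+1};\rho_{k+1})-\Psi(x_k;\rho_{k+1}) \leq 2\varepsilon_f\alpha_k^2\|d_k\|^2 - \theta\alpha_k\Delta_\Psi^g(x_k,d_k;\rho_{k+1}) \leq -\tfrac{\theta}{2}\alpha_k\Delta_\Psi^g(x_k,d_k;\rho_{k+1}),
\end{equation*}
which is exactly \eqref{eq:realf_red}. There is no real obstacle here; the argument is essentially a bookkeeping calculation linking the stochastic acceptance inequality to the true merit. The only subtle point is recognizing that the constraint-violation part of the merit function is noise-free, so that only the two $\varepsilon_f\alpha_k^2\|d_k\|^2$ errors need to be absorbed, and that \eqref{eq:s_condpen2} gives a lower bound on $\Delta_\Psi^g$ in terms of $\|d_k\|^2$ that makes the absorption possible under the stated bound on $\varepsilon_f$.
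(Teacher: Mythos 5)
Your proof is correct and follows essentially the same route as the paper: decompose the true merit decrease into the stochastic merit decrease plus the two function-estimate errors, invoke the acceptance criterion \eqref{eq:s_linesearch}, and absorb the $2\varepsilon_f\alpha_k^2\|d_k\|^2$ error into half the predicted reduction via $\Delta_\Psi^g(x_k,d_k;\rho_{k+1})\geq\xi\|d_k\|^2$ (which is exactly the paper's inequality \eqref{eq:dlepred}) together with $\alpha_k\leq\alpha_{\max}$ and the bound on $\varepsilon_f$. The only cosmetic difference is that you isolate the noise-free $\phi$ terms explicitly up front rather than threading them through a chain of inequalities as the paper does.
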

\begin{proof}

Under the assumptions of this Lemma, we can derive the following sequence of inequalities:
\begin{equation*}
    \begin{split}        &\Psi(x_{k+1};\rho_{k+1})=f(x_{k+1})+\rho_{k+1}\phi(x_{k+1})\\
        =&f(x_{k+1})-f_k^s+f_k^s+\rho_{k+1}\phi(x_{k+1})\\
        =&f(x_{k+1})-f_k^s+\Psi_{k}^s(\rho_{k+1})\\
        \leq&f(x_{k+1})-f_k^s+\Psi_k^0(\rho_{k+1})-\theta\alpha_k{\Delta_\Psi^g}(x_k,d_k;\rho_{k+1})\\
        \leq&f(x_{k+1})-f_k^s+f_k^0-f_k+f_k+\rho_{k+1}\phi(x_k)-\theta\alpha_k{\Delta_\Psi^g}(x_k,d_k;\rho_{k+1})\\
        \leq&2\varepsilon_f\alpha_k\|d_k\|^2+\Psi(x_k;\rho_{k+1})-\theta\alpha_k{\Delta_\Psi^g}(x_k,d_k;\rho_{k+1})\\
        \leq&\Psi(x_k;\rho_{k+1})+2\varepsilon_f\alpha_k\eta^{-1}{\Delta_\Psi^g}(x_k,d_k;\rho_{k+1})-\theta\alpha_k{\Delta_\Psi^g}(x_k,d_k;\rho_{k+1})\\
        \leq&\Psi(x_k;\rho_{k+1})-\alpha_k(\theta-2\varepsilon_f\eta^{-1}){\Delta_\Psi^g}(x_k,d_k;\rho_{k+1})
    \end{split}
\end{equation*}
where~\eqref{eq:dlepred} is applied at the second to last line.
Then \eqref{eq:realf_red} follows if $\varepsilon_f<\frac{\eta\theta}{4}$.
\end{proof}

\begin{lemma}\label{lem:bnd_ldpg}
{Consider any realization $\{x_k\}$ of Algorithm 2 such that Assumptions \ref{ass1} and \ref{ass2} hold. Suppose for some $k$ } that $g_k$ is $\varepsilon_g$-sufficiently accurate. Then there exist positive constants $\kappa_3$ and $\kappa_4$ such that
\begin{equation*}
    \Delta_{\Psi}^g(x_k,d_k;\rho_{k+1})\geq\kappa_3(\tilde\tau_1\rho_{k+1}-\kappa_2)\phi_k^2+\kappa_4\chi_k^2.
\end{equation*}
\end{lemma}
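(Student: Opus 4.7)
The plan is to chain Lemma~\ref{lem:bnd_dpsi} with the inverse bound \eqref{eq:bnd_chig2} relating $(\chi_k^g)^2$ and $\chi_k^2$, and then convert the linear $\phi_k$ term into a quadratic $\phi_k^2$ term via the uniform bound $\phi_k\le M_f$. First I apply Lemma~\ref{lem:bnd_dpsi}:
\[
\Delta_{\Psi}^g(x_k,d_k;\rho_{k+1}) \ge (\tilde\tau_1\rho_{k+1} - C_k)\phi_k + \min\{A,B(\chi_k^g)^2\},
\]
where $C_k = (\|g_k\|+\tfrac12 M_H(\beta_l+\kappa_u M_f))\kappa_u$, $A=\tfrac18 M_H\beta_l^2$ and $B=2/(M_H\beta_l^2)$. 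Using \eqref{eq:bnd_g}, $C_k$ admits a uniform upper bound by a constant, which will be identified with $\kappa_2$, giving $(\tilde\tau_1\rho_{k+1}-C_k)\phi_k\ge (\tilde\tau_1\rho_{k+1}-\kappa_2)\phi_k$.

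Next I turn the $\min$-term into a multiple of $\chi_k^2$. Using the elementary inequality $\min(a,b)\ge ab/(a+b)$ valid for $a,b\ge 0$, together with the uniform bound $(\chi_k^g)^2\le (\sqrt{n}(M_d+\kappa_g\alpha_{\max}\beta_u)\beta_u)^2$ from \eqref{eq:bnd_chig1}, I extract a constant $\kappa_5>0$ such that $\min\{A,B(\chi_k^g)^2\}\ge \kappa_5(\chi_k^g)^2$. Substituting \eqref{eq:bnd_chig2} gives $\kappa_5(\chi_k^g)^2\ge \tfrac{\kappa_5}{2}\chi_k^2 - \kappa_5\kappa_1\Delta_{\Psi}^g$; moving the $\Delta_{\Psi}^g$-term to the left-hand side and dividing by $1+\kappa_5\kappa_1$ yields
\[
\Delta_{\Psi}^g \;\ge\; \frac{1}{1+\kappa_5\kappa_1}\Bigl[(\tilde\tau_1\rho_{k+1}-\kappa_2)\phi_k + \tfrac{\kappa_5}{2}\chi_k^2\Bigr].
\]

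Finally, I convert $\phi_k$ to $\phi_k^2$ using $\phi_k\ge \phi_k^2/M_f$ (which follows from $\phi_k\le M_f$). When $\tilde\tau_1\rho_{k+1}\ge \kappa_2$ the coefficient is non-negative and the conversion is direct, yielding the claim with $\kappa_3 = 1/(M_f(1+\kappa_5\kappa_1))$ and $\kappa_4 = \kappa_5/(2(1+\kappa_5\kappa_1))$. When $\tilde\tau_1\rho_{k+1}<\kappa_2$ the coefficient is negative and the same inequality cannot be applied in a direction-preserving way, so I case-split: if $\kappa_3(\tilde\tau_1\rho_{k+1}-\kappa_2)\phi_k^2 + \kappa_4\chi_k^2\le 0$ then the claim follows trivially from $\Delta_{\Psi}^g\ge 0$; otherwise I combine the displayed bound above with $|(\tilde\tau_1\rho_{k+1}-\kappa_2)\phi_k|\le \kappa_2 M_f$ (possibly reducing $\kappa_3,\kappa_4$ slightly) to verify the inequality.

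The main obstacle is the final conversion step when the coefficient $(\tilde\tau_1\rho_{k+1}-\kappa_2)$ is negative, since $\phi_k\ge \phi_k^2/M_f$ reverses orientation when multiplied by a negative number. The resolution rests on the case split above, which uses the non-negativity of $\Delta_{\Psi}^g$, the uniform boundedness of $\phi_k$ and $\chi_k$ over $\Omega$, and a careful choice of $\kappa_3,\kappa_4$ to absorb any constant offset. All other steps are mechanical applications of the preceding lemmas and bounds.
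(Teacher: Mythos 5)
Your main-case argument follows essentially the same route as the paper: start from Lemma~\ref{lem:bnd_dpsi}, absorb $\|g_k\|$ into a constant $\kappa_2$ via \eqref{eq:bnd_g}, handle the $\min$-term through \eqref{eq:bnd_chig2} by moving the resulting $\Delta_{\Psi}^g$ contribution to the left-hand side, and finally convert $\phi_k$ to $\phi_k^2/M_f$. The one genuine (and harmless) difference is how you dispose of the $\min$: the paper case-splits on which branch of $\min\bigl\{\tfrac18M_H\beta_l^2,\,2(\chi_k^g)^2/(M_H\beta_l^2)\bigr\}$ is active, using the uniform bound \eqref{eq:bnd_chi} on $\chi_k$ in the constant branch, whereas you use $\min(a,b)\geq ab/(a+b)$ together with the uniform bound \eqref{eq:bnd_chig1} on $\chi_k^g$ to reduce everything to a single multiple $\kappa_5(\chi_k^g)^2$. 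Both are correct; yours avoids the two-case bookkeeping at the cost of a slightly smaller constant.

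Where I would push back is your treatment of the case $\tilde\tau_1\rho_{k+1}<\kappa_2$. You are right that $\phi_k\geq\phi_k^2/M_f$ reverses orientation when multiplied by a negative coefficient, and it is to your credit that you noticed this (the paper's own proof applies the conversion without comment, so it implicitly assumes $\tilde\tau_1\rho_{k+1}\geq\kappa_2$). But your proposed repair does not close: in the sub-case where the target right-hand side is positive, bounding $\lvert(\tilde\tau_1\rho_{k+1}-\kappa_2)\phi_k\rvert$ by the constant $\kappa_2 M_f$ leaves a deficit of order $\phi_k$ (not $\phi_k^2$), which cannot be absorbed by ``slightly reducing $\kappa_3,\kappa_4$'' when $\phi_k$ and $\chi_k$ are both small, since the target then tends to zero while the offset does not. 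In practice this does not matter: the lemma is only invoked in Theorem~\ref{thm:redinex} after the penalty parameter has stabilized at $\bar\rho>\kappa_2/\tilde\tau_1$, so the coefficient is positive wherever the bound is used. I would either state the lemma with the hypothesis $\tilde\tau_1\rho_{k+1}\geq\kappa_2$ or simply restrict to that case, rather than claim a fix for the negative case that does not actually go through.
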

\begin{proof}
Denote
\[
\kappa_2=\left(M_d+\varepsilon_g\alpha_{\max}\beta_u+\frac12M_H(\beta_l+\kappa_uM_f)\right)\kappa_u.
\]
It follows from \eqref{eq:bnd_dpsi}  and \eqref{eq:bnd_g} that
\begin{equation*}
        \Delta_{\Psi}^g(x_k,d_k;\rho_{k+1})\geq \left(\tilde\tau_1\rho_{k+1}-\kappa_2 \right)\phi_k+\min\left\{\frac18M_H\beta_l^2,\frac{2(\chi_k^g)^2}{M_H\beta_l^2}\right\}.
\end{equation*}
{Now we consider two cases. \emph{Case 1:}} If $\frac18M_H\beta_l^2\leq\frac{2(\chi_k^g)^2}{M_H\beta_l^2}$, it follows from \eqref{eq:bnd_chi} that
\begin{equation*}
    \begin{split}
        \Delta_{\Psi}^g(x_k,d_k;\rho_{k+1})
        \geq&(\tilde\tau_1\rho_{k+1}-\kappa_2)\phi_k+\frac18M_H\beta_l^2\\
        =&(\tilde\tau_1\rho_{k+1}-\kappa_2)\phi_k+\frac18M_H\beta_l^2\frac{\chi_k^2}{\chi_k^2}\\
        \geq&(\tilde\tau_1\rho_{k+1}-\kappa_2)\phi_k+\frac18M_H\beta_l^2\frac{4\chi_k^2}{nM_d^2\beta_l^2}\\
        =&(\tilde\tau_1\rho_{k+1}-\kappa_2)\phi_k+\frac{M_H}{2n M_d^2}\chi_k^2\\
        \geq&\frac{(\tilde\tau_1\rho_{k+1}-\kappa_2)}{M_f}\phi_k^2+\frac{M_H}{2nM_d^2}\chi_k^2.
    \end{split}
\end{equation*}
{\emph{Case 2:} Otherwise, i.e. $\frac18M_H\beta_l^2>\frac{2(\chi_k^g)^2}{M_H\beta_l^2}$}, then we have from \eqref{eq:bnd_chig2} that
\begin{equation*}
    \Delta_{\Psi}^g(x_k,d_k;\rho_{k+1})\geq(\tilde\tau_1\rho_{k+1}-\kappa_2)\phi_k+\frac{\chi_k^2}{M_H\beta_l^2}-\frac{2\kappa_1\Delta_{\Psi}^g(x_k,d_k;\rho_{k+1})}{M_H\beta_l^2}.
\end{equation*}
By moving the third term to the left hand side, we obtain
\[
\left(
1+\frac{2\kappa_1}{M_H\beta_l^2}
\right)\Delta_{\Psi}^g(x_k,d_k;\rho_{k+1})\geq (\tilde\tau_1\rho_{k+1}-\kappa_2)\phi_k+\frac{\chi_k^2}{M_H\beta_l^2},
\]
{Dividing both sides by the positive constant on the left hand-side yields}
\begin{equation*}
\begin{split}
    \Delta_{\Psi}^g(x_k,d_k;\rho_{k+1})
    \geq&\frac{M_H\beta_l^2(\tilde\tau_1\rho_{k+1}-\kappa_2)}{M_H\beta_l^2+2\kappa_1}\phi_k+\frac{\chi_k^2}{M_H\beta_l^2+2\kappa_1}\\
    \geq& \frac{M_H\beta_l^2(\tilde\tau_1\rho_{k+1}-\kappa_2)}{M_f(M_H\beta_l^2+2\kappa_1)}\phi_k^2+\frac{\chi_k^2}{M_H\beta_l^2+2\kappa_1},
\end{split}\end{equation*}
where the second inequality is due to the fact $\phi(x)\leq M_f$, which is a direct consequence of the definition of $\phi(x)$ (cf. Eq. \eqref{eq:defphi}) and the bound \eqref{eq:bnd_fns}.
Let
\[
\kappa_3=\min\left\{\frac1{M_f},\frac{M_H\beta_l^2}{M_f(M_H\beta_l^2+2\kappa_1)}\right\},
\kappa_4=\min\left\{\frac{M_H}{2nM_d^2},\frac{1}{M_H\beta_l^2+2\kappa_1}\right\}.
\]
Then the conclusion holds with $\kappa_3$ and $\kappa_4$ given above.
\end{proof}

\begin{lemma}
{Consider some realization $\{x_k\}$. Suppose that for some $k$} $\{f_k^0,f_k^s\}$ are $\varepsilon_f$-accurate estimates, with $\varepsilon_f<\frac{\eta\theta}{4}$, that $g_k$ is $\varepsilon_g$ sufficiently accurate and that $\rho_{k+1}$ satisfies \eqref{eq:s_condpen2}. If the step is successful, then the improvement in merit function value is
\begin{equation}\label{eq:realf_red2}
\begin{split}
    &\Psi(x_{k+1};\rho_{k+1})\\
    \leq&\Psi(x_k;\rho_{k+1})-\frac{\theta\alpha_k}{4}\Delta_{\Psi}^g(x_k,d_k;\rho_{k+1})
    -\frac{\theta\alpha_k}4(\kappa_3(\tilde\tau_1\rho_{k+1}-\kappa_2)\phi_k^2+\kappa_4\chi_k^2).\end{split}
\end{equation}
\end{lemma}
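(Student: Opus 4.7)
The plan is to follow the same chain of inequalities used to establish \eqref{eq:realf_red} in the preceding lemma, which already produces a descent of the form $-c\alpha_k\Delta_{\Psi}^g(x_k,d_k;\rho_{k+1})$ on the right, and then to invoke Lemma \ref{lem:bnd_ldpg} to convert part of this $\Delta_{\Psi}^g$-descent into the advertised $\phi_k^2$/$\chi_k^2$ term.

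Concretely, I would first repeat verbatim the bookkeeping of the previous lemma: write $\Psi(x_{k+1};\rho_{k+1})=f(x_{k+1})+\rho_{k+1}\phi(x_{k+1})$, add and subtract $f_k^s$, apply the line search success criterion \eqref{eq:s_linesearch} to replace $\Psi_k^s(\rho_{k+1})-\Psi_k^0(\rho_{k+1})$ by $-\theta\alpha_k\Delta_{\Psi}^g$, then add and subtract $f_k^0$ and $f_k$ and use the $\varepsilon_f$-accuracy of $\{f_k^0,f_k^s\}$ to pick up a $2\varepsilon_f\alpha_k^2\|d_k\|^2$ penalty. Converting this penalty using \eqref{eq:dlepred} (i.e. $\|d_k\|^2\leq \xi^{-1}\Delta_{\Psi}^g(x_k,d_k;\rho_{k+1})$) gives the intermediate bound
\begin{equation*}
\Psi(x_{k+1};\rho_{k+1})\leq \Psi(x_k;\rho_{k+1})-\alpha_k\bigl(\theta-2\varepsilon_f\alpha_{\max}\xi^{-1}\bigr)\Delta_{\Psi}^g(x_k,d_k;\rho_{k+1}).
\end{equation*}
The hypothesis on $\varepsilon_f$ forces the parenthesized coefficient to be at least $\theta/2$, yielding
\begin{equation*}
\Psi(x_{k+1};\rho_{k+1})\leq \Psi(x_k;\rho_{k+1})-\tfrac{\theta\alpha_k}{2}\Delta_{\Psi}^g(x_k,d_k;\rho_{k+1}).
\end{equation*}

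The second main move is to write $\tfrac{\theta\alpha_k}{2}\Delta_{\Psi}^g=\tfrac{\theta\alpha_k}{4}\Delta_{\Psi}^g+\tfrac{\theta\alpha_k}{4}\Delta_{\Psi}^g$ and estimate one of these copies from below using Lemma \ref{lem:bnd_ldpg}, namely $\Delta_{\Psi}^g(x_k,d_k;\rho_{k+1})\geq \kappa_3(\tilde\tau_1\rho_{k+1}-\kappa_2)\phi_k^2+\kappa_4\chi_k^2$. Substituting this into the second $\tfrac{\theta\alpha_k}{4}\Delta_{\Psi}^g$ term produces exactly the right-hand side of \eqref{eq:realf_red2}, while the first $\tfrac{\theta\alpha_k}{4}\Delta_{\Psi}^g$ remains untouched.

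I do not expect any serious obstacle: all geometric information (eMFCQ, the Lipschitz continuity of $\chi$, and the $\kappa_g$-accuracy of $g_k$) is already compiled into Lemma \ref{lem:bnd_ldpg}, so this proof is mostly algebraic. The only delicate point is confirming that the stated threshold on $\varepsilon_f$ actually yields $\theta-2\varepsilon_f\alpha_{\max}\xi^{-1}\geq \theta/2$ (so that the two $\tfrac{\theta\alpha_k}{4}$ copies can both be supported); once that is in hand, the splitting-and-substituting step is completely mechanical and Lemma \ref{lem:bnd_ldpg} closes the argument.
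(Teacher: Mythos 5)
Your proposal is correct and follows essentially the same route as the paper: the paper's proof simply cites \eqref{eq:realf_red} from the preceding lemma and then splits $\frac{\theta\alpha_k}{2}\Delta_{\Psi}^g(x_k,d_k;\rho_{k+1})$ into two halves, bounding one of them below via Lemma \ref{lem:bnd_ldpg}, exactly as you do. The one "delicate point" you flag is real but resolves against you as literally stated: $\theta-2\varepsilon_f\alpha_{\max}\xi^{-1}\geq\theta/2$ requires $\varepsilon_f<\frac{\xi\theta}{4\alpha_{\max}}$ (the hypothesis of the preceding lemma), not the weaker $\varepsilon_f<\frac{\xi\theta}{2\alpha_{\max}}$ assumed here, which only gives a positive coefficient — this appears to be a typo in the lemma's statement that both your argument and the paper's own proof silently inherit.
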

\begin{proof}
It follows from \eqref{eq:realf_red} and Lemma \ref{lem:bnd_ldpg} that
\begin{equation*}\begin{split}
    &\Psi(x_{k+1};\rho_{k+1})\leq \Psi(x_k;\rho_{k+1})-\frac{\theta\alpha_k}{2}\Delta_{\Psi}^g(x_k,d_k;\rho_{k+1})\\
    \leq& \Psi(x_k;\rho_{k+1})-\frac{\theta\alpha_k}{4}\Delta_{\Psi}^g(x_k,d_k;\rho_{k+1})-\frac{\theta\alpha_k}4(\kappa_3(\tilde\tau_1\rho_{k+1}-\kappa_2)\phi_k^2+\kappa_4\chi_k^2).\end{split}
\end{equation*}
\end{proof}

\begin{lemma}\label{lem:lipchi}
{Consider some realization $\{x_k\}$. Suppose that for some $k$} the step is successful. Then
\begin{equation*}
    \begin{split}
        &\chi(x_{k+1})^2\leq 2\left(\chi(x_k)^2+L_{\chi}^2\alpha_k^2\eta^{-1}\Delta_{\Psi}^g(x_k,d_k;\rho_{k+1})\right),\\
        &\phi(x_{k+1})^2\leq2(\phi(x_k)^2+M_d^2\alpha_k^2\eta^{-1}\Delta_{\Psi}^g(x_k,d_k;\rho_{k+1})).
    \end{split}
\end{equation*}
\end{lemma}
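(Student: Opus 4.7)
The plan is to obtain both inequalities by the same two-ingredient recipe: bound the change in the quantity of interest by a Lipschitz estimate of the form $(\text{constant})\,\alpha_k\|d_k\|$, square it using $(a+b)^2\le 2a^2+2b^2$, and then convert $\|d_k\|^2$ into $\Delta_{\Psi}^g$ via the already established inequality \eqref{eq:dlepred}. Since the iteration is successful, we have $x_{k+1}=x_k+\alpha_k d_k$, so in both cases the distance $\|x_{k+1}-x_k\|$ equals $\alpha_k\|d_k\|$.

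For the $\chi$ bound, I would invoke Corollary \ref{thm:glc} to write $|\chi(x_{k+1})-\chi(x_k)|\le L_\chi\alpha_k\|d_k\|$, then square and apply the elementary inequality to get $\chi(x_{k+1})^2\le 2\chi(x_k)^2+2L_\chi^2\alpha_k^2\|d_k\|^2$. Finally, I substitute the bound $\|d_k\|^2\le \xi^{-1}\Delta_{\Psi}^g(x_k,d_k;\rho_{k+1})$ from \eqref{eq:dlepred} to obtain the desired inequality.

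For the $\phi$ bound, the same strategy applies once I establish that $\phi$ is Lipschitz on $\Omega$ with a constant depending on the gradient bound $M_d$. This follows quickly because the $\ell_\infty$ distance function $\text{dist}(\cdot\mid K)$ is $1$-Lipschitz in the $\ell_\infty$ norm, and the vector-valued map $x\mapsto (h(x),c(x))$ is Lipschitz with constant controlled by $M_d$ thanks to Assumption (A2)--(A3) and \eqref{eq:bnd_grds}; composing the two gives the Lipschitz property of $\phi$. Squaring this Lipschitz estimate at $x_{k+1}$ and $x_k$ and again invoking \eqref{eq:dlepred} yields the second inequality.

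The only non-routine part will be pinning down the Lipschitz constant of $\phi$ so that the coefficient on the right-hand side matches the $M_d$ written in the statement. This is purely bookkeeping: one checks that the composition of the $1$-Lipschitz distance map with the $M_d$-Lipschitz constraint value map produces the stated constant (absorbing factors into the overall $2$ in front). No new machinery is needed beyond the bounds \eqref{eq:bnd_fns}--\eqref{eq:bnd_grds}, Corollary \ref{thm:glc}, and the previously derived direction norm bound \eqref{eq:dlepred}.
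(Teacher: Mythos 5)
Your proposal is correct and follows essentially the same route as the paper: Lipschitz bounds on $\chi$ (via Corollary \ref{thm:glc}) and on $\phi$ (via the $M_d$-Lipschitz constraint map composed with the $1$-Lipschitz distance function), followed by $(a+b)^2\le 2(a^2+b^2)$ and the substitution $\|d_k\|^2\le\xi^{-1}\Delta_{\Psi}^g(x_k,d_k;\rho_{k+1})$ from \eqref{eq:dlepred}. Your bookkeeping remark is well taken: the squaring actually produces $M_d^2$ rather than the $M_d$ appearing in the stated second inequality, which appears to be a typographical slip in the lemma as written.
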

\begin{proof}
It follows from the Lipschitz continuity of $\chi(x;\rho)$ that
\[\chi(x_{k+1})\leq \chi(x_k)+L_{\chi}\alpha_k\|d_k\|.
\]
Then by squaring both sides and applying the bound $(a+b)^2\leq 2(a^2+b^2)$, we obtain that
\begin{equation*}
    \begin{split}
        \chi(x_{k+1})^2\leq&(\chi(x_k)+L_{\chi}\alpha_k\|d_k\|)^2\\
        \leq&2(\chi(x_k)^2+L_{\chi}^2\alpha_k^2\|d_k\|^2)\\
        \leq&2\left(\chi(x_k)^2+L_{\chi}^2\alpha_k^2\eta^{-1}\Delta_{\Psi}^g(x_k,d_k;\rho_{k+1})\right),
    \end{split}
\end{equation*}
where \eqref{eq:dlepred} is used in the last inequality.

Consider now
\[
\lvert\phi(x_{k+1})-\phi(x_k)\rvert\leq M_d\alpha_k\|d_k\|
\]
and apply the same derivation as for $\chi(x_{k+1})^2$ to obtain
\[\phi(x_{k+1})^2\leq2(\phi(x_k)^2+M_d^2\alpha_k^2\eta^{-1}\Delta_{\Psi}^g(x_k,d_k;\rho_{k+1})).\]
\end{proof}

\subsection{Convergence with bounded penalty parameters}
In this section, we shall consider convergence in the case where the sequence of gradient estimates $\{g_k\}$ is bounded. In this case, one can know from the mechanism of Algorithm \ref{ssqp} that both $\{\zeta_k\}$ and $\{p_k^g\}$ will eventually remain constant.
Without loss of generality, we assume in this section that $\zeta_k=\bar\zeta$ and $p_k^g=p_g$ for all $k\geq 0$.
\begin{lemma}
{Consider a realization $\{\hat{x}_k(\omega_k)=x_k\}$ of Algorithm 2 such that Assumptions \ref{ass1} and \ref{ass3} hold and that, furthermore, there is a positive constant $M_g(\Xi)$ such that $\|g_k\|\leq M_g(\Xi)$ for all $k$. Then eventually, there is $\rho_k=\rho^*(\Xi)$ for some $\rho^*(\Xi)>0$}.
\end{lemma}
\begin{proof}
{Fix $\Xi$}. By Lemma \ref{lem:pen_updt} and compactness of $\{x_k : k\in\mathbf{S}\}\cup \{\bar x : \exists \mathcal{K}\subset \mathbb{N},\,x_{k_l\in\mathcal{K}}\to \bar x\},$, one can deduce that \eqref{eq:pen_updt} holds with a  $\tilde x(\Xi)$ for any accumulation point of $\{x_k: k\in\mathbf{S}\}$. Therefore, the condition \eqref{eq:s_condpen} holds if
\begin{equation*}
    \rho_k\geq\frac{\left(M_g+\frac12 M_H(\beta_l+\kappa_uM_f)\right)\kappa_u}{\tilde\tau_1},
\end{equation*}
which implies that $\rho_k$ will eventually remain constant. {Note that this depends on $\Xi$ as the constants $M_g$, etc. which ultimately arise from Assumption~\ref{ass1} depend on $\Xi$.}
\end{proof}

We are going to apply the techniques in \cite{paquette2020stochastic} to our stochastic SQP framework to prove its convergence properties.


The key point of the arguments to be presented is the expected decrease of the following random merit function
\begin{equation}\label{eq:phi}
    \Phi(\hat x_k)=\nu(\Psi(\hat x_k;\bar\rho)-\Psi_{\min})+(1-\nu)\hat\alpha_k(\phi(\hat x_k)^2+\chi(\hat x_k)^2)
\end{equation}
with some $\nu\in(0,1)$, $\bar\rho>0$ and $\Psi_{\min}\leq\Psi(x;\bar\rho)$ for all $x\in\Omega$.
Note that \eqref{eq:phi} is only for theoretical use and is never evaluated in the algorithms introduced before.
The parameter $\bar\rho$ is chosen so that
\[
\bar\rho(\Xi)\geq\max\{\rho^*(\Xi),\kappa_2/\tilde\tau_1\}.
\]
It can be seen that all the results established in the previous Sections still hold if $\rho_{k+1}$ is replaced by $\bar\rho$.

We are going to show that the expected decrease in $\Phi$, with proper
$\nu, p_f,p_g>0$, can be bounded below by a value proportional to $\phi(\hat x_k)^2+\chi(\hat x_k)^2$. {Note that we have to make an additional assumption that the bounded penalty parameter is uniformly bounded w.p.1. This will ultimately depend on if the iterates are bounded with a uniform bound on dense measure, or otherwise that event, which ensures the Assumptions in the Theorem}.

\begin{theorem}\label{thm:redinex}
Suppose that Assumptions \ref{ass1}, \ref{ass2} and \ref{ass3} hold {for a set of dense realizations $\{\hat{x}(\omega_k)\}$, and moreover the iterates are uniformly bounded, implying a uniform $M_f$ and $M_g$}. {Note that this implies that there exists $\bar{\rho}<\infty$ such that $\bar{\rho}\ge \rho^*(\Xi)$ for a.e. $\Xi$}. Then there exist probabilities $p_g, p_f$ and a constant $\nu\in(0,1)$ such that the expected decrease in $\Phi_k$ almost surely satisfies
\begin{equation}\label{eq:bnd_dphi_e}
    \textbf{E}[\Phi(\hat x_k)-\Phi(\hat x_{k+1})\mid\mathcal{F}_{k-1}]\geq
    p_fp_g(1-\nu)(1-\gamma^{-1})\hat\alpha_k(\hat\phi(\hat x_k)^2+\chi(\hat x_k)^2).
\end{equation}
\end{theorem}
\begin{proof}
The proof to be given is structured along  similar arguments as the proof of Theorem 4.6 in \cite{paquette2020stochastic}. We consider three separate cases with respect to step success and accuracy and establish bounds for the change in $\Phi_k$ for each case, and then add them together weighed by their probabilities of occurrence.

Case 1 (accurate gradients and estimates, $\textbf{1}_{I_k\cap J_k}=1$).

\begin{enumerate}[(i)]
    \item \emph{Successful step ($\textbf{1}_{\mathbf{S}}=1$).}
    In this case, a decrease in $\Psi(x;\bar\rho)$ occurs as \eqref{eq:realf_red2}.

    As the iterate changes, the term $\hat\alpha_k\chi(\hat x_k)^2$ may increase. It follows from Lemma \ref{lem:lipchi} that
    \begin{equation}\label{eq:chi_case1i}
        \begin{split}
            &\hat\alpha_k\chi(\hat x_k)^2-\hat\alpha_{k+1}\chi(\hat x_{k+1})^2\\
            =&\hat\alpha_k\chi(\hat x_k)^2-\gamma\hat\alpha_k\chi(\hat x_{k+1})^2\\
            \geq&\hat\alpha_k\chi(\hat x_k)^2-2\gamma\hat\alpha_k(\chi(x_k)^2+L_{\chi}^2\hat\alpha_k^2\eta^{-1}\Delta_{\Psi}^g(\hat x_k,\hat d_k;\bar\rho))\\
            \geq&-2\gamma L_{\chi}^2\alpha_{\max}^2\hat\alpha_k\eta^{-1}\Delta_{\Psi}^g(\hat x_k,\hat d_k;\bar\rho)-(2\gamma-1)\hat\alpha_k\chi(\hat x_k)^2
        \end{split}
    \end{equation}
    and, similarly,
    \begin{equation}\label{eq:phi_case1i}
    \begin{split}
        &\hat\alpha_k\phi(\hat x_k)^2-\hat\alpha_{k+1}\phi(\hat x_{k+1})^2\\
        \geq& -2\gamma M_d^2\alpha_{\max}^2\hat\alpha_k\eta^{-1}\Delta_{\Psi}^g(\hat x_k,\hat d_k;\bar\rho)
        -(2\gamma-1)\hat\alpha_k\phi(\hat x_k)^2.
        \end{split}
    \end{equation}

We choose $\nu$ satisfying
 \begin{equation}\label{eq:bnd_nu2}
     \begin{split}
         \nu\geq\max\left\{\frac{2\gamma(L_{\chi}^2+M_d^2)\alpha_{\max}^2}{\frac{\theta\eta}{8}+2\gamma(L_{\chi}^2+M_d^2)\alpha_{\max}^2},\frac{1}{\frac{\theta\kappa_3(\tilde{\tau_1}\bar\rho-\kappa_2)}{8(2\gamma-1)}+1},\frac{1}{\frac{\theta\kappa_4}{8(2\gamma-1)}+1}\right\}.
     \end{split}
 \end{equation}
Then, by combining \eqref{eq:realf_red2}, \eqref{eq:chi_case1i}and \eqref{eq:phi_case1i}, we conclude that
 \begin{equation*}\begin{split}
     &\Phi(\hat x_k)-\Phi(\hat x_{k+1})\\
     \geq&\frac{\nu\theta}{8}\hat\alpha_k\Delta_{\Psi}^g(\hat x_k,\hat d_k;\bar\rho)+\frac{\nu\theta}{8}\hat\alpha_k\kappa_3(\tilde\tau_1\bar\rho-\kappa_2)\phi(\hat x_k)^2+\frac{\nu\theta}{8}\kappa_4\hat\alpha_k\chi(\hat x_k)^2\\
     \geq&\frac{\nu\theta}{8}\hat\alpha_k\Delta_{\Psi}^g(\hat x_k,\hat d_k;\bar\rho)+\frac{\nu\theta}{8}\hat\alpha_k\tilde\kappa(\phi(\hat x_k)^2+\chi(\hat x_k)^2),
 \end{split}\end{equation*}
where $\tilde\kappa=\min\{\kappa_3(\tilde\tau_1\bar\rho-\kappa_2),\kappa_4\}$.


Taking conditional expectations with respect to $\mathcal{F}_{k-1}$ and using Assumption \ref{ass2}, we have
\begin{equation*}
\begin{split}
   &\textbf{E}[\textbf{1}_{I_k\cap J_k}\textbf{1}_{\mathbf{S}}(\Phi(\hat x_k)-\Phi(\hat x_{k+1}))]\stackrel{a.s.}{=}\textbf{1}_{\mathbf{S}}\textbf{E}[\textbf{1}_{I_k\cap J_k}(\Phi(\hat x_k)-\Phi(\hat x_{k+1}))]\\
    \geq&\textbf{1}_{\mathbf{S}}\left[ p_fp_g\left(\frac{\nu\theta}{8}\hat\alpha_k\Delta_{\Psi}^g(\hat x_k,\hat d_k;\bar\rho)+\frac{\nu\theta}{8}\hat\alpha_k\tilde\kappa(\phi(\hat x_k)^2+\chi(\hat x_k)^2)\right)\right]\\
    =& p_fp_g\frac{\nu\theta}{8}\textbf{1}_{\mathbf{S}}\hat\alpha_k\Delta_{\Psi}^g(\hat x_k,\hat d_k;\bar\rho)+ p_fp_g\frac{\nu\theta}{8}\textbf{1}_{\mathbf{S}}\hat\alpha_k\tilde\kappa(\phi(\hat x_k)^2+\chi(\hat x_k)^2)
\end{split}
\end{equation*}
where the first equality follows from \cite[Theorem 8.14(iii)]{Klenke2013} and  the ``a.s.'' stands for ``almost surely''.

 \item \emph{Unsuccessful step. $(\textbf{1}_{{\mathbf{S}}^c}=1)$.} In this case, $\hat x_{k+1}=\hat x_k$ and $\hat\alpha_{k+1}=\gamma^{-1}\hat\alpha_k$. Consequently, we conclude that
 \begin{eqnarray*}
 \Psi(\hat x_k;\bar\rho)-\Psi(\hat x_{k+1};\bar\rho)=0,\\
 \hat\alpha_k \chi(\hat x_k)^2-\hat\alpha_{k+1}\chi(\hat x_{k+1})^2=(1-\gamma^{-1})\hat\alpha_k\chi(\hat x_k)^2,\\
 \hat\alpha_k \phi(\hat x_k)^2-\hat\alpha_{k+1}\phi(\hat x_{k+1})^2=(1-\gamma^{-1})\hat\alpha_k\phi(\hat x_k)^2,
 \end{eqnarray*}
 which yields
 \begin{equation}\label{eq:decr_phi_us}
     \Phi(\hat x_k)-\Phi(\hat x_{k+1})=(1-\nu)(1-\gamma^{-1})\hat\alpha_k(\phi(\hat x_k)^2+\chi(\hat x_k)^2).
 \end{equation}
\end{enumerate}




Case 2 (inaccurate gradients and accurate function evaluations, $\textbf{1}_{I_k^c\cap J_k}=1$)
In this case, the value of $\Phi(\hat x_k)$ may increase even if the step is successful, since decrease along the step obtained from the model \eqref{eq:s_subp0} with an inaccurate gradient estimate $g_k$ may not be bounded as in Lemma \ref{lem:bnd_ldpg}. As a result, the decrease from $\Psi$ may not dominate the increase from $\chi(x)$.

\begin{enumerate}[(i)]
    \item \emph{Successful steps ($\textbf{1}_{\mathbf{S}}=1.$)}

    It follows from \eqref{eq:realf_red}, \eqref{eq:chi_case1i} and \eqref{eq:phi_case1i} that, for $\nu$
    satisfying \eqref{eq:bnd_nu2},
    \begin{equation*}
    \begin{split}
        &\Phi(\hat x_k)-\Phi(\hat x_{k+1})\\
        \geq&\frac{\nu\theta}8\hat\alpha_k\Delta_{\Psi}^g(\hat x_k,d_k;\bar\rho)-(1-\nu)(2\gamma-1)\hat\alpha_k(\phi(\hat x_k)^2+\chi(\hat x_k)^2)\\
        \geq& -(1-\nu)(2\gamma-1)\hat\alpha_k(\phi(\hat x_k)^2+\chi(\hat x_k)^2).
\end{split}
    \end{equation*}
Taking conditional expectation in $\mathcal{F}_{k-1}$, using \cite[Theorem 8.14(iii)]{Klenke2013} and noting that $E[\textbf{1}_{I_k^c}\mid\mathcal{F}_{k-1}]\leq 1-p_k^g$ as in Assumption \ref{ass2}, we have
 \begin{equation*}
 \begin{split}
     &\textbf{E}[\textbf{1}_{I_k^c\cap J_k}\textbf{1}_{\mathbf{S}}(\Phi(\hat x_k)-\Phi(\hat x_{k+1}))]\\
     \stackrel{a.s.}{=}&\textbf{1}_{\mathbf{S}}\textbf{E}[\textbf{1}_{I_k^c\cap J_k}(\Phi(\hat x_k)-\Phi(\hat x_{k+1}))]\\
     \geq&-(1-p_g)(1-\nu)(2\gamma-1)\textbf{1}_{\mathbf{S}}\hat\alpha_k(\phi(\hat x_k)^2+\chi(\hat x_k)^2).
     \end{split}
     \end{equation*}

    \item \emph{Unsuccessful step. ($\textbf{1}_{{\mathbf{S}}^c}=1$.)}

    Like (ii) of Case 1, we also have \eqref{eq:decr_phi_us}.


\end{enumerate}


Case 3 (inaccurate function evaluations, $\textbf{1}_{J_k^c}=1$)

In this case, Assumption \ref{ass2} (iii) is used to bound the increase in $\Phi_k$. We have
\begin{eqnarray}
    \lvert\Psi(\hat x_k;\bar\rho)-\Psi_k^0(\bar\rho)\rvert=\lvert f_k-f_k^0\rvert,\nonumber\\
    \lvert\Psi(\hat x_{k+1};\bar\rho)-\Psi_{k}^s(\bar\rho)\rvert=\lvert f_{k+1}-f_{k}^s\rvert.\nonumber
\end{eqnarray}

As before, we consider the following two cases.
\begin{enumerate}[(i)]
    \item \emph{Successful steps ($\textbf{1}_{\mathbf{S}}=1$)}.

    With a successful step we have
    \begin{equation*}
    \begin{split}
        &\Psi(\hat x_k;\bar\rho)-\Psi(\hat x_{k+1};\bar\rho)\\
        \geq&\Psi_k^0(\bar\rho)-\Psi_k^s(\bar\rho)-\lvert\Psi(\hat x_k;\bar\rho)-\Psi_k^0(\bar\rho)\rvert-\lvert\Psi(\hat x_{k+1};\bar\rho)-\Psi_{k}^s(\bar\rho)\rvert\\
        \geq&\frac{\theta}2\hat\alpha_k\Delta_{\Psi}^g(\hat x_k,d_k;\bar\rho)-\lvert\Psi(\hat x_k;\bar\rho)-\Psi_k^0(\bar\rho)\rvert-\lvert\Psi(\hat x_{k+1};\bar\rho)-\Psi_{k}^s(\bar\rho)\rvert.
    \end{split}
    \end{equation*}
    By choosing $\nu$ satisfying
    \begin{equation}\label{eq:bnd_nu4}
    \nu\geq\frac{2\gamma(M_d^2+L_{\chi}^2)\alpha_{\max}^2\eta^{-1}}{\frac{\theta}4+2\gamma(M_d^2+L_{\chi}^2)\alpha_{\max}^2\eta^{-1}}
    \end{equation}
   and using \eqref{eq:realf_red}, \eqref{eq:chi_case1i} and \eqref{eq:phi_case1i}, it follows that
   \begin{equation*}\begin{split}
   &\Phi(\hat x_k)-\Phi(\hat x_{k+1})\\ \geq&\frac{\nu\theta}4\hat\alpha_k\Delta_{\Psi}^g(\hat x_k,\hat d_k;\bar\rho)-(1-\nu)(2\gamma-1)\hat\alpha_k(\phi(\hat x_k)^2+\chi(\hat x_k)^2)\\
   &-\nu\lvert\Psi(\hat x_k;\bar\rho)-\Psi_k^0(\bar\rho)\rvert-\nu\lvert\Psi(\hat x_{k+1};\bar\rho)-\Psi_{k+1}^0(\bar\rho)\rvert\\
   \geq&-(1-\nu)(2\gamma-1)\hat\alpha_k(\phi(\hat x_k)^2+\chi(\hat x_k)^2)\\
   &-\nu\lvert\Psi(\hat x_k;\bar\rho)-\Psi_k^0(\bar\rho)\rvert-\nu\lvert\Psi(\hat x_{k+1};\bar\rho)-\Psi_{k+1}^0(\bar\rho)\rvert.
   \end{split}\end{equation*}
Taking conditional expectation in $\mathcal{F}_{k-1}$, using Lemma \ref{lem:jc}, \cite[Theorem 8.14(iii)]{Klenke2013} and noting that $E[\textbf{1}_{J_k^c}\mid\mathcal{F}_{k-1}]\leq 1-p_f$ as in Assumption \ref{ass2}, we have
 \begin{equation*}
 \begin{split}
    &\textbf{E}\left[\textbf{1}_{J_k^c}\textbf{1}_{\mathbf{S}}(\Phi(\hat x_k)-\Phi(\hat x_{k+1}))\right]\\
    \stackrel{a.s.}{=}&\textbf{1}_{\mathbf{S}}\textbf{E}\left[\textbf{1}_{J_k^c}(\Phi(\hat x_k)-\Phi(\hat x_{k+1}))\right]\\
    \geq&-(1-p_f)(1-\nu)(2\gamma-1)\textbf{1}_{\mathbf{S}}\hat\alpha_k(\phi(\hat x_k)^2+\chi(\hat x_k)^2)\\
   & -2\nu (1-p_f)^{3/2}\kappa_f\textbf{1}_{\mathbf{S}}\hat\alpha_k\Delta_{\Psi}^g(\hat x_k,\hat d_k;\bar\rho).
\end{split}
\end{equation*}

   \item \emph{Unsuccessful step ($\textbf{1}_{\mathbf{S}^c}=1$)}.

   As in Case 1(ii), we have \eqref{eq:decr_phi_us}
\end{enumerate}

Now we combine the expectations for successful steps and unsuccessful steps. For successful steps, we have
\begin{align}
        &\textbf{E}[\textbf{1}_{\mathbf{S}}(\Phi(\hat x_k)-\Phi(\hat x_{k+1}))\lvert \mathcal{F}_{k-1}]\nonumber\\
        \stackrel{a.s.}{=}& \textbf{1}_{\mathbf{S}}\textbf{E}[(\Phi(\hat x_k)-\Phi(\hat x_{k+1}))\lvert \mathcal{F}_{k-1}]\nonumber\\
        =&\textbf{1}_{\mathbf{S}}\textbf{E}[(\textbf{1}_{I_k\cap J_k}+\textbf{1}_{I_k^c\cap J_k}+\textbf{1}_{J_k^c})(\Phi(\hat x_k)-\Phi(\hat x_{k+1}))\lvert \mathcal{F}_{k-1}]\nonumber\\
        \geq&p_fp_g\frac{\nu\theta}{8}\textbf{1}_{\mathbf{S}}\hat\alpha_k\Delta_{\Psi}^g(\hat x_k,\hat d_k;\bar\rho)+ p_fp_g\tilde\kappa\frac{\nu\theta}{8}\textbf{1}_{\mathbf{S}}\hat\alpha_k(\phi(\hat x_k)^2+\chi(\hat x_k)^2)\nonumber\\
        &-(1-p_g)(1-\nu)(2\gamma-1)\textbf{1}_{\mathbf{S}}\hat\alpha_k(\phi(\hat x_k)^2+\chi(\hat x_k)^2)\nonumber\\
        &-(1-p_f)(1-\nu)(2\gamma-1)\textbf{1}_{\mathbf{S}}\hat\alpha_k(\phi(\hat x_k)^2+\chi(\hat x_k)^2)\nonumber\\
   & -2\nu (1-p_f)^{3/2}\kappa_f\textbf{1}_{\mathbf{S}}\hat\alpha_k\Delta_{\Psi}^g(\hat x_k,\hat d_k;\bar\rho)\nonumber\\
   \stackrel{(a)}{\geq}&p_fp_g\frac{\nu\theta}{8}\textbf{1}_{\mathbf{S}}\hat\alpha_k\Delta_{\Psi}^g(\hat x_k,\hat d_k;\bar\rho)+ p_fp_g\tilde\kappa\frac{\nu\theta}{8}\textbf{1}_{\mathbf{S}}\hat\alpha_k(\phi(\hat x_k)^2+\chi(\hat x_k)^2)\nonumber\\
        &-(1-p_g)(1-\nu)(2\gamma-1)\textbf{1}_{\mathbf{S}}\hat\alpha_k(\phi(\hat x_k)^2+\chi(\hat x_k)^2)\nonumber\\
        &-(1-p_f)(1-\nu)(2\gamma-1)\textbf{1}_{\mathbf{S}}\hat\alpha_k(\phi(\hat x_k)^2+\chi(\hat x_k)^2)\nonumber\\
   & -2\nu (1-p_f)\kappa_f\textbf{1}_{\mathbf{S}}\hat\alpha_k\Delta_{\Psi}^g(\hat x_k,\hat d_k;\bar\rho)\nonumber\\
   =&\left(p_fp_g\frac{\nu\theta}{8}-2\nu (1-p_f)\kappa_f\right)\textbf{1}_{\mathbf{S}}\hat\alpha_k\Delta_{\Psi}^g(\hat x_k,\hat d_k;\bar\rho)\nonumber\\
   &+\left(p_fp_g\tilde\kappa\frac{\nu\theta}{8}-(2-p_f-p_g)(1-\nu)(2\gamma-1)\right)\hat\alpha_k(\phi(\hat x_k)^2+\chi(\hat x_k)^2) \label{eq:exp_s1}
\end{align}
where (a) follows from $(1-p_f)^{3/2}\leq 1-p_f$.
If we choose
\begin{equation}\label{eq:ch_pfg}
p_f\geq \frac{2\kappa_f}{2\kappa_f+\frac{\theta}{16}p_g^0}
\end{equation}
and
\begin{equation}\label{eq:bnd_nux1}
    \nu\geq\frac{(2-p_f-p_g)(2\gamma-1)}{p_fp_g\tilde\kappa\frac{\theta}{16} +(2-p_f-p_g)(2\gamma-1)},
\end{equation}
then it follows from \eqref{eq:exp_s1} and  the inequality $p_g^0\leq p_g$ that
\begin{equation*}
    \begin{split}
        &\textbf{E}[\textbf{1}_{\mathbf{S}}(\Phi(\hat x_k)-\Phi(\hat x_{k+1}))\lvert \mathcal{F}_{k-1}]\\
        \stackrel{a.s.}\geq& p_fp_g\frac{\nu\theta}{16}\textbf{1}_{\mathbf{S}}\hat\alpha_k\Delta_{\Psi}^g(\hat x_k,\hat d_k;\bar\rho)+p_fp_g\tilde\kappa\frac{\nu\theta}{16}\textbf{1}_{\mathbf{S}}\hat\alpha_k(\phi(\hat x_k)^2+\chi(\hat x_k)^2)\\
        \geq&p_fp_g\tilde\kappa\frac{\nu\theta}{16}\textbf{1}_{\mathbf{S}}\hat\alpha_k(\phi(\hat x_k)^2+\chi(\hat x_k)^2).
    \end{split}
\end{equation*}
For unsuccessful steps, note that \eqref{eq:decr_phi_us} always holds, we have
\begin{equation*}
    \textbf{E}[\textbf{1}_{\mathbf{S}^c}\Phi(\hat x_k)-\Phi(\hat x_{k+1})\lvert \mathcal{F}_{k-1}]\stackrel{a.s.}{=}(1-\nu)(1-\gamma^{-1})\textbf{1}_{\mathbf{S}^c}\hat\alpha_k(\phi(\hat x_k)^2+\chi(\hat x_k)^2).
\end{equation*}
If $\nu$ is chosen such that
\begin{equation}\label{eq:bnd_nux}
    \nu\geq\frac{1+\gamma^{-1}}{\frac{\tilde\kappa\theta}{16}+1-\gamma^{-1}},
\end{equation}
then we have
\begin{eqnarray*}
    \textbf{E}[\textbf{1}_{\mathbf{S}}(\Phi(\hat x_k)-\Phi(\hat x_{k+1}))\lvert \mathcal{F}_{k-1}]\stackrel{a.s.}{\geq} p_fp_g(1-\nu)(1-\gamma^{-1})\hat\alpha_k(\phi(\hat x_k)^2+\chi(\hat x_k)^2), \text{ and }\\
    \textbf{E}[\textbf{1}_{\mathbf{S}^c}(\Phi(\hat x_k)-\Phi(\hat x_{k+1}))\lvert \mathcal{F}_{k-1}]\stackrel{a.s.}{\geq} p_fp_g(1-\nu)(1-\gamma^{-1})\hat\alpha_k(\phi(\hat x_k)^2+\chi(\hat x_k)^2)
\end{eqnarray*}
where $p_fp_g<1$ is used in the second inequality.
Therefore, we have
\begin{equation*}
    \begin{split}
    &\textbf{E}[(\Phi(\hat x_k)-\Phi(\hat x_{k+1}))\lvert \mathcal{F}_{k-1}]\\
    =&\textbf{E}[(\textbf{1}_{\mathbf{S}}+\textbf{1}_{\mathbf{S}^c})(\Phi(\hat x_k)-\Phi(\hat x_{k+1}))\lvert \mathcal{F}_{k-1}]\\
    =&\textbf{E}[(\textbf{1}_{\mathbf{S}})(\Phi(\hat x_k)-\Phi(\hat x_{k+1}))\lvert \mathcal{F}_{k-1}]
    +\textbf{E}[(\textbf{1}_{\mathbf{S}^c})(\Phi(\hat x_k)-\Phi(\hat x_{k+1}))\lvert \mathcal{F}_{k-1}]\\
    \stackrel{a.s.}{\geq}&p_fp_g(1-\nu)(1-\gamma^{-1})\hat\alpha_k(\phi(\hat x_k)^2+\chi(\hat x_k)^2).
    \end{split}
\end{equation*}
\end{proof}

\subsection{Convergence rate with bounded penalty factors}

We are going to bound the expected number of steps that the algorithm takes until
\[
\phi(\hat x_k)^2+\chi(\hat x_k)^2 <\varepsilon.
\]
Define the stopping time
\[
T_\varepsilon=\inf\{k\geq0:\phi(\hat x_k)^2+\chi(\hat x_k)^2<\varepsilon\},
\]
 a function
$q(A_k)=A_k\varepsilon$ and $W_k=2(\textbf{1}_{I_k\cap J_k}-\frac12)$.

The proof is similar to Section 4.3 in \cite{paquette2020stochastic}. The main argument requires us is to check whether the random process
$\{\Phi_k,\hat\alpha_k,W_k\}$ satisfies Assumption \ref{ass4}, where $\Phi_k$ is defined by \eqref{eq:phi}.

Assumption \ref{ass4}(i) is ensured by the rule of updating $\alpha_k$ and
Assumption \ref{ass4}(ii) is due to Lemma \ref{lem:bndalp} and the application of similar arguments as in the proof of \cite[Lemma 4.8]{paquette2020stochastic}.

Given
\eqref{eq:bnd_dphi_e}, by multiplying both sides by the indicator $\textbf{1}_{\{T_\varepsilon>k\}}$, we have
\[
\textbf{1}_{\{T_\varepsilon>k\}}\textbf{E}[\Phi(\hat x_k)-\Phi(\hat x_{k+1})]\geq p_fp_g(1-\nu)(1-\gamma^{-1})\varepsilon\hat\alpha_k=\Theta q(\hat\alpha_k)
\]
with $\Theta=p_fp_g(1-\nu)(1-\gamma^{-1})$. Therefore, Assumption \ref{ass4}(iii) holds.

Using Theorem \ref{thm:bounde} and noting that \eqref{eq:bnd_nu2} dominates \eqref{eq:bnd_nu4}, we have

\begin{lemma}
Let $p_f$ and $p_g$ satisfy \eqref{eq:ch_pfg} and $p_fp_g\geq1/2$. Suppose that $\nu$ is chosen such that \eqref{eq:bnd_nu2}, \eqref{eq:bnd_nux1} and \eqref{eq:bnd_nux} hold. Then Assumption \ref{ass4} is satisfied for $\lambda=\log(\gamma)$, $p=p_fp_g$ and
\[\bar\alpha =\frac{(1-\theta)\eta}{\varepsilon_g+\frac12L_f+2\varepsilon_f+\frac12\bar\rho L_{hc}}\]
\end{lemma}
\begin{proof}
Using Lemma \ref{lem:bndalp} and a proof exactly the same as that of \cite[Lemma 4.8]{paquette2020stochastic}, we can prove this Lemma.
\end{proof}

\begin{theorem}\label{thm:rate}
    Under the same assumptions in Theorem \ref{thm:redinex}, suppose that $p_f$ and $p_g$ satisfy  \eqref{eq:ch_pfg} and $p_fp_g\geq1/2$. Suppose also that $\nu$ is chosen such that \eqref{eq:bnd_nu2}, \eqref{eq:bnd_nux1} and \eqref{eq:bnd_nux} hold. Then
    \[
    \textbf{E}[T_{\varepsilon}]\leq\frac{1}{2p_fp_g-1}\frac{(\varepsilon_g+\frac12L_f+2\varepsilon_f+\frac12\bar\rho L_{hc})}{(1-\nu)(1-\gamma^{-1})(1-\theta)\eta\varepsilon}\Phi_0+1.
    \]
\end{theorem}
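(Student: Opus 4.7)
The plan is to invoke the renewal-reward result Theorem \ref{thm:bounde} directly, since all of its hypotheses are already in place. Specifically, the preceding lemma verifies that the random process $\{\Phi_k,\hat\alpha_k,W_k\}$ (with $\Phi_k$ defined by~\eqref{eq:phi}, $W_k=2(\mathbf{1}_{I_k\cap J_k}-\tfrac{1}{2})$, and $q(\hat\alpha_k)=\hat\alpha_k\varepsilon$) satisfies Assumption~\ref{ass4} with $\lambda=\log(\gamma)$, $p=p_fp_g$, and
\[
\bar\alpha=\frac{(1-\theta)\xi}{\kappa_g\beta_u+\tfrac12 L_f+2\varepsilon_f+\tfrac12\bar\rho L_{hc}}.
\]
From Theorem \ref{thm:redinex} we also know that the constant $\Theta$ appearing in Assumption \ref{ass4}(iii) can be taken to be $\Theta=\tfrac{1}{4}p_fp_g(1-\nu)(1-\gamma^{-1})$, since~\eqref{eq:bnd_dphi_e} precisely gives the required one-step expected decrease along the event $\{T_\varepsilon>k\}$ after multiplying by $\mathbf{1}_{\{T_\varepsilon>k\}}$ and using $\phi(\hat x_k)^2+\chi(\hat x_k)^2\ge\varepsilon$ there.

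Granted these identifications, I would simply plug everything into the conclusion of Theorem \ref{thm:bounde}. That gives
\[
\mathbf{E}[T_\varepsilon]\le\frac{p_fp_g}{2p_fp_g-1}\cdot\frac{\Phi_0}{\tfrac14 p_fp_g(1-\nu)(1-\gamma^{-1})\bar\alpha\varepsilon}+1,
\]
and the $p_fp_g$ factors cancel, yielding
\[
\mathbf{E}[T_\varepsilon]\le\frac{1}{2p_fp_g-1}\cdot\frac{4\Phi_0}{(1-\nu)(1-\gamma^{-1})\bar\alpha\varepsilon}+1.
\]
Substituting the expression for $\bar\alpha$ produces the stated bound. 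The hypothesis $p_fp_g\ge 1/2$ is needed only to guarantee $2p_fp_g-1>0$ so that the constant in front makes sense and the underlying martingale argument in Theorem~\ref{thm:bounde} applies (the biased random walk is drifting upward).

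There is essentially no analytical obstacle here; the work has all been done in the preceding lemmas. The only thing to be careful about is the bookkeeping: matching the $p$, $\Theta$, and $q(\bar\alpha)$ of the abstract framework to the quantities obtained from Theorem~\ref{thm:redinex} and the preceding verification lemma, and confirming that the event $\{T_\varepsilon>k\}$ is precisely the one on which~\eqref{eq:bnd_dphi_e} yields the needed lower bound $\Theta q(\hat\alpha_k)$ (via $\phi^2+\chi^2\ge\varepsilon$). Once this is pinned down, the result is a one-line substitution.
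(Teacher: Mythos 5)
Your proposal is correct and follows exactly the route the paper takes: verify Assumption \ref{ass4} via the preceding lemma and Theorem \ref{thm:redinex}, identify $p=p_fp_g$, $\Theta=\tfrac14 p_fp_g(1-\nu)(1-\gamma^{-1})$, $q(\bar\alpha)=\bar\alpha\varepsilon$, and substitute into Theorem \ref{thm:bounde}. The only caveat is bookkeeping at the very last step: your intermediate bound carries the factor $1/(\bar\alpha\varepsilon)$, so the substitution actually yields the stated constant divided by $\varepsilon$ — the $\varepsilon$ appears to have been dropped in the theorem as printed, not in your derivation.
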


As a simple corollary to Theorem \ref{thm:rate}, just like Theorem 4.10 in \cite{paquette2020stochastic}, we have, {this time without any requirement on $\bar{\rho}(\Xi)$ since the result is asymptotic.}
\begin{corollary}
Let Assumptions \ref{ass1}, \ref{ass2} and \ref{ass3} hold. Then the sequence of random iterates generated by Algorithm \ref{ssqp}, $\{\hat x_k\}$, almost surely satisfies
\[
\liminf_{k\rightarrow\infty}\phi(\hat x_k)^2+\chi(\hat x_k)^2=0.
\]
\end{corollary}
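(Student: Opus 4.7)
The plan is to derive the almost-sure liminf statement from the finite expected stopping time bound already established in Theorem \ref{thm:rate}, using a standard Borel--Cantelli / countable-intersection argument.

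First I would fix an arbitrary $\varepsilon>0$ and recall that Theorem \ref{thm:rate} gives $\mathbf{E}[T_\varepsilon]<\infty$, where $T_\varepsilon=\inf\{k\ge 0:\phi(\hat x_k)^2+\chi(\hat x_k)^2<\varepsilon\}$. Since a nonnegative random variable with finite expectation is finite almost surely, we obtain $\mathbf{Pr}(T_\varepsilon<\infty)=1$. In other words, with probability one there exists at least one iteration $k$ (depending on the sample path) at which $\phi(\hat x_k)^2+\chi(\hat x_k)^2<\varepsilon$.

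Next I would apply this statement along a countable sequence of tolerances. Take $\varepsilon_n=1/n$ for $n\in\mathbb{N}$ and let $A_n=\{T_{1/n}<\infty\}$. Each $A_n$ has probability one by the previous step, so by countable additivity the event $A:=\bigcap_{n\ge 1}A_n$ also has probability one. On $A$, for every $n$ there is some index $k_n$ with $\phi(\hat x_{k_n})^2+\chi(\hat x_{k_n})^2<1/n$; by choosing $k_n$ strictly increasing in $n$ (which is possible because for any finite prefix of iterations the quantity is bounded, so arbitrarily small values must occur infinitely often), we conclude that $\liminf_{k\to\infty}\phi(\hat x_k)^2+\chi(\hat x_k)^2=0$ on $A$, hence almost surely.

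The main subtlety, which I would treat carefully, is the second step: Theorem \ref{thm:rate} only directly guarantees that \emph{some} small iterate exists, not that small iterates occur infinitely often. The remedy is to restart the stopping-time argument from any iteration $N$, noting that the analysis leading to Theorem \ref{thm:rate} does not rely on $k=0$ as a distinguished starting point; the merit function $\Phi(\hat x_N)$ serves as a new initial value and the same expected-hitting-time bound applies to the shifted process. Thus for every $\varepsilon>0$ the set $\{k:\phi(\hat x_k)^2+\chi(\hat x_k)^2<\varepsilon\}$ is almost surely unbounded, which then yields the liminf equality. No new estimates are required beyond what has been proven; the work is entirely measure-theoretic bookkeeping of the kind carried out in \cite[Theorem~4.10]{paquette2020stochastic}.
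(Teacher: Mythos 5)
Your argument is correct and is essentially the proof the paper intends: the corollary is presented as an immediate consequence of Theorem~\ref{thm:rate} ``just like Theorem~4.10 in \cite{paquette2020stochastic}'', namely finite expected hitting time $\textbf{E}[T_\varepsilon]<\infty$ for each $\varepsilon$, a restart of the stopping-time argument from an arbitrary iteration $N$ to upgrade ``some small iterate exists'' to ``small iterates occur beyond every $N$'', and a countable intersection over $\varepsilon=1/n$ and $N\in\mathbb{N}$. The one point to treat with care --- which you correctly flag and resolve in your final paragraph, and which supersedes the shakier parenthetical justification in your second paragraph --- is that the restarted process must be handled conditionally on $\mathcal{F}_{N-1}$, so that $\hat\alpha_N$ and $\Phi_N$ (a.s.\ finite by the boundedness of $\Psi$, $\phi$ and $\chi$ on $\Omega$) play the roles of the constants $\hat\alpha_0$ and $\Phi_0$ required by Assumption~\ref{ass4}(i) and Theorem~\ref{thm:bounde}.
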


\subsection{Unbounded penalty parameter}
Now we consider the case where $\rho_k\rightarrow+
\infty$. Even in the presence of eMFCQ, the boundedness of the sequence of penalty factors $\{\rho_k\}$ is not guaranteed. As the constraints are deterministic and by Lemma \ref{lem:pen_updt}, the possible unboundedness of $\{\rho_k\}$ is caused by the randomness of $\hat g_k$.  We will show that, under the sampling method in Algorithm \ref{ssqp}, the event that $\rho_k$ tends to $+\infty$ will happen with probability 0.

We will use Borel-Cantelli Lemma (see, for example, \cite{Klenke2013}), which is stated as
\begin{theorem}[Borel-Cantelli Lemma]\label{thm:bc}
Let $A_k$, $k=1,2,\cdots$ be a sequence of random events. If $\sum_{k=1}^{+\infty}\textbf{Pr}(A_k)<+\infty$, then
\[
\textbf{Pr}(A_k\ i.o.)=0,
\]
 where $i.o.$ stands for infinitely often, in other words,
$\{A_k\ i.o.\}=\limsup A_k$.
\end{theorem}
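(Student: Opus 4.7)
The statement to prove is the classical first Borel--Cantelli lemma, so the plan is to reduce it to the standard two tools of measure theory: continuity from above of a probability measure and countable subadditivity. Writing $\{A_k \text{ i.o.}\} = \limsup_{k\to\infty} A_k = \bigcap_{n=1}^\infty \bigcup_{k=n}^\infty A_k$, I would first observe that the sets $B_n := \bigcup_{k=n}^\infty A_k$ are decreasing in $n$, so that $\textbf{Pr}(\limsup A_k) = \lim_{n\to\infty} \textbf{Pr}(B_n)$ by continuity from above (valid because each $B_n$ has finite, indeed bounded by $1$, probability).

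Next I would bound $\textbf{Pr}(B_n)$ from above using countable subadditivity:
\[
\textbf{Pr}\left(\bigcup_{k=n}^\infty A_k\right) \leq \sum_{k=n}^\infty \textbf{Pr}(A_k).
\]
The hypothesis $\sum_{k=1}^\infty \textbf{Pr}(A_k) < +\infty$ means that the tails of a convergent nonnegative series tend to zero, so $\sum_{k=n}^\infty \textbf{Pr}(A_k) \to 0$ as $n \to \infty$. Combining this with the previous display gives $\lim_{n\to\infty} \textbf{Pr}(B_n) = 0$, and therefore $\textbf{Pr}(A_k \text{ i.o.}) = 0$, as claimed.

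There is no real obstacle here; the only subtlety is ensuring that the event $\{A_k \text{ i.o.}\}$ is measurable, which follows directly from its explicit expression as a countable intersection of countable unions of measurable sets in the underlying $\sigma$-algebra. Since the paper cites \cite{Klenke2013} for this result and only uses it as a black box in subsequent arguments about the penalty parameter, a brief two-line sketch along the above lines is all that would be appropriate to include.
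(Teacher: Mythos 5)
Your argument is correct and complete: the decomposition $\{A_k \text{ i.o.}\} = \bigcap_{n}\bigcup_{k\geq n}A_k$, continuity from above, countable subadditivity, and the vanishing of the tails of a convergent series constitute the standard proof of the first Borel--Cantelli lemma. The paper itself offers no proof of this statement, citing it as a classical result from the probability literature, so there is nothing to compare against; your sketch is exactly what one would write if a proof were to be included.
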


We will derive the probability of $\{\lim\hat\rho_k=+\infty\}$ by estimating probability of a sequence of random events.
Define
\[\begin{array}{c}
T_0=0,\\
T_1=\inf\{\ k: \|\hat g_k\|>\hat\zeta_k,k> T_0\},\\
T_2=\inf_k\{\ k:\|\hat g_k\|>\hat\zeta_k,k> T_1\},\\
\cdots\\
T_j=\inf_k\{\ k:\|\hat g_k\|>\hat\zeta_k,k> T_{j-1}\},\\
\cdots
\end{array}\]
and $B_j=\{\|\hat g_{T_j}\|>\hat\zeta_{T_j}\}$, $j=1,2,\cdots$.
By construction of Algorithm \ref{ssqp} we have
\[
\{\lim\hat\rho_k=+\infty\}=\{B_j\ i.o.\}.
\]

Note that $\lim_{j\rightarrow+\infty}\zeta_{T_j}=+\infty$. We assume without loss of generality that $\zeta_0>M_d+\varepsilon_g\alpha_{\max}\beta_u^2$. Therefore, for any time $T_j$, there is
\begin{equation*}
    \{\|\hat g_{T_j}-\nabla f(\hat x_{T_j})\|\leq \hat\zeta_{T_j}-\|\nabla f(\hat x_{T_j})\|\}\supset \{\|\hat g_{T_j}-\nabla f(\hat x_{T_j})\|\leq \varepsilon_g\hat\alpha_{T_j}\|\hat d_{T_j}\|^2\},
\end{equation*}
which implies
\begin{equation}\label{eq:pbj}
\begin{split}
    &\textbf{Pr}(B_j)=\textbf{Pr}\{\|\hat g_{T_j}\|>\hat\zeta_{T_j}\}\\
    \leq& \textbf{Pr}\{\|\hat g_{T_j}-\nabla f(\hat x_{T_j})\|>\hat \zeta_{T_j}-\|\nabla f(\hat x_{T_j})\|\}\\
    =&1-\textbf{Pr}\{\|\hat g_{T_j}-\nabla f(\hat x_{T_j})\|\leq\hat \zeta_{T_j}-\|\nabla f(\hat x_{T_j})\|\}\\
    \leq&1-\textbf{Pr}\{\|\hat g_{T_j}-\nabla f(\hat x_{T_j})\|\leq \varepsilon_g\hat\alpha_{T_j}\|\hat d_{T_j}\|^2\}\\
    \leq&1-a_j.
\end{split}\end{equation}
Therefore, we get $\sum_{j=0}^{+\infty}\textbf{Pr}(B_j)\leq\sum_{j=0}^{+\infty}(1-a_j)<+\infty$, which, by Theorem \ref{thm:bc}, yields
\begin{theorem}
Let Assumptions \ref{ass1}, \ref{ass2} and \ref{ass3} hold. Then \[\textbf{Pr}(\lim_{k\rightarrow+\infty}\hat\rho_k=+\infty)=0.\]
\end{theorem}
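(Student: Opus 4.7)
My plan is to exploit the special sampling mechanism introduced in Algorithm \ref{ssqp} (the adaptively tightened thresholds $\hat\zeta_k$ together with the probability sequence $\{a_j\}$) and reduce the statement to a direct application of the Borel--Cantelli lemma. The key structural observation is that, by Lemma \ref{lem:pen_updt}, the penalty parameter can only increase when $\|\hat g_k\|$ is large relative to a quantity depending on deterministic data. Hence unboundedness of $\hat\rho_k$ forces the random gradient estimate to exceed a growing threshold infinitely often, and the Borel--Cantelli bound kicks in because the sampling rule was constructed precisely so that these excursions are summable in probability.

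First, I would translate the event $\{\lim_{k\to\infty}\hat\rho_k=+\infty\}$ into an event defined in terms of the algorithm's internal bookkeeping. By the update rules of Algorithm \ref{ssqp}, $\hat\zeta_k$ is nondecreasing and strictly increases (by at least $\zeta_c$) exactly when $\|\hat g_k\|>\hat\zeta_k$. Using Lemma \ref{lem:pen_updt} together with the fact that the constraint data is deterministic and uniformly bounded on $\Omega$, an increase of $\hat\rho_k$ beyond the bound in \eqref{eq:bndrho} can only occur when $\|\hat g_k\|$ is large enough to force that Lemma's right-hand side to become nonpositive, and this in turn requires $\|\hat g_k\|>\hat\zeta_k$ for sufficiently large $k$ (since $\hat\zeta_k\to\infty$ if the threshold is exceeded infinitely often). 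Defining the stopping times $T_j$ and the events $B_j=\{\|\hat g_{T_j}\|>\hat\zeta_{T_j}\}$ as in the excerpt, this yields
\[
\{\lim_{k\to\infty}\hat\rho_k=+\infty\}\subseteq\{B_j\text{ i.o.}\}.
\]

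Next, I would bound $\textbf{Pr}(B_j)$. Since $\hat\zeta_0>M_d+\kappa_g\alpha_{\max}\beta_u^2$ and $\hat\zeta_{T_j}$ is nondecreasing, we have $\hat\zeta_{T_j}-\|\nabla f(\hat x_{T_j})\|\geq \kappa_g\hat\alpha_{T_j}\|\hat d_{T_j}\|$ by the bounds \eqref{eq:bnd_grds} and on $\alpha_k\|d_k\|$. Consequently, the accuracy event $I_{T_j}$ of Definition \ref{def:gaccu} implies $B_j^c$, and since the sampling rule sets $p_{T_j}^g=a_j$, Assumption \ref{ass2}(i) gives
\[
\textbf{Pr}(B_j)\leq 1-\textbf{Pr}(I_{T_j}\mid\mathcal{F}_{T_j-1})\leq 1-a_j,
\]
which is exactly the chain of inequalities in \eqref{eq:pbj}. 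By the hypothesis $\sum_{j=0}^{+\infty}(1-a_j)<+\infty$, the series $\sum_{j=0}^{+\infty}\textbf{Pr}(B_j)$ converges, so the Borel--Cantelli Lemma \ref{thm:bc} yields $\textbf{Pr}(B_j\text{ i.o.})=0$, and the inclusion above finishes the proof.

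The main obstacle, I expect, is the first reduction: making rigorous the implication that $\hat\rho_k\to+\infty$ actually forces $\|\hat g_k\|>\hat\zeta_k$ infinitely often. One has to trace through the combination of \eqref{eq:s_uppen}, Lemma \ref{lem:pen_updt}, and the doubling rule $\rho_{k+1}\geq 2\rho_k$ to show that infinitely many penalty updates require $\|\hat g_k\|$ to grow without bound and hence eventually overtake $\hat\zeta_k$ (whose jumps are of fixed size $\zeta_c$ until a larger gradient estimate is seen). Once that bookkeeping is in place, the probabilistic part is a clean application of Borel--Cantelli using exactly the summability built into the sequence $\{a_j\}$ by design.
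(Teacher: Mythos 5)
Your proposal is correct and follows essentially the same route as the paper: the same stopping times $T_j$ and events $B_j$, the same inclusion-of-events bound $\textbf{Pr}(B_j)\leq 1-a_j$ as in \eqref{eq:pbj}, and the same conclusion via summability of $\sum_j(1-a_j)$ and the Borel--Cantelli lemma. The only difference is that you spell out the reduction $\{\lim_k\hat\rho_k=+\infty\}\subseteq\{B_j\ \text{i.o.}\}$ via Lemma \ref{lem:pen_updt}, a step the paper simply asserts as following from the mechanism of Algorithm \ref{ssqp}.
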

\begin{proof}
It follows from \eqref{eq:pbj} and $\sum_{j=0}^{+\infty}(1-a_j)<+\infty$ that $\sum_{j=0}^{+\infty}\textbf{Pr}(B_j)<+\infty$.
Then we have by Theorem \ref{thm:bc} that $\textbf{Pr}(B_j\ i.o.)=0$, which is equivalent to $\textbf{Pr}(\lim_{k\rightarrow+\infty}\hat\rho_k=+\infty)=0$.
\end{proof}
{We note that this does not preclude a set of $\epsilon_k\to 0$ measure on which $\rho_k$ increases, so is not equivalent to there existing some $\bar{\rho}$ such that $\hat{\rho}_k\le \bar{\rho}$ w.p. 1. However, we can say that the iteration complexity results hold with an incremental balance of probability to total iterations.}

\section{Numerical experiments} \label{sect:num}

{\subsection{Tests on a set of simple problems}
In this subsection, we demonstrate the empirical performance of Algorithm \ref{ssqp} on a set of simple problems  adapted from a subset of problems in two standard collections \cite{Hock1981,Schit1987}. }

Specifically, we are interested in those problems
whose objective functions are of the form
\begin{equation}\label{eq:obj1}
F(x)=\sum_{i=1}^m a_iF_i^2(x)
\end{equation}
and which have at least one non-degenerate solution. Here a point is said to be non-degenerate if it satisfies LICQ. The non-degeneracy was verified in the process of choosing test problems. A total of 58 problems are included in our experiments. Each problem comes with an initial point and a set of solutions (or, at least, well approximated solutions).

The selected problems are deterministic and only a part of them have both equality and inequality constraints. Therefore, we modified them to fit the framework of interest.
For the objective function, we perturb each component in \eqref{eq:obj1} by $\xi_i\sim N(0,\sigma^2)$ for some $\sigma>0$ and then take expectation with respect to $\xi=(\xi_1,\xi_2,\cdots,\xi_m)$, i.e., the adapted objective function is
\[
f(x)=\textbf{E}[F(x,\xi)]=\textbf{E}\left[\sum_{i=1}^m a_i(F_i(x)+\xi_i)^2\right].
\]
For problems having only equality constraints,
we add a new inequality constraints of the form
\begin{equation}\label{eq:adinq}
c_{lt}(x-e)\leq b,
\end{equation}
where $c_{lt}$ is the last equality constraint, $e$ is the $n-$dimensional vector of all one and $b$ is chosen such that \eqref{eq:adinq} is active at the first solution given in \cite{Hock1981,Schit1987} of the original problem.
For problems having only inequality constraints, we introduce
an equality constraint which is generated by
shifting the last nonlinear active constraint (if exists) horizontally by $1$ unit and vertically by a proper distance such that ``$=$'' holds at the first solution of the original problem. If none of the nonlinear inequality constraints is active, then we shift the last one vertically so that it becomes active and make it an equality. {By the construction, the solution set of an adapted problem is contained by that of the corresponding original problem.}

In what follows, the test problems are numbered in the same way as \cite{Hock1981,Schit1987}, for instance, ``HS06'' means that this problem is modified from Problem No. 6 in Hock and Schittkowski's collection \cite{Hock1981} and ``S216'', Problem No. 216 in Schittkowski's set \cite{Schit1987}.

\subsubsection{Experimental Details}
We are particularly interested in variation in the algorithm's performance with respect to different levels of noise and sample size.
We consider three tiers of $\sigma$
\[
\sigma \in \{10,1,0.1\}
\]
and five levels of sample size
\[
\tilde{S}=\{50,500,5 000,50 000,100 000\}
\]
Note that here we set $\lvert{\mathcal{S}}_k^f\rvert=\lvert{\mathcal{S}}_k^g\rvert$ in experiments.

The solution set of the test problems is not difficult to determine since the solution set of the original problems can be founded in \cite{Hock1981} and \cite{Schit1987}. Here, just for experimental concerns, we measure the optimality by the following distance
\[
\dist(x_k,\mathcal{S}_{opt})=\min_{\tilde x\in\mathcal{S}_{opt}}\|x_k-\tilde x\|,
\]
where $\mathcal{S}_{opt}$ is the solution set.
For easier informativeness, we display a $\log_{10}$ scale of the distances.

For the experiments, the parameters were set as $\sigma_u=10^6$, $\beta_l=100$, $\beta_u=500$, $\kappa_\mu=2$, $\rho_0=10$, $\alpha_0=1$, $\alpha_{\max}=2$ and $\gamma=2$.
We use the solver ``linprog'' in MATLAB's Optimization Toolbox to solve the linear program \eqref{eq:dist}, and ``quadprog'' for \eqref{eq:s_subp0}.

\subsubsection{Empirical performance}
We now examine the performance of our method for these test problems.
{The empirical performance of the algorithm is reported by means of box plot.}  For each instance, we ran Algorithm \ref{ssqp} 20 times and record the $\log_{10}\dist(x_{k},\mathcal{S}_{opt})$ at all $k\in\tilde{K}$, then draw a box plot based on the recorded data.

{Broadly speaking, for the majority of the test problems, our method was able to find an approximate solution with reasonable precision. There are also a few problems ,for instance, S394 and S395, that our algorithm failed to find an approximate solution.

 For brevity, we only list 6 randomly chosen box plots, but the complete figures for performance on the additional test problems can be found in Appendix. Box plots for these 6 problems are given in Figures \ref{fig:b1}-\ref{fig:b12}.

\begin{figure*}[ht]
  \begin{minipage}[t]{0.5\linewidth}
    \centering
    \includegraphics[scale=0.35]{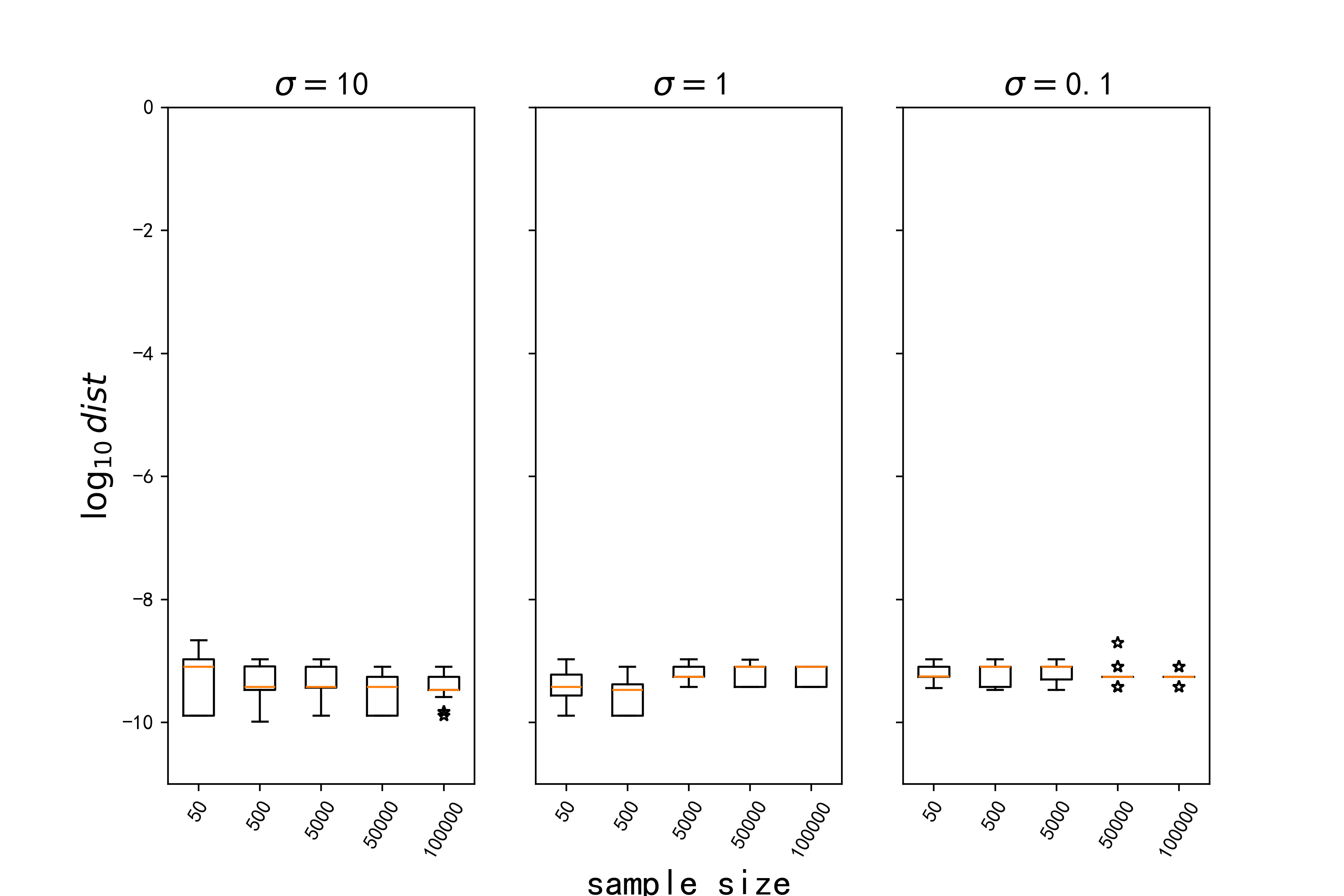}
    \caption{\scriptsize{Performance on HS11 w.r.t after 50 iterations.}}
    \label{fig:b1}
  \end{minipage}%
  \begin{minipage}[t]{0.5\linewidth}
    \centering
    \includegraphics[scale=0.35]{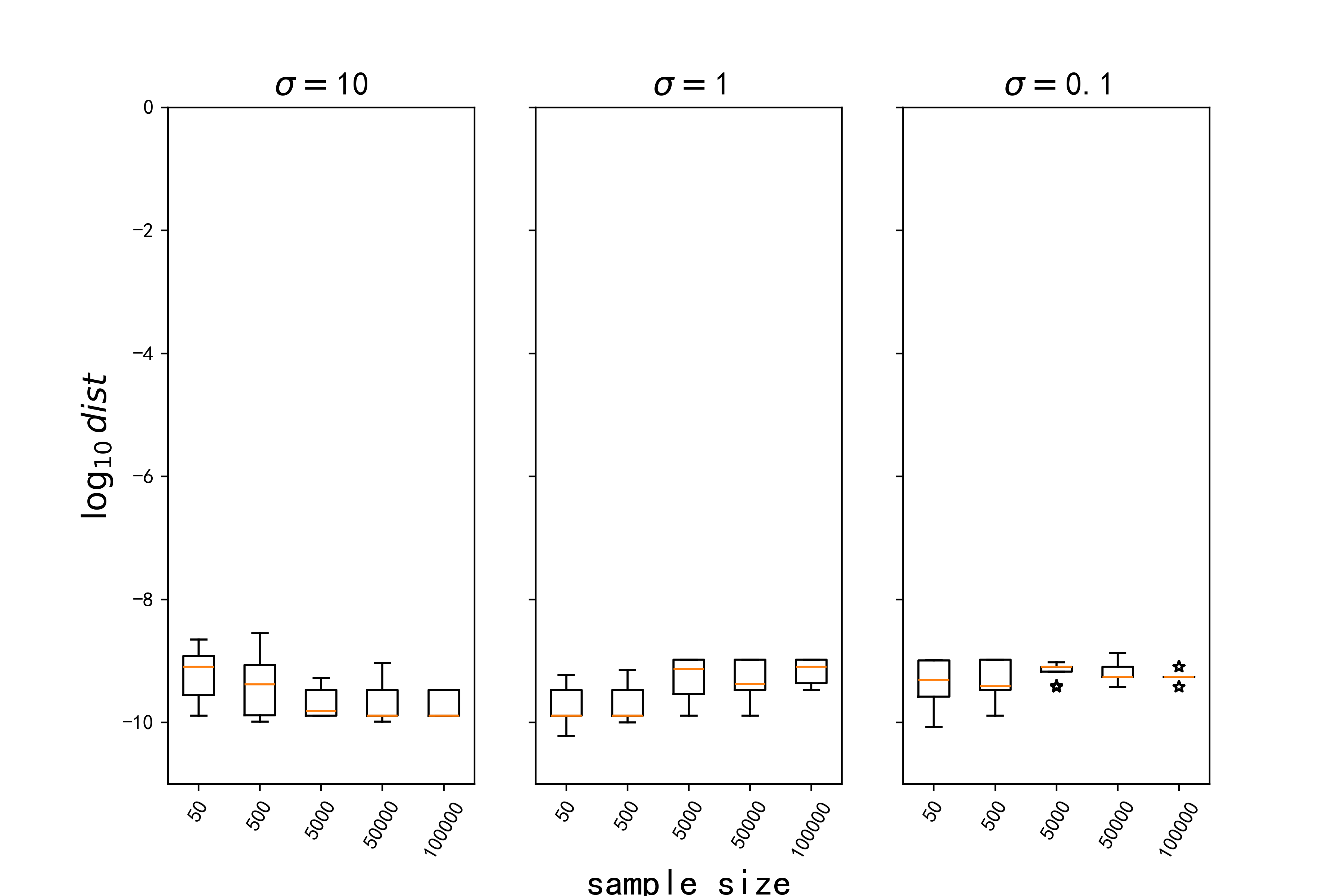}
    \caption{\scriptsize{Performance on HS11 w.r.t after 1 500 iterations.}}
  \end{minipage}
\end{figure*}

\begin{figure*}[ht]
  \begin{minipage}[t]{0.5\linewidth}
    \centering
    \includegraphics[scale=0.35]{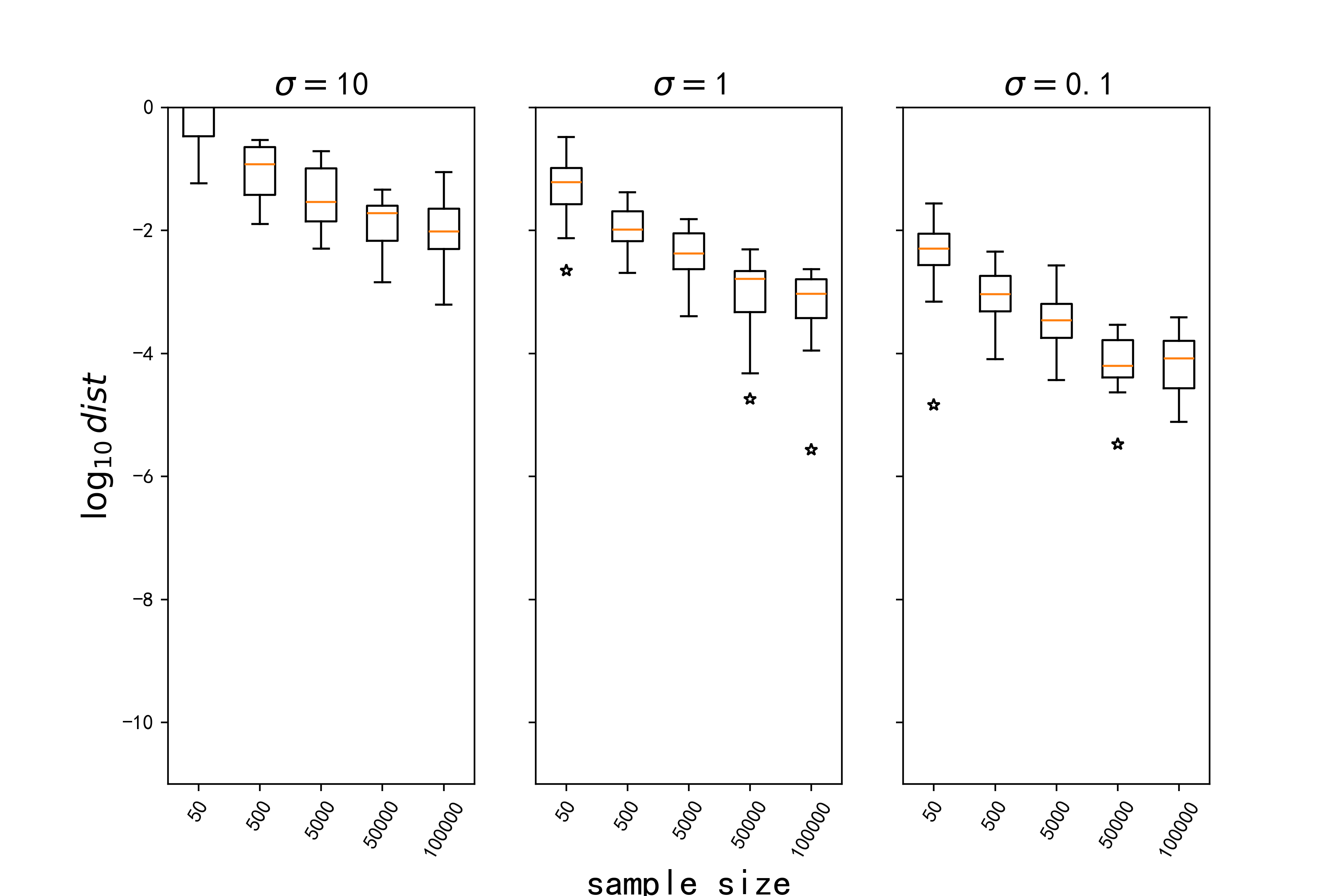}
    \caption{\scriptsize{Performance on HS30 w.r.t after 50 iterations.}}
  \end{minipage}%
  \begin{minipage}[t]{0.5\linewidth}
    \centering
    \includegraphics[scale=0.35]{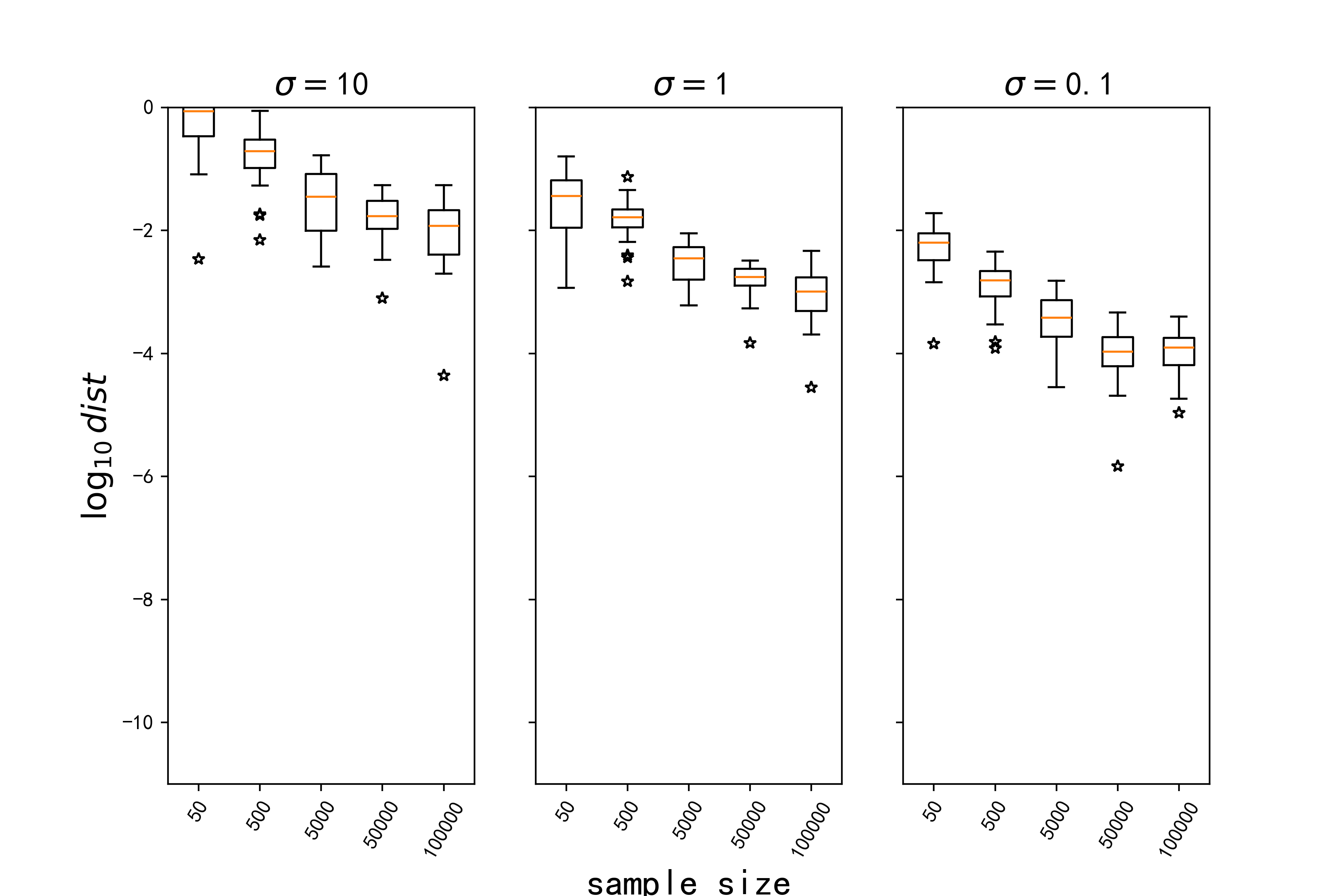}
    \caption{\scriptsize{Performance on HS30 w.r.t after 1 500 iterations.}}
  \end{minipage}
\end{figure*}

\begin{figure*}[ht]
  \begin{minipage}[t]{0.5\linewidth}
    \centering
    \includegraphics[scale=0.35]{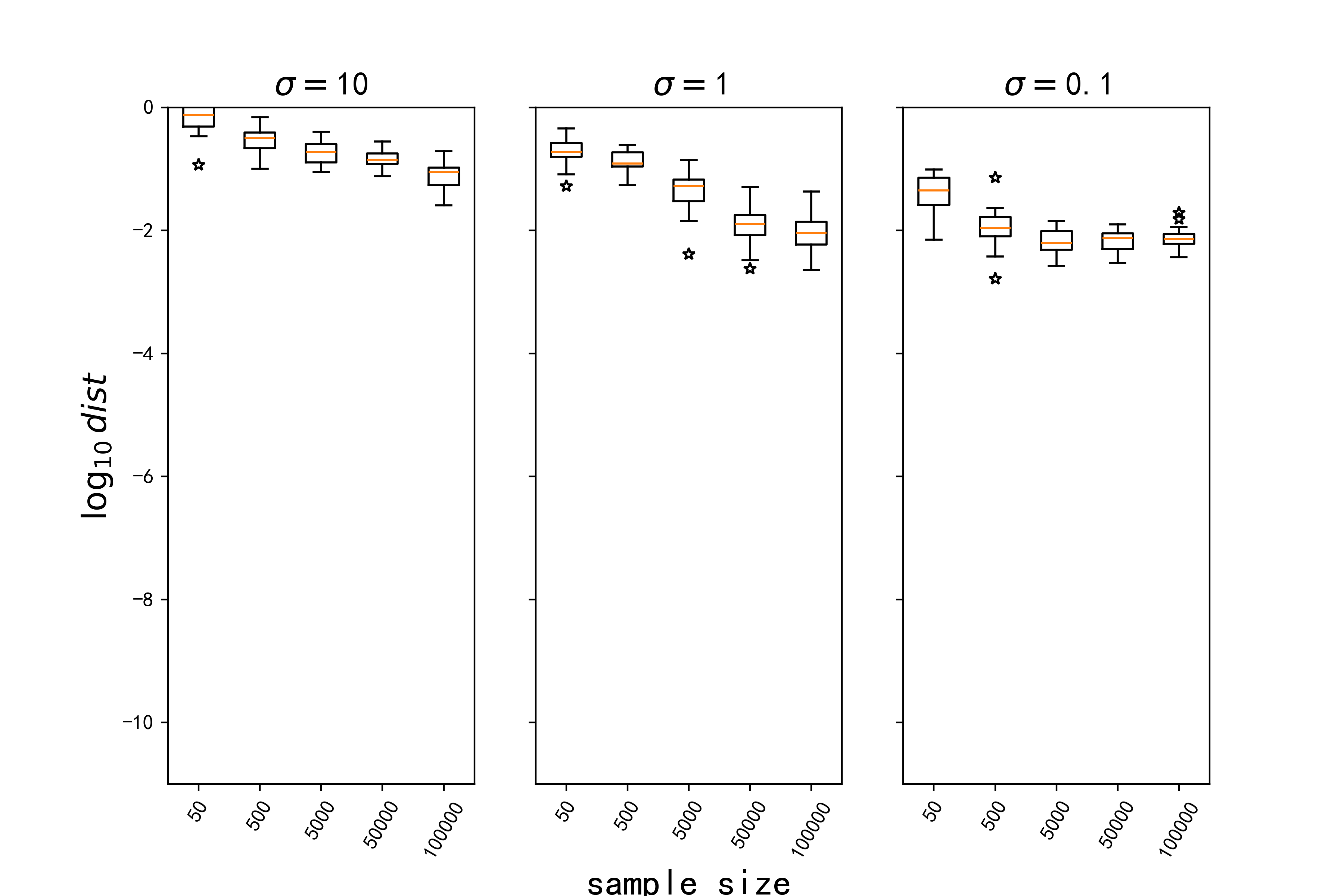}
    \caption{\scriptsize{Performance on HS46 w.r.t after 50 iterations.}}
  \end{minipage}%
  \begin{minipage}[t]{0.5\linewidth}
    \centering
    \includegraphics[scale=0.35]{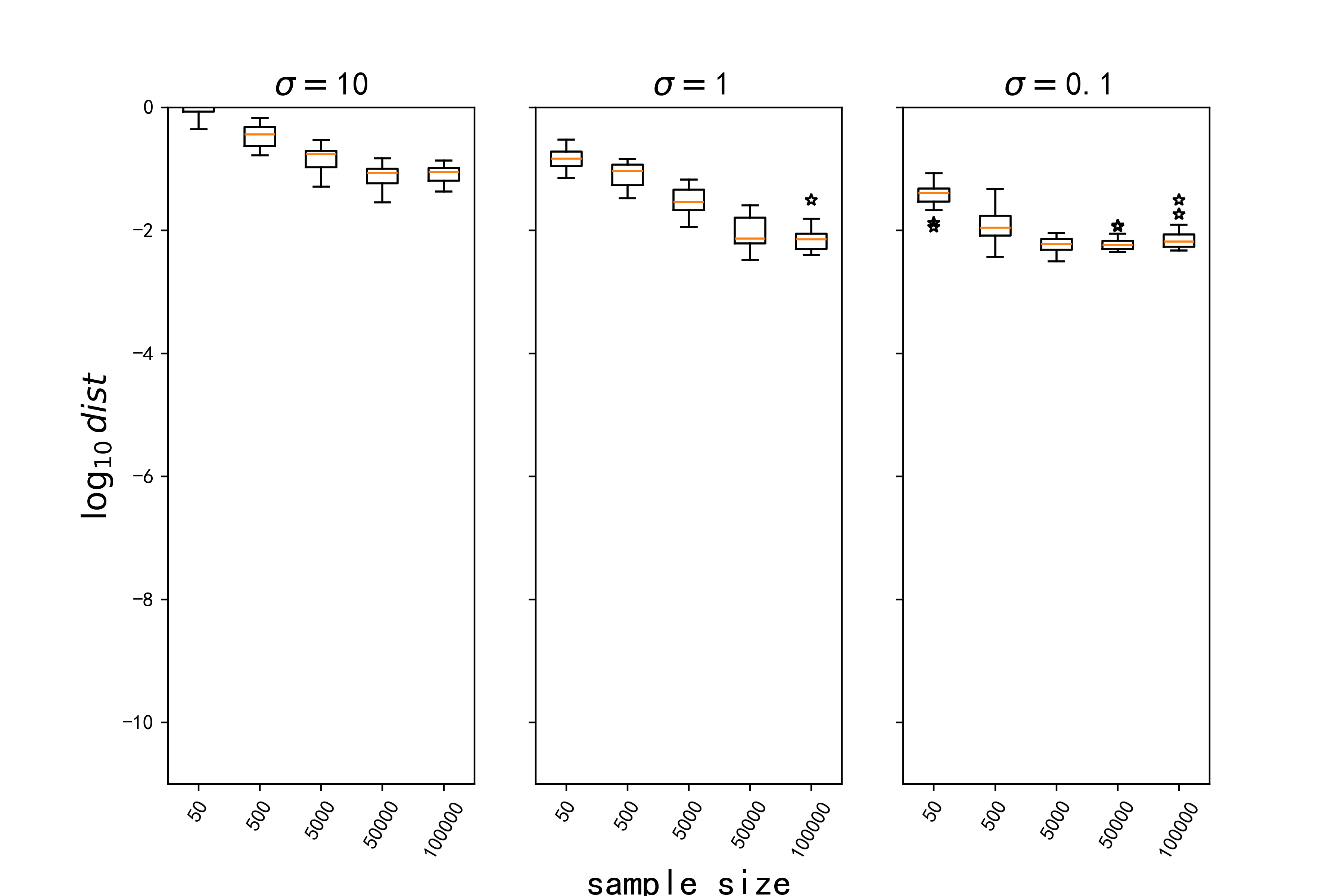}
    \caption{\scriptsize{Performance on HS46 w.r.t after 1 500 iterations.}}
  \end{minipage}
\end{figure*}

\begin{figure*}[ht]
  \begin{minipage}[t]{0.5\linewidth}
    \centering
    \includegraphics[scale=0.35]{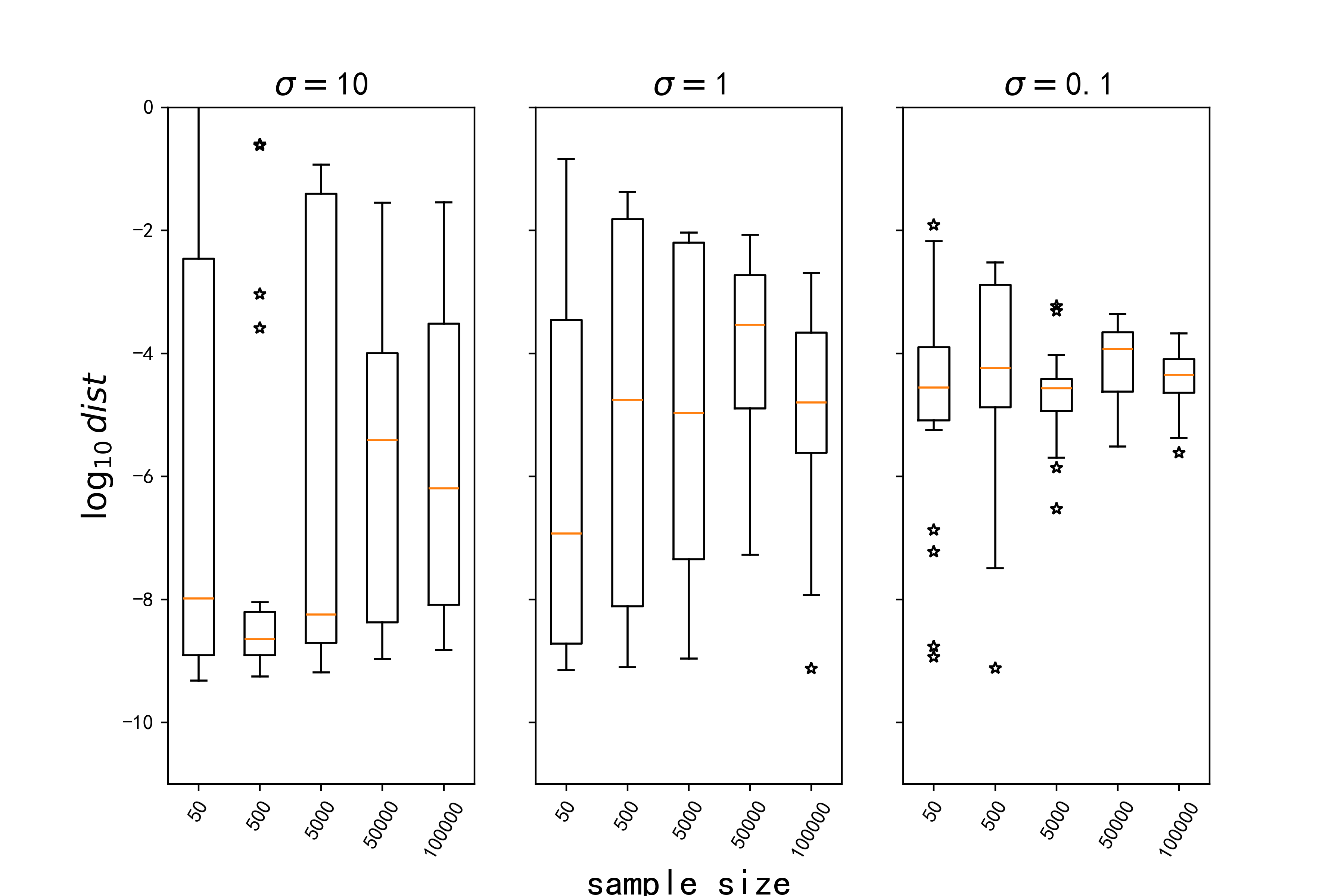}
    \caption{\scriptsize{Performance on HS61 w.r.t after 50 iterations.}}
  \end{minipage}%
  \begin{minipage}[t]{0.5\linewidth}
    \centering
    \includegraphics[scale=0.35]{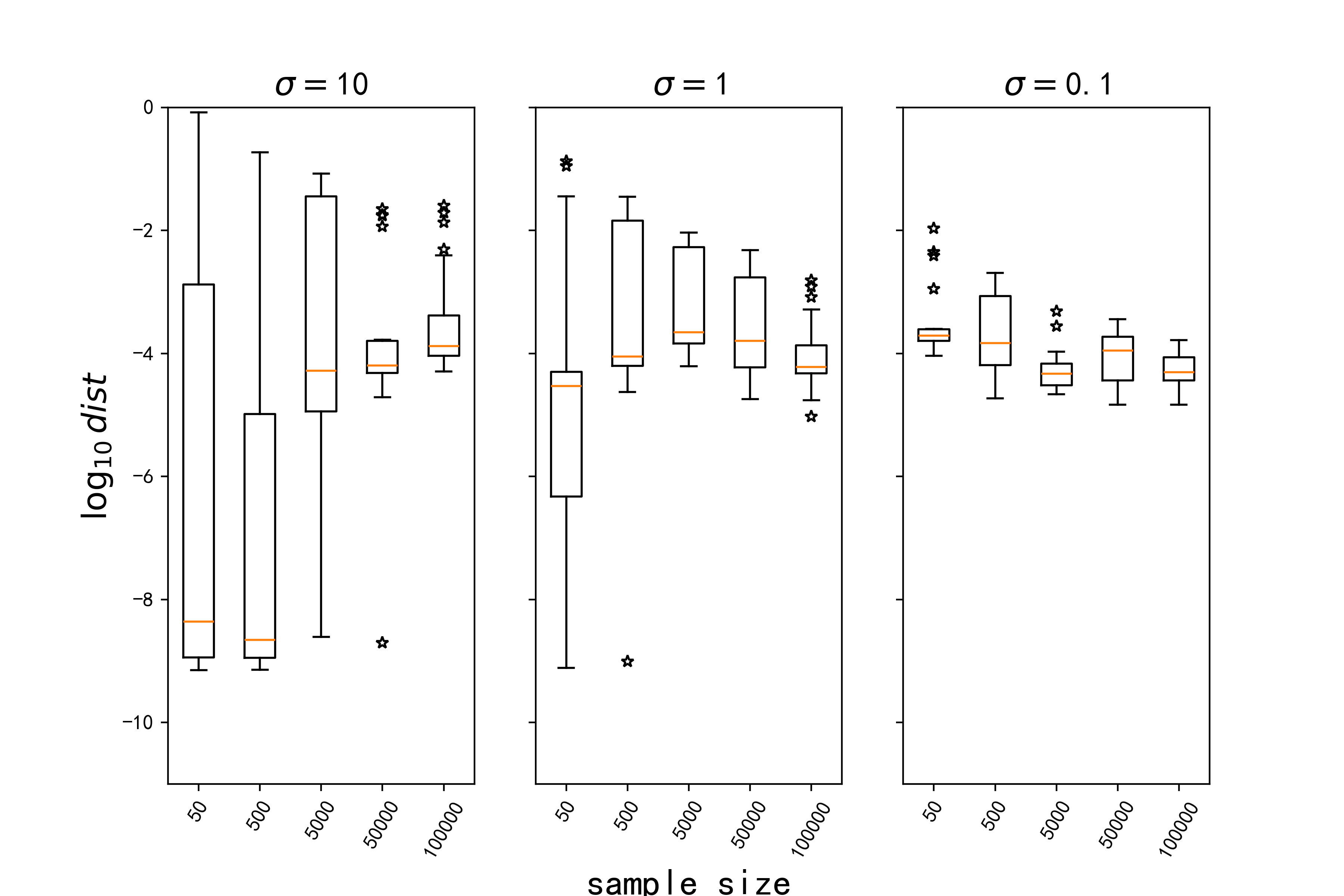}
    \caption{\scriptsize{Performance on HS61 w.r.t after 1 500 iterations.}}
  \end{minipage}
\end{figure*}

\begin{figure*}[ht]
  \begin{minipage}[t]{0.5\linewidth}
    \centering
    \includegraphics[scale=0.35]{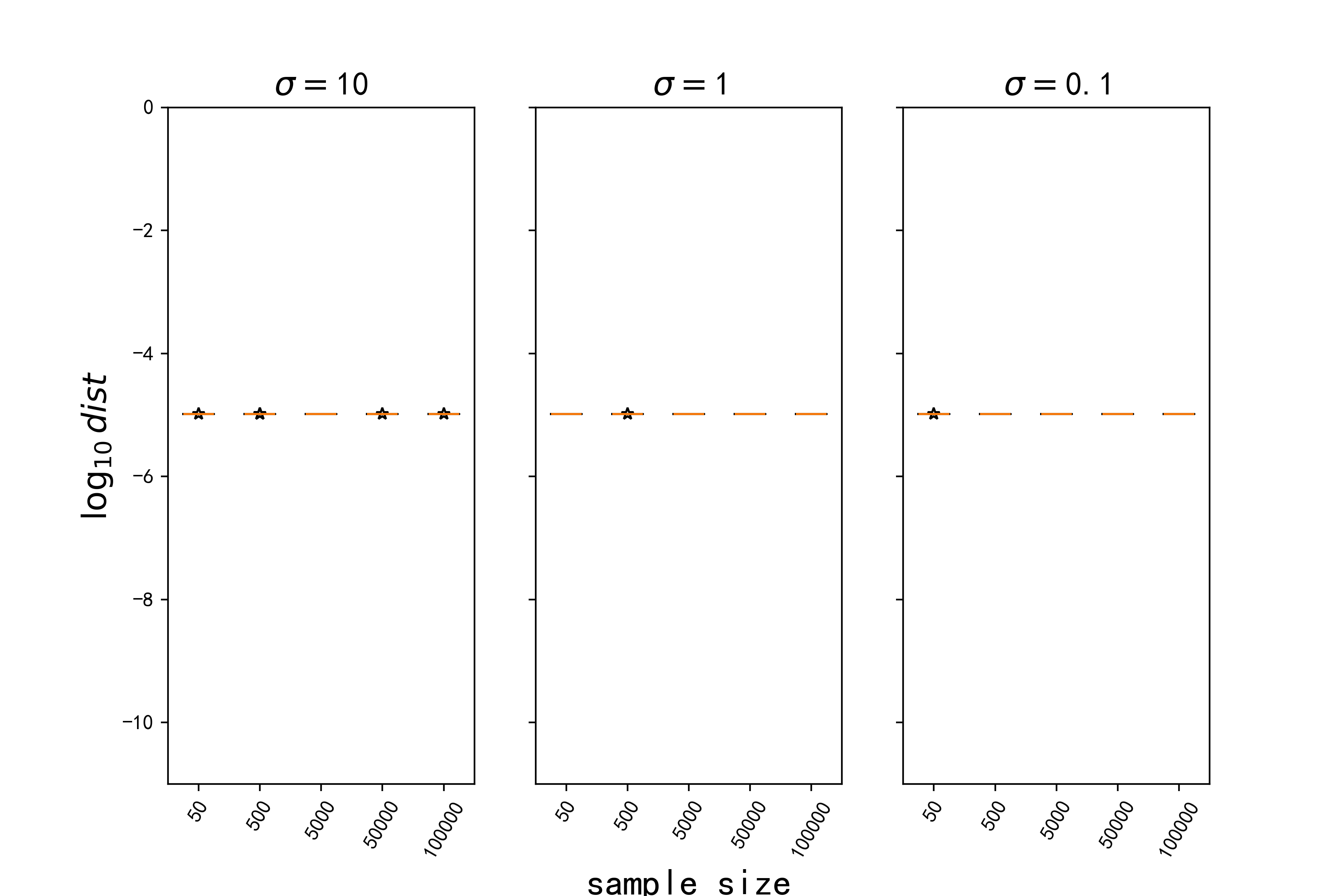}
    \caption{\scriptsize{Performance on S324 w.r.t after 50 iterations.}}
  \end{minipage}%
  \begin{minipage}[t]{0.5\linewidth}
    \centering
    \includegraphics[scale=0.35]{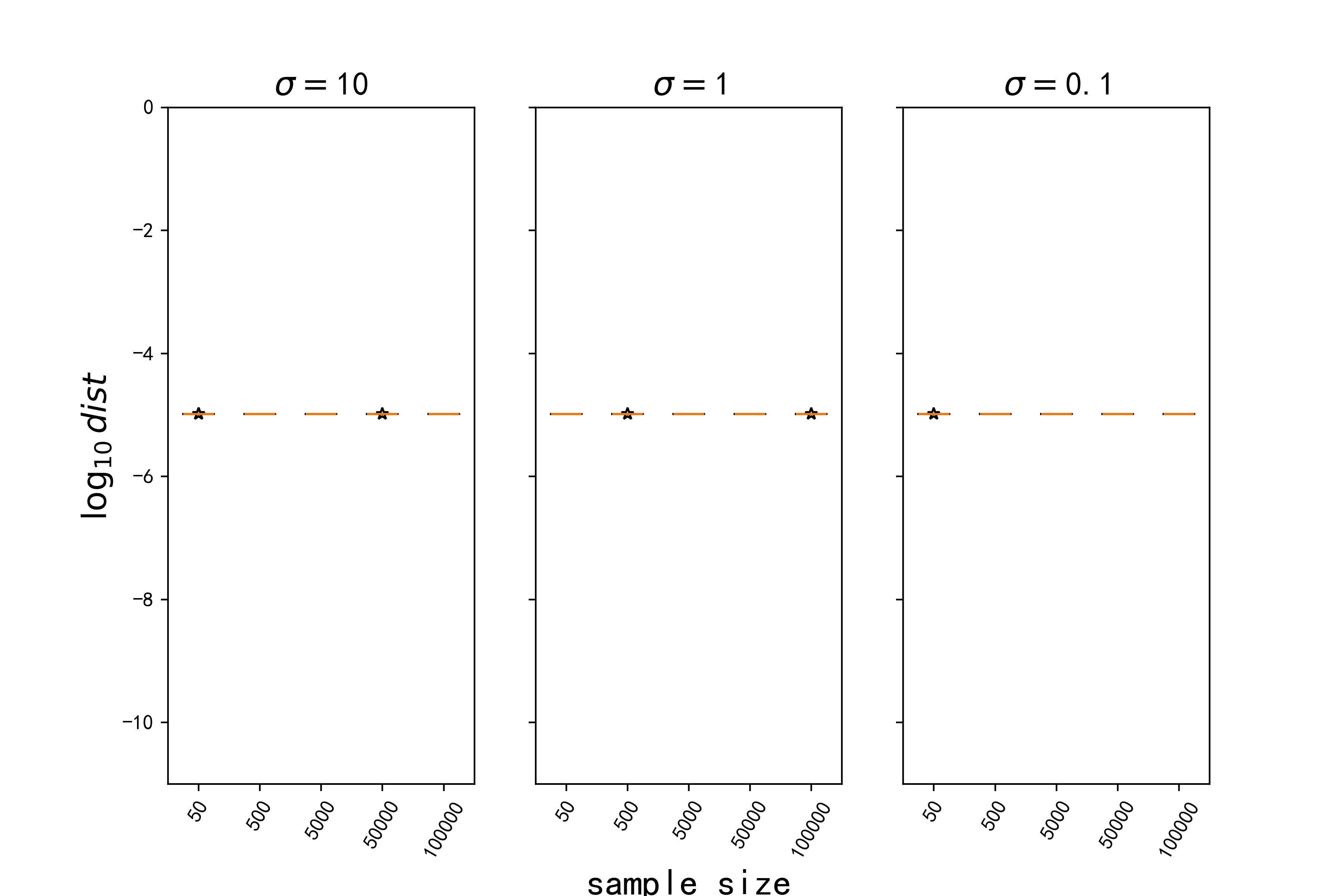}
    \caption{\scriptsize{Performance on S324 w.r.t after 1 500 iterations.}}
  \end{minipage}
\end{figure*}

\begin{figure*}[ht]
  \begin{minipage}[t]{0.5\linewidth}
    \centering
    \includegraphics[scale=0.35]{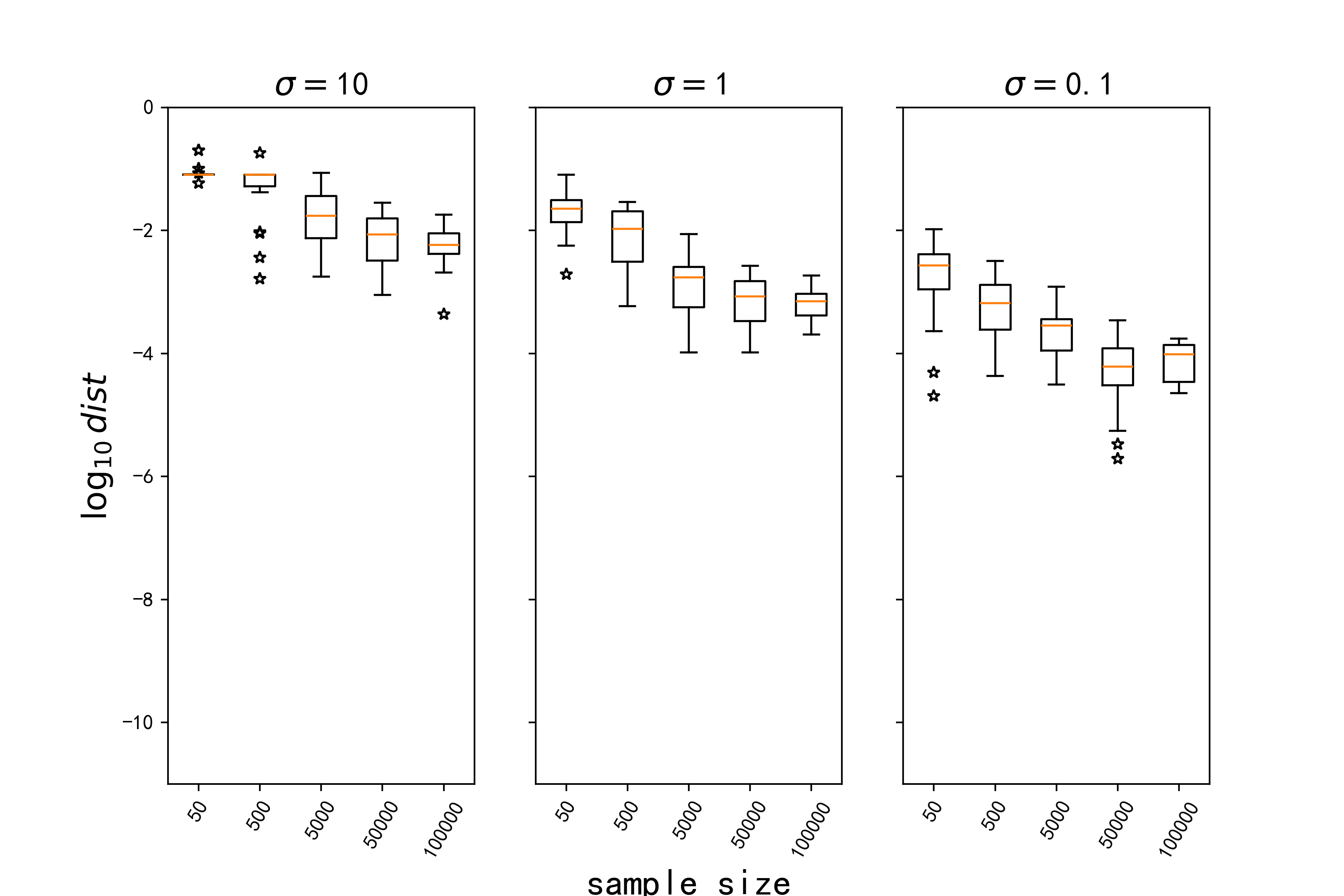}
    \caption{\scriptsize{Performance on S327 w.r.t after 50 iterations.}}
  \end{minipage}%
  \begin{minipage}[t]{0.5\linewidth}
    \centering
    \includegraphics[scale=0.35]{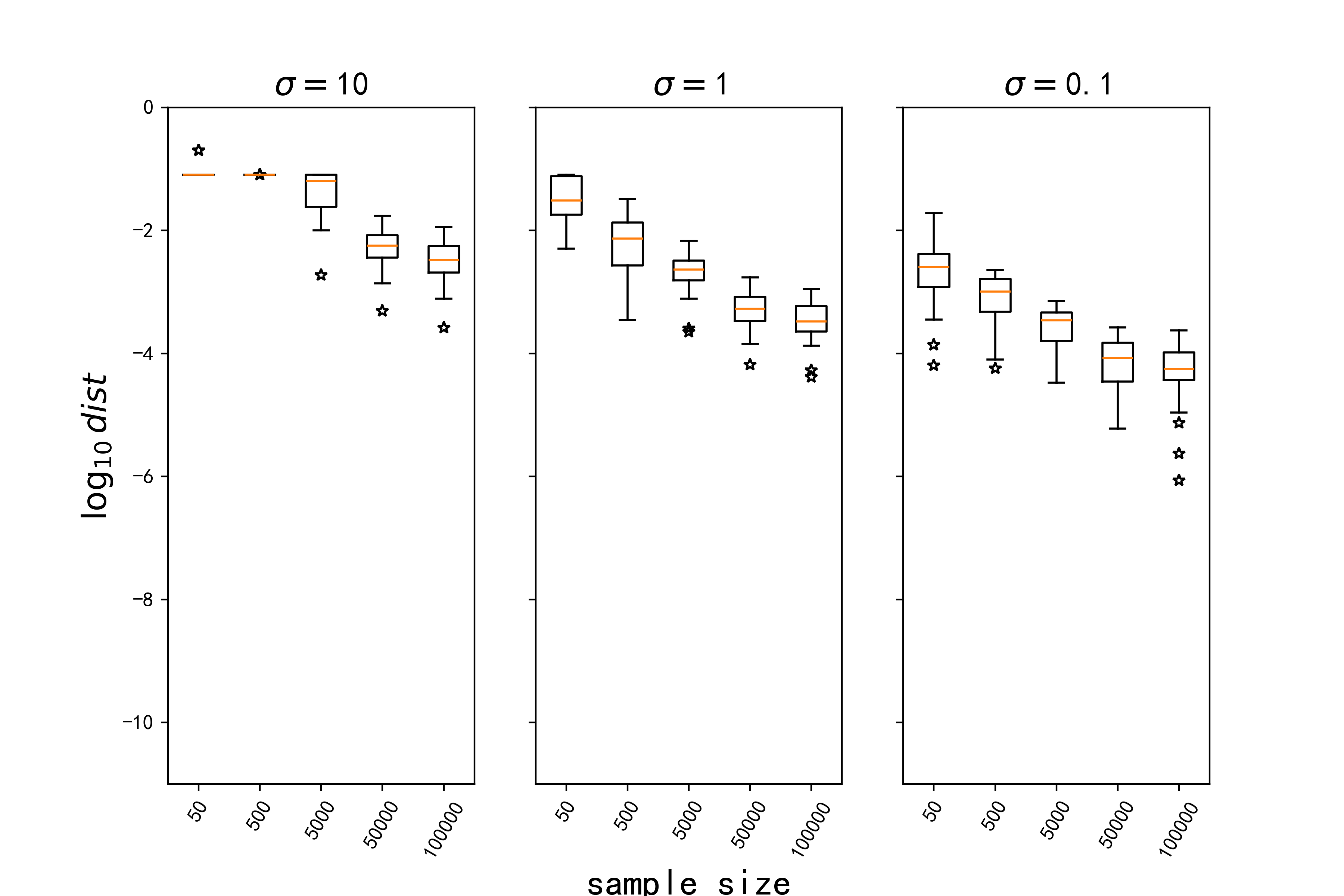}
    \caption{\scriptsize{Performance on S327 w.r.t after 1 500 iterations.}}
    \label{fig:b12}
  \end{minipage}
\end{figure*}}

We can note several general patterns that can be seen from the figures informative in regards to the specific properties of the Algorithm as well as broad insights for stochastic constrained optimization in general.

From these figures, one can see that Algorithm \ref{ssqp} performs better on instances with small levels of noise, which is to be expected.

In addition, we see that using a larger sample size does seem to reduce the overall variation in the objective performance, however not its expected value. Furthermore, there is often a threshold at which the additional variance reduction is marginal with increasing sample size. This threshold varies depending on the problem.

Finally, we see that as standard with stochastic optimization, there is an exponential increase in the number of iterations required to achieve an order of magnitude. Moreover, there is considerable variation across problems as to how steep this relationship is. This confirms the overall understanding that there is considerable cost in total samples necessary to achieve precision.

{
\subsection{Learning directed acyclic graphs}

We have used Algorithm 2 to learn the structure (in the form of directed acyclic graphs (DAGs) and weights of Bayesian Networks (BN) from data in the framework of NOTEARS \cite{NEURIPS2018_e347c514}. In this work, the problem of estimating the connections of the DAG defining the BN is formulated as purely continuous constrained  optimization problem.

BNs  find wide applications in a variety of fields, ranging from bio-informatics to health care, image processing, economic risk analysis, causal inference, and engineering fault diagnosis \cite{Pourret2008,Kitson2023}. Learning DAGs from data is known as an NP-hard problem \cite{Chickering1996},due to the combinatorial acyclicity constraint.

One of the most used approaches to learning structure of DAGs is score-based learning. Loosely speaking, the main task is to find a highest scoring graph according to some function measuring fit and parsimony. Traditionally, the goal is to solve the following combinatorial optimization problem:
\begin{equation}\label{eq:dags_com}
    \begin{split}
        \min\limits_{W\in\mathbb{R}^{d\times d}}&\ F(W)\\
        \text{s.t.}&\ G(W)\in \textrm{DAGs,}
    \end{split}
\end{equation}
where $G(W)$ is the $d$-node graph introduced by the weighted adjacency matrix $W$, $F$ is a score function.

The task is to learn a DAG for the joint distribution of a d-dimensional random vector $X=(X_1,X_2,\cdots,X_d)$, given a data matrix of $n$ i.i.d observations of $\textbf{X}$, which is modeled via a structural equation model (SEM) defined by a weighted adjacency matrix $W\in\mathbb{R}^{d\times d}$.
}{
\subsubsection{A sketch of NOTEARS}
Zheng et al. \cite{NEURIPS2018_e347c514} proposed NOTEARS, an approach for score-based learning of BNs, where problem \eqref{eq:dags_com} is converted into a continuous program:
\begin{equation}\label{eq:notears}
    \begin{split}
        \min\limits_{W\in\mathbb{R}^{d\times d}}&\ F(W)\\
        \text{s.t.}&\ h(W)=0,
    \end{split}
\end{equation}
where $h:\mathbb{R}^{d\times d}\rightarrow\mathbb{R}$ is a smooth function over real matrices. The equation $h(W)=0$ exactly characterizes the acyclic graphs, i.e.
\[
 G(W)\in \textrm{DAGs,}\Longleftrightarrow h(W)
=0.\]

In \cite{NEURIPS2018_e347c514}, the authors focus on the setting of linear SEM:
\begin{equation}\label{eq:sample}
    X_j={X}W_{\cdot j}+z_j, \forall j=1,2,,d,
\end{equation}
where $W_{\cdot j}$ is the $j-$th column of $W$ and $z_j$ is random noise. The score function $F(W)$ is chosen to be a regularized least square loss function:
\begin{equation}\label{eq:fw}
   F(W)=\frac1{2n}\|\textbf{X}-\textbf{X}W\|_F^2+\lambda\|W\|_1,
\end{equation}
where $\|W\|_1=\|vec(W)\|_1$ and $\lambda\geq0$ is a regularization parameter. The function $h(W)$ is defined as
\begin{equation}\label{eq:hw}
h(W)=tr(W\circ W)-d,
\end{equation}
where $\circ$ is the Hadamard product and $e^A$ is the matrix exponential of A. NOTEARS uses a classical augmented Lagrangian method to solve \eqref{eq:notears}.

In order to reduce the number of false discoveries, a weight thresholding is used: after getting a approximate stationary point $\tilde{W}$ of \eqref{eq:notears}, given a fixed threshold $w>0$, set any weights smaller than $w$ in absolute value to zero. It is shown that $w=0.3$ works well.

\subsubsection{Numerical results}
We applied Algorithm \ref{ssqp} to \eqref{eq:notears} with the settings \eqref{eq:fw} and \eqref{eq:hw}. We follow the ideas introduced in \cite{NEURIPS2018_e347c514} to generate the test graph: A random graph $G$ was generated from one of two common random graph models, Erd\"{o}s-R\'{e}nyi (ER) or scale-free (SF), and was assigned a weight matrix $W$ from Unif$([-2,-0.5]\cup[0.5,2])$. We sampled \eqref{eq:sample} from Gaussian (Gauss) and Exponential (Exp) noise models with number of nodes $d\in\{10,15\}$, number of edges $ed\in \{d,2d,3d\}$ and sample size $n=1000$. For weight thresholding, we use the setting in NOTEARS, i.e., $w=0.3$. The regularization parameter is $\lambda = 0.1$.

 Learning DAGs is understood to be difficult and solving \eqref{eq:notears} at large scale often becomes more of a scientific computing rather than algorithmic optimization exercise. Here we aim to assess the ability of Algorithm \ref{ssqp} to solve \eqref{eq:notears}, rather than to develop an efficient method for it. The latter needs a large number of additional issues, such as linear algebra tricks, sparse matrix techniques, to enhance efficiency. Therefore, the tests were performed on small graphs only, and run Algori. Practical implementation issues for large scale models will be considered in a further work.

The results of the test, compared with NOTEARS are listed in Table \ref{tab:tab1}. Like \cite{NEURIPS2018_e347c514}, we used four common graph metrics to evaluate the learned graph: 1) False discovery rate (FDR), 2) True positive rate (TPR), 3)False positive rate (FPR), and 4) Structure Hamming distance (SHD).
We also recorded the distance in Frobenius norm from the estimated graph $W^*$ to the ground truth graph $W_{true}$. From the reported results, the behaviors of Algorithm \ref{ssqp} on learning DAGs are encouraging, though more tests, especially tests on large graphs are needed.
}

\begin{sidewaystable}[htbp]
  \centering
  \caption{Numerical Results, Compared with NOTEARS}
    \begin{tabular}{|c|c|c|c|c|c|c|c|c|c|c|c|c|c|}
    \hline
    \multirow{2}{*}{Graph} & \multirow{2}{*}{Noise} & \multirow{2}{*}{Nodes} & \multicolumn{1}{c|}{\multirow{2}{*}{Edges}} & \multicolumn{2}{c|}{FDR} & \multicolumn{2}{c|}{TPR} & \multicolumn{2}{c|}{FPR } & \multicolumn{2}{c|}{SHD} & \multicolumn{2}{c|}{$\|W_{true}-W^*\|$} \\ \cline{5-14}
          &       &       &       & \multicolumn{1}{l|}{Alg.} & \multicolumn{1}{l|}{\scriptsize{NOTEARS}} & \multicolumn{1}{l|}{Alg. \ref{ssqp}} & \multicolumn{1}{l|}{\scriptsize{NOTEARS}} & \multicolumn{1}{l|}{Alg. \ref{ssqp}} & \multicolumn{1}{l|}{\scriptsize{NOTEARS}} & \multicolumn{1}{l|}{Alg. \ref{ssqp}} & \multicolumn{1}{l|}{\scriptsize{NOTEARS}} & \multicolumn{1}{l|}{Alg. \ref{ssqp}} & \multicolumn{1}{l|}{\scriptsize{NOTEARS}} \\
    \hline
    \multirow{12}{*}{ER} & \multirow{6}{*}{Guass} & \multirow{3}{*}{10} & 10    & 0.00  & 0.00  & 1.00  & 1.00  & 0.00  & 0.00  & 0     & 0     & 0.786  & 0.626  \\
          &       &       & 20    & 0.00  & 0.15  & 0.90  & 0.85  & 0.00  & 0.12  & 2     & 4     & 1.265  & 2.607  \\
          &       &       & 30    & 0.14  & 0.04  & 0.80  & 0.83  & 0.27  & 0.07  & 9     & 5     & 3.024  & 2.820  \\ \cline{3-14}
          &       & \multirow{3}{*}{15} & 15    & 0.00  & 0.00  & 1.00  & 1.00  & 0.00  & 0.00  & 0     & 0     & 0.784  & 0.468  \\
          &       &       & 30    & 0.00  & 0.00  & 0.97  & 0.97  & 0.00  & 0.00  & 1     & 1     & 1.205  & 1.232  \\
          &       &       & 45    & 0.03  & 0.12  & 0.82  & 0.82  & 0.02  & 0.08  & 9     & 9     & 2.943  & 3.218  \\ \cline{2-14}
          & \multirow{6}{*}{Exp} & \multirow{3}{*}{10} & 10    & 0.20  & 0.00  & 0.80  & 1.00  & 0.06  & 0.00  & 3     & 0     & 1.831  & 0.556  \\
          &       &       & 20    & 0.00  & 0.00  & 0.75  & 0.85  & 0.00  & 0.00  & 5     & 3     & 2.069  & 1.654  \\
          &       &       & 30    & 0.23  & 0.00  & 0.90  & 0.90  & 0.53  & 0.00  & 11    & 3     & 3.270  & 2.419  \\ \cline{3-14}
          &       & \multirow{3}{*}{15} & 15    & 0.08  & 0.00  & 0.73  & 0.93  & 0.01  & 0.00  & 5     & 1     & 2.084  & 0.887  \\
          &       &       & 30    & 0.21  & 0.03  & 0.87  & 0.93  & 0.09  & 0.01  & 11    & 2     & 3.255  & 1.341  \\
          &       &       & 45    &  0.29     & 0      &0.78       & 0.87      &0.23       &0.00       &23      &6       &4.581       &2.530  \\
    \hline
    \multirow{12}{*}{SF} & \multirow{6}{*}{Guass} & \multirow{3}{*}{10} & 10    & 0.00  & 0.00  & 1.00  & 1.00  & 0.00  & 0.00  & 0     & 0     & 0.293  & 0.274  \\
          &       &       & 20    & 0.26  & 0.14  & 0.82  & 0.71  & 0.18  & 0.07  & 7     & 6     & 2.704  & 2.534  \\
          &       &       & 30    & 0.00  & 0.00  & 0.92  & 0.92  & 0.00  & 0.00  & 0     & 0     & 4.646  & 3.492  \\ \cline{3-14}
          &       & \multirow{3}{*}{15} & 15    & 0.00  & 0.00  & 1.00  & 1.00  & 0.00  & 0.00  & 0     & 0     & 0.349  & 0.341  \\
          &       &       & 30    & 0.00  & 0.00  & 1.00  & 1.00  & 0.00  & 0.00  & 0     & 0     & 0.609  & 0.573  \\
          &       &       & 45    & 0.03  & 0.03  & 0.95  & 0.95  & 0.02  & 0.02  & 3     & 3     & 2.315  & 2.385  \\ \cline{2-14}
          & \multirow{6}{*}{Exp} & \multirow{3}{*}{10} & 10    & 0.10      &0.00       &1.00       &1.00       &0.03       &0.00       &1       & 0      &1.781       &0.243  \\
          &       &       & 20    & 0.00      &0.00       &0.941       &0.882       &0.00       &0.00       & 1      &2       &1.146       &1.467  \\
          &       &       & 30    & 0.00      &0.05       &0.83       &0.79       &0.00       &0.05       &4       & 6      &2.535       &3.245  \\ \cline{3-14}
          &       & \multirow{3}{*}{15} & 15    & 0.07      & 0.00      &1.00       &1.00       &0.01       &0.00       &1       &0       &0.727       &0.324  \\
          &       &       & 30    & 0.00      &0.00       &0.96       &0.96       &0.00       &0.00       & 1      &1       & 1.418      &1.303  \\
          &       &       & 45    & 0.17      &0.00       &0.87       &1.00       &0.11       &  0.00     & 12      & 0      & 3.077      &2.089  \\
    \hline
    \end{tabular}%
  \label{tab:tab1}%
\end{sidewaystable}%

\section{Conclusions} \label{sect:conc}

In this paper, we proposed a robust SQP method for optimization with stochastic objective functions and deterministic constraints. The presented method generalizes Paquette and Scheinberg's line search method for unconstrained stochastic optimizations to the SQP method for  constrained stochastic optimization of the form \eqref{eq:prob}. Ideas of Burke and Han's robust SQP \cite{burke1989robust} method are adopted to ensure the consistency of QP subproblems. Global convergence in the case where the penalty parameter keeps bounded is proved, meanwhile the probability of the penalty parameter approaching infinity is shown to be 0, with a specific sampling method.

Some advanced SQP schemes in deterministic nonlinear programming, for example, the inexact SQP scheme \cite{ByrdCN2008}, the filterSQP technique \cite{FletcL2002,FletcLT2002}, the stabilized SQP scheme \cite{Wright1998} and etc, can be used to extend this work. It will be also interesting and deserves studying to exploit other classical schemes for nonlinear constrained optimizations, such as the augmented Lagrangian method and interior point method, to solve stochastic problems. Finally, establishing fast local convergence as well as real time iteration and other online schemes of using SQP, two of the classical strengths of SQP methods, are of interest.

\section*{Acknowledgement}
Computing resources for the paper were supported by the OP VVV project:
CZ.02.1.01/0.0/0.0/16\_019/0000765 ``Research Center for Informatics''.


\section*{Funding}
Songqiang Qiu was supported by a scholarship granted by the China Scholarship Council (No. 202006425023).
Vyacheslav Kungurtsev was supported by the European Union's Horizon Europe research and innovation programme under grant agreement No. 101084642.

\bibliographystyle{plain}
\bibliography{stosqprv}

\begin{thebibliography}{10}

\bibitem{Aitchison1958}
J.~Aitchison and S.D. Silvey.
\newblock {Maximum-likelihood estimation of parameters subject to restraints}.
\newblock {\em Ann. Math. Statist.}, 29(3):813 -- 828, 1958.

\bibitem{berahas2021stochastic}
A.S. Berahas, F.E. Curtis, M.J. O’Neill, and D.P. Robinson.
\newblock A stochastic sequential quadratic optimization algorithm for
  nonlinear-equality-constrained optimization with rank-deficient {J}acobians.
\newblock {\em Math. Oper. Res.}, 2023.

\bibitem{Berahas2021}
A.S. Berahas, F.E. Curtis, D.P. Robinson, and B.~Zhou.
\newblock Sequential quadratic optimization for nonlinear equality constrained
  stochastic optimization.
\newblock {\em SIAM J. Optim.}, 31(2):1352--1379, 2021.

\bibitem{Berahas2023a}
A.S. Berahas, J.~Shi, Z.~Yi, and B.~Zhou.
\newblock Accelerating stochastic sequential quadratic programming for equality
  constrained optimization using predictive variance reduction.
\newblock {\em Comput. Optim. Appl.}, 86(1):79--116, April 2023.

\bibitem{Bertsekas2016}
D.P. Bertsekas.
\newblock {\em Nonlinear Programming}.
\newblock Athena Scientific, Belmont, Massachusetts, 3 edition, 2016.

\bibitem{blanchet2019}
J.~Blanchet, C.~Cartis, M.~Menickelly, and K.~Scheinberg.
\newblock Convergence rate analysis of a stochastic trust-region method via
  supermartingales.
\newblock {\em INFORMS J. Optim.}, 1(2):92--119, 2019.

\bibitem{Burke1986}
J.~Burke and S.-P. Han.
\newblock A {G}auss-{N}ewton approach to solving generalized inequalites.
\newblock {\em Math. Oper. Res.}, 11(4):632 -- 643, 1986.

\bibitem{burke1989robust}
J.V. Burke and S.-P. Han.
\newblock A robust sequential quadratic programming method.
\newblock {\em Math. Program.}, 43(1-3):277--303, 1989.

\bibitem{ByrdCN2008}
R.H. Byrd, F.E. Curtis, and J.~Nocedal.
\newblock An inexact {SQP} method for equality constrained optimization.
\newblock {\em SIAM J. Optim.}, 19(1):351--369, 2008.

\bibitem{Cartis2018}
C.~Cartis and K.~Scheinberg.
\newblock Global convergence rate analysis of unconstrained optimization
  methods based on probabilistic models.
\newblock {\em Math. Program.}, 169(2):337--375, 2018.

\bibitem{Chen2018a}
C.~Chen, F.~Tung, N.~Vedula, and G.~Mori.
\newblock Constraint-aware deep neural network compression.
\newblock In Vittorio Ferrari, Martial Hebert, Cristian Sminchisescu, and Yair
  Weiss, editors, {\em Computer Vision -- ECCV 2018}, pages 409--424, Cham,
  2018. Springer International Publishing.

\bibitem{Chickering1996}
D.M. Chickering.
\newblock Learning {B}ayesian networks is {NP}-complete.
\newblock In {\em Learning from Data}, pages 121--130. Springer New York, 1996.

\bibitem{Canovas2006a}
M.J. Cánovas, M.A. López, J.~Parra, and F.J. Toledo.
\newblock Lipschitz continuity of the optimal value via bounds on the optimal
  set in linear semi-infinite optimization.
\newblock {\em Math. Oper. Res.}, 31(3):478--489, 2006.

\bibitem{ConnGT2000}
A.R. Conn, N.I.M. Gould, and Ph.L. Toint.
\newblock {\em Trust-Region Methods}.
\newblock SIAM, Philadelphia, PA, USA, 2000.

\bibitem{CurtiNW2009}
F.E. Curtis, J.~Nocedal, and A.~W{\"a}chter.
\newblock A matrix-free algorithm for equality constrained optimization
  problems with rank-deficient jacobians.
\newblock {\em SIAM J. Optim.}, 20(3):1224--1249, 2009.

\bibitem{Curtis2021a}
F.E. Curtis, M.J. O’Neill, and D.P. Robinson.
\newblock Worst-case complexity of an sqp method for nonlinear equality
  constrained stochastic optimization.
\newblock {\em Math. Program.}, 205(1–2):431--483, 2023.

\bibitem{curtis2023sequential}
F.E. Curtis, D.P. Robinson, and B.~Zhou.
\newblock Sequential quadratic optimization for stochastic optimization with
  deterministic nonlinear inequality and equality constraints.
\newblock {\em arXiv preprint arXiv:2302.14790}, 2023.

\bibitem{Dennis1997}
J.E. Dennis, M.~El-Alem, and M.C. Maciel.
\newblock A global convergence theory for general trust-region-based algorithms
  for equality constrained optimization.
\newblock {\em SIAM J. Optim.}, 7(1):177--207, 1997.

\bibitem{DiPillo1988}
G.~Di~Pillo and L.~Grippo.
\newblock On the exactness of a class of nondifferentiable penalty functions.
\newblock {\em J. Optim. Theory Appl.}, 57(3):399--410, 1988.

\bibitem{DiPillo1989}
G.~Di~Pillo and L.~Grippo.
\newblock Exact penalty functions in constrained optimization.
\newblock {\em SIAM J. Control Optim.}, 27(6):1333--1360, 1989.

\bibitem{Dupacova1988}
J.~Dupacova and R.~Wets.
\newblock {Asymptotic behavior of statistical estimators and of optimal
  solutions of stochastic optimization problems}.
\newblock {\em Ann. Statist.}, 16(4):1517 -- 1549, 1988.

\bibitem{FletcGLTW2002}
R.~Fletcher, N.I.M. Gould, S.~Leyffer, Ph.L. Toint, and A.~W{\"a}chter.
\newblock Global convergence of a trust-region {SQP}-filter algorithm for
  general nonlinear programming.
\newblock {\em SIAM J. Optim.}, 13(3):635--659, 2002.

\bibitem{FletcL2002}
R.~Fletcher and S.~Leyffer.
\newblock Nonlinear programming without a penalty function.
\newblock {\em Math. Program.}, 91(2):239--269, 2002.

\bibitem{FletcLT2002}
R.~Fletcher, S.~Leyffer, and Ph.L. Toint.
\newblock On the global convergence of a filter--{SQP} algorithm.
\newblock {\em SIAM J. Optim}, 13(1):44--59, 2002.

\bibitem{Hock1981}
W.~Hock and K.~Schittkowski.
\newblock Test examples for nonlinear programming code.
\newblock In {\em Lecture Notes in Economics and Mathematical Systems}, volume
  187. Springer-Verlag, Berlin, 1981.

\bibitem{Kaufman1978}
L.~Kaufman and V.~Pereyra.
\newblock A method for separable nonlinear least squares problems with
  separable nonlinear equality constraints.
\newblock {\em SIAM J. Numer. Anal.}, 15(1):12--20, 1978.

\bibitem{Kirkegaard1972}
P.~Kirkegaard and M.~Eldrup.
\newblock {POSITRONFIT}: A versatile program for analysing positron lifetime
  spectra.
\newblock {\em Comput. Phys. Comm.}, 3(3):240--255, 1972.

\bibitem{Kitson2023}
N.K. Kitson, A.C. Constantinou, Z.~Guo, Y.~Liu, and K.~Chobtham.
\newblock A survey of {B}ayesian network structure learning.
\newblock {\em Artif. Intell. Rev.}, 56(8):8721--8814, 2023.

\bibitem{Klenke2013}
A.~Klenke.
\newblock {\em Probability theory: a comprehensive course}.
\newblock Springer Science \& Business Media, 2013.

\bibitem{kungurtsev2013second}
V.~Kungurtsev.
\newblock {\em Second-derivative sequential quadratic programming methods for
  nonlinear optimization}.
\newblock University of California, San Diego, 2013.

\bibitem{na2021adaptive}
S.~Na, M.~Anitescu, and M.~Kolar.
\newblock An adaptive stochastic sequential quadratic programming with
  differentiable exact augmented lagrangians.
\newblock {\em Math. Program.}, 199(1–2):721--791, 2022.

\bibitem{Na2021}
S.~Na, M.~Anitescu, and M.~Kolar.
\newblock Inequality constrained stochastic nonlinear optimization via
  active-set sequential quadratic programming.
\newblock {\em Math. Program.}, 202(1–2):279--353, March 2023.

\bibitem{Nagaraj1991}
N.K. Nagaraj and W.A. Fuller.
\newblock {Estimation of the Parameters of Linear Time Series Models Subject to
  Nonlinear Restrictions}.
\newblock {\em Ann. Statist.}, 19(3):1143 -- 1154, 1991.

\bibitem{Nandwani2019}
Y.~Nandwani, A.~Pathak, Mausam, and P.~Singla.
\newblock A primal-dual formulation for deep learning with constraints.
\newblock In {\em Advances in Neural Information Processing Systems 32}, pages
  12157--12168. Curran Associates,Inc., 2019.

\bibitem{WrighN1999}
J.~Nocedal and S.J. Wright.
\newblock {\em Numerical optimization}.
\newblock Springer-Verlag, New York, 2 edition, 2006.

\bibitem{Omojo1989}
E.O. Omojokun.
\newblock {\em Trust region algorithm for optimization with equalities and
  inequalities constraints}.
\newblock PhD thesis, University of Cororado at Boulder, 1989.

\bibitem{paquette2020stochastic}
C.~Paquette and K.~Scheinberg.
\newblock A stochastic line search method with expected complexity analysis.
\newblock {\em SIAM J. Optim.}, 30(1):349--376, 2020.

\bibitem{Pourret2008}
O.~Pourret, P.~Naim, and B.~Marcot, editors.
\newblock {\em Bayesian networks: a practical guide to applications}.
\newblock Wiley, Hoboken, 2008.

\bibitem{Powel1978}
M.J.D. Powell.
\newblock A fast algorithm for nonlinearly constrained optimization
  calculations.
\newblock In G.A. Watson, editor, {\em Numer. Anal.}, volume 630 of {\em
  Lecture Notes in Mathematics}, pages 144--157. Springer, Berlin Heidelberg,
  1978.

\bibitem{Ravi2019}
S.N. Ravi, T.~Dinh, V.S. Lokhande, and V.~Singh.
\newblock Explicitly imposing constraints in deep networks via conditional
  gradients gives improved generalization and faster convergence.
\newblock In {\em Proceedings of the AAAI Conference on Artificial
  Intelligence}, volume~33, pages 4772--4779. Association for the Advancement
  of Artificial Intelligence (AAAI), 2019.

\bibitem{Robinson1977}
S.M. Robinson.
\newblock A characterization of stability in linear programming.
\newblock {\em Oper, Res.}, 25(3):435, 1977.

\bibitem{Roy2018}
S.K. Roy, Z.~Mhammedi, and M.~Harandi.
\newblock Geometry aware constrained optimization techniques for deep learning.
\newblock In {\em 2018 IEEE/CVF Conference on Computer Vision and Pattern
  Recognition}, pages 4460--4469, 2018.

\bibitem{Schaeffer2016}
D.G. Schaeffer and J.W. Cain.
\newblock {\em Ordinary Differential Equations: Basics and Beyond}.
\newblock Texts in Applied Math. Springer, New York, NY, 1 edition, 2016.

\bibitem{Schit1987}
K.~Schittkowski.
\newblock More test examples for nonlinear programming codes.
\newblock In {\em Lecture Notes in Economics and Mathematical Systems}, volume
  282. Springer-Verlag, Berlin, 1987.

\bibitem{Sen1979}
P.K. Sen.
\newblock {Asymptotic properties of maximum likelihood estimators based on
  conditional specification}.
\newblock {\em Ann. Statist.}, 7(5):1019 -- 1033, 1979.

\bibitem{Shapiro2000}
A.~Shapiro.
\newblock {On the asymptotics of constrained local $M$-estimators}.
\newblock {\em Ann. Statist.}, 28(3):948 -- 960, 2000.

\bibitem{Silvey1959}
S.D. Silvey.
\newblock {The Lagrangian Multiplier Test}.
\newblock {\em Ann. Math. Statist.}, 30(2):389 -- 407, 1959.

\bibitem{Sun2006}
W.~Sun and Y.-X. Yuan.
\newblock {\em Optimization theroy and methods}.
\newblock Springer Optimization and Its Applications. Springer, Boston, MA,
  2006.

\bibitem{wang2008stochastic}
I.-J. Wang and J.C. Spall.
\newblock Stochastic optimisation with inequality constraints using
  simultaneous perturbations and penalty functions.
\newblock {\em Internat. J. Control}, 81(8):1232--1238, 2008.

\bibitem{Wright1998}
S.J. Wright.
\newblock Superlinear convergence of a stabilized {SQP} method to a degenerate
  solution.
\newblock {\em Comput. Optim. Appl.}, 11(3):253--275, 1998.

\bibitem{NEURIPS2018_e347c514}
X.~Zheng, B.~Aragam, P.K. Ravikumar, and E.P. Xing.
\newblock {DAG}s with {NO TEARS}: Continuous optimization for structure
  learning.
\newblock In S.~Bengio, H.~Wallach, H.~Larochelle, K.~Grauman, N.~Cesa-Bianchi,
  and R.~Garnett, editors, {\em Advances in Neural Information Processing
  Systems}, volume~31. Curran Associates, Inc., 2018.

\end{thebibliography}
\newpage
\appendix
\section{Remaining figures of the numerical experiments}
\begin{figure*}[!h]
  \begin{minipage}[t]{0.5\linewidth}
    \centering
    \includegraphics[scale=0.35]{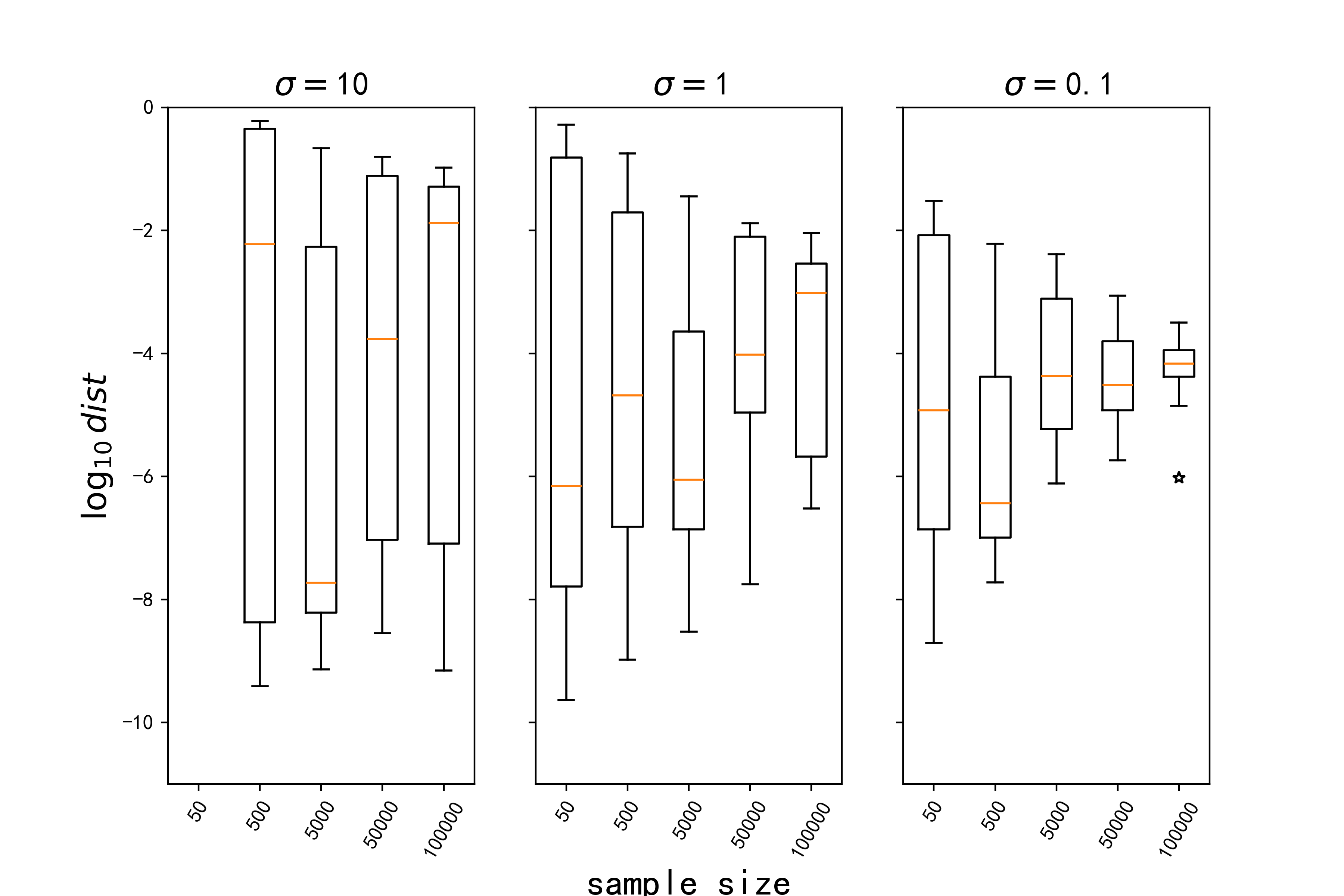}
    \caption{\scriptsize{Performance on HS06 w.r.t after 50 iterations.}}
  \end{minipage}%
  \begin{minipage}[t]{0.5\linewidth}
    \centering
    \includegraphics[scale=0.35]{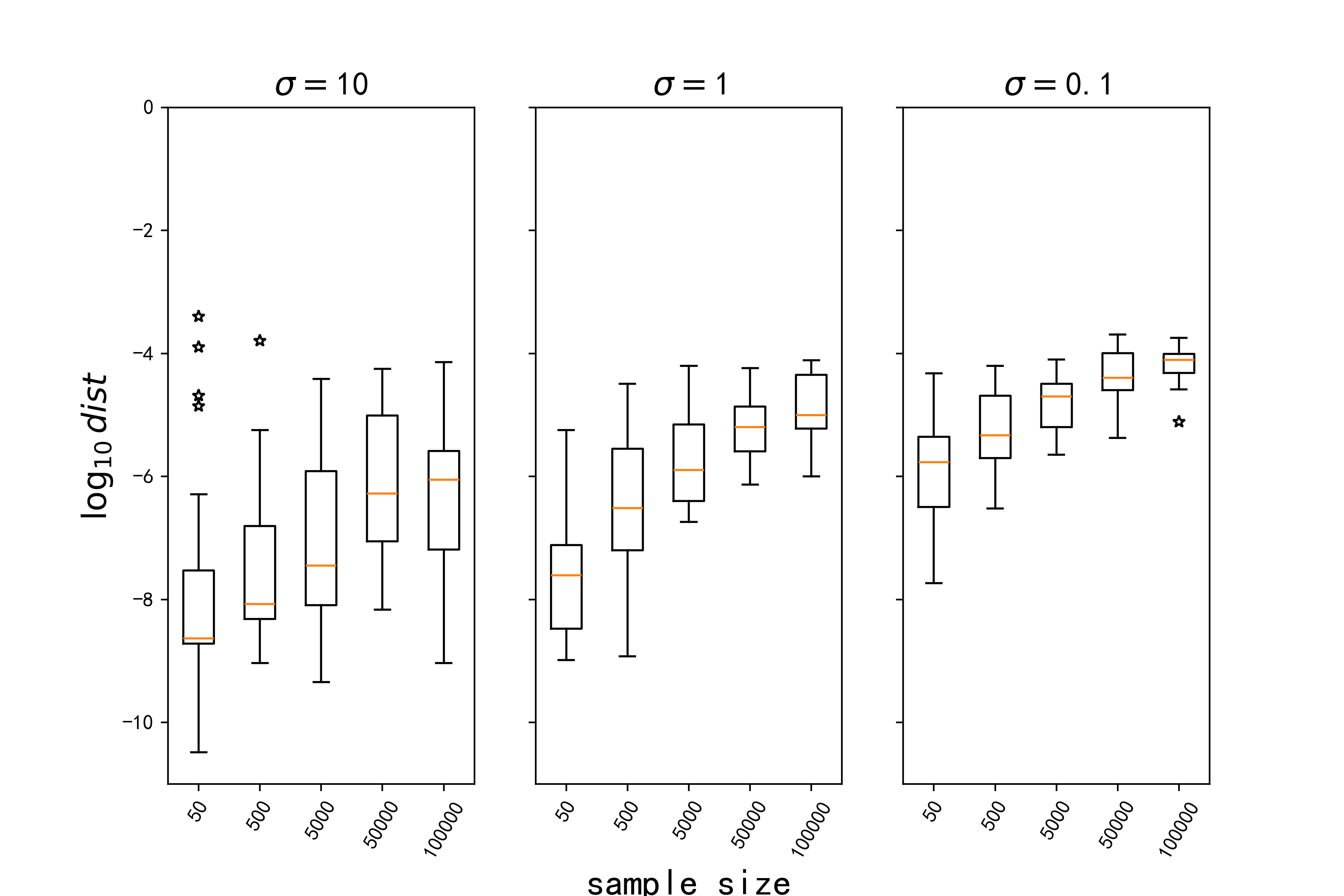}
    \caption{\scriptsize{Performance on HS06 w.r.t after 1 500 iterations.}}
  \end{minipage}
\end{figure*}

\begin{figure*}[!h]
  \begin{minipage}[t]{0.5\linewidth}
    \centering
    \includegraphics[scale=0.35]{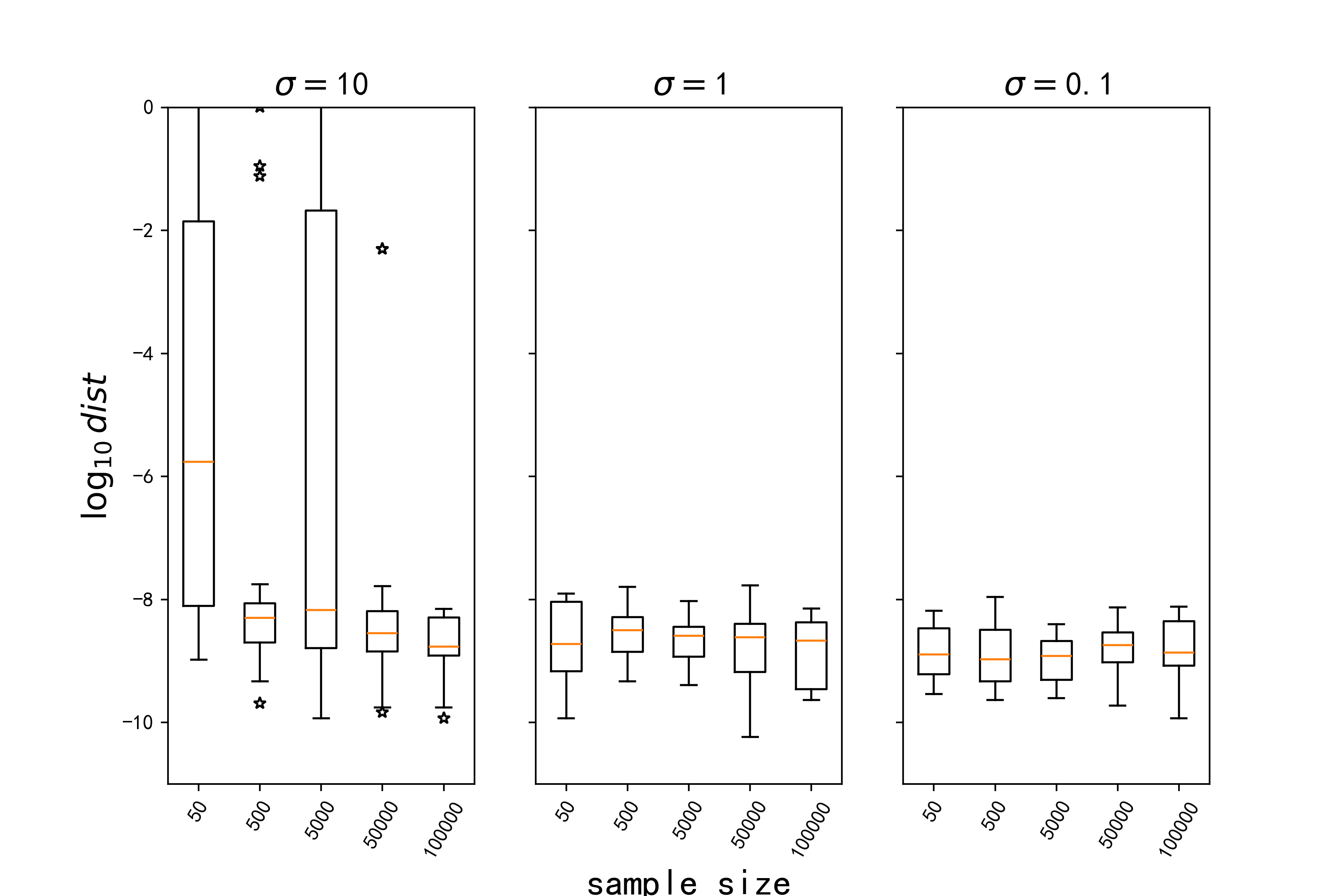}
    \caption{\scriptsize{Performance on HS12 w.r.t after 50 iterations.}}
  \end{minipage}%
  \begin{minipage}[t]{0.5\linewidth}
    \centering
    \includegraphics[scale=0.35]{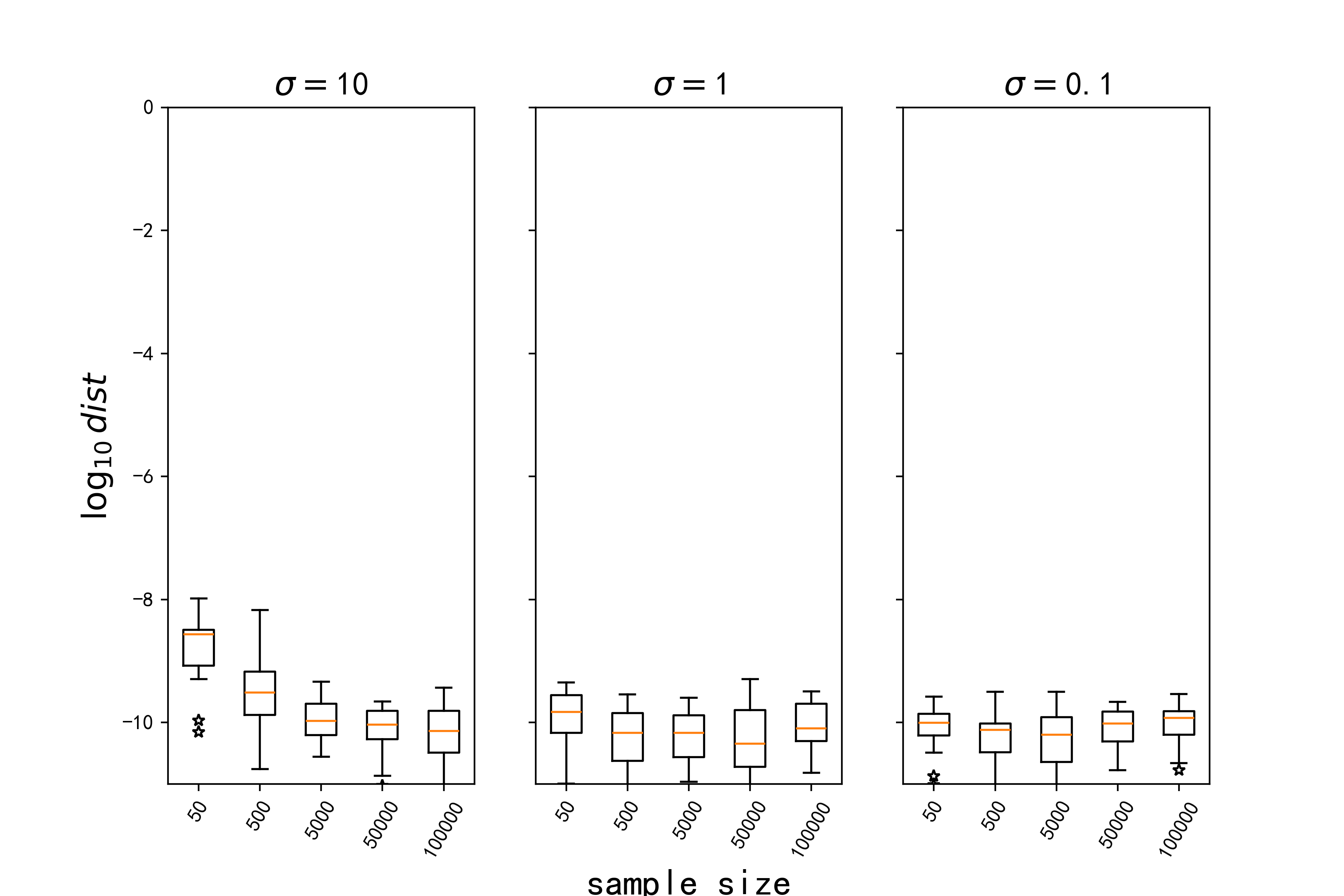}
    \caption{\scriptsize{Performance on HS12 w.r.t after 1 500 iterations.}}
  \end{minipage}
\end{figure*}
\clearpage

\begin{figure*}[!h]
  \begin{minipage}[t]{0.5\linewidth}
    \centering
    \includegraphics[scale=0.35]{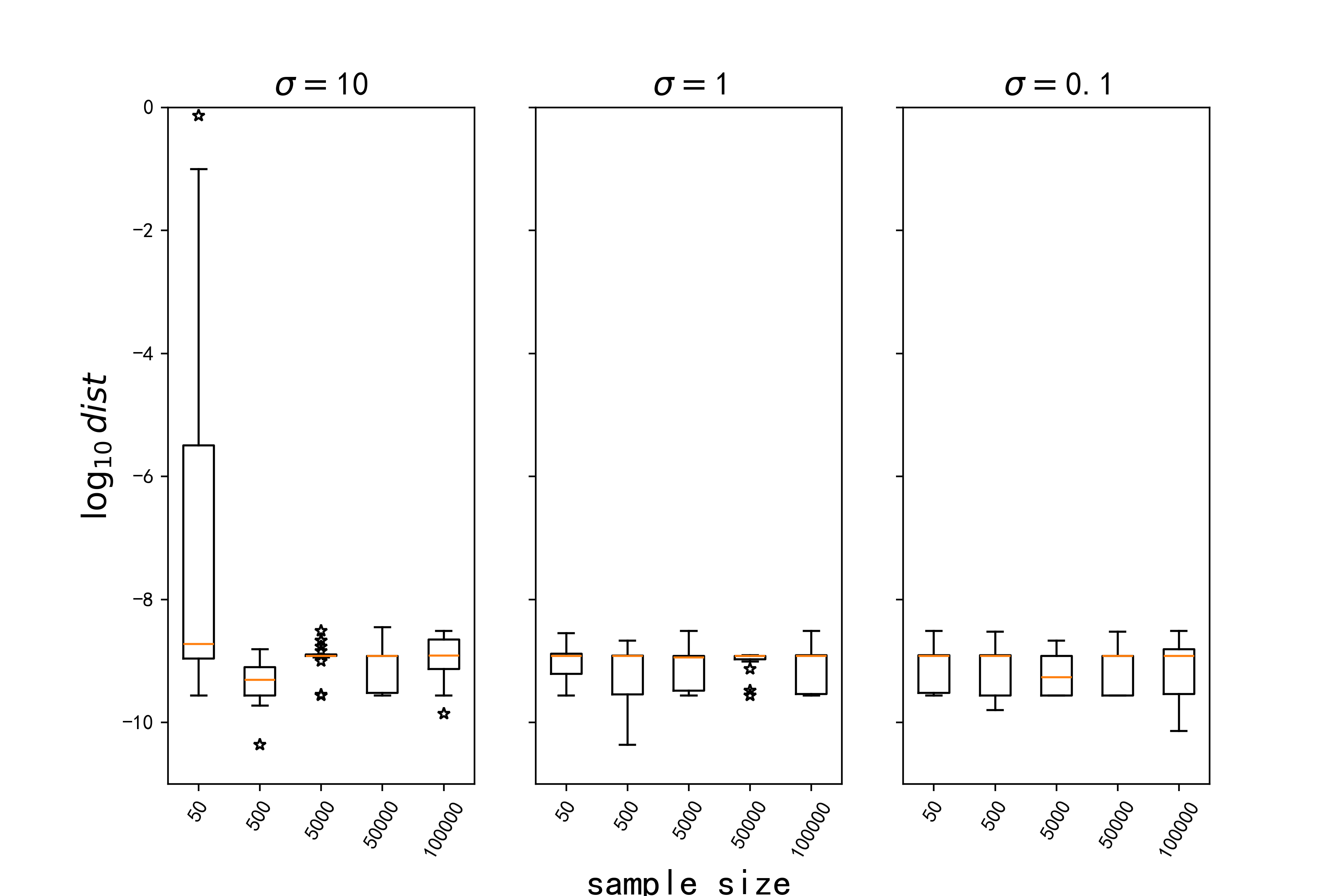}
    \caption{\scriptsize{Performance on HS14 w.r.t after 50 iterations.}}
  \end{minipage}%
  \begin{minipage}[t]{0.5\linewidth}
    \centering
    \includegraphics[scale=0.35]{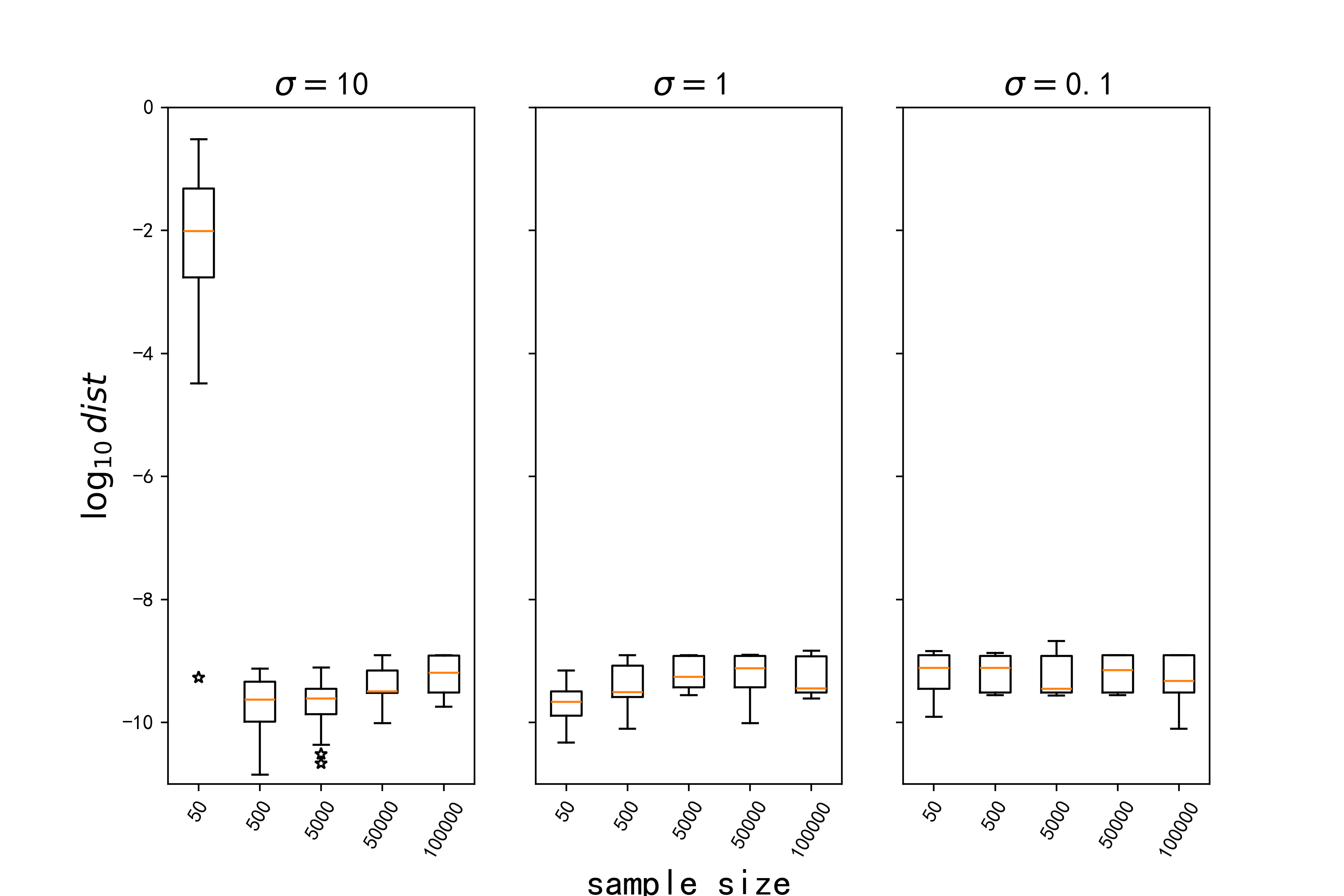}
    \caption{\scriptsize{Performance on HS14 w.r.t after 1 500 iterations.}}
  \end{minipage}
\end{figure*}

\begin{figure*}[!h]
  \begin{minipage}[t]{0.5\linewidth}
    \centering
    \includegraphics[scale=0.35]{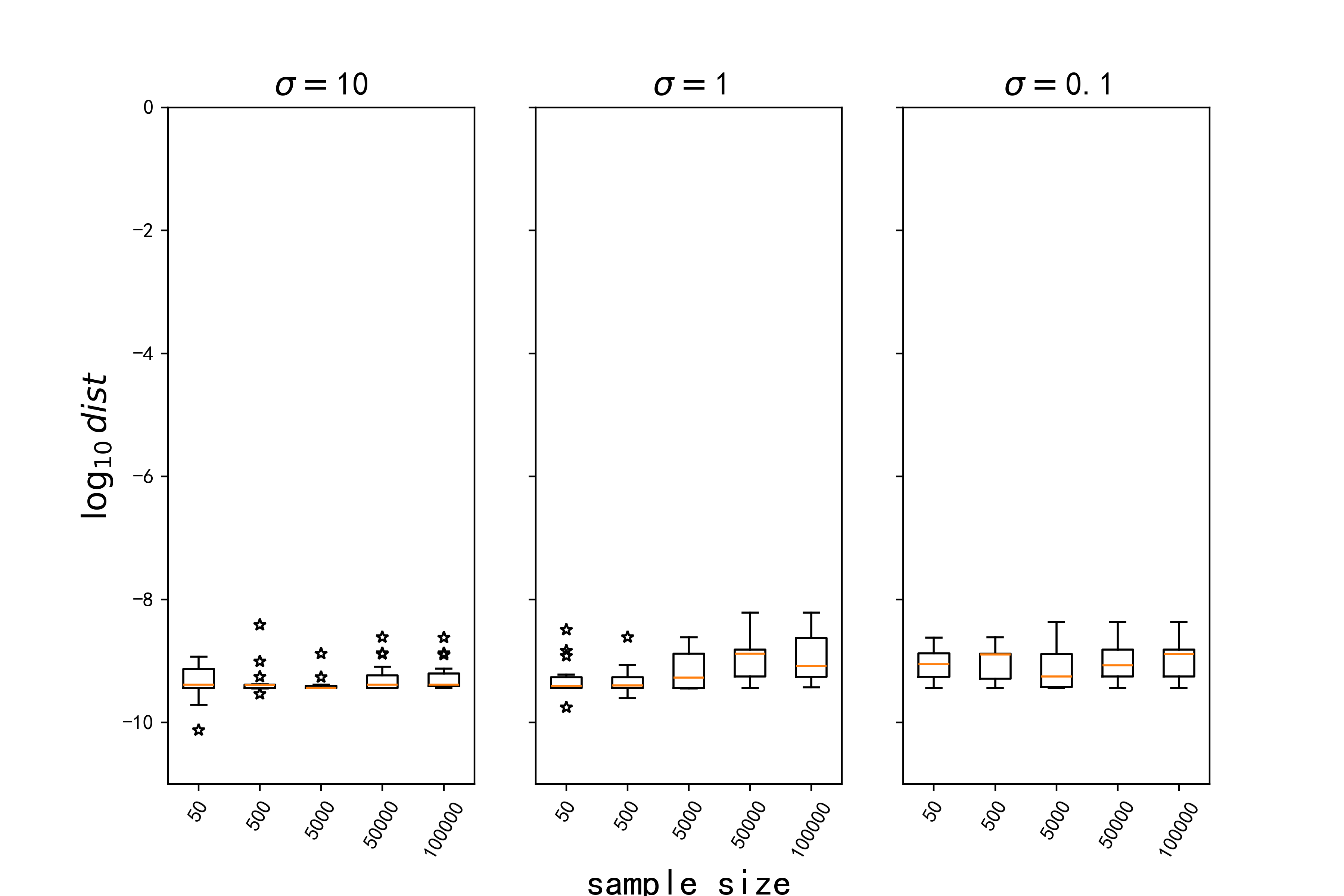}
    \caption{\scriptsize{Performance on HS15 w.r.t after 50 iterations.}}
  \end{minipage}%
  \begin{minipage}[t]{0.5\linewidth}
    \centering
    \includegraphics[scale=0.35]{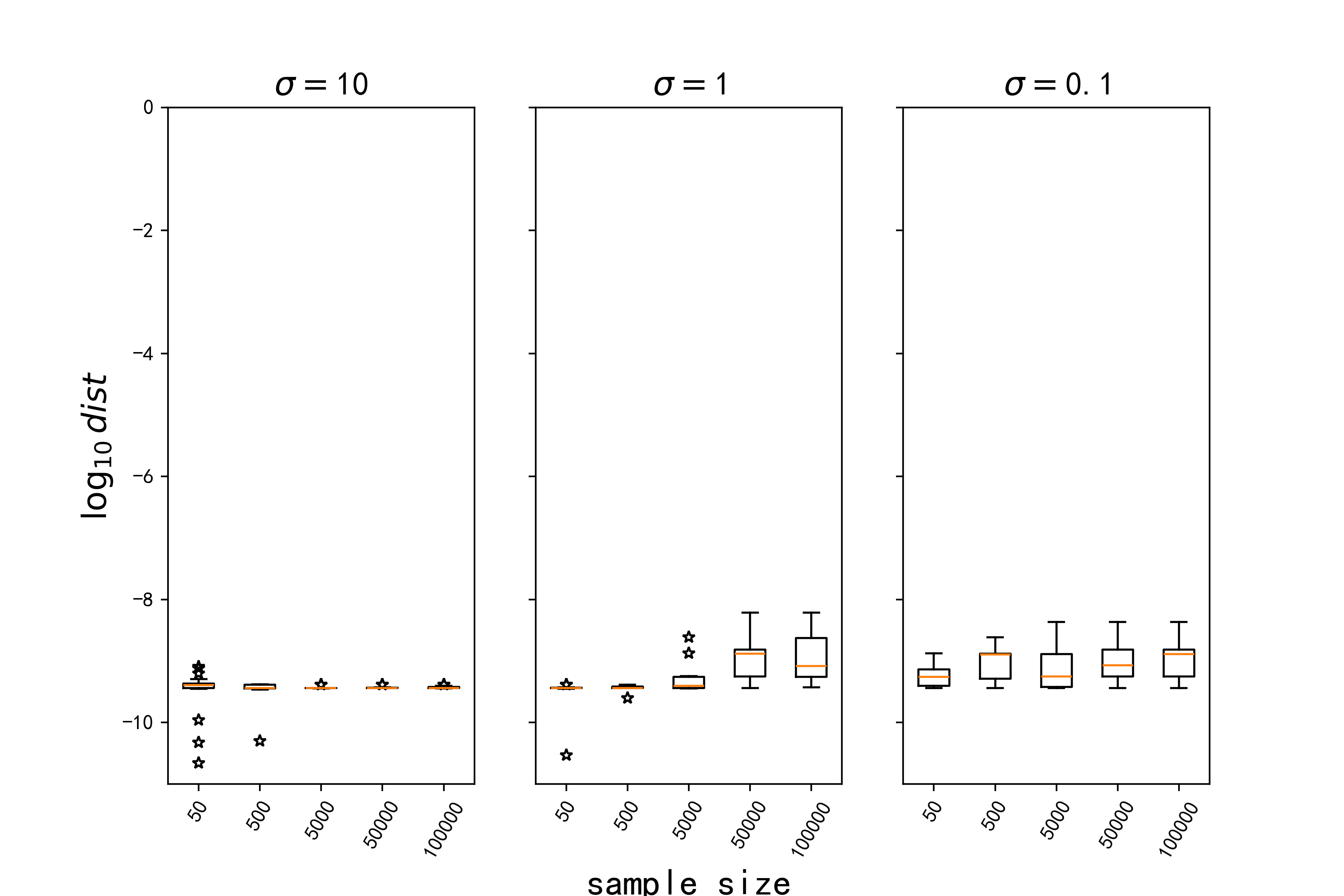}
    \caption{\scriptsize{Performance on HS15 w.r.t after 1 500 iterations.}}
  \end{minipage}
\end{figure*}
\clearpage

\begin{figure*}[!h]
  \begin{minipage}[t]{0.5\linewidth}
    \centering
    \includegraphics[scale=0.35]{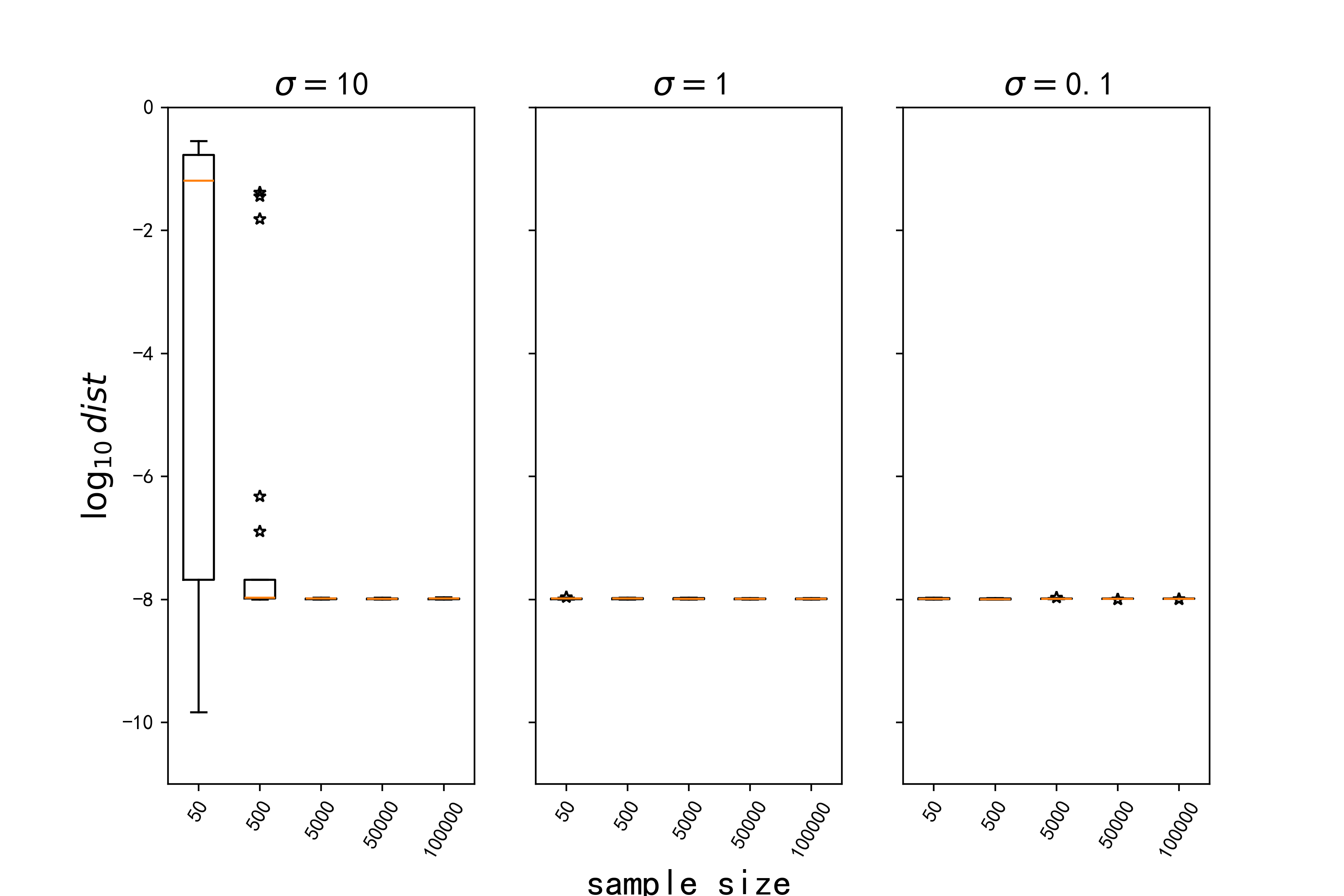}
    \caption{\scriptsize{Performance on HS16 w.r.t after 50 iterations.}}
  \end{minipage}%
  \begin{minipage}[t]{0.5\linewidth}
    \centering
    \includegraphics[scale=0.35]{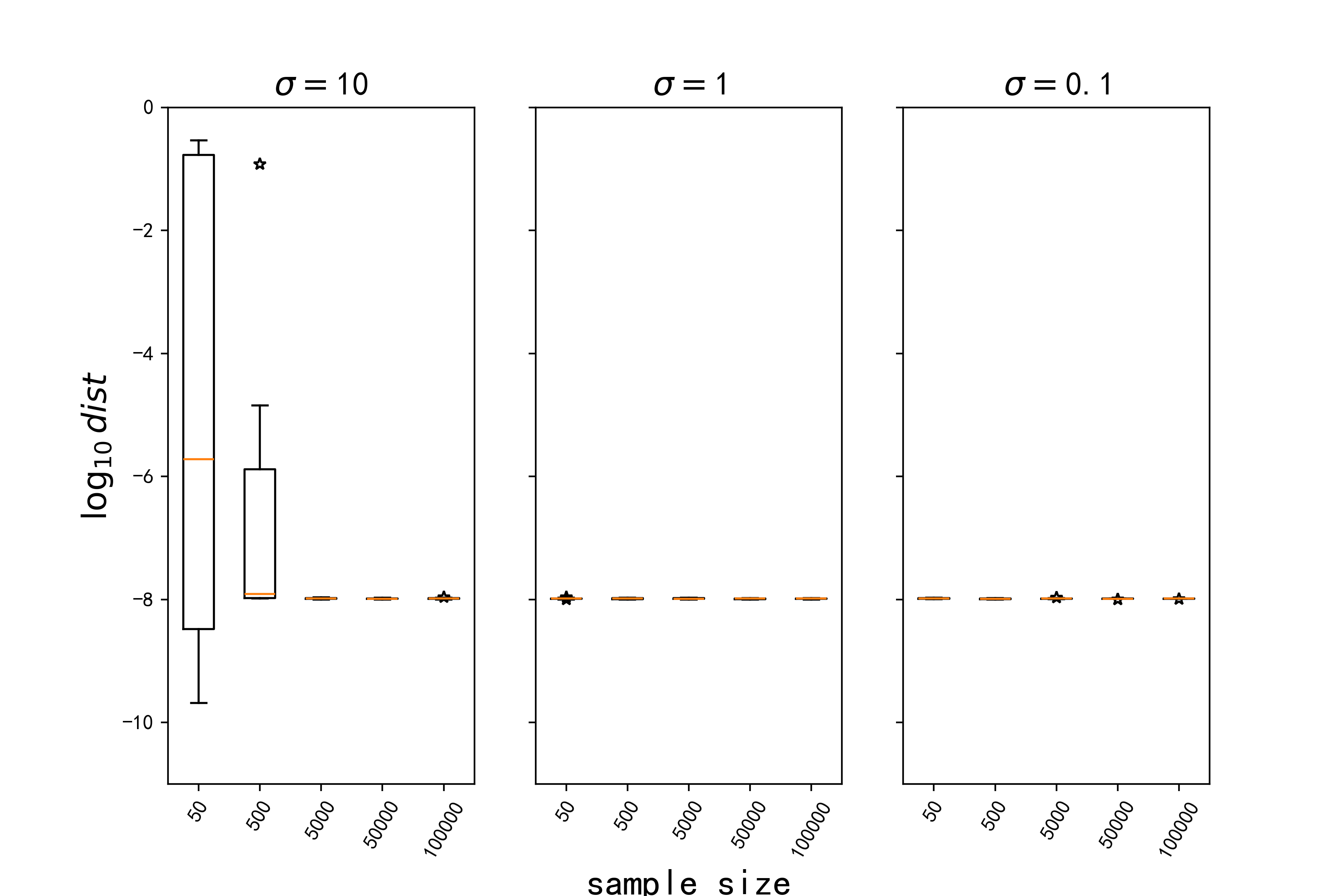}
    \caption{\scriptsize{Performance on HS16 w.r.t after 1 500 iterations.}}
  \end{minipage}
\end{figure*}

\begin{figure*}[!h]
  \begin{minipage}[t]{0.5\linewidth}
    \centering
    \includegraphics[scale=0.35]{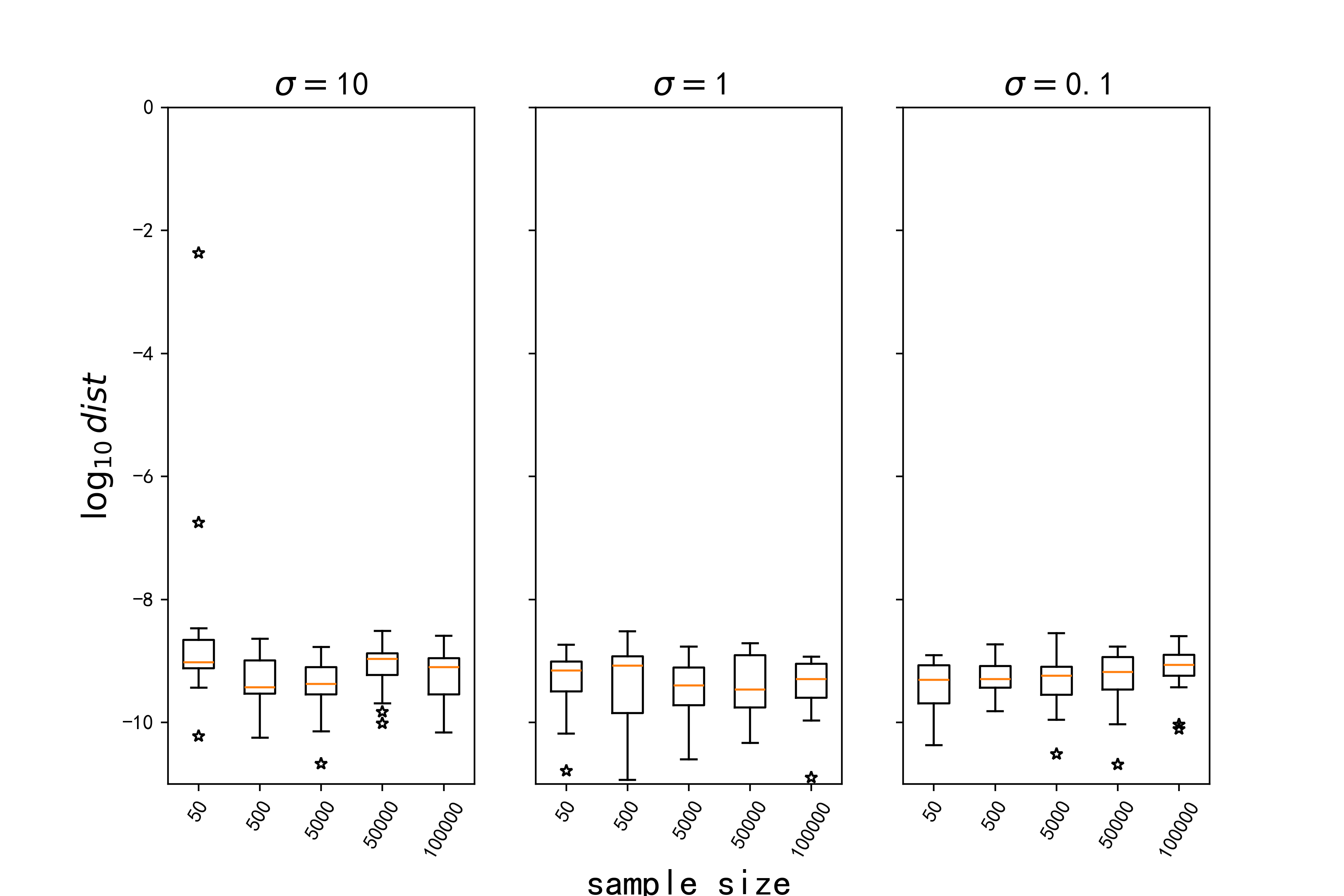}
    \caption{\scriptsize{Performance on HS17 w.r.t after 50 iterations.}}
  \end{minipage}%
  \begin{minipage}[t]{0.5\linewidth}
    \centering
    \includegraphics[scale=0.35]{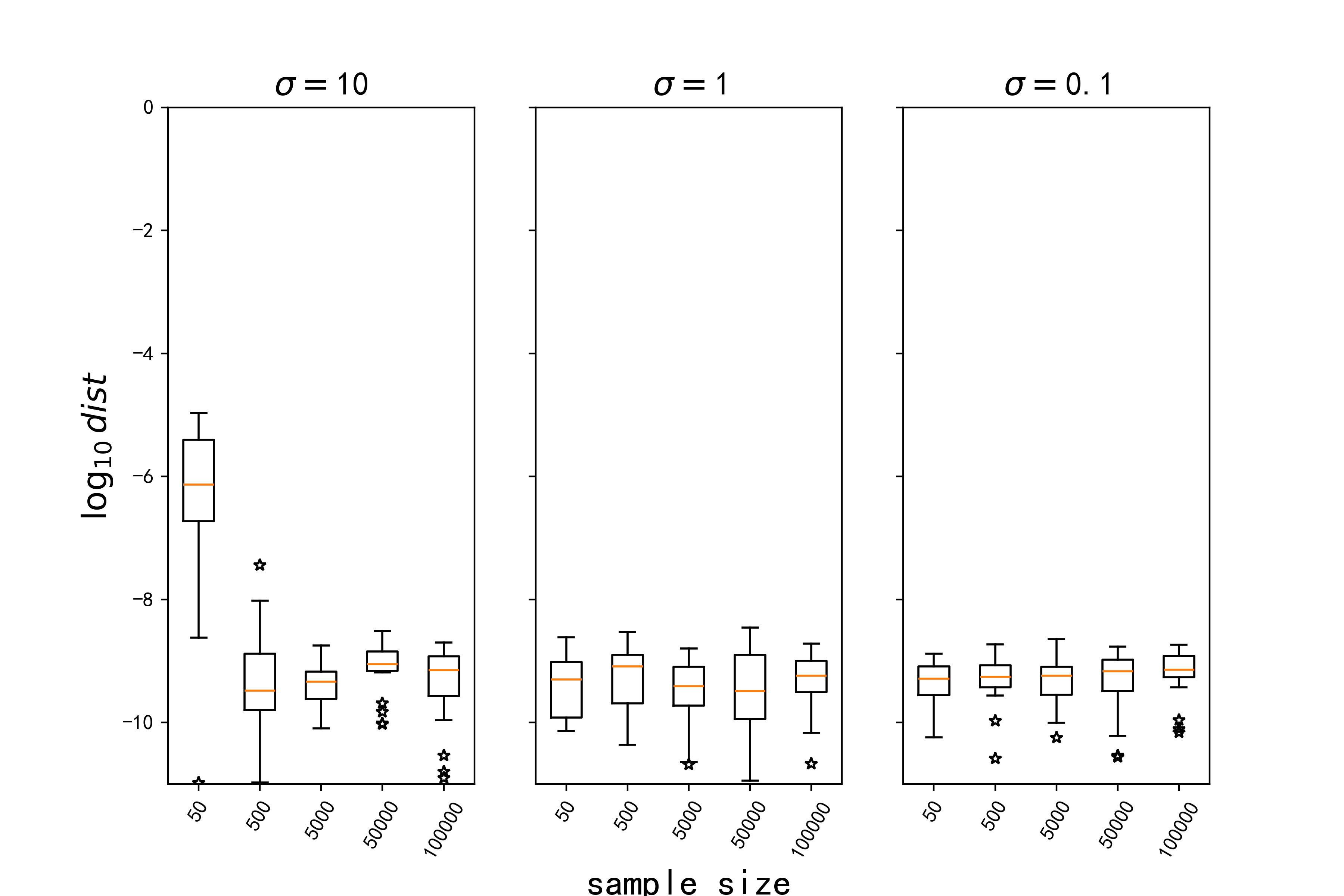}
    \caption{\scriptsize{Performance on HS17 w.r.t after 1 500 iterations.}}
  \end{minipage}
\end{figure*}
\clearpage

\begin{figure*}[!h]
  \begin{minipage}[t]{0.5\linewidth}
    \centering
    \includegraphics[scale=0.35]{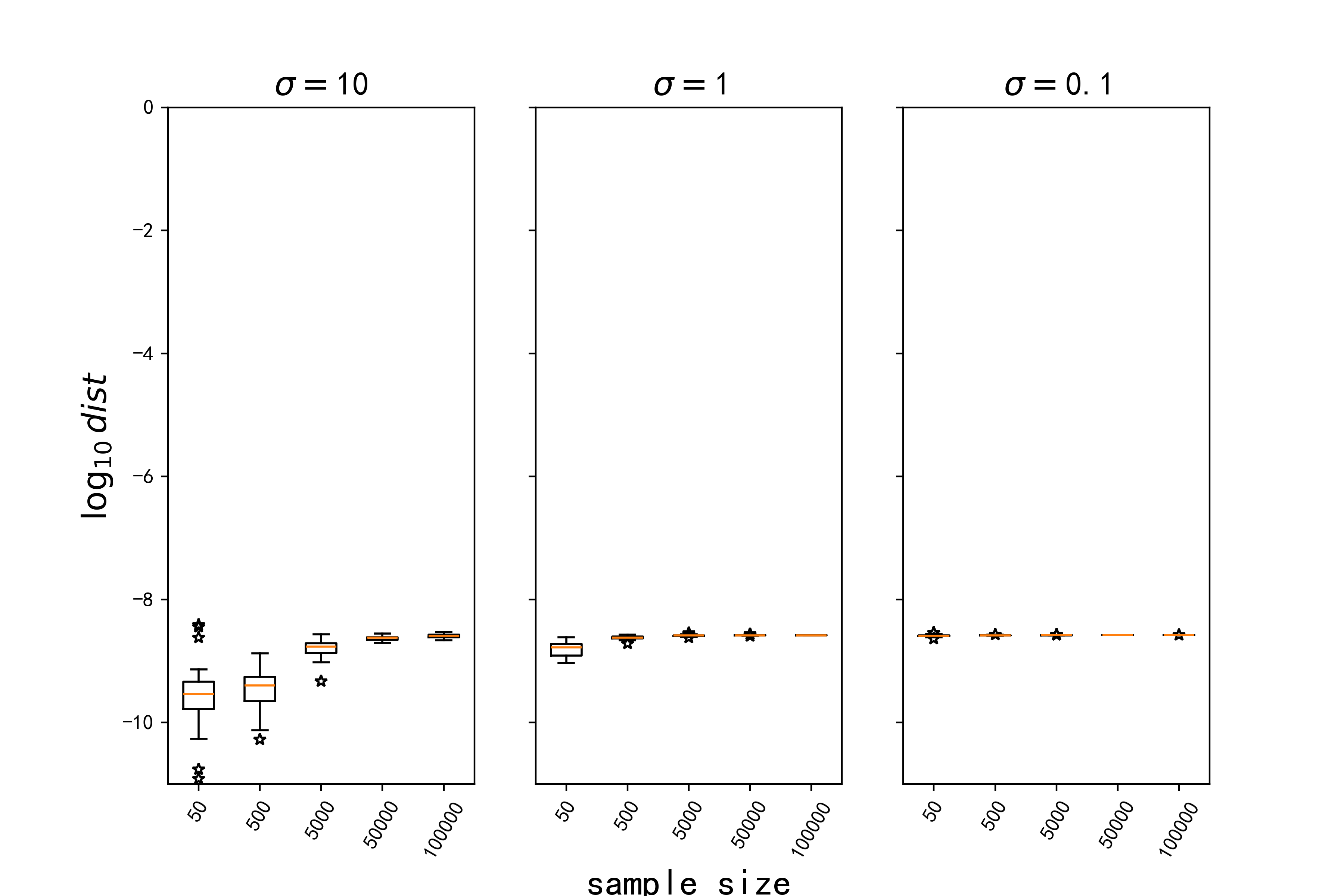}
    \caption{\scriptsize{Performance on HS18 w.r.t after 50 iterations.}}
  \end{minipage}%
  \begin{minipage}[t]{0.5\linewidth}
    \centering
    \includegraphics[scale=0.35]{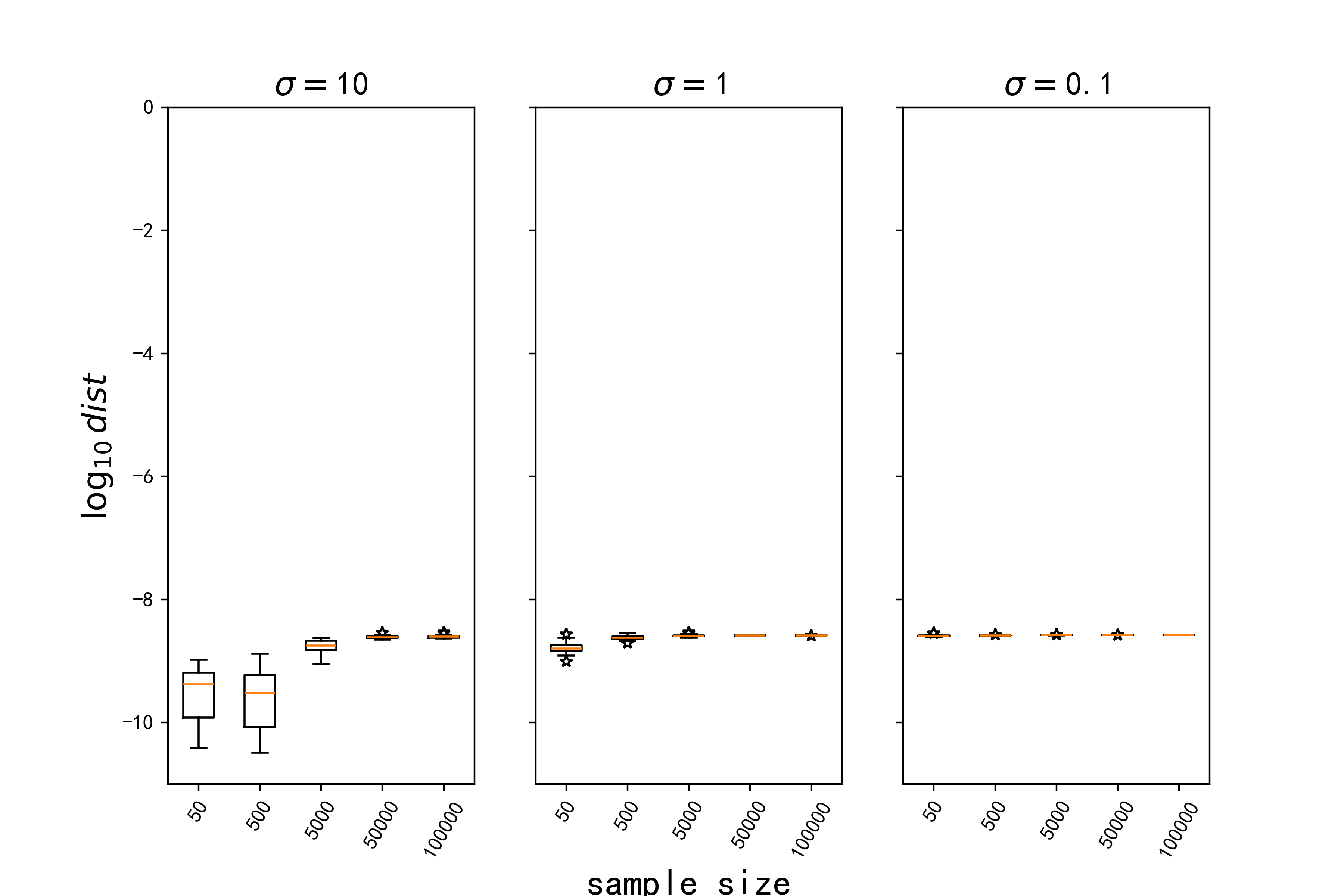}
    \caption{\scriptsize{Performance on HS18 w.r.t after 1 500 iterations.}}
  \end{minipage}
\end{figure*}

\begin{figure*}[!h]
  \begin{minipage}[t]{0.5\linewidth}
    \centering
    \includegraphics[scale=0.35]{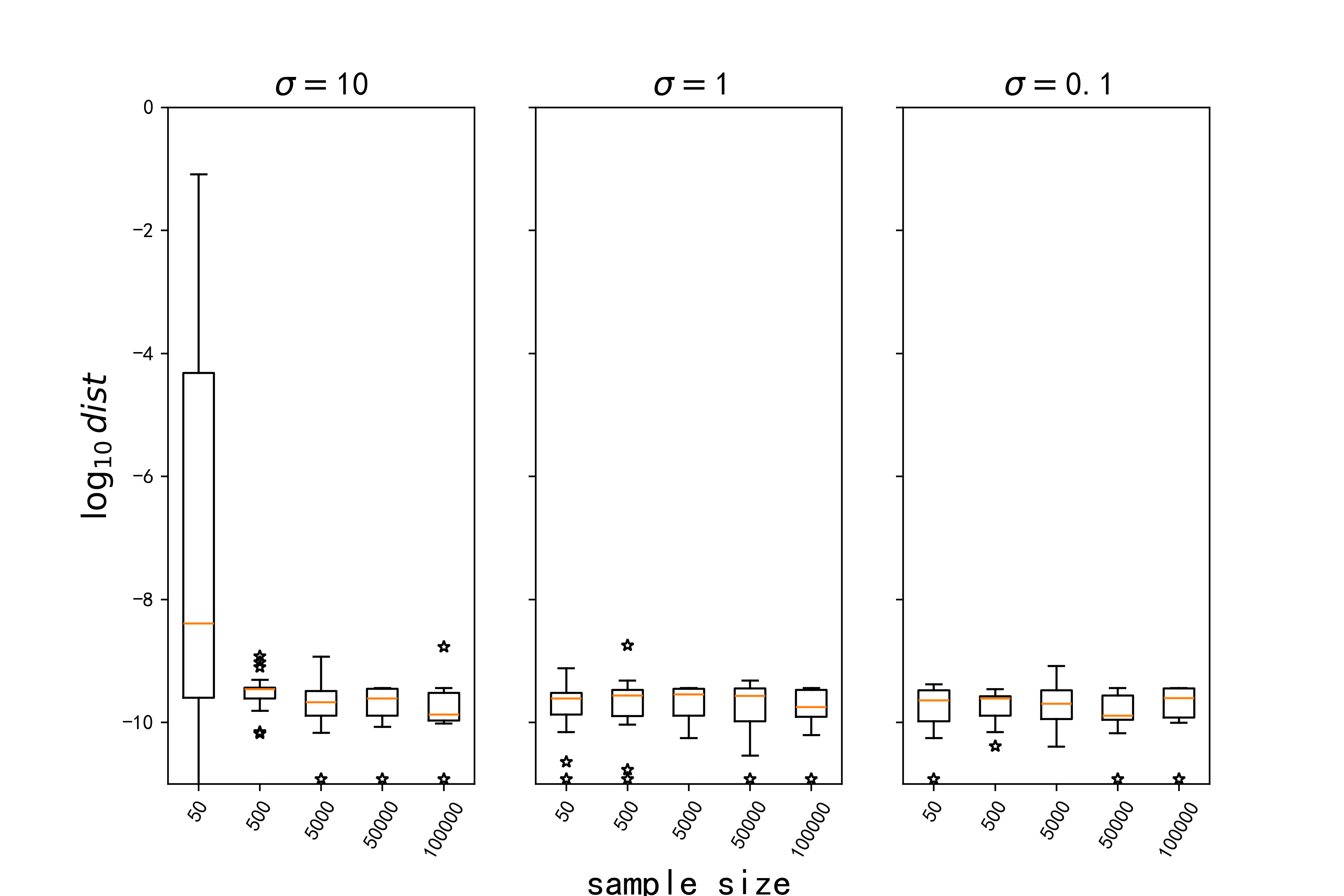}
    \caption{\scriptsize{Performance on HS20 w.r.t after 50 iterations.}}
  \end{minipage}%
  \begin{minipage}[t]{0.5\linewidth}
    \centering
    \includegraphics[scale=0.35]{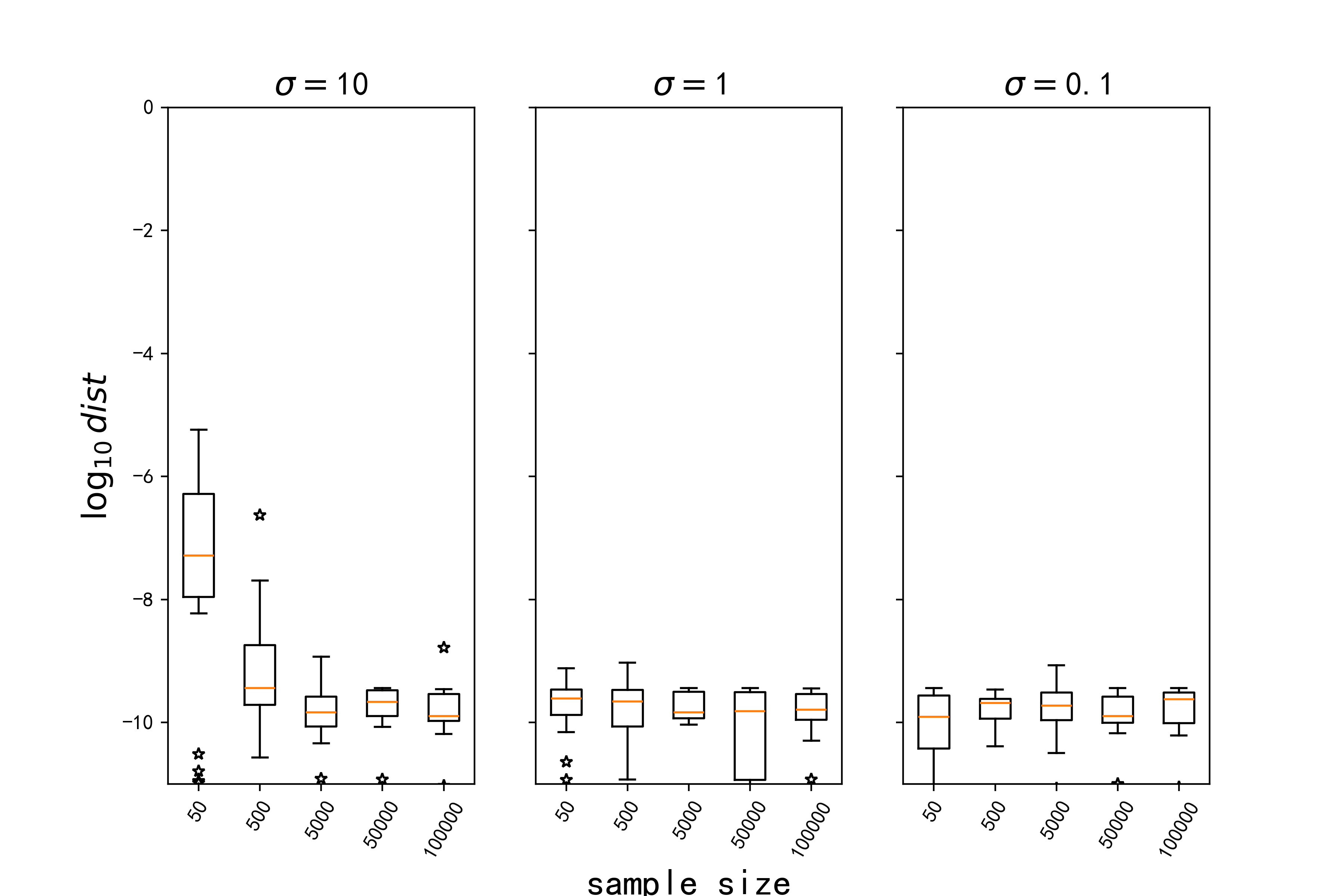}
    \caption{\scriptsize{Performance on HS20 w.r.t after 1 500 iterations.}}
  \end{minipage}
\end{figure*}
\clearpage

\begin{figure*}[!h]
  \begin{minipage}[t]{0.5\linewidth}
    \centering
    \includegraphics[scale=0.35]{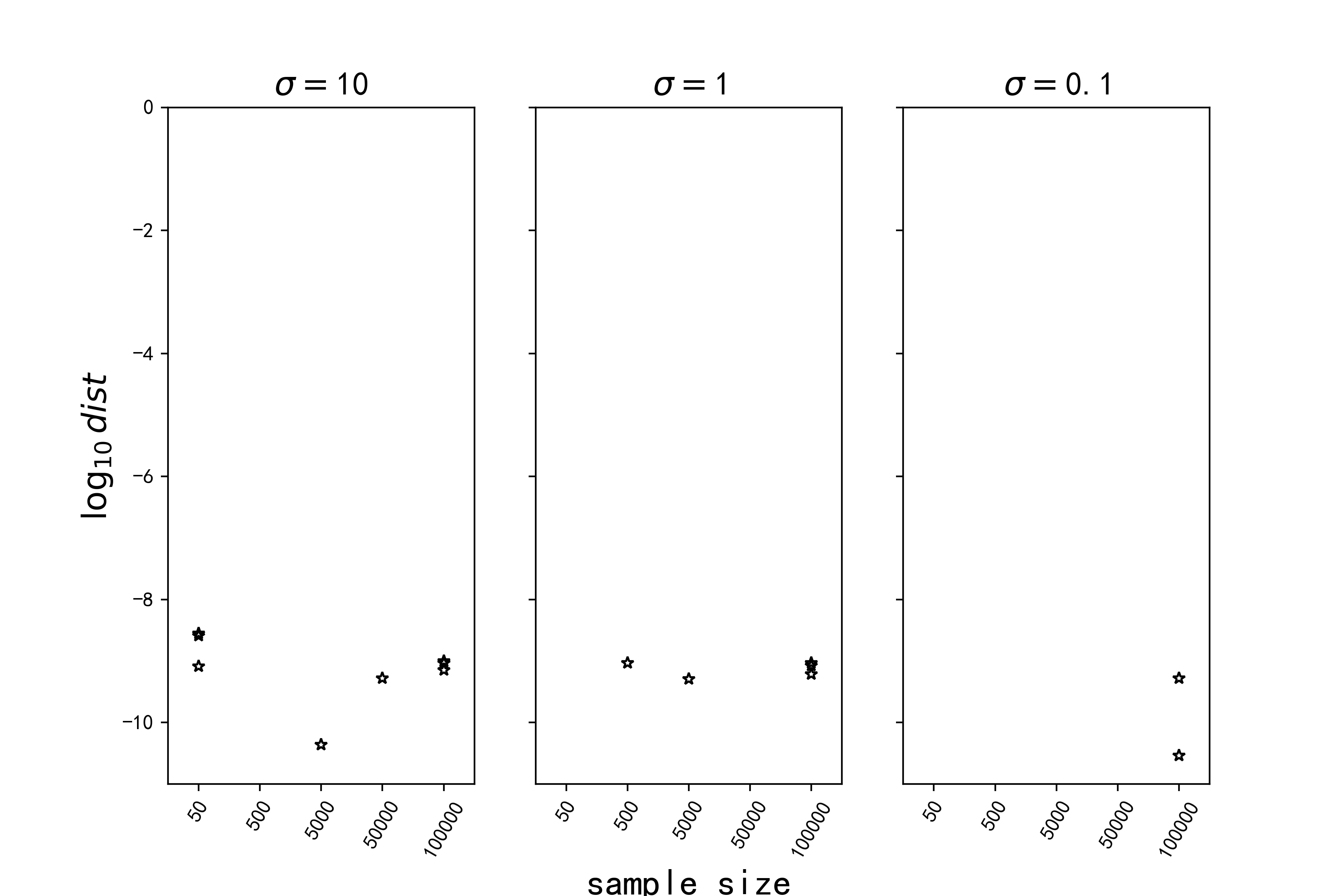}
    \caption{\scriptsize{Performance on HS22 w.r.t after 50 iterations.}}
  \end{minipage}%
  \begin{minipage}[t]{0.5\linewidth}
    \centering
    \includegraphics[scale=0.35]{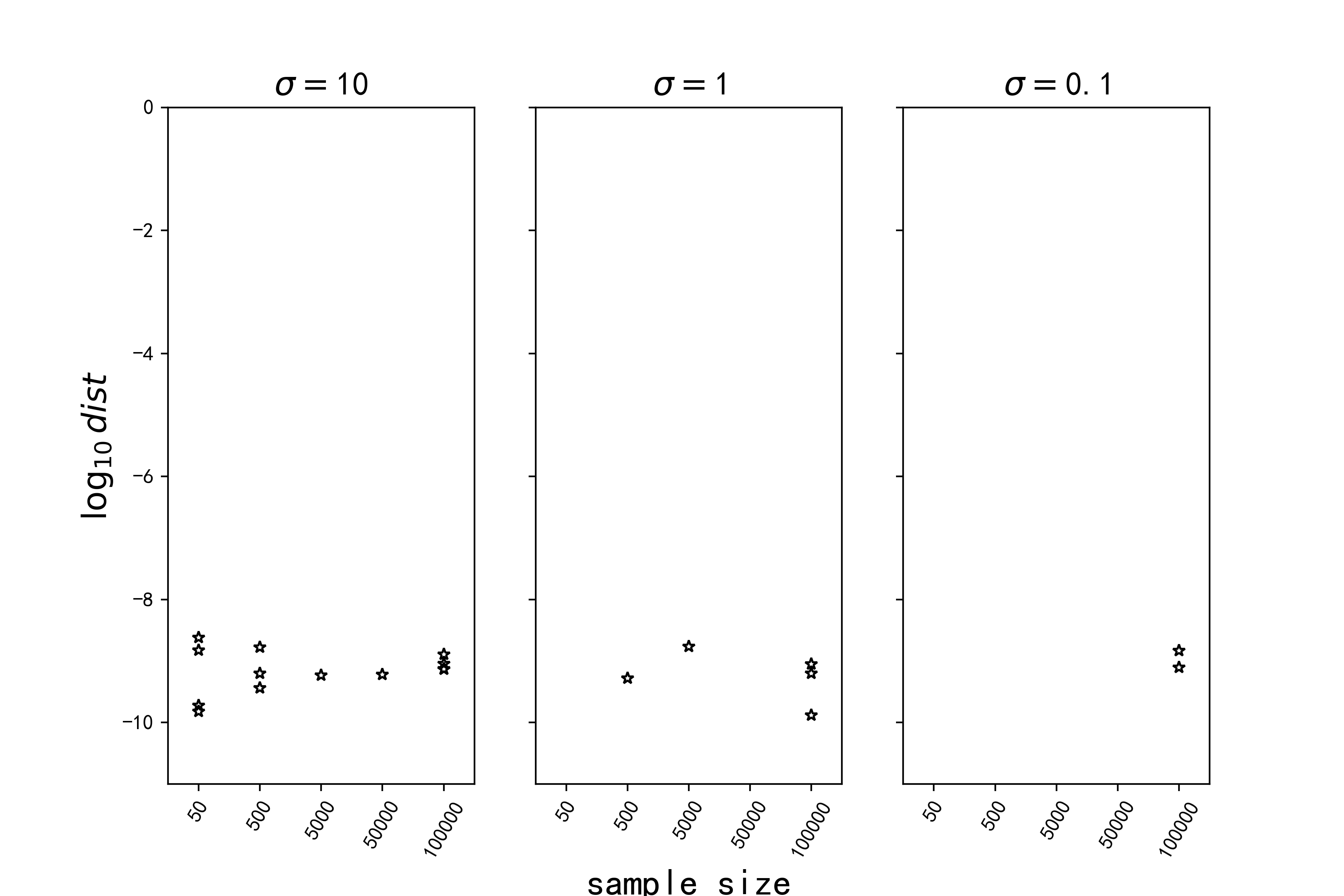}
    \caption{\scriptsize{Performance on HS22 w.r.t after 1 500 iterations.}}
  \end{minipage}
\end{figure*}

\begin{figure*}[!h]
  \begin{minipage}[t]{0.5\linewidth}
    \centering
    \includegraphics[scale=0.35]{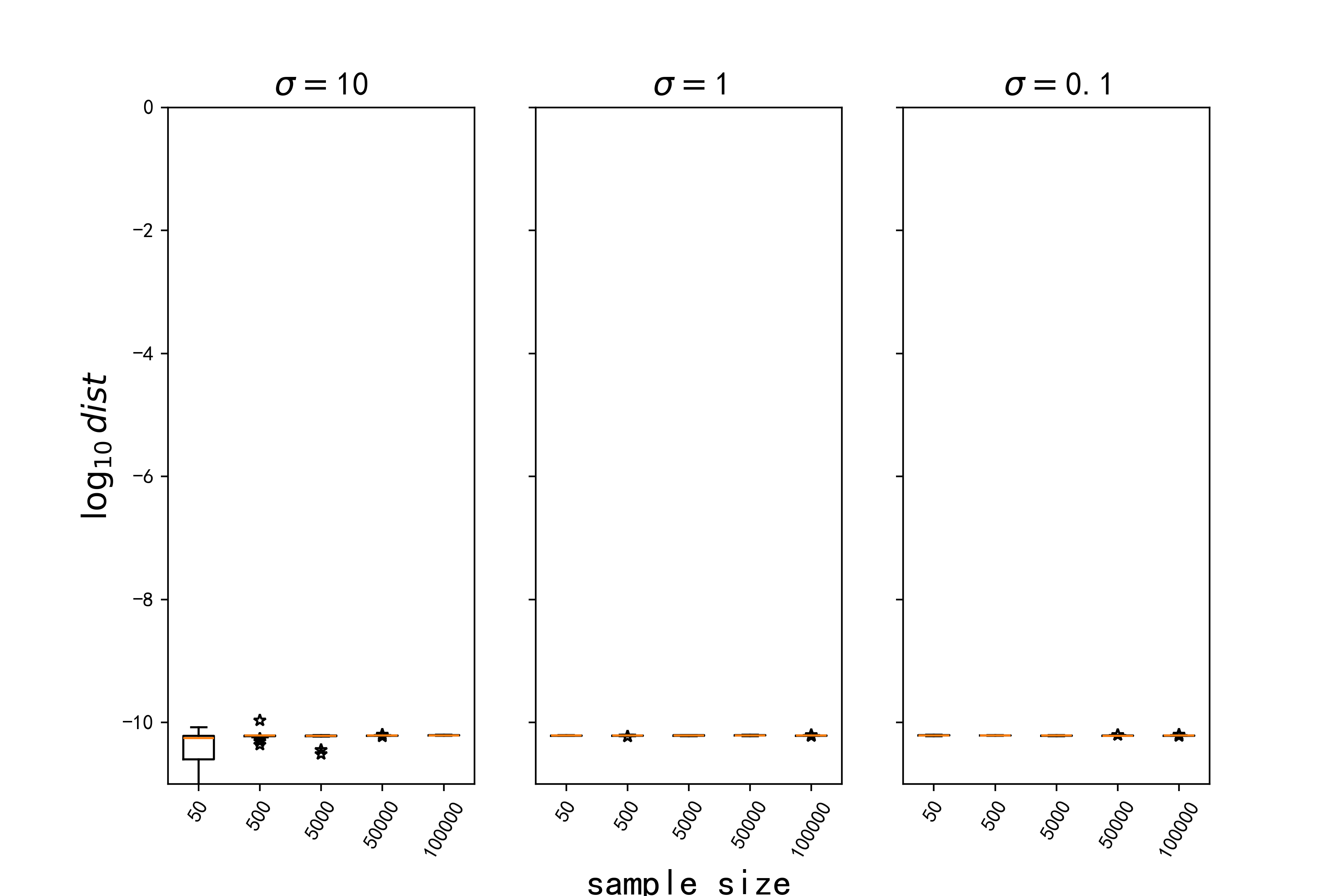}
    \caption{\scriptsize{Performance on HS23 w.r.t after 50 iterations.}}
  \end{minipage}%
  \begin{minipage}[t]{0.5\linewidth}
    \centering
    \includegraphics[scale=0.35]{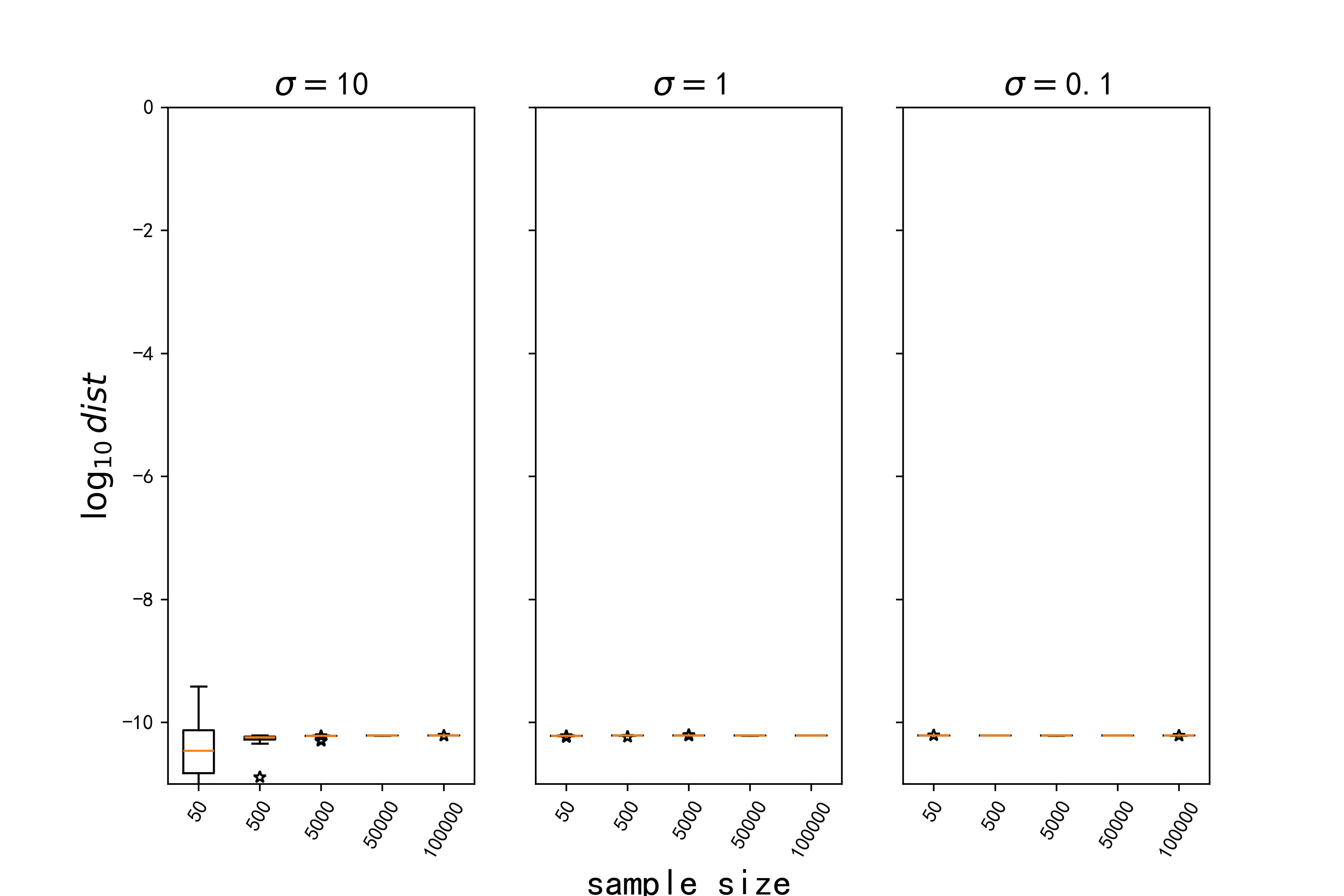}
    \caption{\scriptsize{Performance on HS23 w.r.t after 1 500 iterations.}}
  \end{minipage}
\end{figure*}
\clearpage

\begin{figure*}[!h]
  \begin{minipage}[t]{0.5\linewidth}
    \centering
    \includegraphics[scale=0.35]{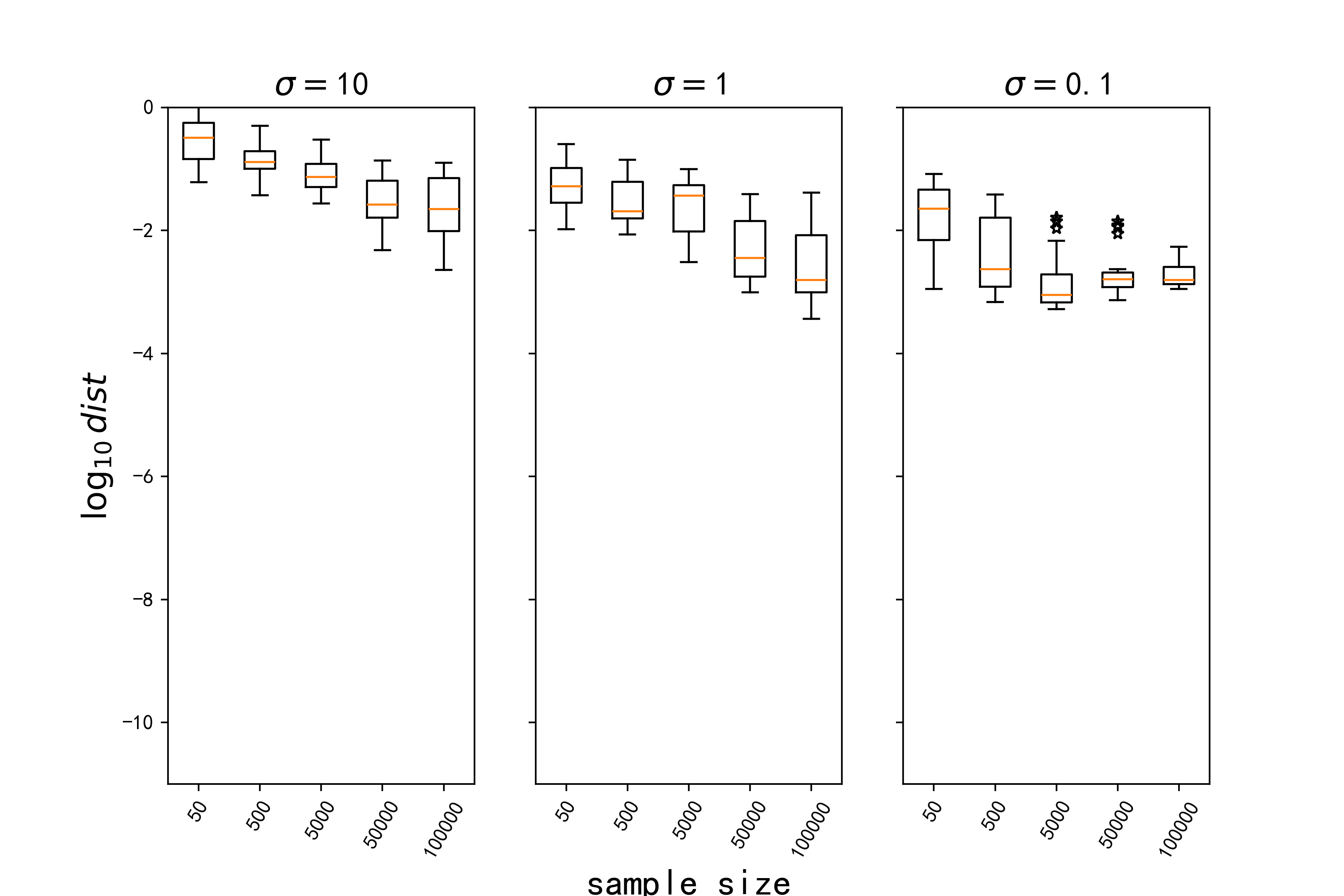}
    \caption{\scriptsize{Performance on HS26 w.r.t after 50 iterations.}}
  \end{minipage}%
  \begin{minipage}[t]{0.5\linewidth}
    \centering
    \includegraphics[scale=0.35]{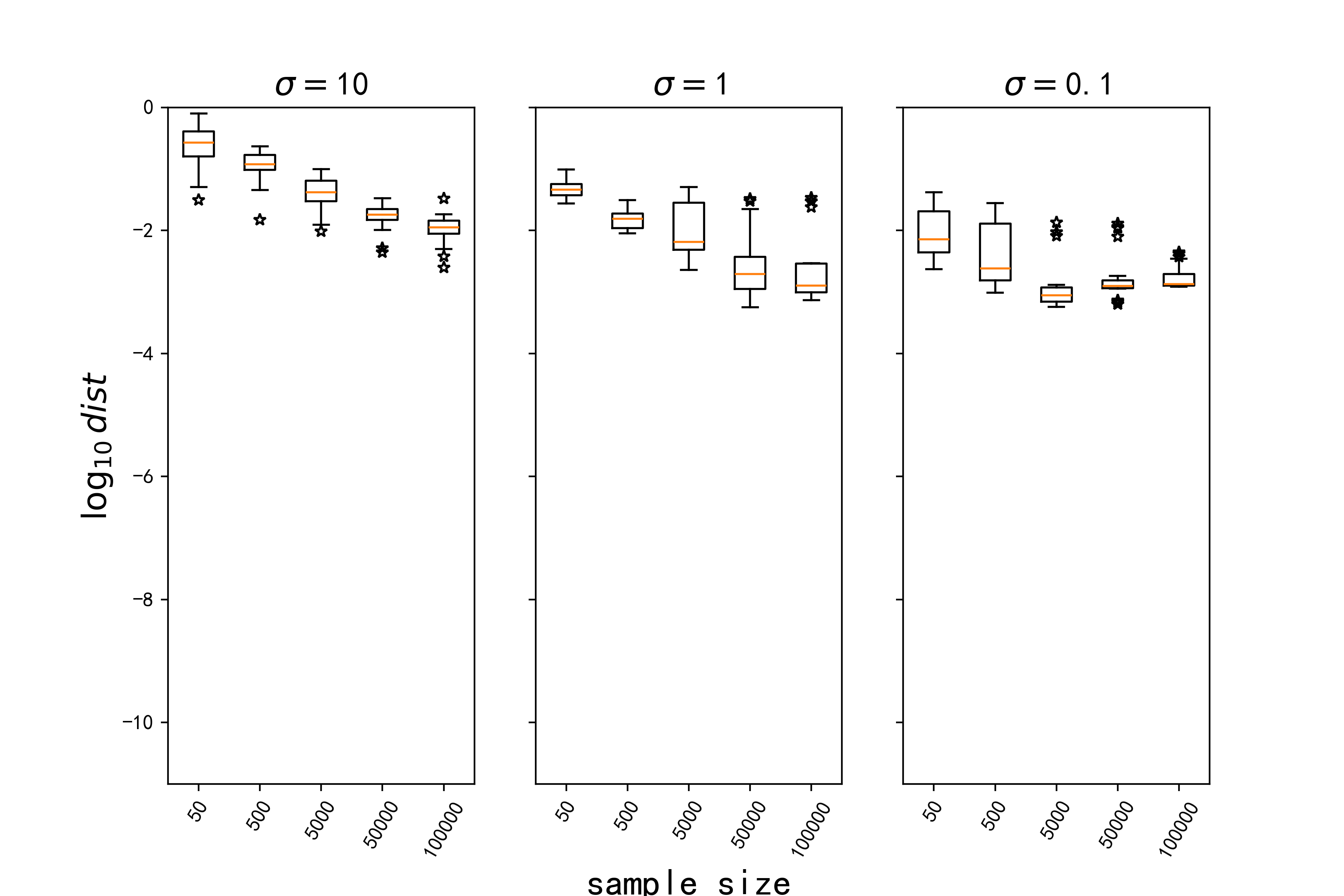}
    \caption{\scriptsize{Performance on HS26 w.r.t after 1 500 iterations.}}
  \end{minipage}
\end{figure*}

\begin{figure*}[!h]
  \begin{minipage}[t]{0.5\linewidth}
    \centering
    \includegraphics[scale=0.35]{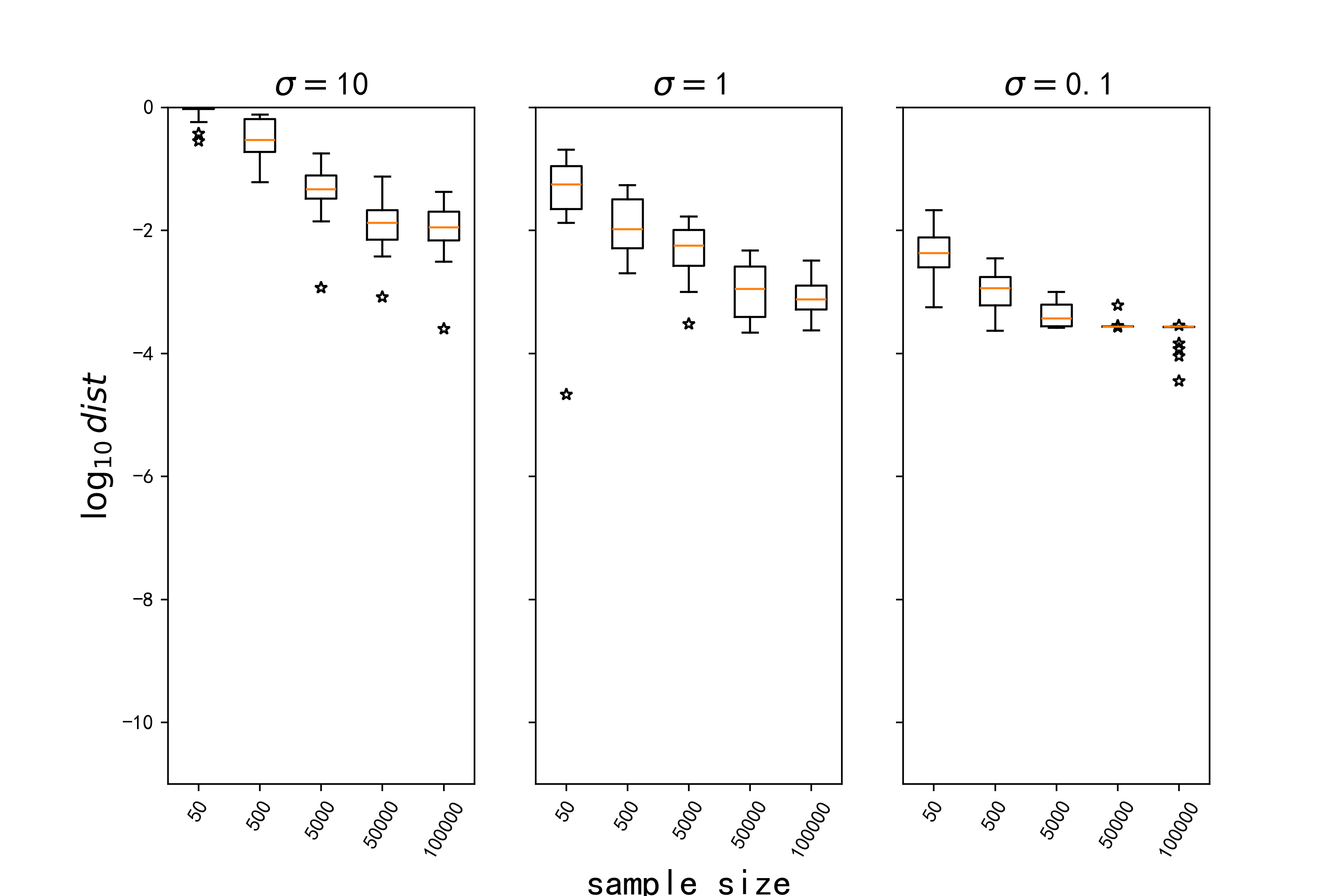}
    \caption{\scriptsize{Performance on HS27 w.r.t after 50 iterations.}}
  \end{minipage}%
  \begin{minipage}[t]{0.5\linewidth}
    \centering
    \includegraphics[scale=0.35]{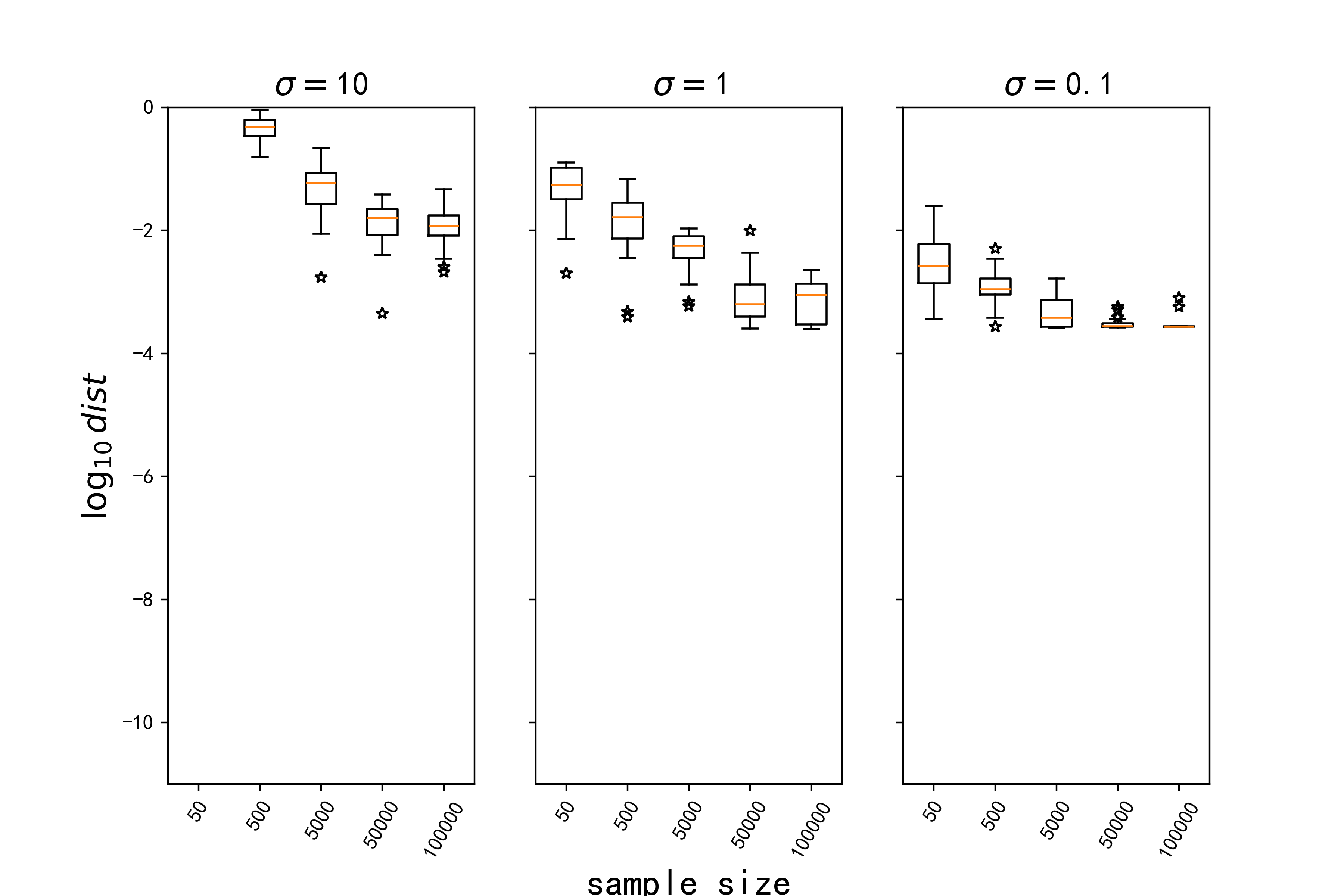}
    \caption{\scriptsize{Performance on HS27 w.r.t after 1 500 iterations.}}
  \end{minipage}
\end{figure*}
\clearpage

\begin{figure*}[!h]
  \begin{minipage}[t]{0.5\linewidth}
    \centering
    \includegraphics[scale=0.35]{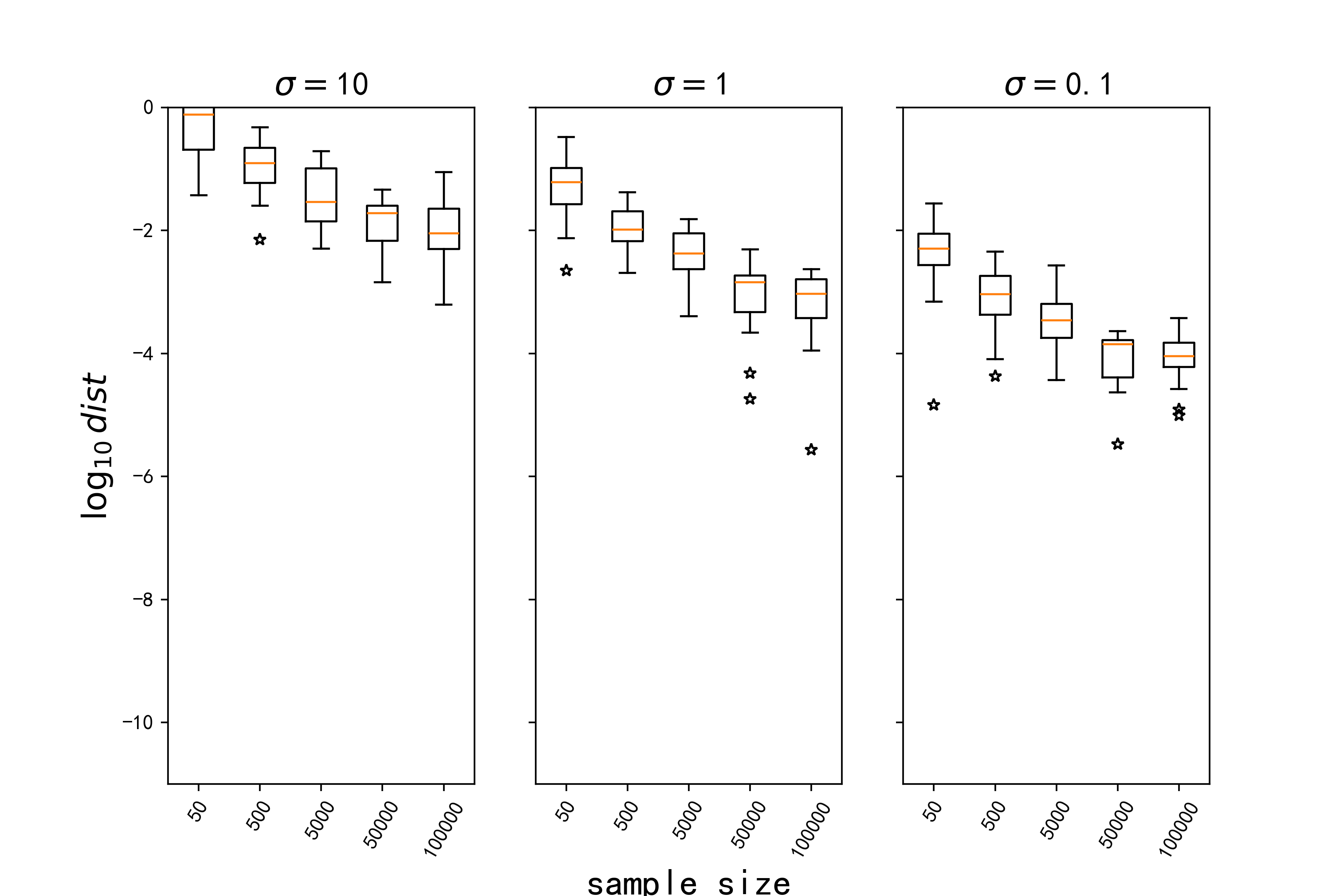}
    \caption{\scriptsize{Performance on HS31 w.r.t after 50 iterations.}}
  \end{minipage}%
  \begin{minipage}[t]{0.5\linewidth}
    \centering
    \includegraphics[scale=0.35]{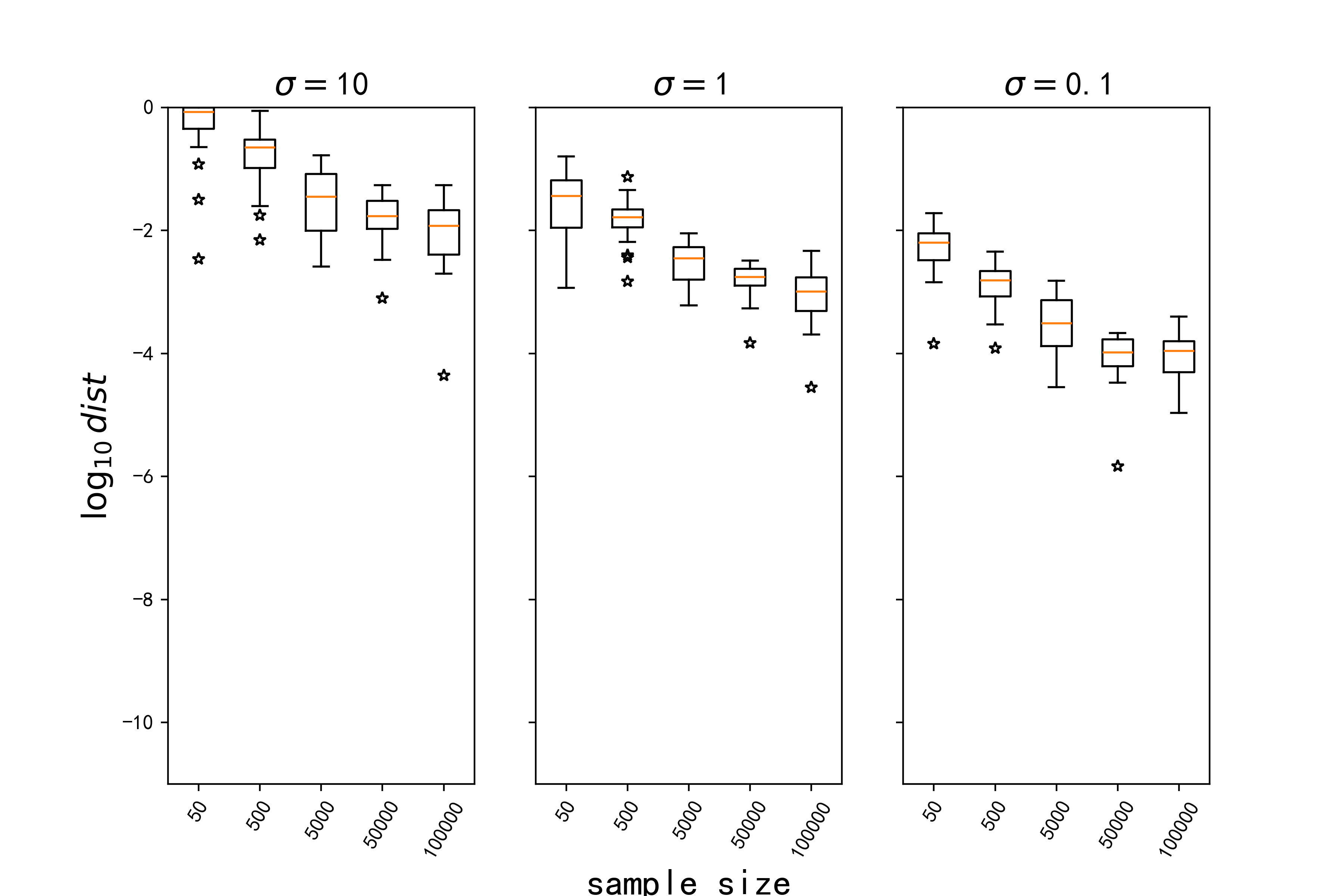}
    \caption{\scriptsize{Performance on HS31 w.r.t after 1 500 iterations.}}
  \end{minipage}
\end{figure*}

\begin{figure*}[!h]
  \begin{minipage}[t]{0.5\linewidth}
    \centering
    \includegraphics[scale=0.35]{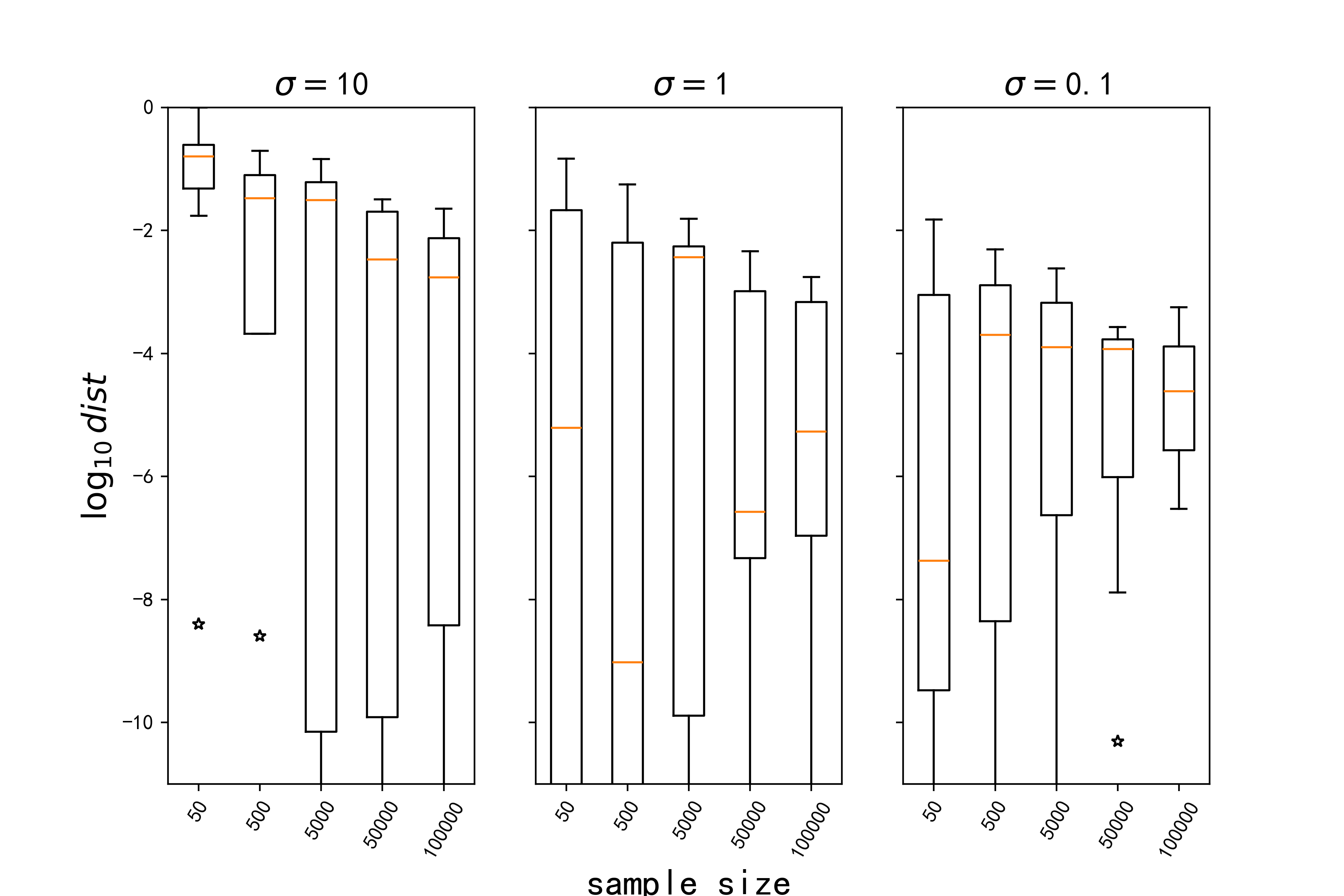}
    \caption{\scriptsize{Performance on HS32 w.r.t after 50 iterations.}}
  \end{minipage}%
  \begin{minipage}[t]{0.5\linewidth}
    \centering
    \includegraphics[scale=0.35]{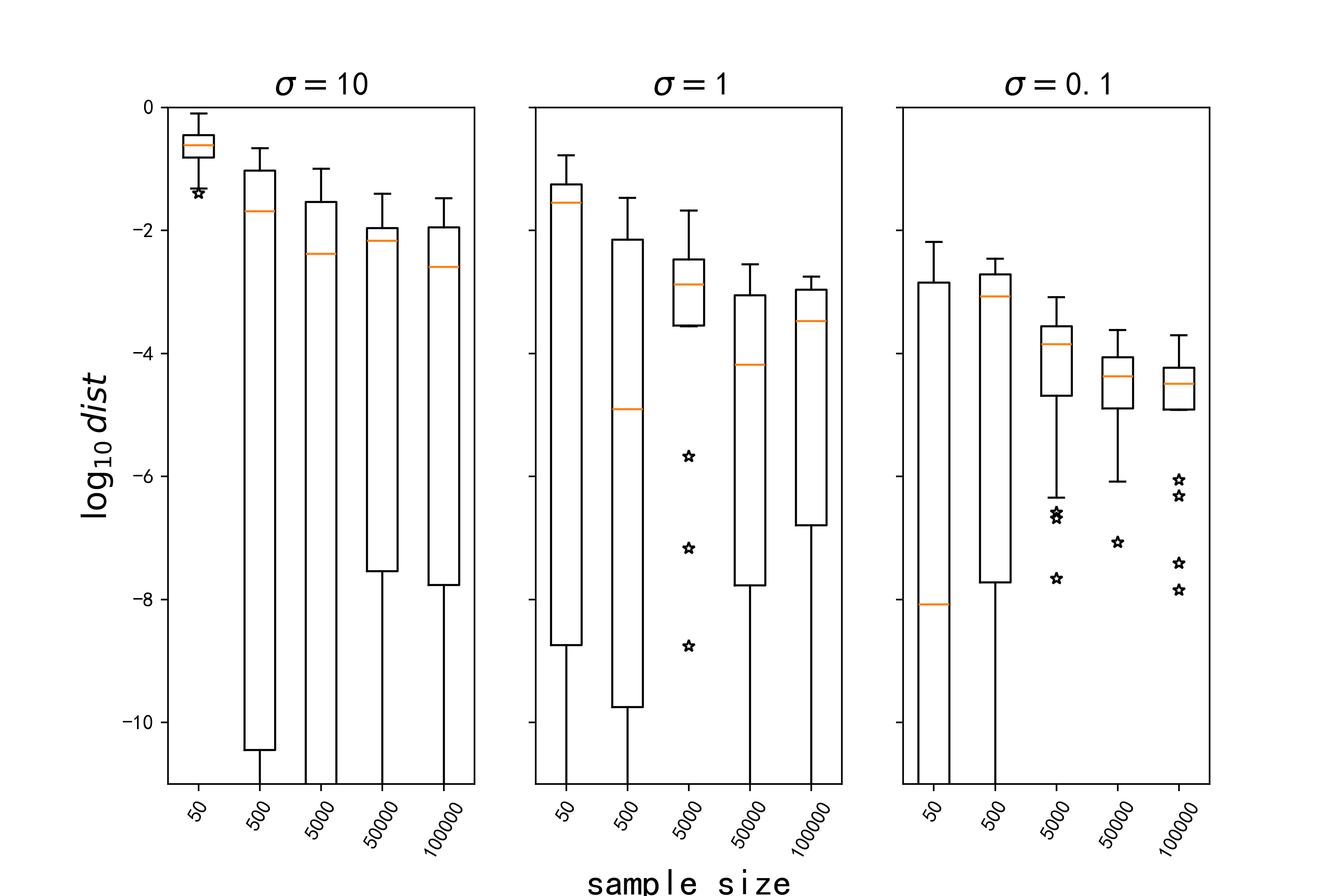}
    \caption{\scriptsize{Performance on HS32 w.r.t after 1 500 iterations.}}
  \end{minipage}
\end{figure*}
\clearpage

\begin{figure*}[!h]
  \begin{minipage}[t]{0.5\linewidth}
    \centering
    \includegraphics[scale=0.35]{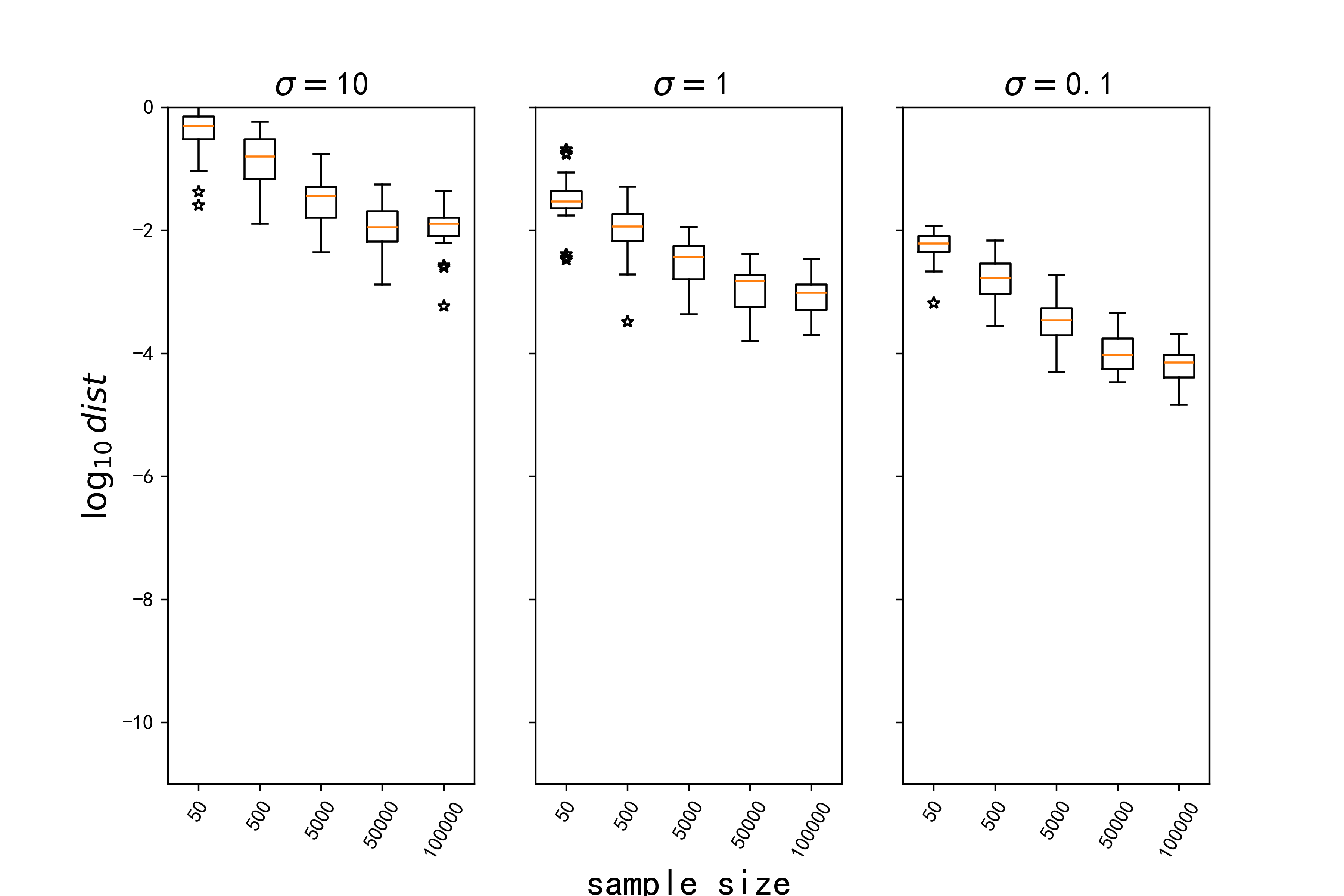}
    \caption{\scriptsize{Performance on HS42 w.r.t after 50 iterations.}}
  \end{minipage}%
  \begin{minipage}[t]{0.5\linewidth}
    \centering
    \includegraphics[scale=0.35]{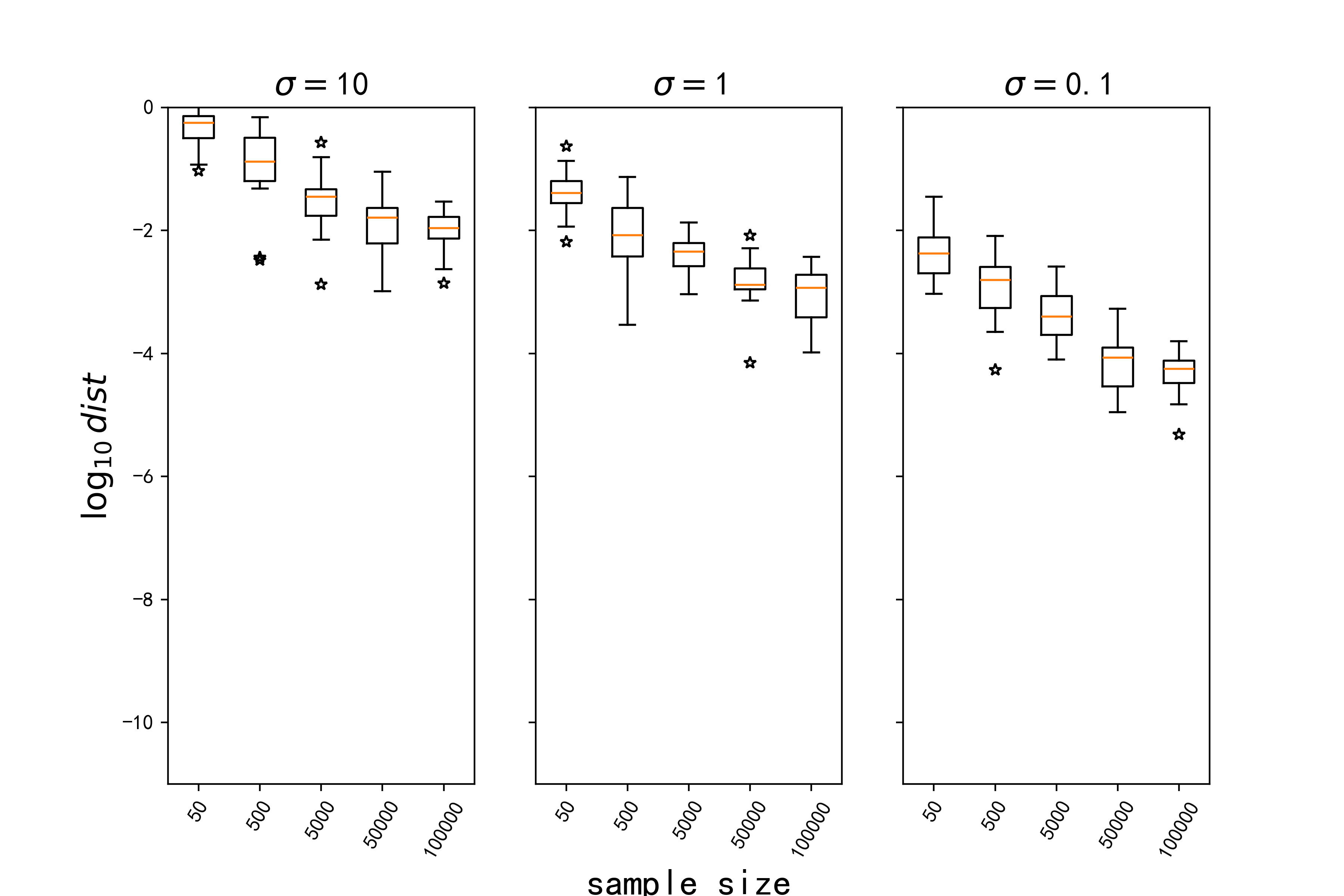}
    \caption{\scriptsize{Performance on HS42 w.r.t after 1 500 iterations.}}
  \end{minipage}
\end{figure*}

\begin{figure*}[!h]
  \begin{minipage}[t]{0.5\linewidth}
    \centering
    \includegraphics[scale=0.35]{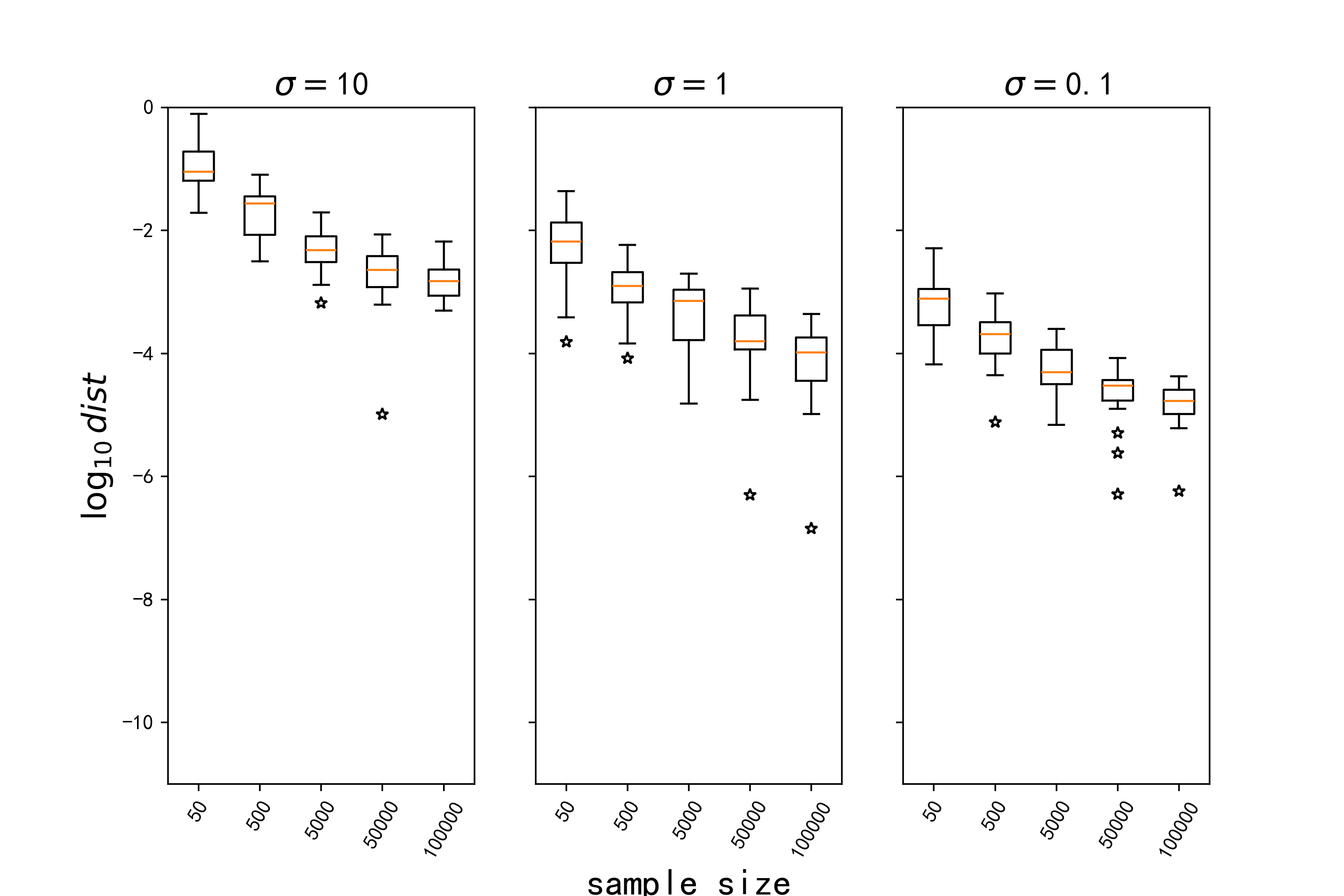}
    \caption{\scriptsize{Performance on HS43 w.r.t after 50 iterations.}}
  \end{minipage}%
  \begin{minipage}[t]{0.5\linewidth}
    \centering
    \includegraphics[scale=0.35]{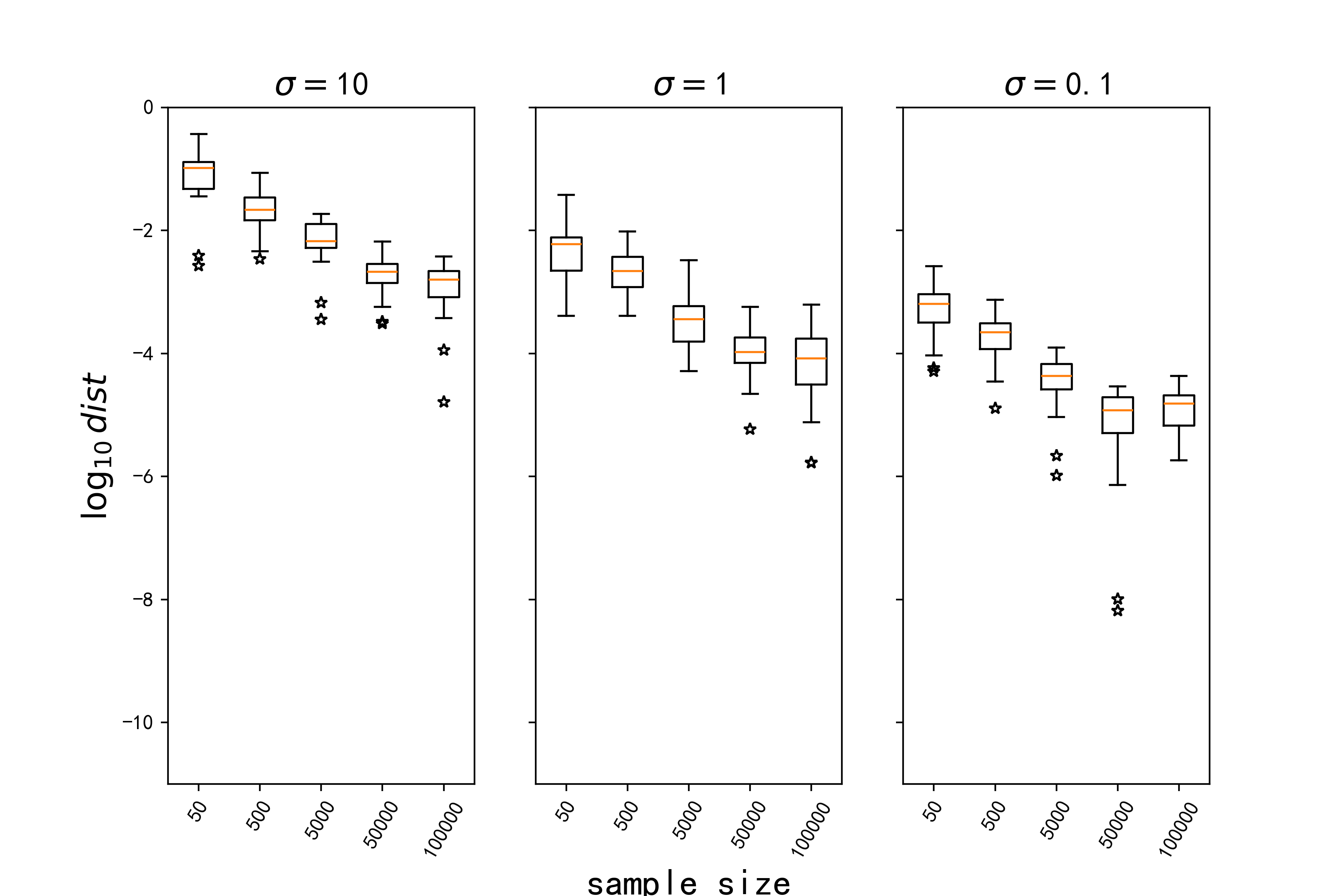}
    \caption{\scriptsize{Performance on HS43 w.r.t after 1 500 iterations.}}
  \end{minipage}
\end{figure*}
\clearpage

\begin{figure*}[!h]
  \begin{minipage}[t]{0.5\linewidth}
    \centering
    \includegraphics[scale=0.35]{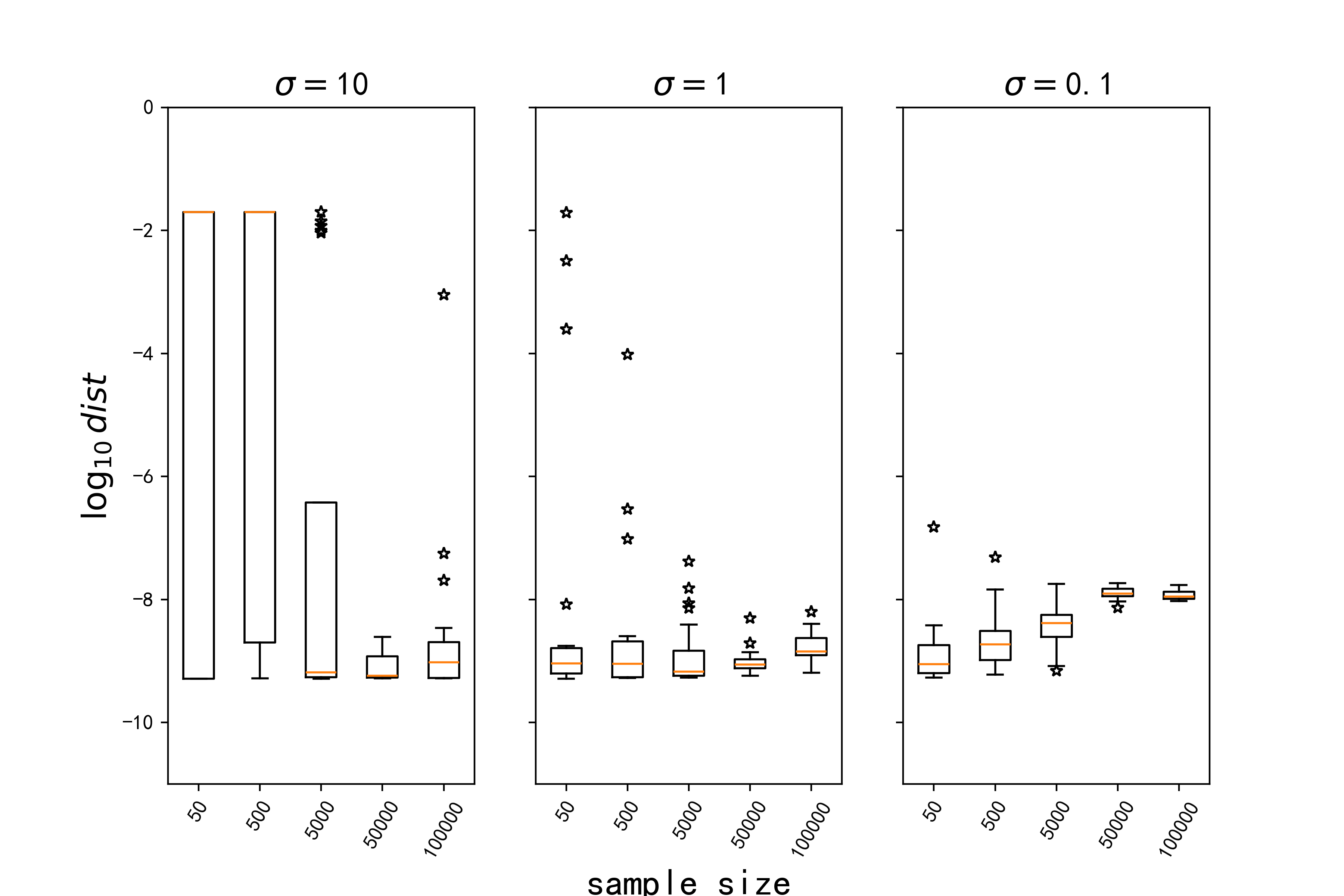}
    \caption{\scriptsize{Performance on HS57 w.r.t after 50 iterations.}}
  \end{minipage}%
  \begin{minipage}[t]{0.5\linewidth}
    \centering
    \includegraphics[scale=0.35]{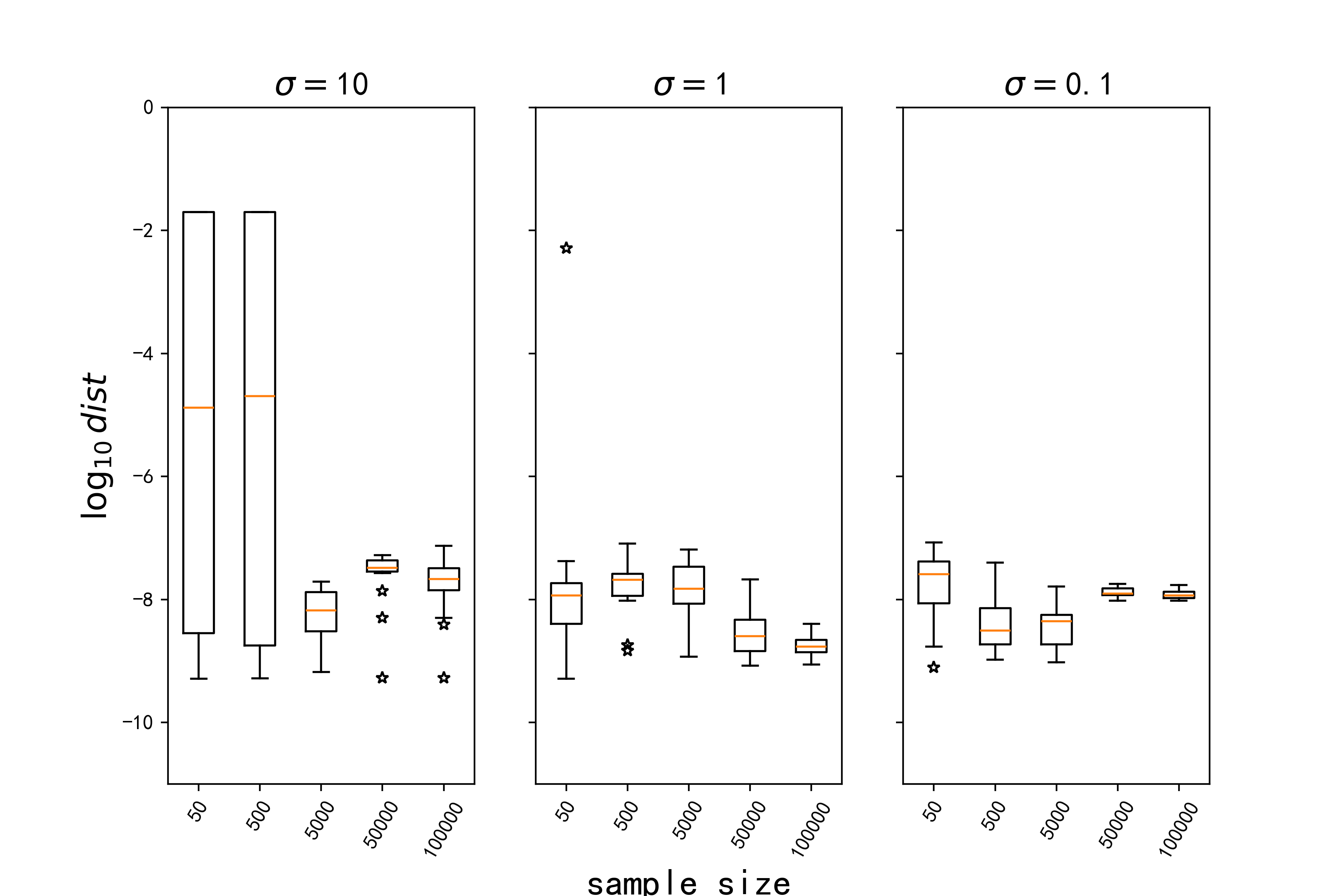}
    \caption{\scriptsize{Performance on HS57 w.r.t after 1 500 iterations.}}
  \end{minipage}
\end{figure*}

\begin{figure*}[!h]
  \begin{minipage}[t]{0.5\linewidth}
    \centering
    \includegraphics[scale=0.35]{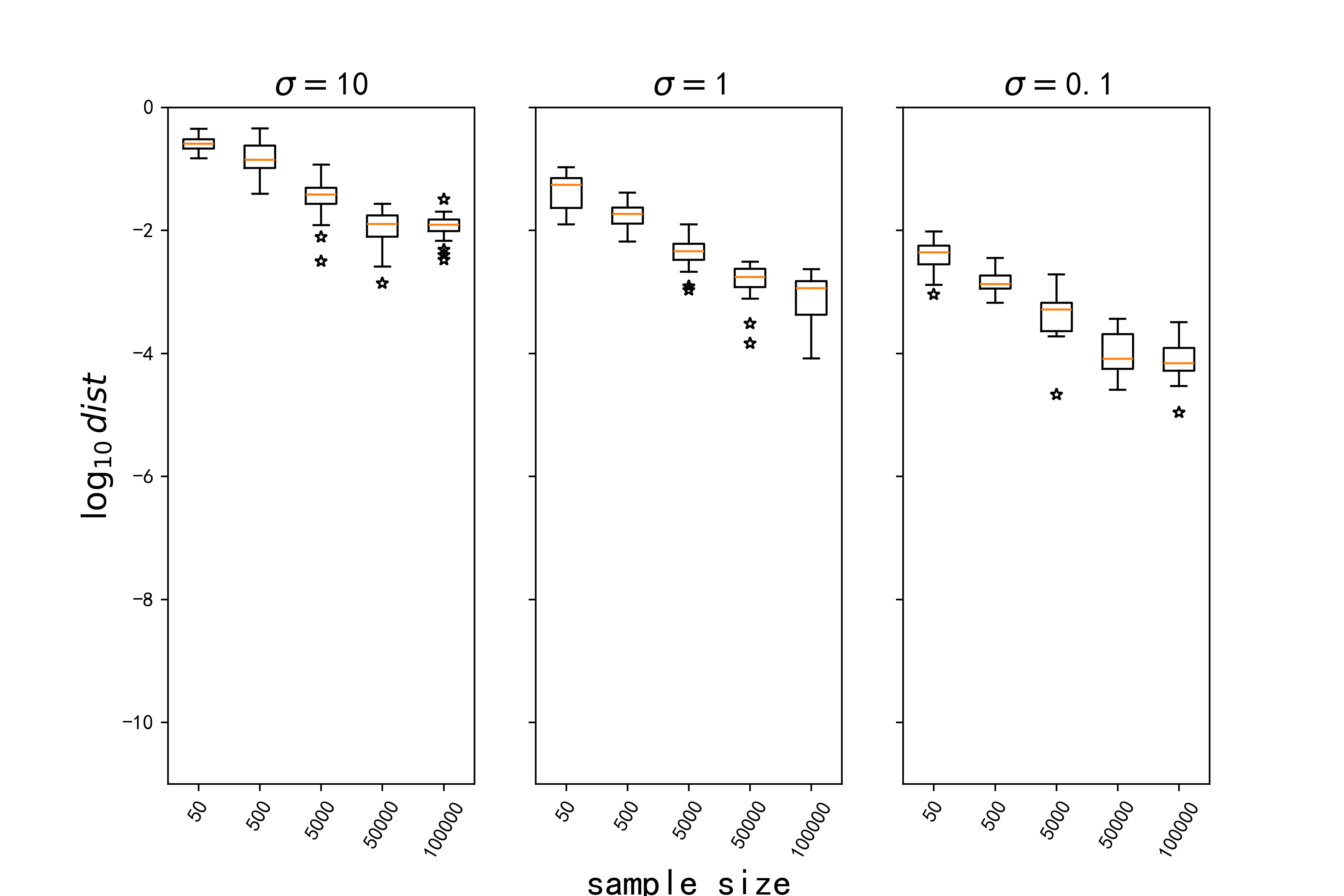}
    \caption{\scriptsize{Performance on HS60 w.r.t after 50 iterations.}}
  \end{minipage}%
  \begin{minipage}[t]{0.5\linewidth}
    \centering
    \includegraphics[scale=0.35]{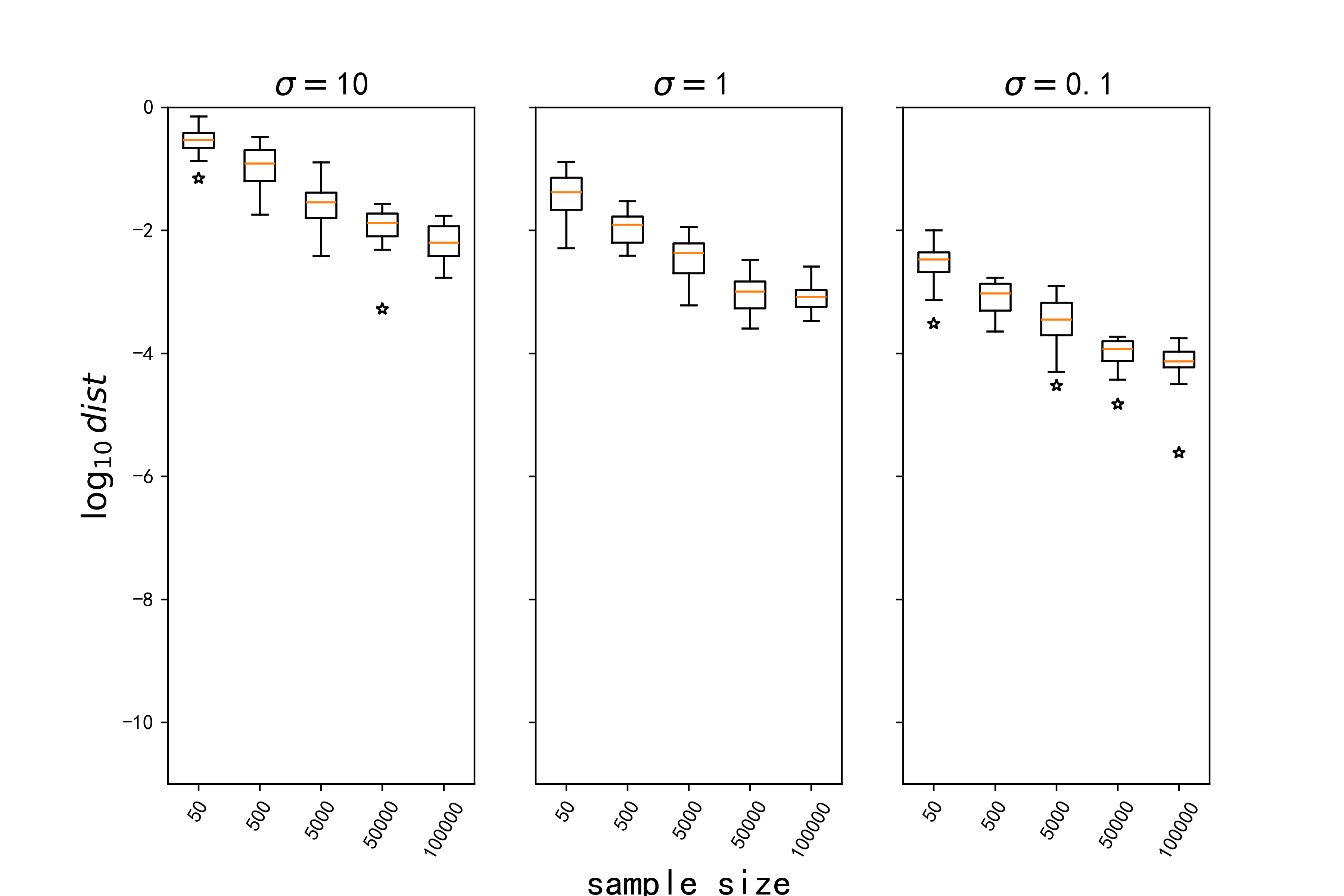}
    \caption{\scriptsize{Performance on HS60 w.r.t after 1 500 iterations.}}
  \end{minipage}
\end{figure*}
\clearpage

\begin{figure*}[!h]
  \begin{minipage}[t]{0.5\linewidth}
    \centering
    \includegraphics[scale=0.35]{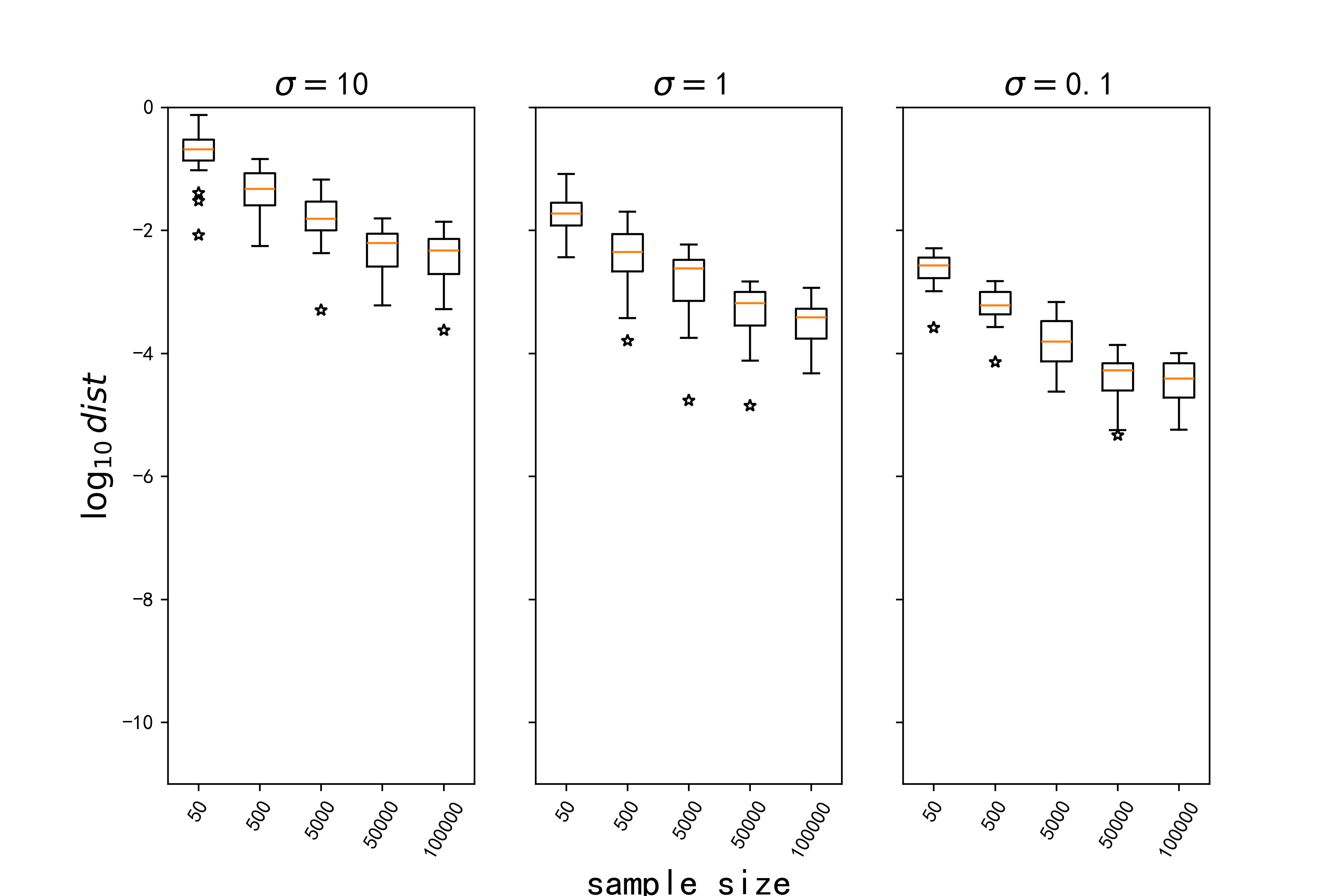}
    \caption{\scriptsize{Performance on HS63 w.r.t after 50 iterations.}}
  \end{minipage}%
  \begin{minipage}[t]{0.5\linewidth}
    \centering
    \includegraphics[scale=0.35]{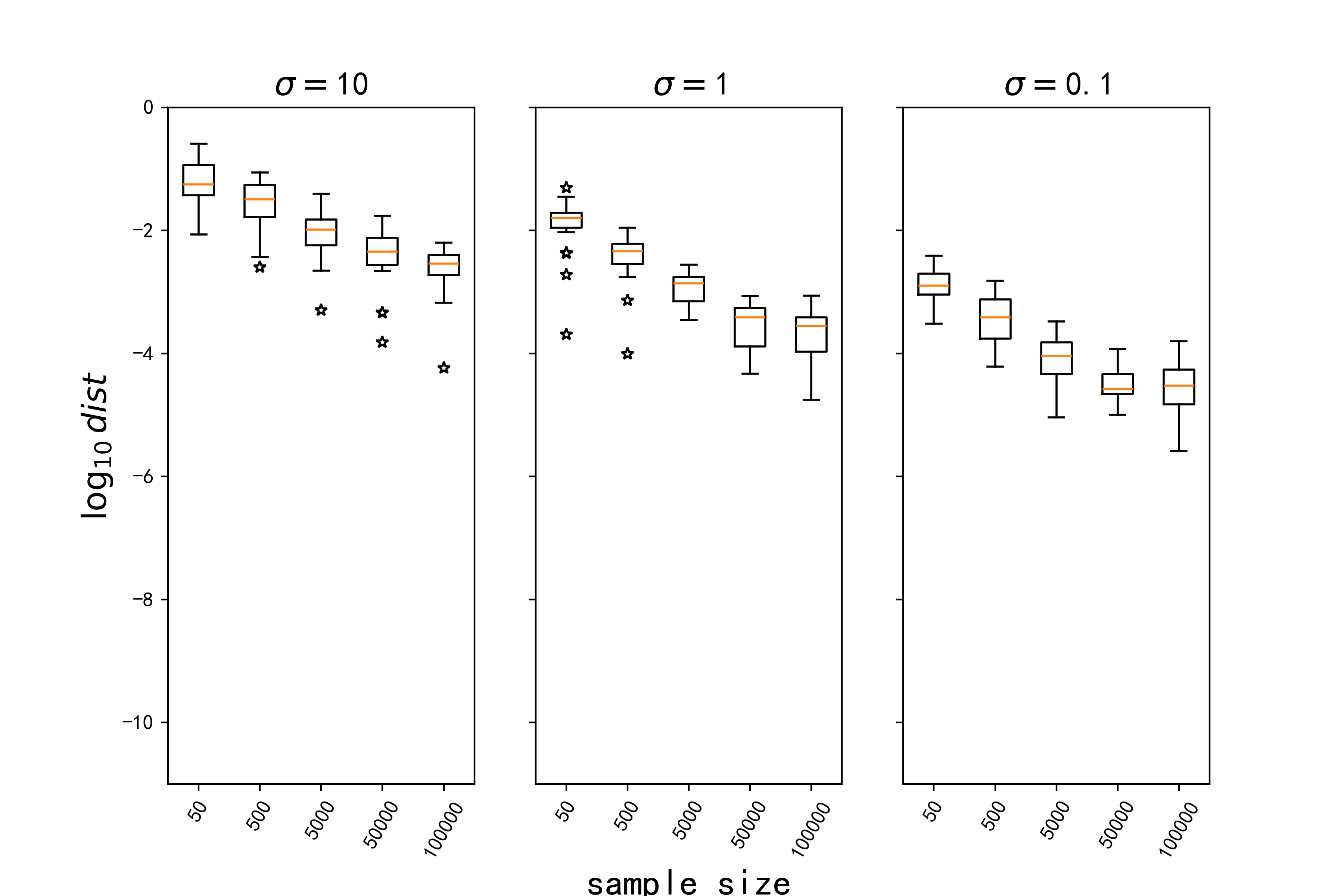}
    \caption{\scriptsize{Performance on HS63 w.r.t after 1 500 iterations.}}
  \end{minipage}
\end{figure*}

\begin{figure*}[!h]
  \begin{minipage}[t]{0.5\linewidth}
    \centering
    \includegraphics[scale=0.35]{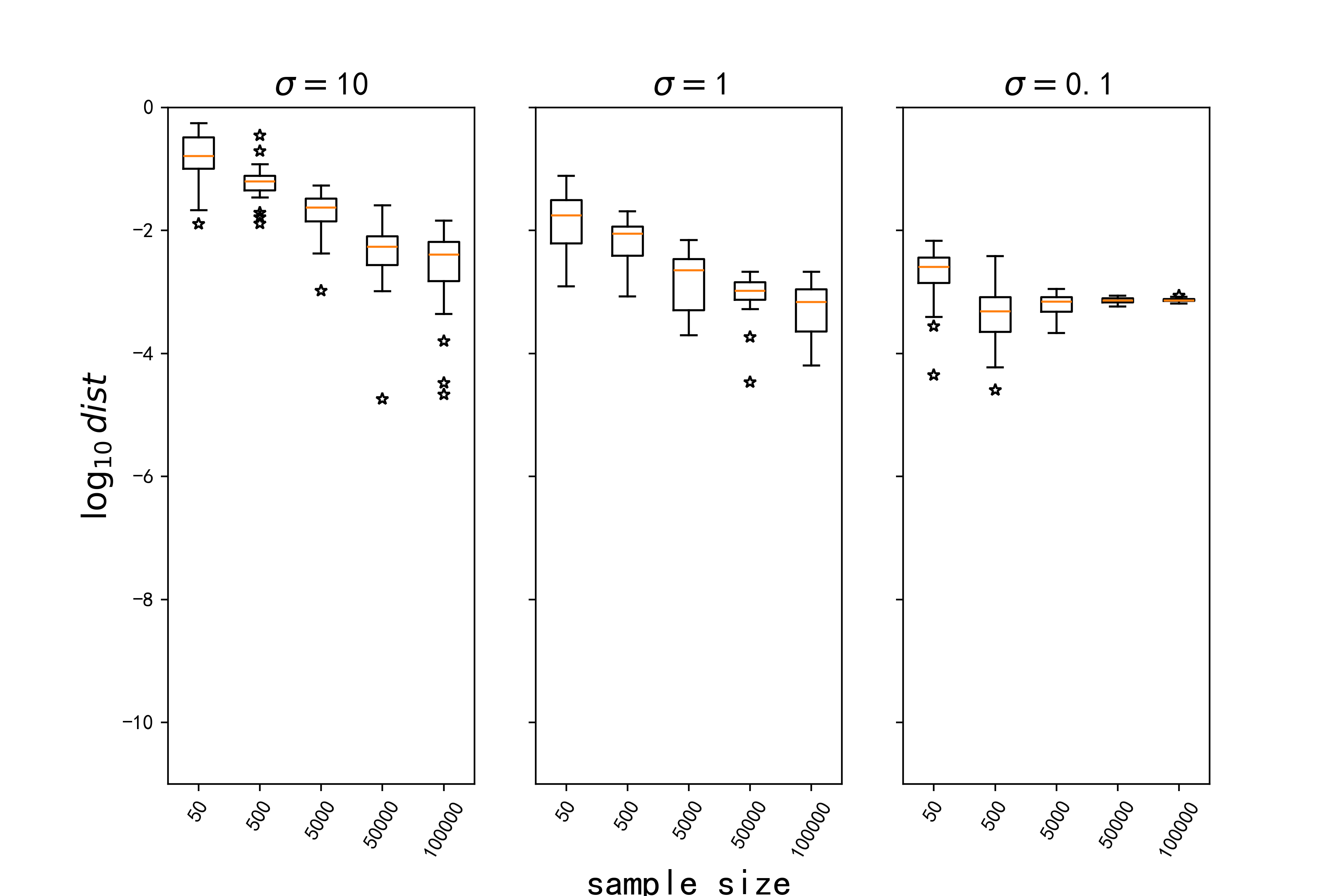}
    \caption{\scriptsize{Performance on HS65 w.r.t after 50 iterations.}}
  \end{minipage}%
  \begin{minipage}[t]{0.5\linewidth}
    \centering
    \includegraphics[scale=0.35]{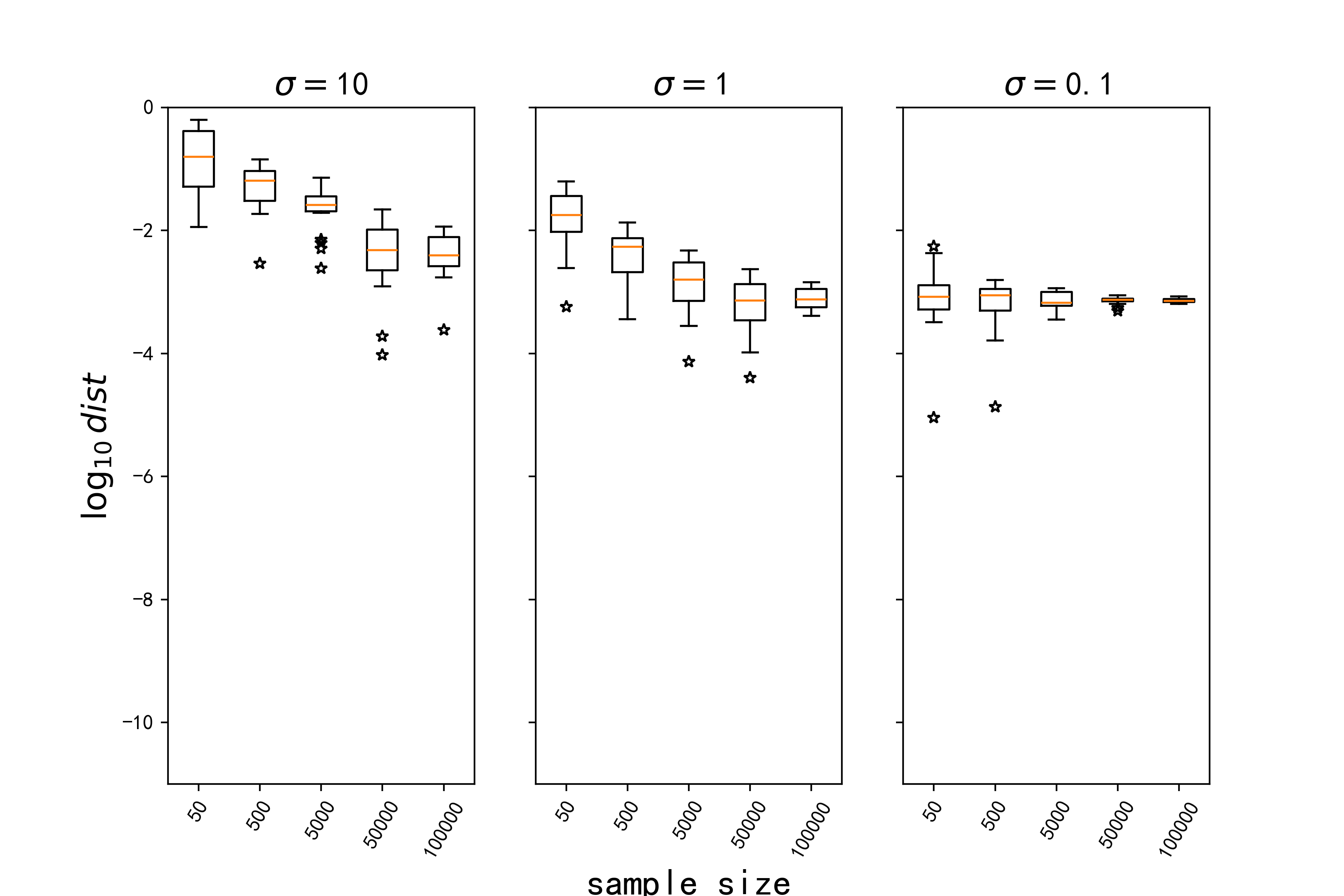}
    \caption{\scriptsize{Performance on HS65 w.r.t after 1 500 iterations.}}
  \end{minipage}
\end{figure*}
\clearpage

\begin{figure*}[!h]
  \begin{minipage}[t]{0.5\linewidth}
    \centering
    \includegraphics[scale=0.35]{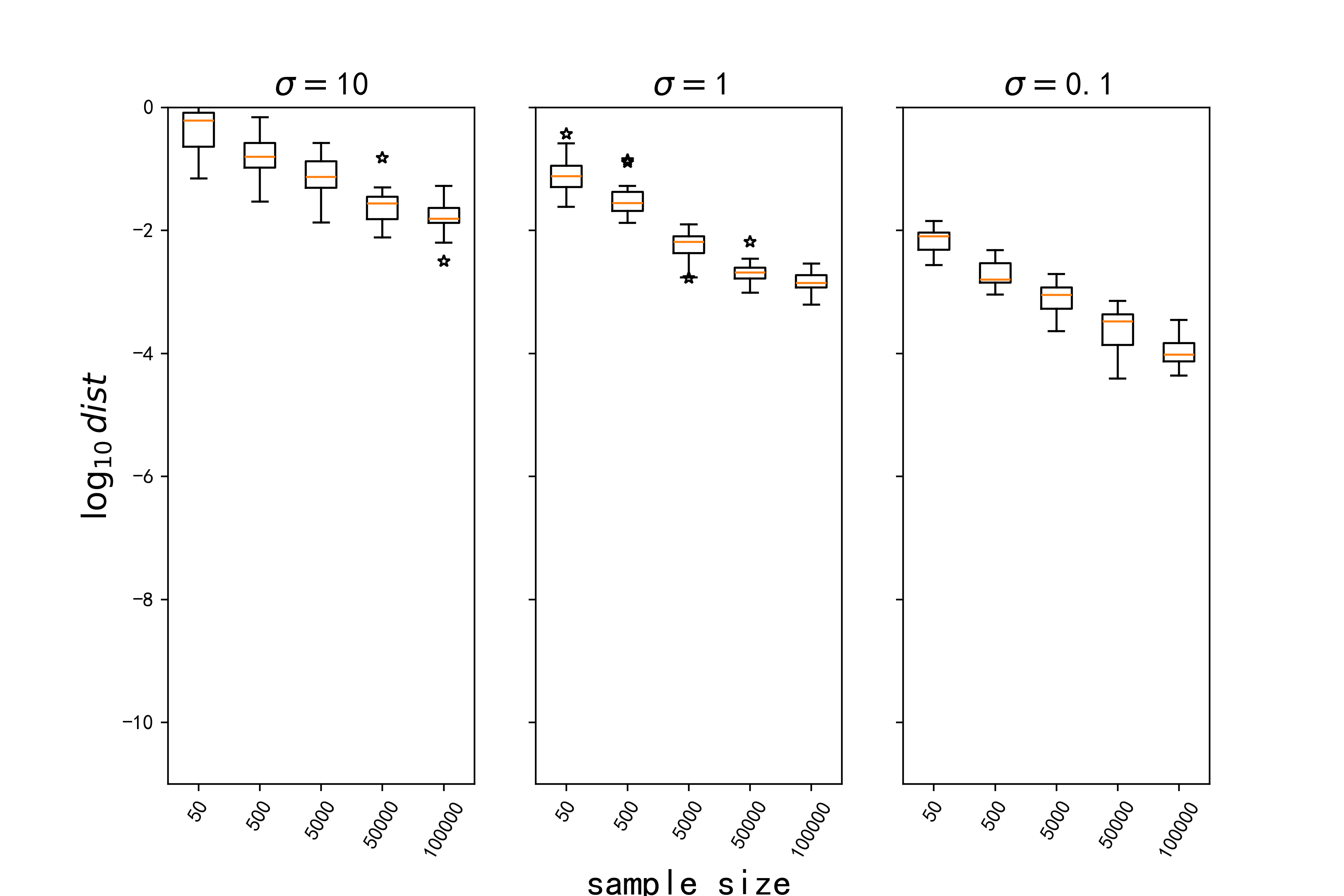}
    \caption{\scriptsize{Performance on HS77 w.r.t after 50 iterations.}}
  \end{minipage}%
  \begin{minipage}[t]{0.5\linewidth}
    \centering
    \includegraphics[scale=0.35]{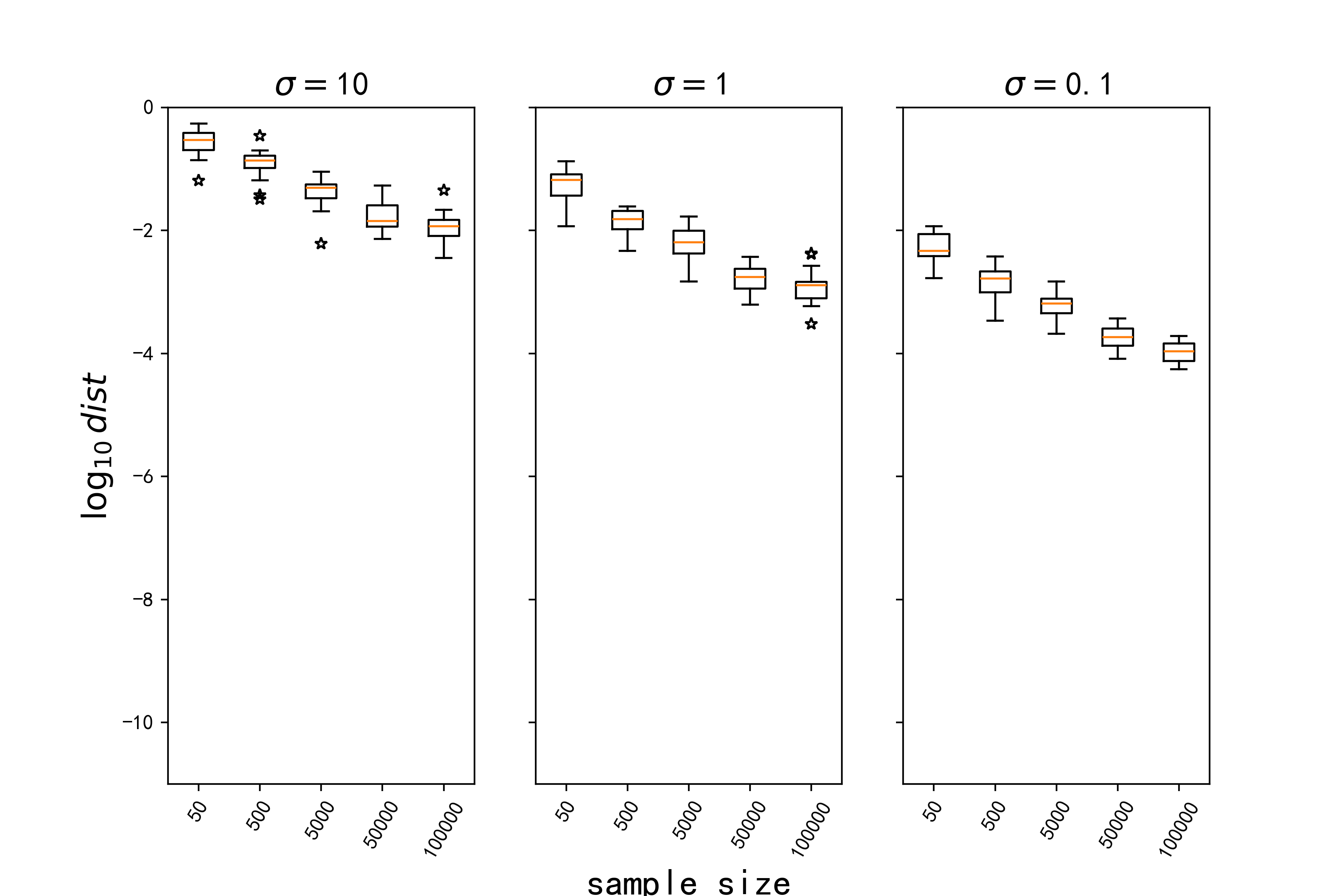}
    \caption{\scriptsize{Performance on HS77 w.r.t after 1 500 iterations.}}
  \end{minipage}
\end{figure*}

\begin{figure*}[!h]
  \begin{minipage}[t]{0.5\linewidth}
    \centering
    \includegraphics[scale=0.35]{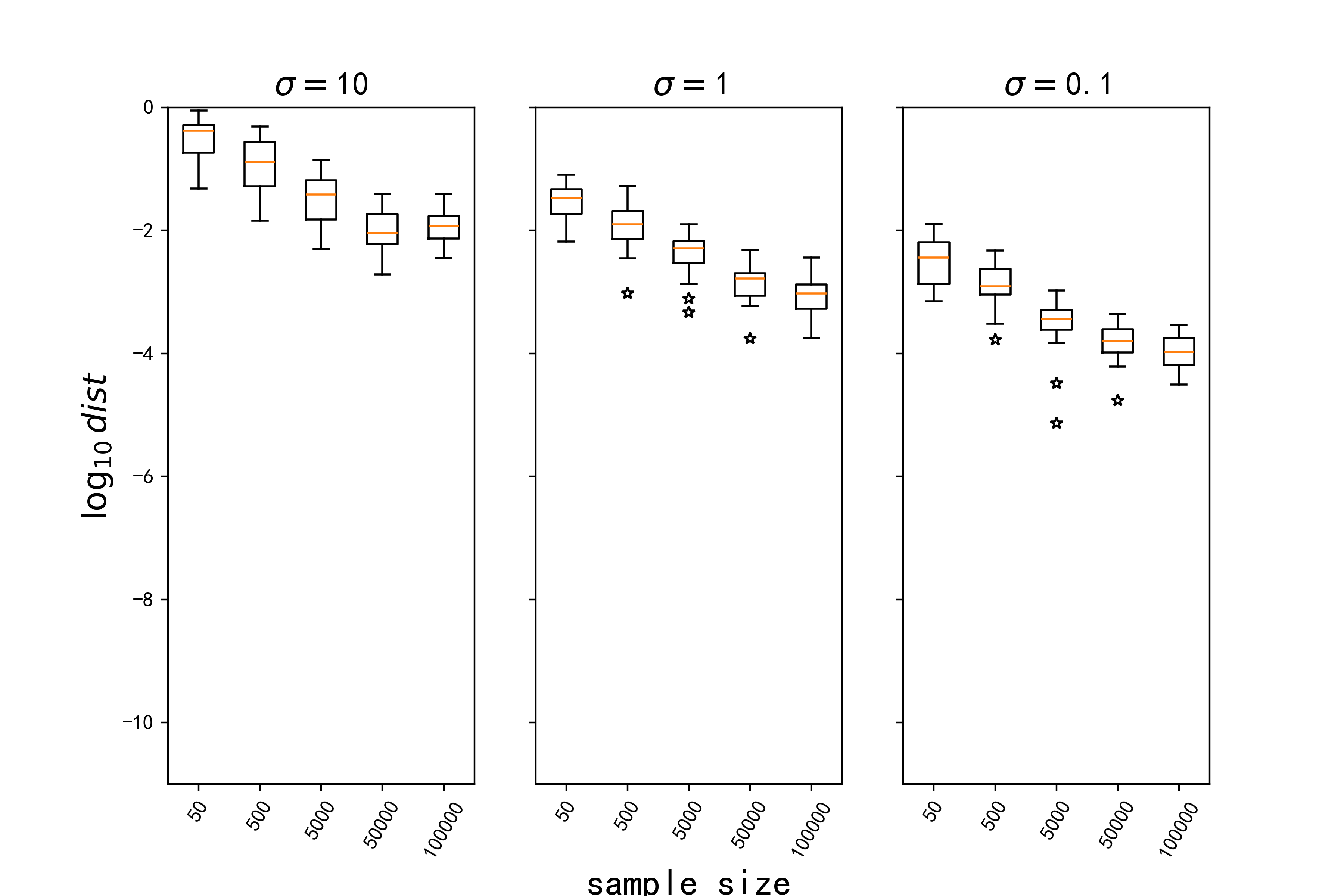}
    \caption{\scriptsize{Performance on HS79 w.r.t after 50 iterations.}}
  \end{minipage}%
  \begin{minipage}[t]{0.5\linewidth}
    \centering
    \includegraphics[scale=0.35]{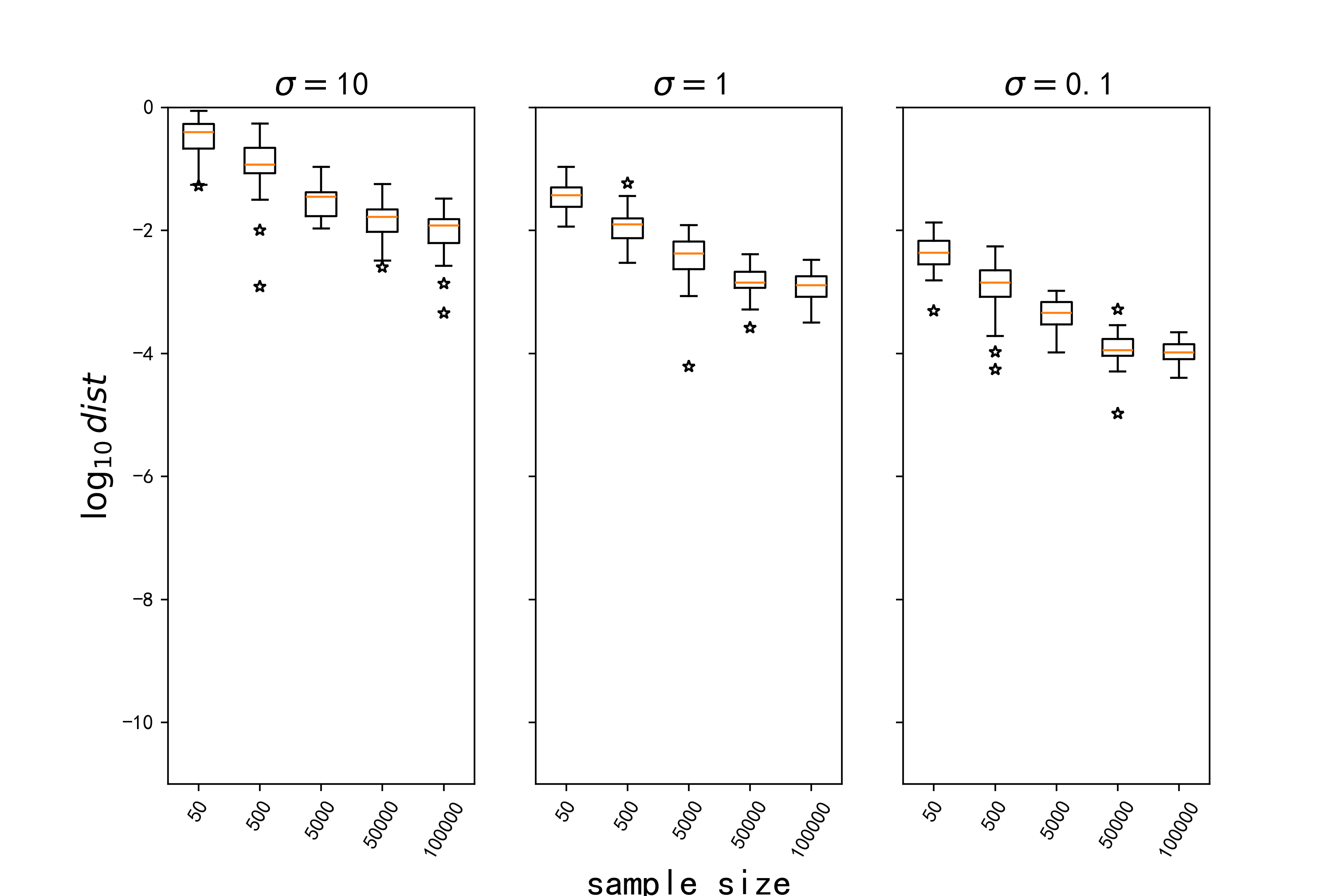}
    \caption{\scriptsize{Performance on HS79 w.r.t after 1 500 iterations.}}
  \end{minipage}
\end{figure*}
\clearpage

\begin{figure*}[!h]
  \begin{minipage}[t]{0.5\linewidth}
    \centering
    \includegraphics[scale=0.35]{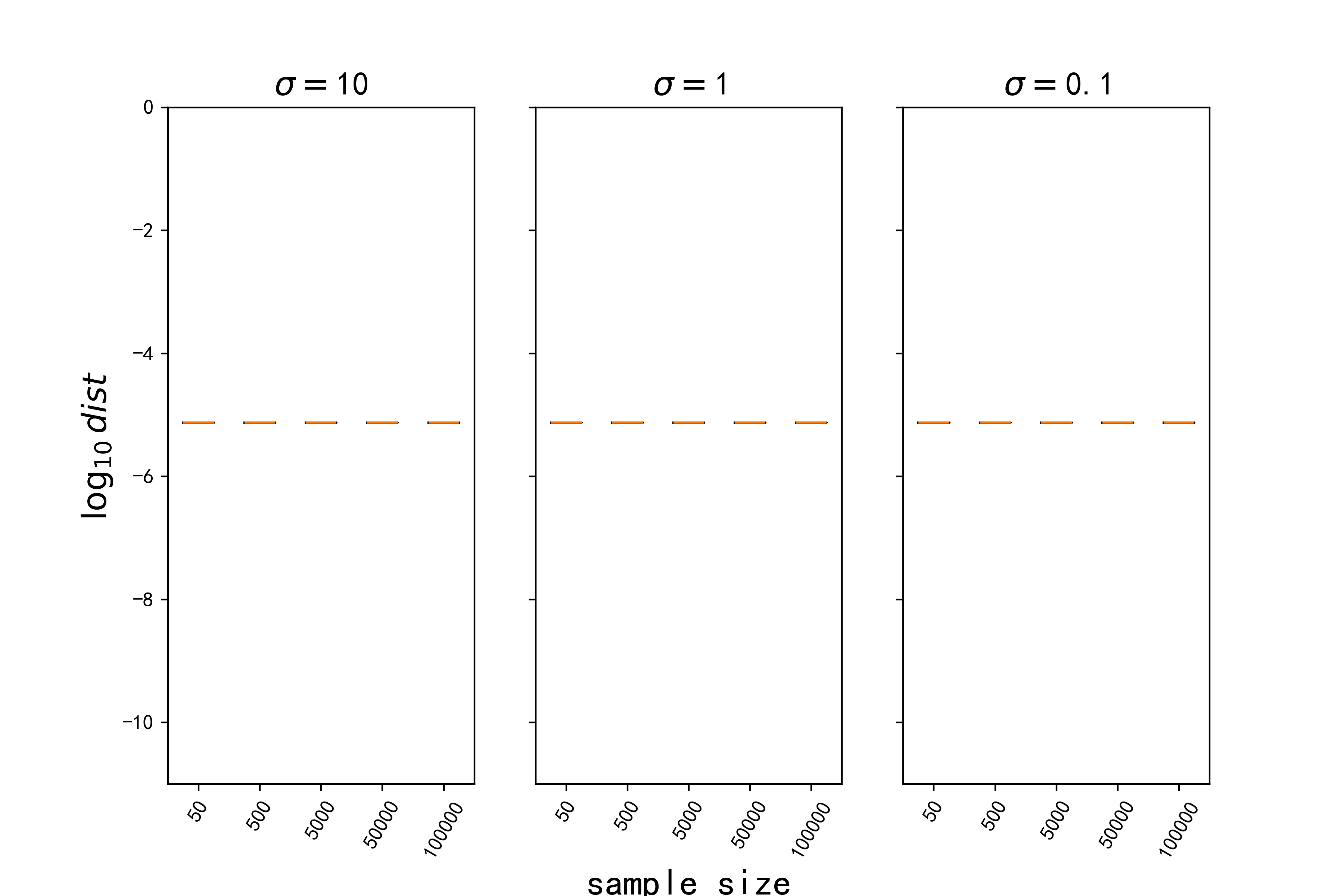}
    \caption{\scriptsize{Performance on HS99 w.r.t after 50 iterations.}}
  \end{minipage}%
  \begin{minipage}[t]{0.5\linewidth}
    \centering
    \includegraphics[scale=0.35]{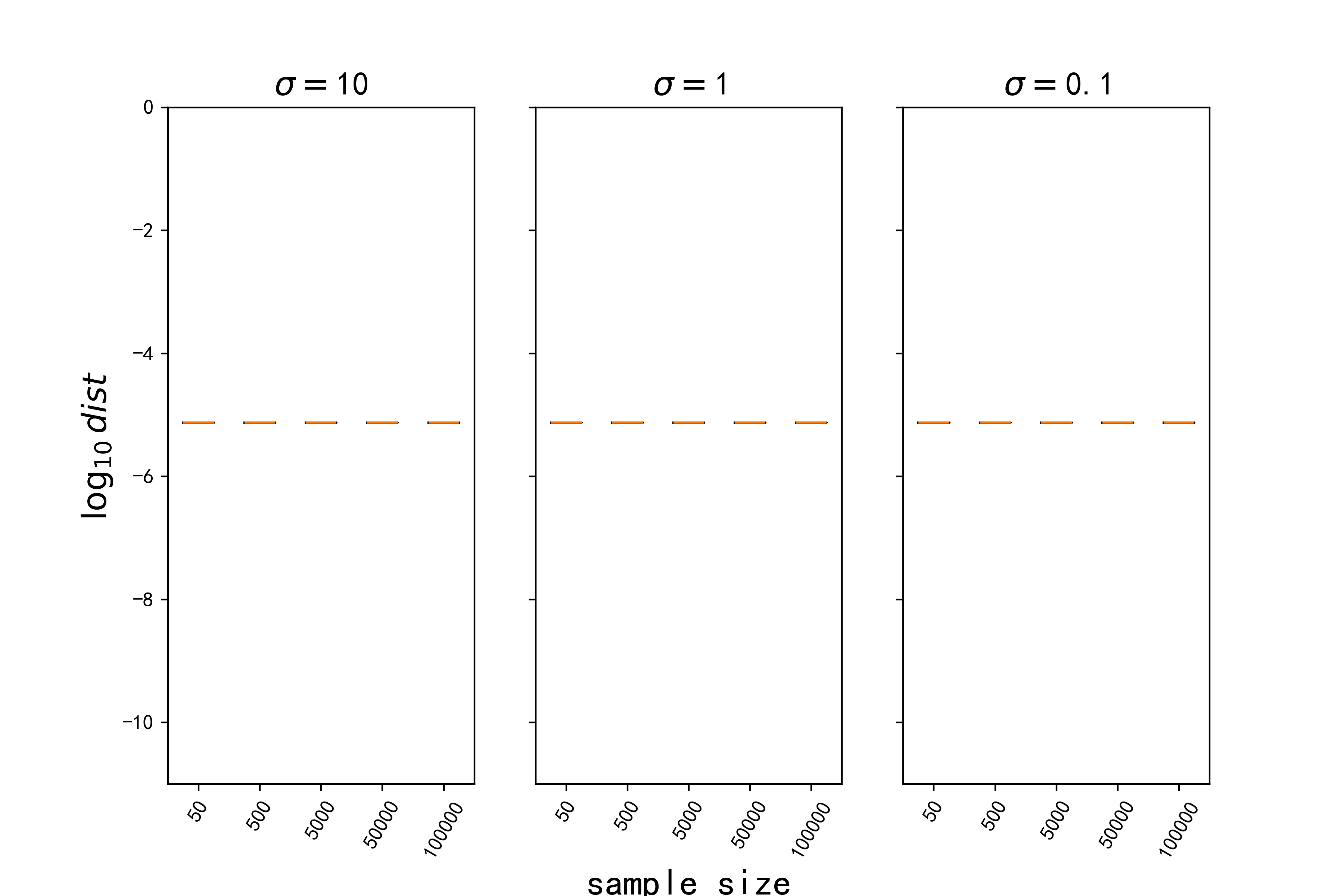}
    \caption{\scriptsize{Performance on HS99 w.r.t after 1 500 iterations.}}
  \end{minipage}
\end{figure*}

\begin{figure*}[!h]
  \begin{minipage}[t]{0.5\linewidth}
    \centering
    \includegraphics[scale=0.35]{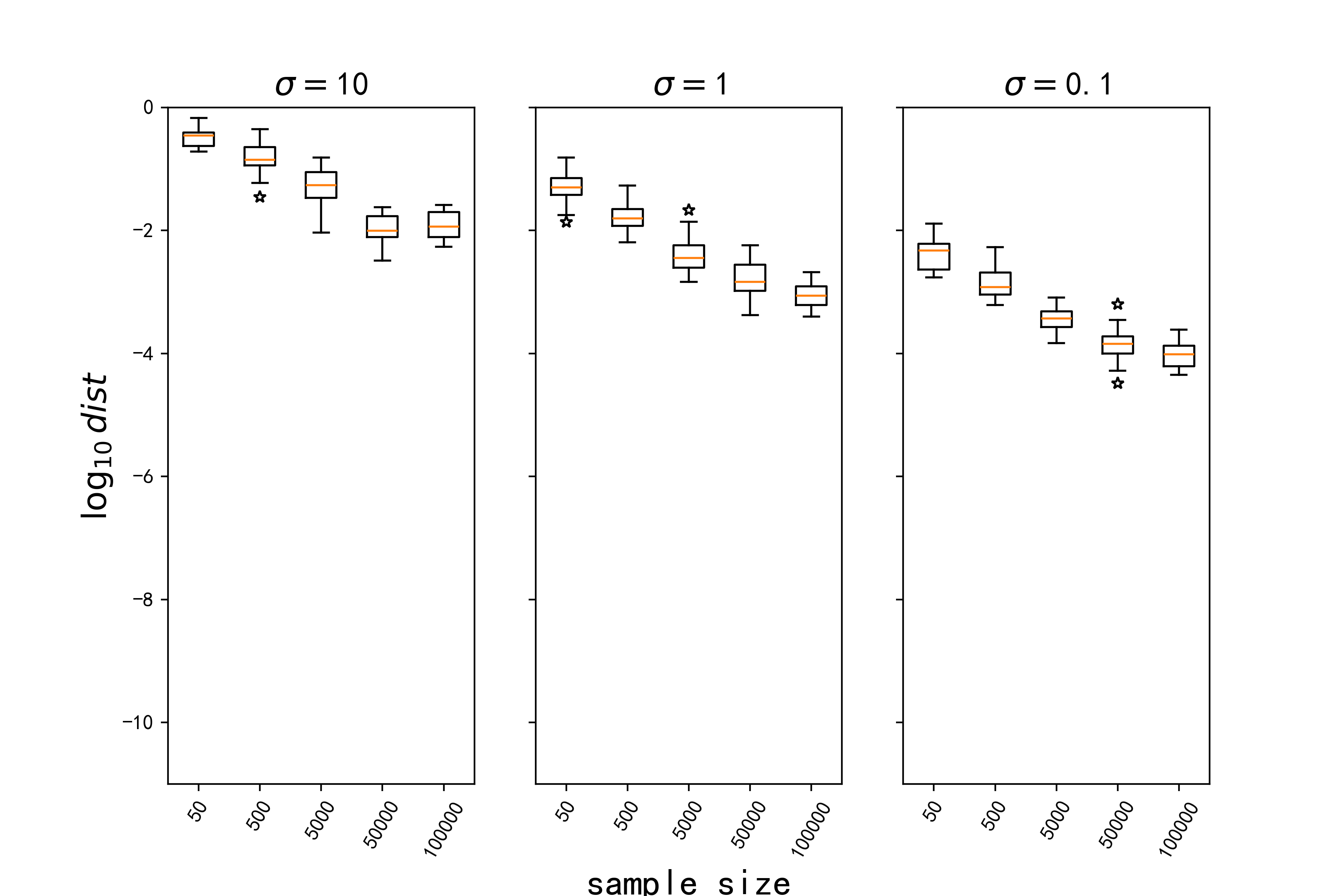}
    \caption{\scriptsize{Performance on HS100 w.r.t after 50 iterations.}}
  \end{minipage}%
  \begin{minipage}[t]{0.5\linewidth}
    \centering
    \includegraphics[scale=0.35]{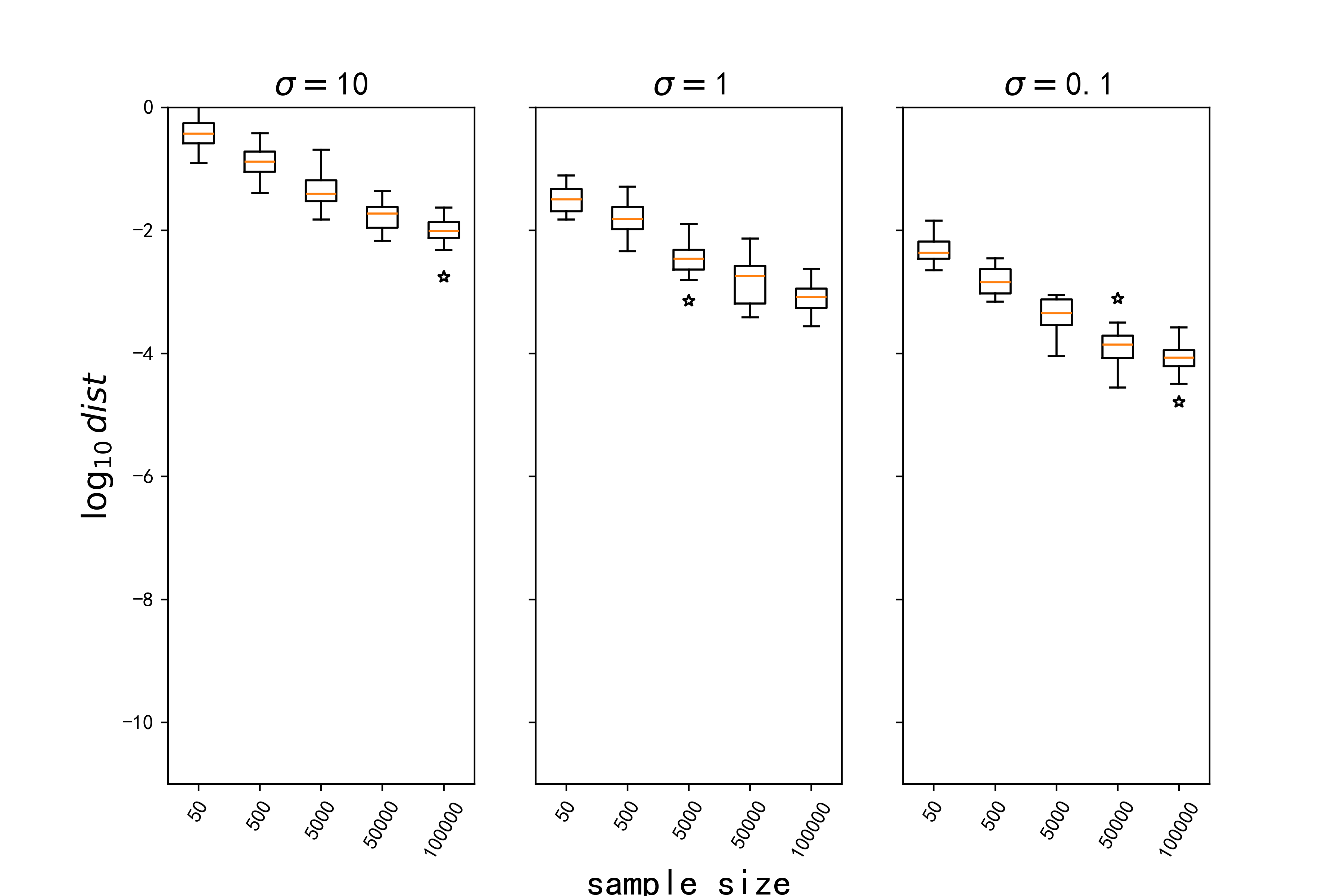}
    \caption{\scriptsize{Performance on HS100 w.r.t after 1 500 iterations.}}
  \end{minipage}
\end{figure*}
\clearpage

\begin{figure*}[!h]
  \begin{minipage}[t]{0.5\linewidth}
    \centering
    \includegraphics[scale=0.35]{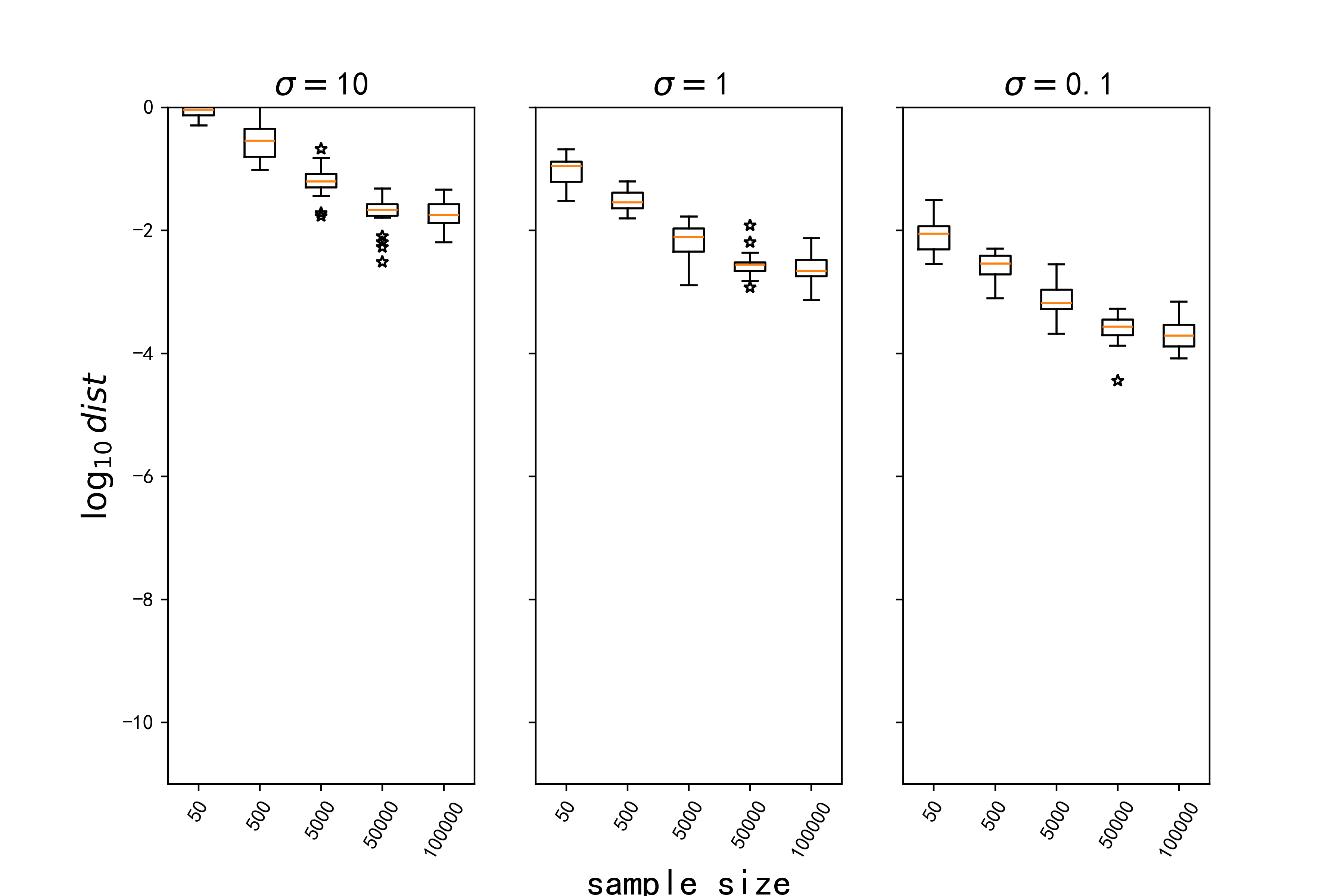}
    \caption{\scriptsize{Performance on HS113 w.r.t after 50 iterations.}}
  \end{minipage}%
  \begin{minipage}[t]{0.5\linewidth}
    \centering
    \includegraphics[scale=0.35]{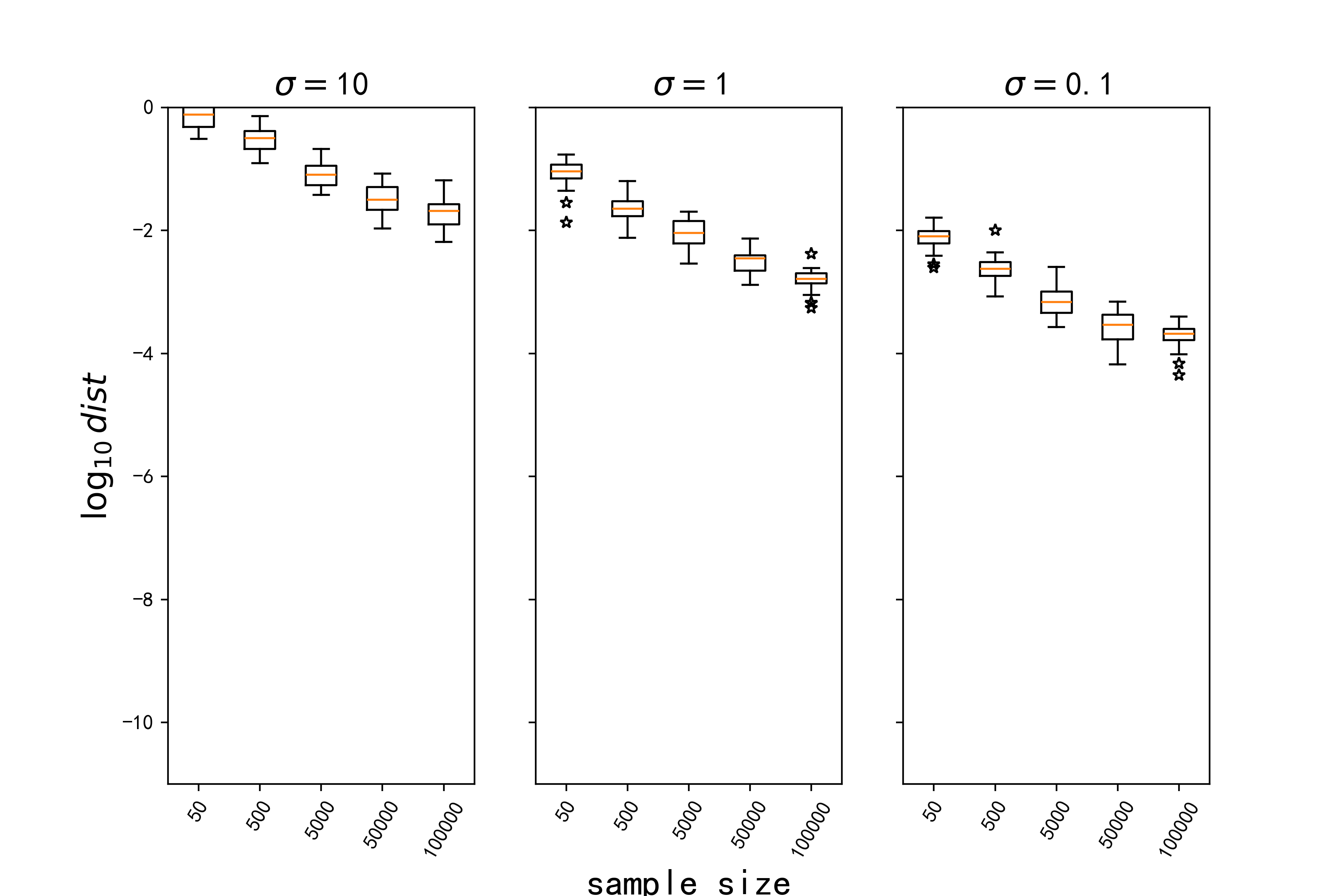}
    \caption{\scriptsize{Performance on HS113 w.r.t after 1 500 iterations.}}
  \end{minipage}
\end{figure*}

\begin{figure*}[!h]
  \begin{minipage}[t]{0.5\linewidth}
    \centering
    \includegraphics[scale=0.35]{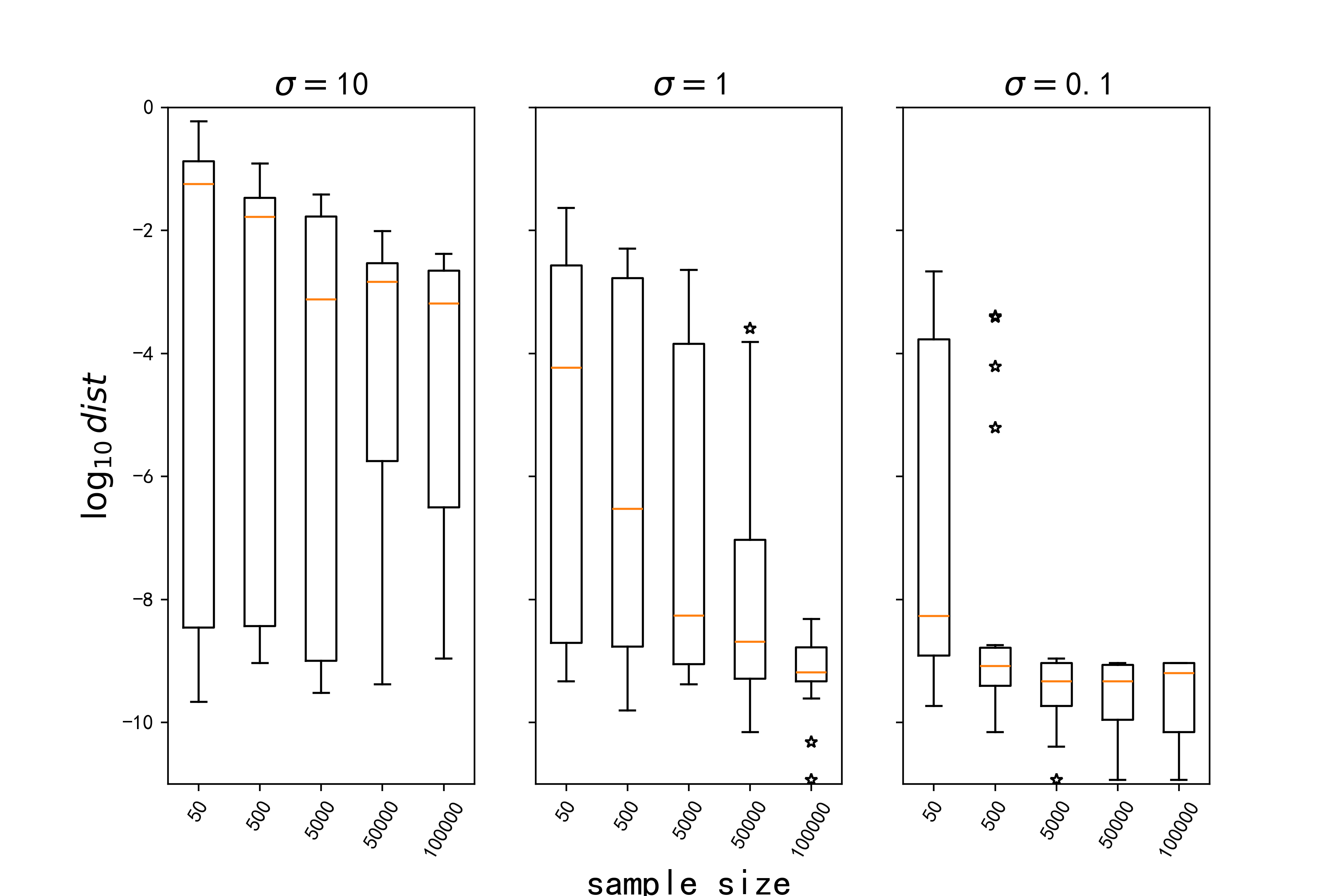}
    \caption{\scriptsize{Performance on S216 w.r.t after 50 iterations.}}
  \end{minipage}%
  \begin{minipage}[t]{0.5\linewidth}
    \centering
    \includegraphics[scale=0.35]{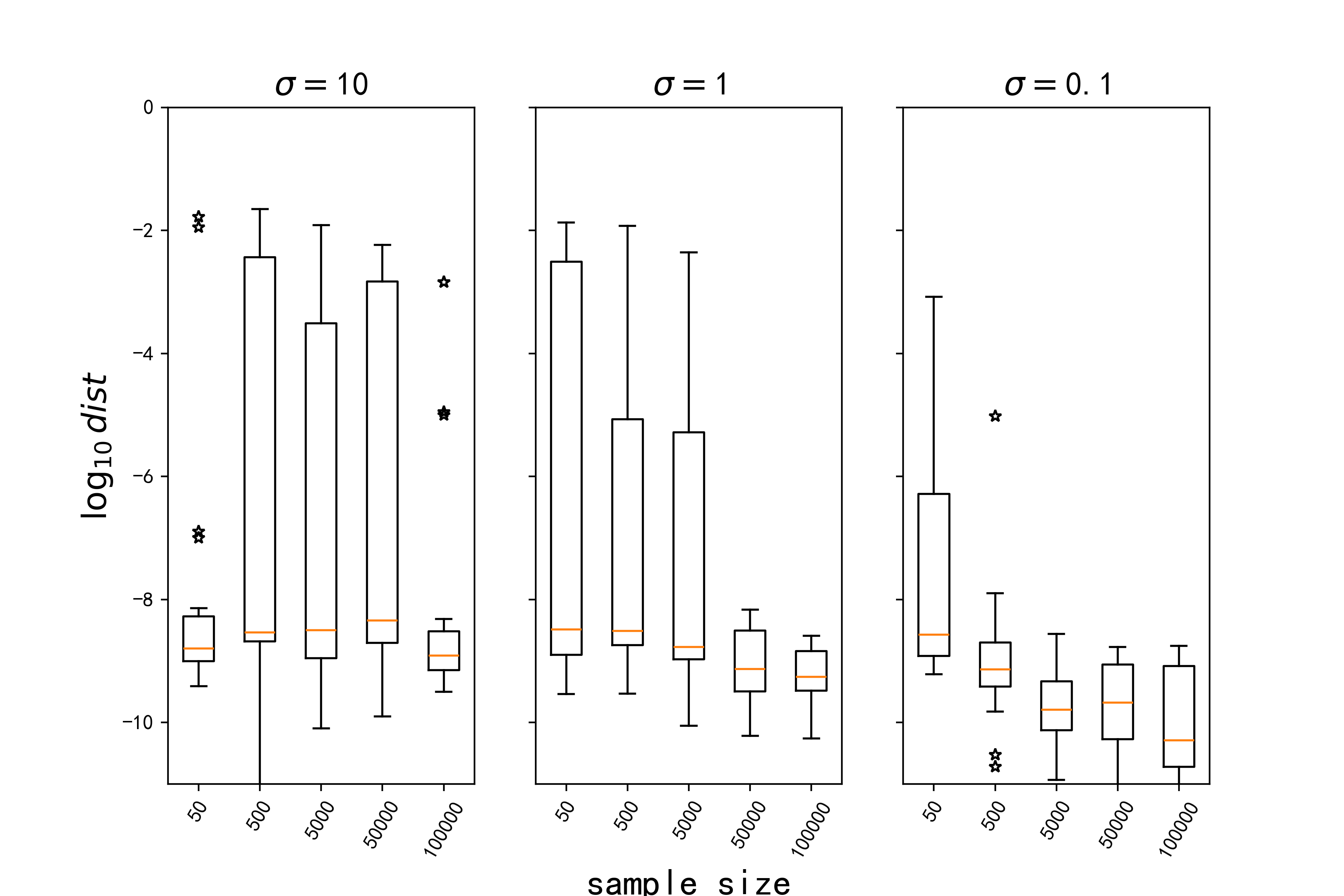}
    \caption{\scriptsize{Performance on S216 w.r.t after 1 500 iterations.}}
  \end{minipage}
\end{figure*}
\clearpage

\begin{figure*}[!h]
  \begin{minipage}[t]{0.5\linewidth}
    \centering
    \includegraphics[scale=0.35]{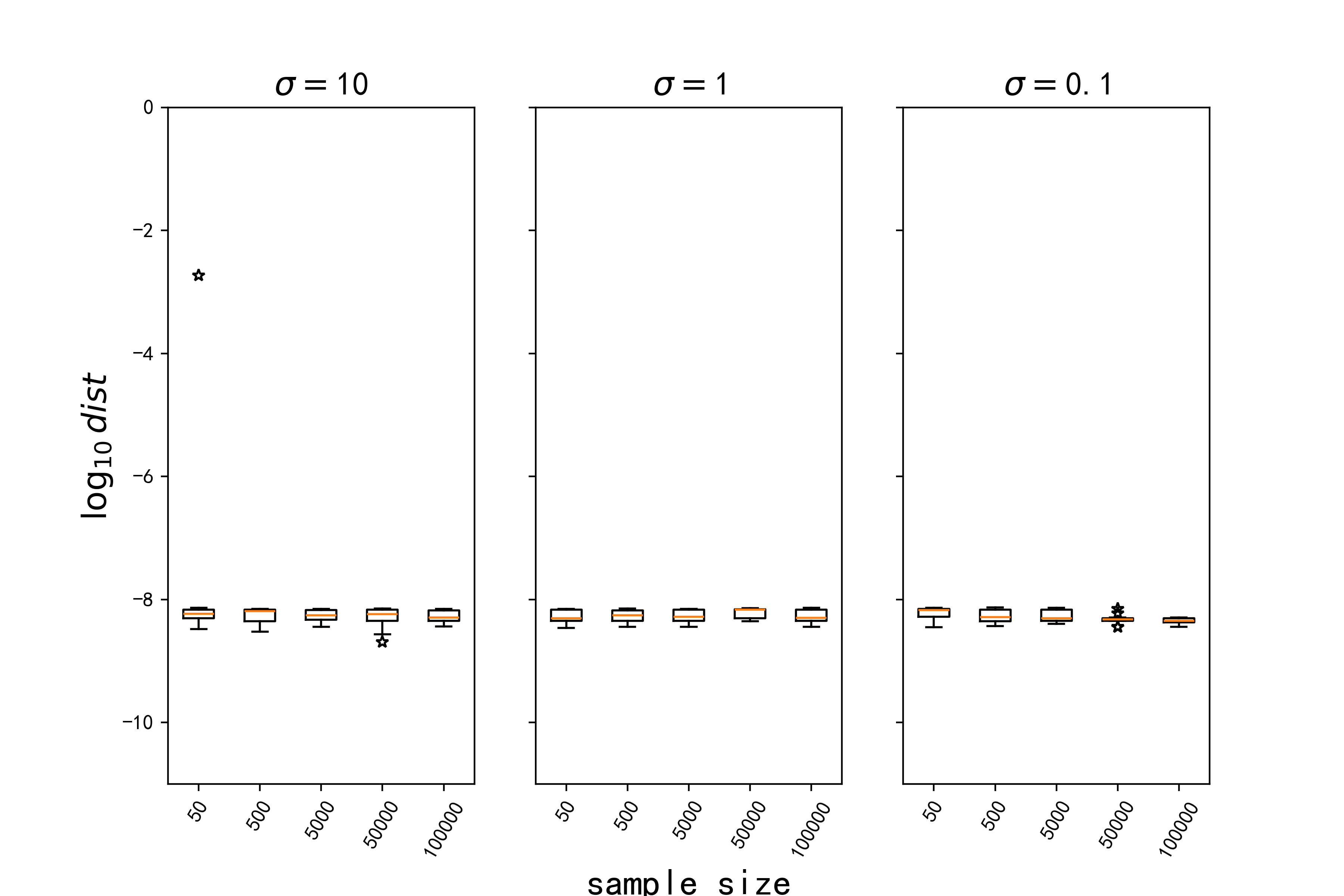}
    \caption{\scriptsize{Performance on S225 w.r.t after 50 iterations.}}
  \end{minipage}%
  \begin{minipage}[t]{0.5\linewidth}
    \centering
    \includegraphics[scale=0.35]{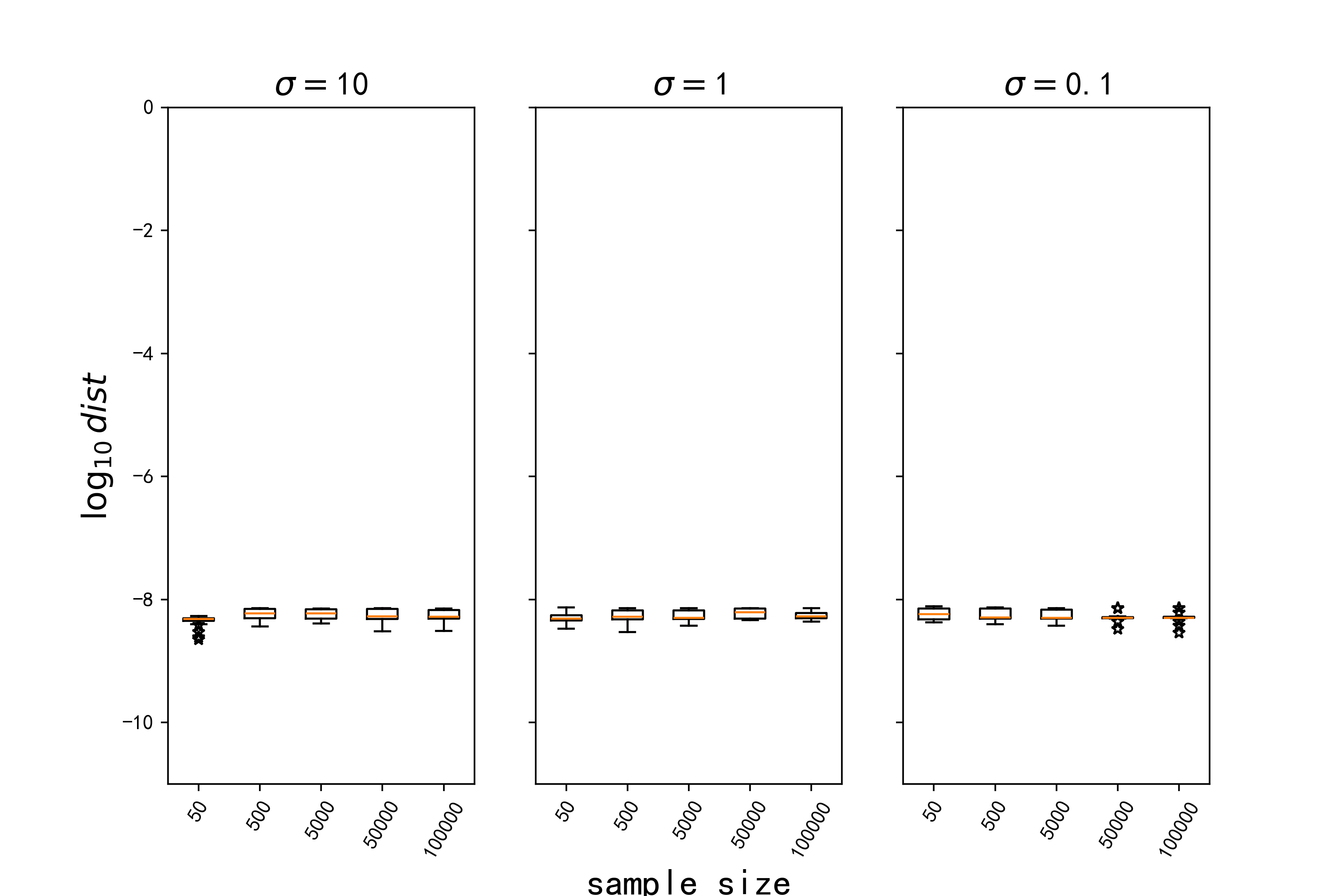}
    \caption{\scriptsize{Performance on S225 w.r.t after 1 500 iterations.}}
  \end{minipage}
\end{figure*}

\begin{figure*}[!h]
  \begin{minipage}[t]{0.5\linewidth}
    \centering
    \includegraphics[scale=0.35]{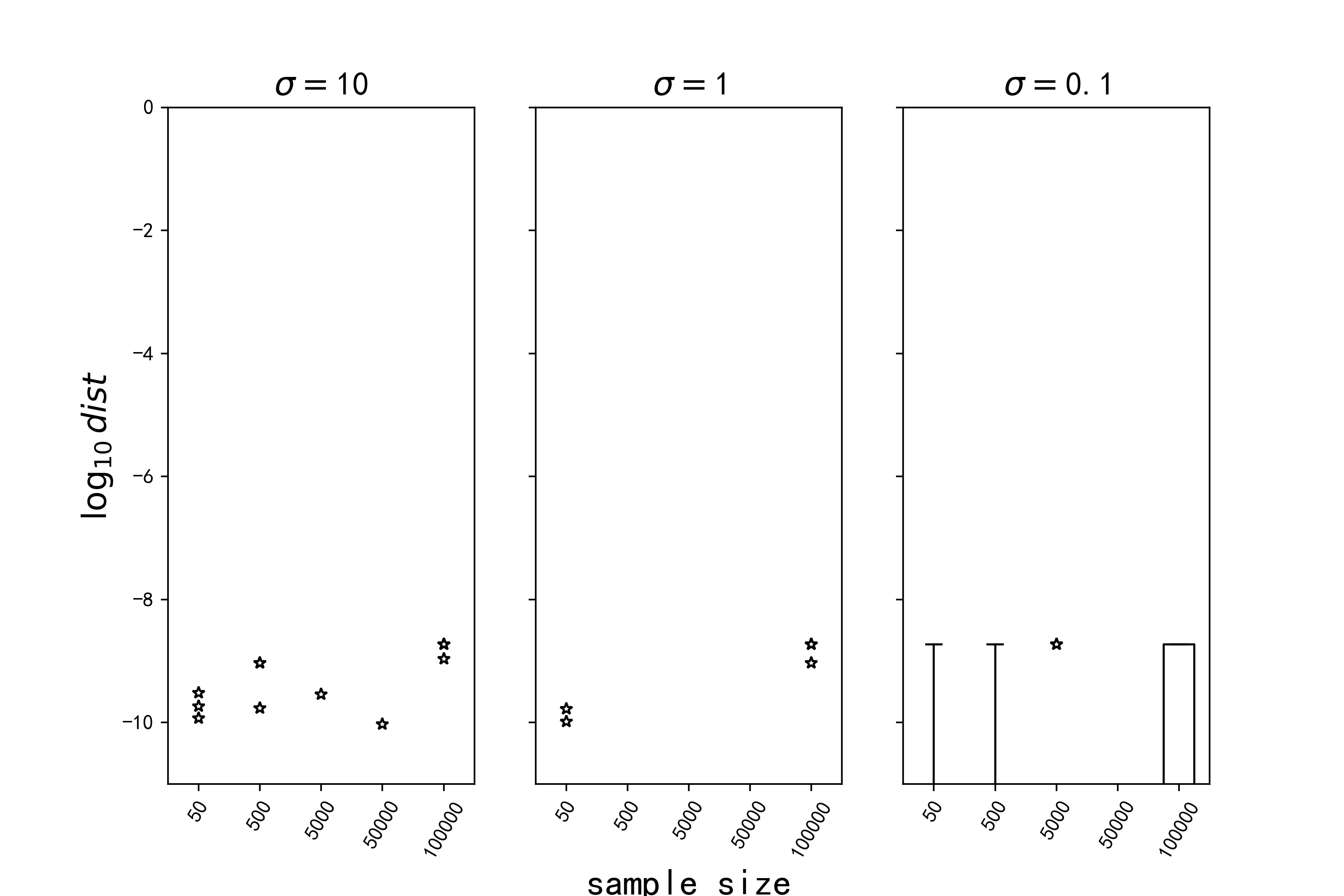}
    \caption{\scriptsize{Performance on S227 w.r.t after 50 iterations.}}
  \end{minipage}%
  \begin{minipage}[t]{0.5\linewidth}
    \centering
    \includegraphics[scale=0.35]{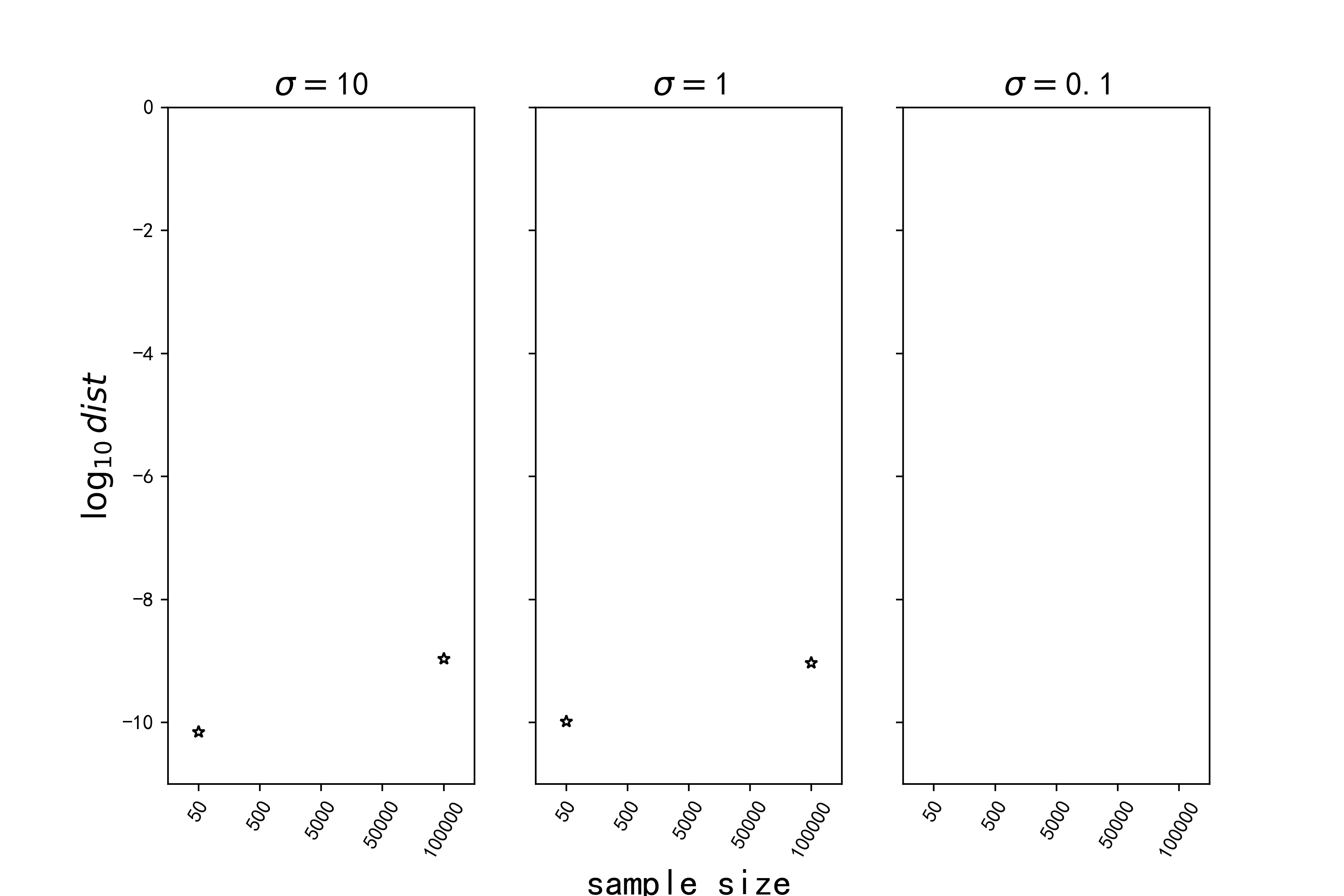}
    \caption{\scriptsize{Performance on S227 w.r.t after 1 500 iterations.}}
  \end{minipage}
\end{figure*}
\clearpage

\begin{figure*}[!h]
  \begin{minipage}[t]{0.5\linewidth}
    \centering
    \includegraphics[scale=0.35]{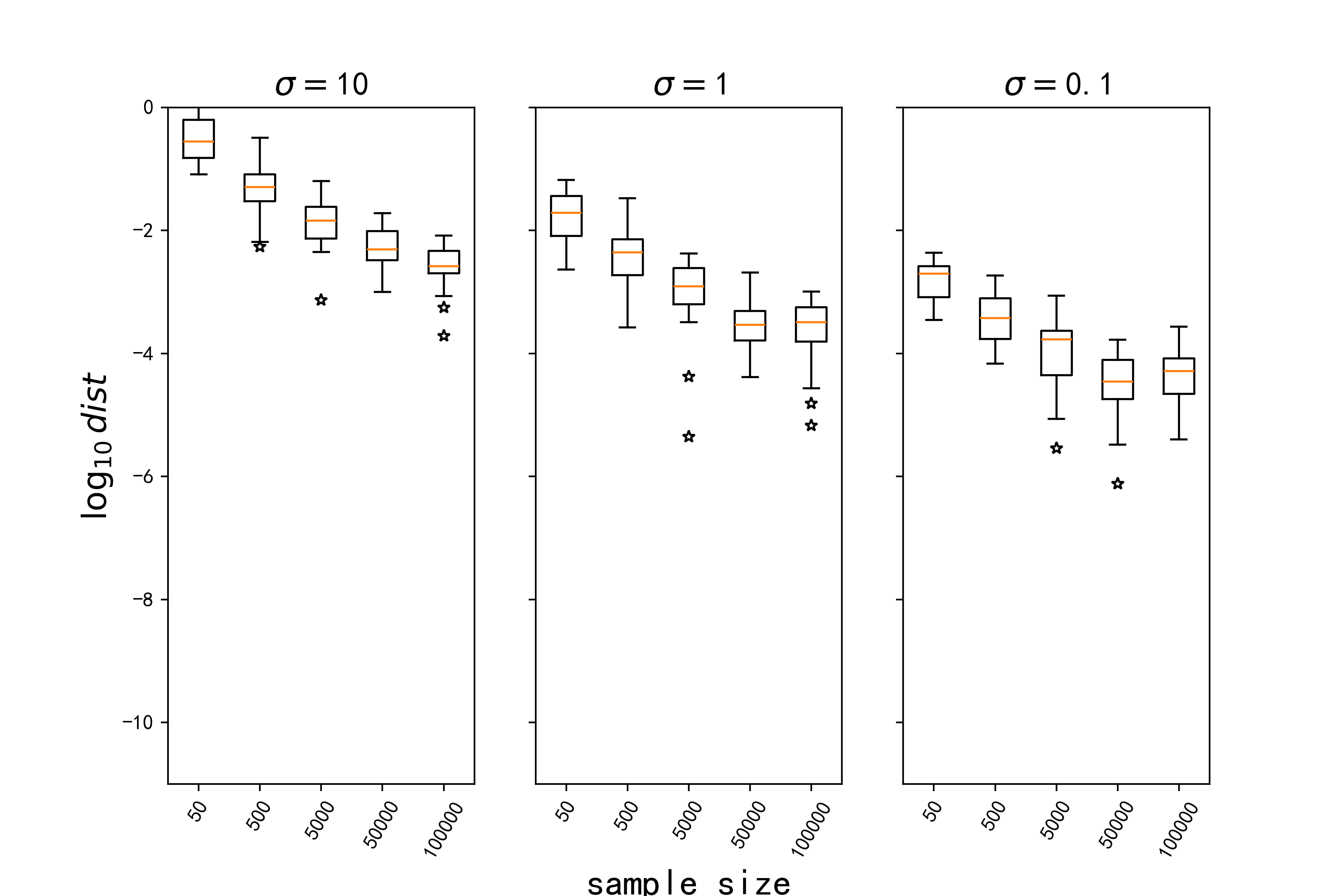}
    \caption{\scriptsize{Performance on S233 w.r.t after 50 iterations.}}
  \end{minipage}%
  \begin{minipage}[t]{0.5\linewidth}
    \centering
    \includegraphics[scale=0.35]{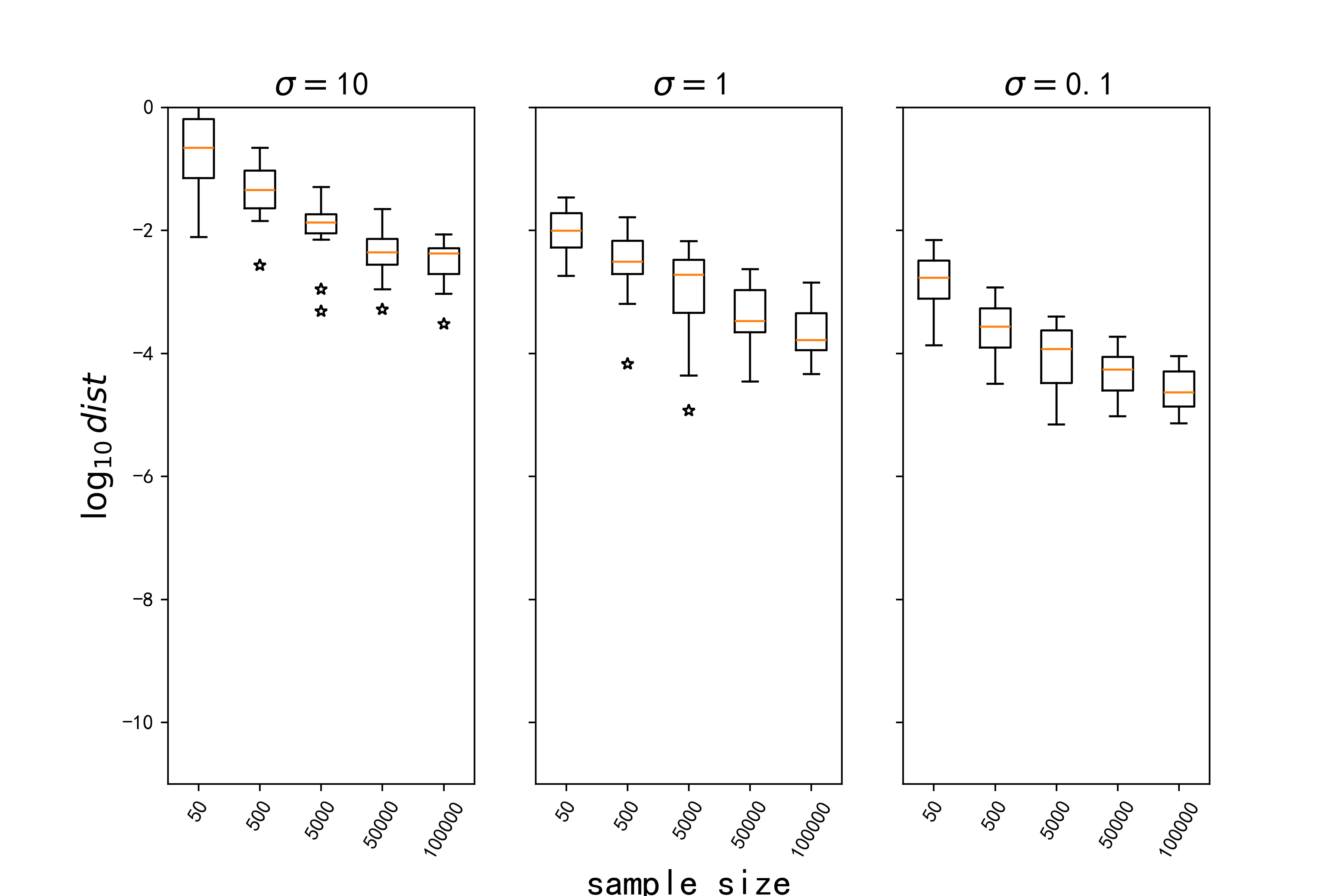}
    \caption{\scriptsize{Performance on S233 w.r.t after 1 500 iterations.}}
  \end{minipage}
\end{figure*}

\begin{figure*}[!h]
  \begin{minipage}[t]{0.5\linewidth}
    \centering
    \includegraphics[scale=0.35]{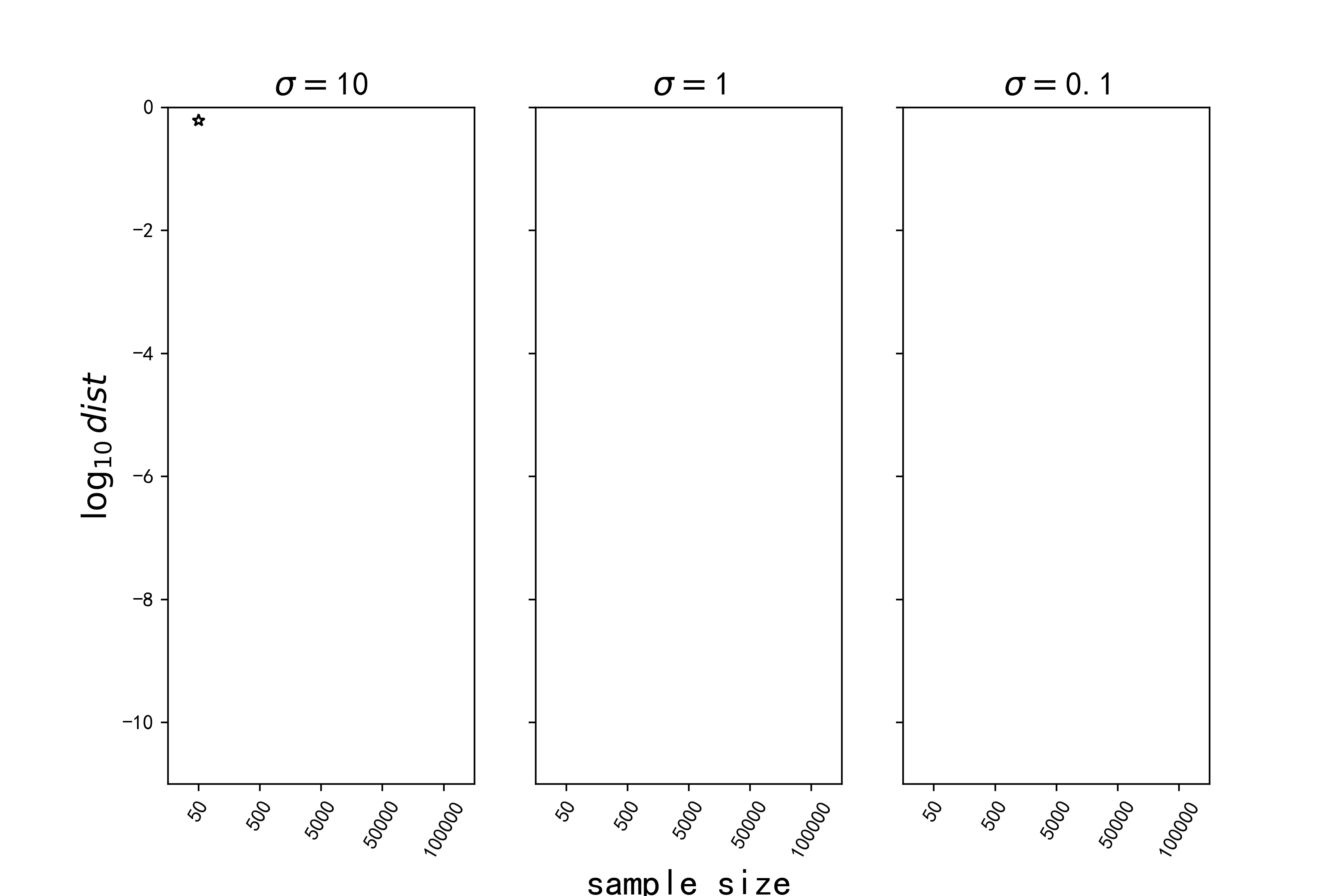}
    \caption{\scriptsize{Performance on S235 w.r.t after 50 iterations.}}
  \end{minipage}%
  \begin{minipage}[t]{0.5\linewidth}
    \centering
    \includegraphics[scale=0.35]{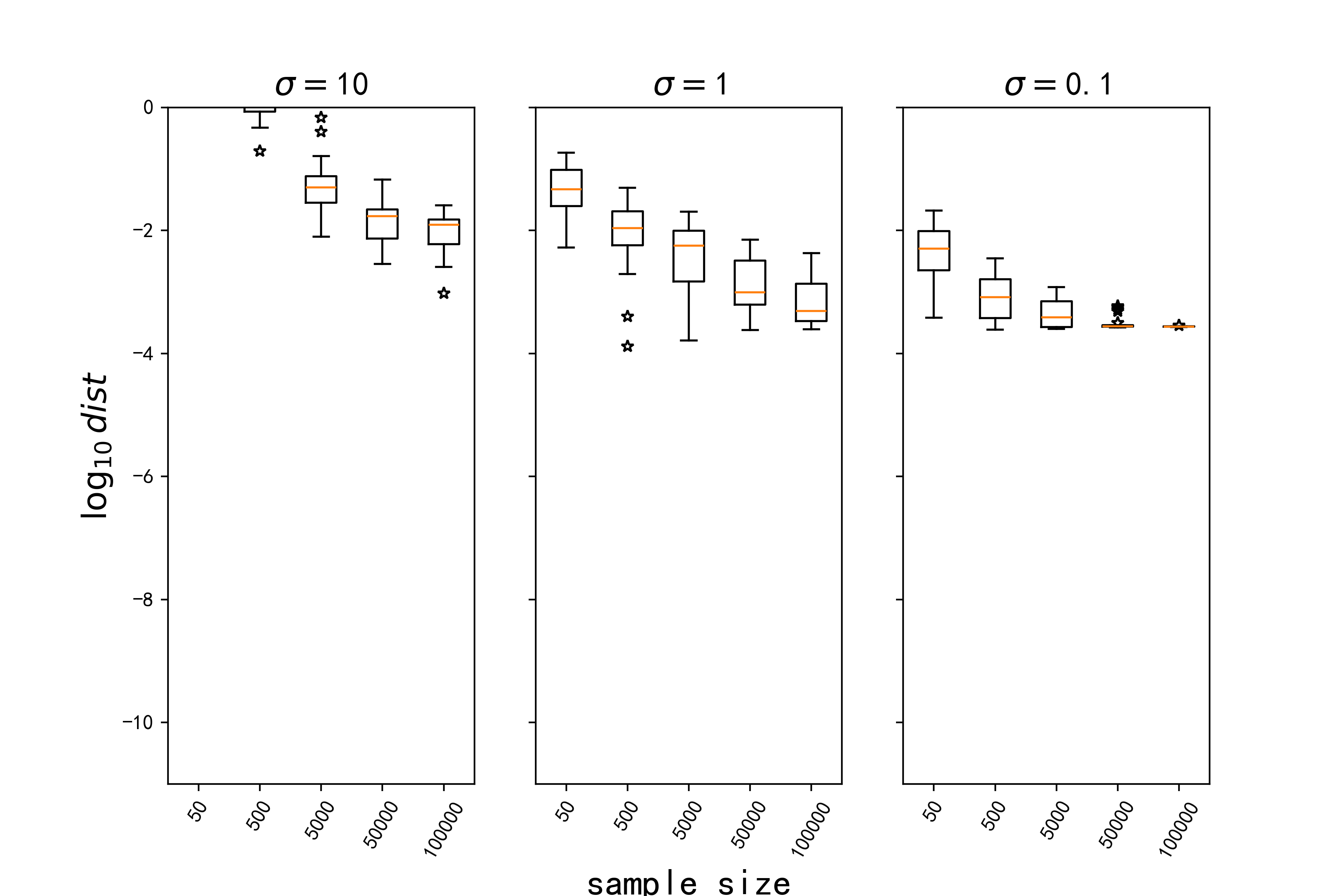}
    \caption{\scriptsize{Performance on S235 w.r.t after 1 500 iterations.}}
  \end{minipage}
\end{figure*}
\clearpage

\begin{figure*}[!h]
  \begin{minipage}[t]{0.5\linewidth}
    \centering
    \includegraphics[scale=0.35]{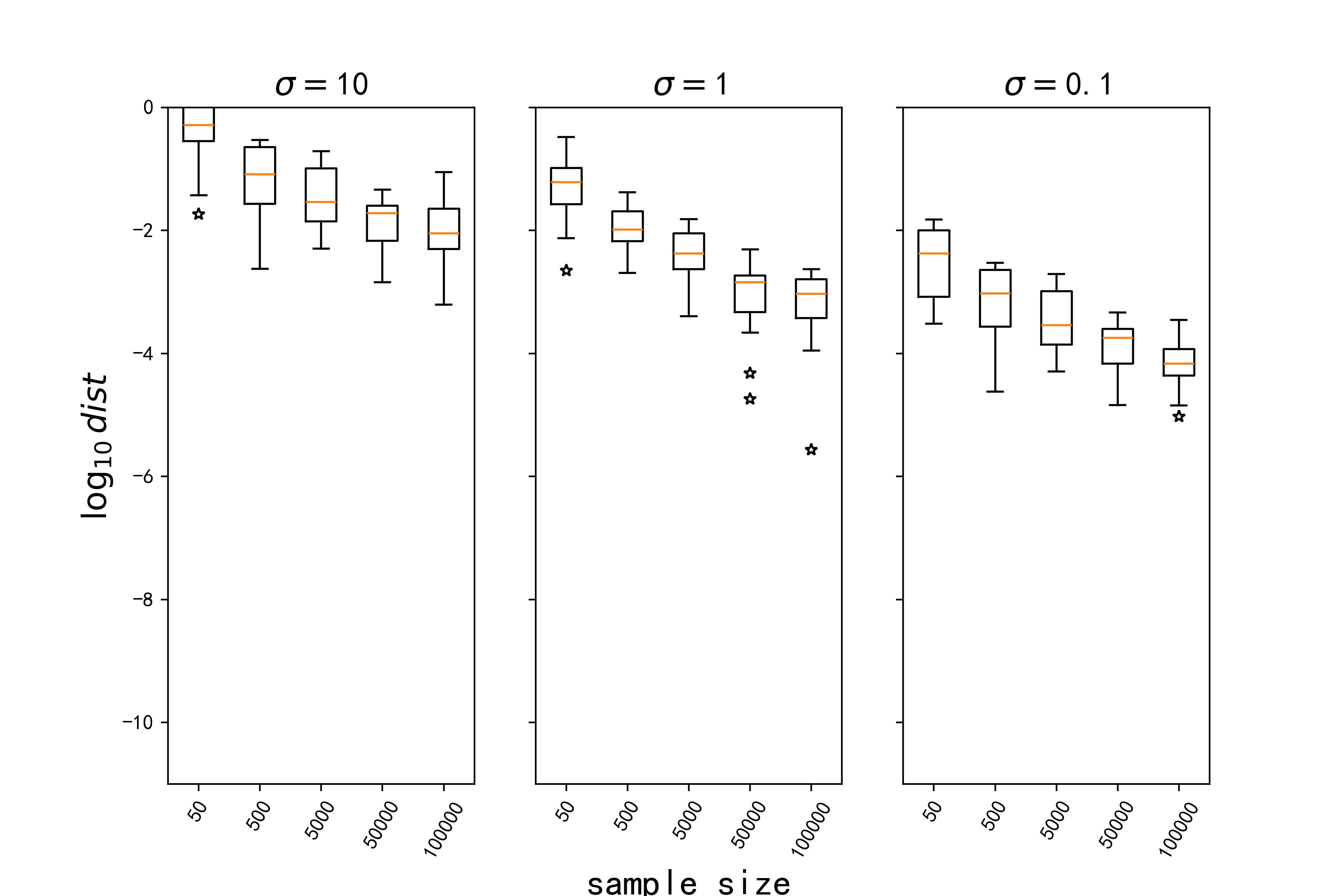}
    \caption{\scriptsize{Performance on S249 w.r.t after 50 iterations.}}
  \end{minipage}%
  \begin{minipage}[t]{0.5\linewidth}
    \centering
    \includegraphics[scale=0.35]{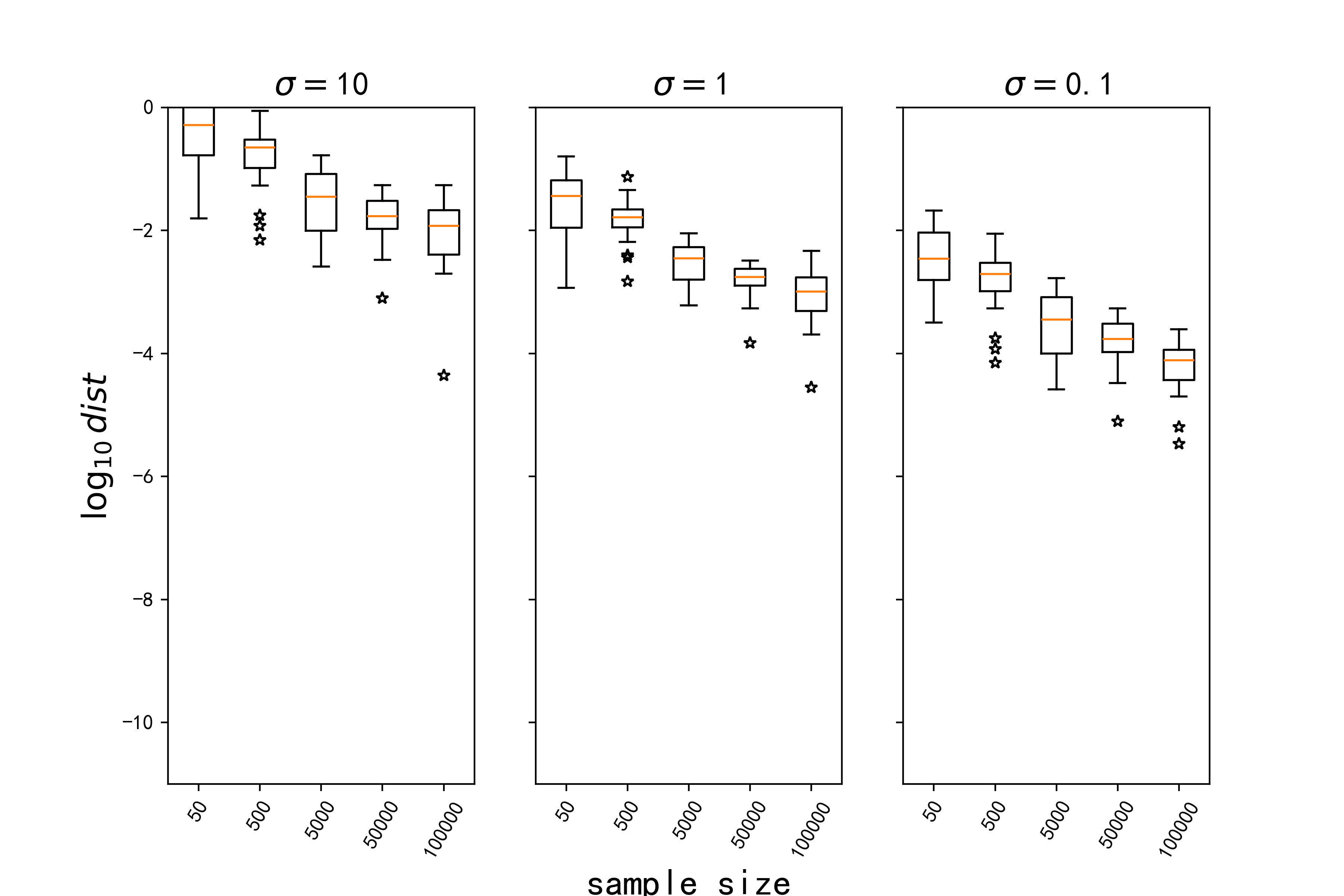}
    \caption{\scriptsize{Performance on S249 w.r.t after 1 500 iterations.}}
  \end{minipage}
\end{figure*}

\begin{figure*}[!h]
  \begin{minipage}[t]{0.5\linewidth}
    \centering
    \includegraphics[scale=0.35]{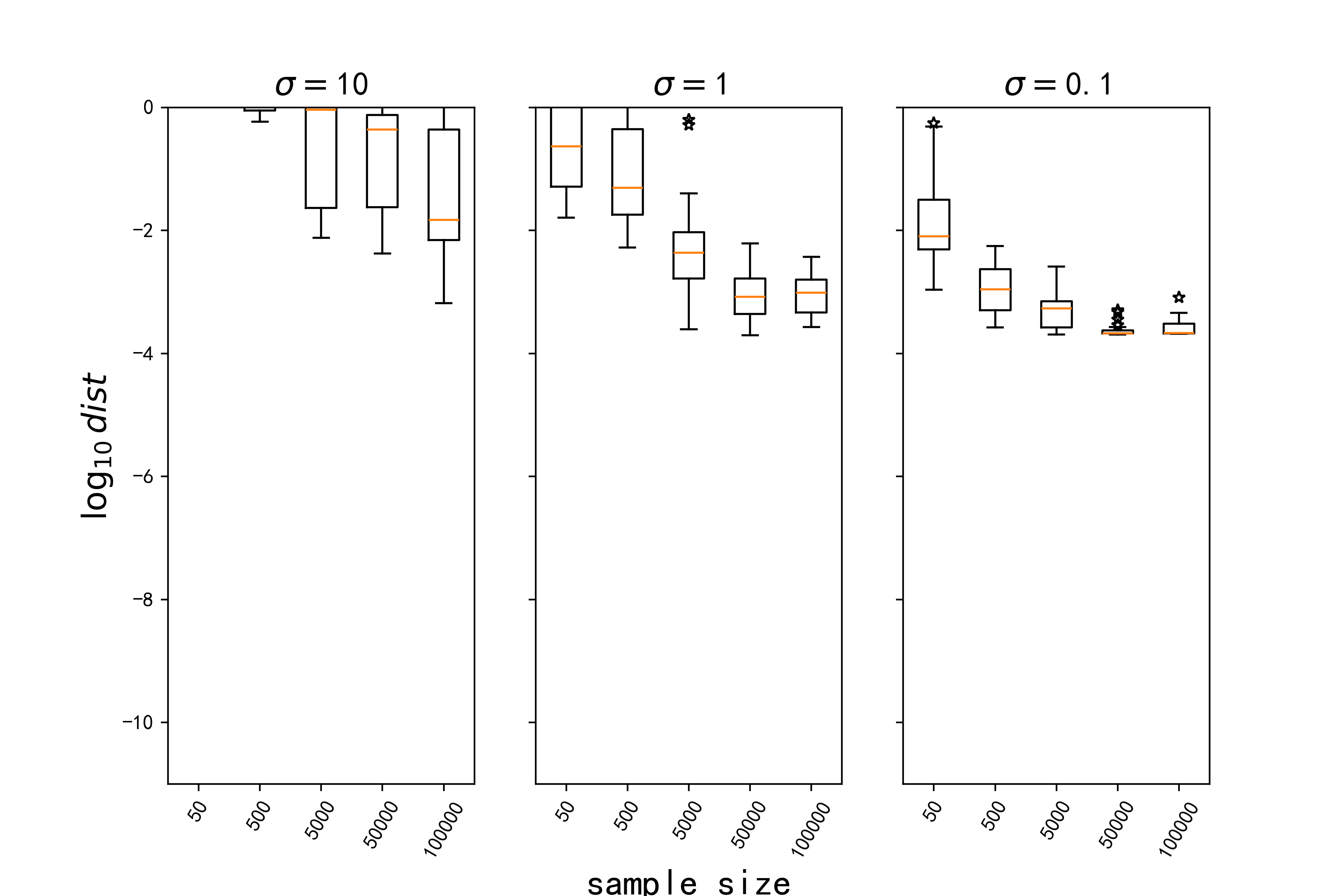}
    \caption{\scriptsize{Performance on S252 w.r.t after 50 iterations.}}
  \end{minipage}%
  \begin{minipage}[t]{0.5\linewidth}
    \centering
    \includegraphics[scale=0.35]{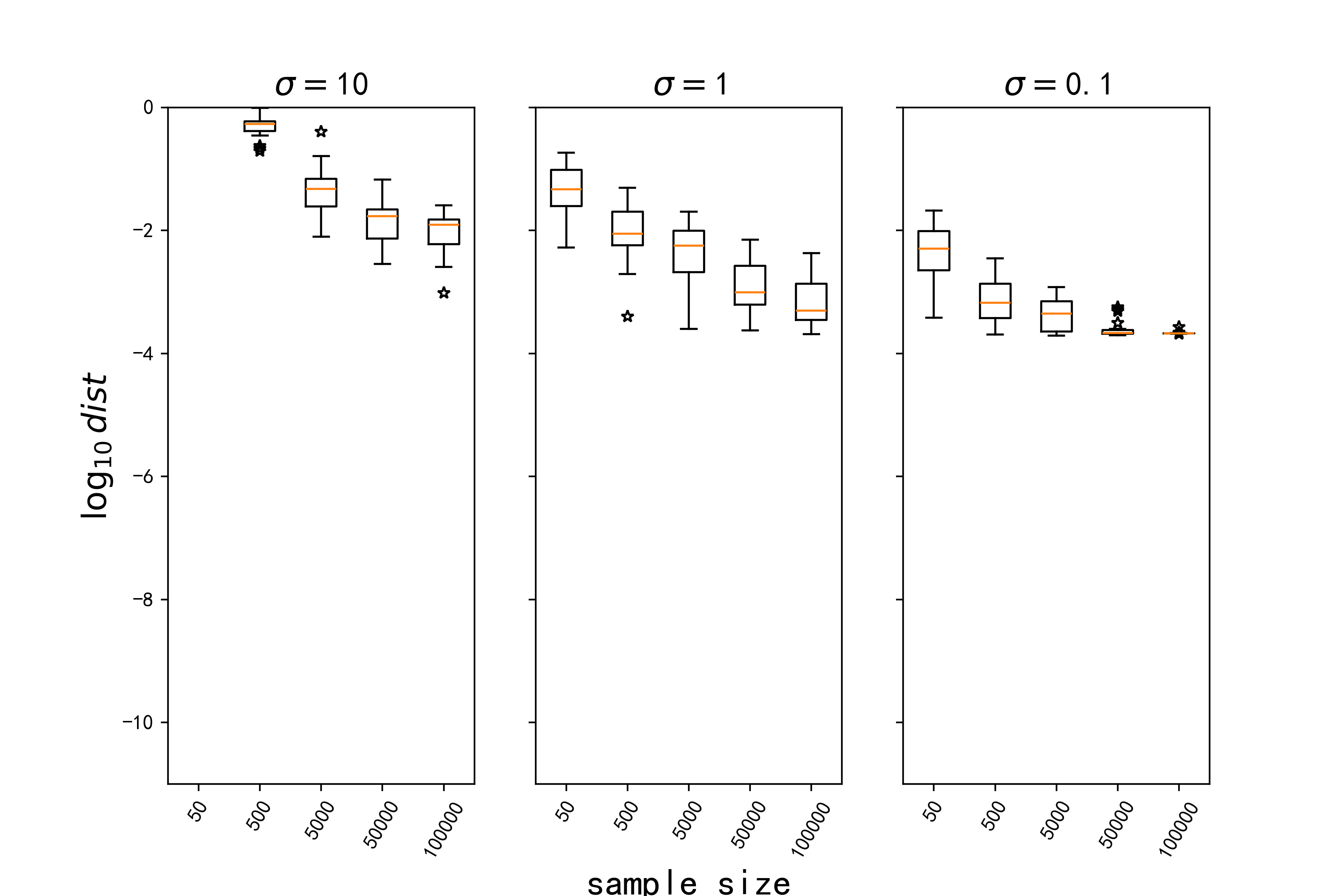}
    \caption{\scriptsize{Performance on S252 w.r.t after 1 500 iterations.}}
  \end{minipage}
\end{figure*}
\clearpage

\begin{figure*}[!h]
  \begin{minipage}[t]{0.5\linewidth}
    \centering
    \includegraphics[scale=0.35]{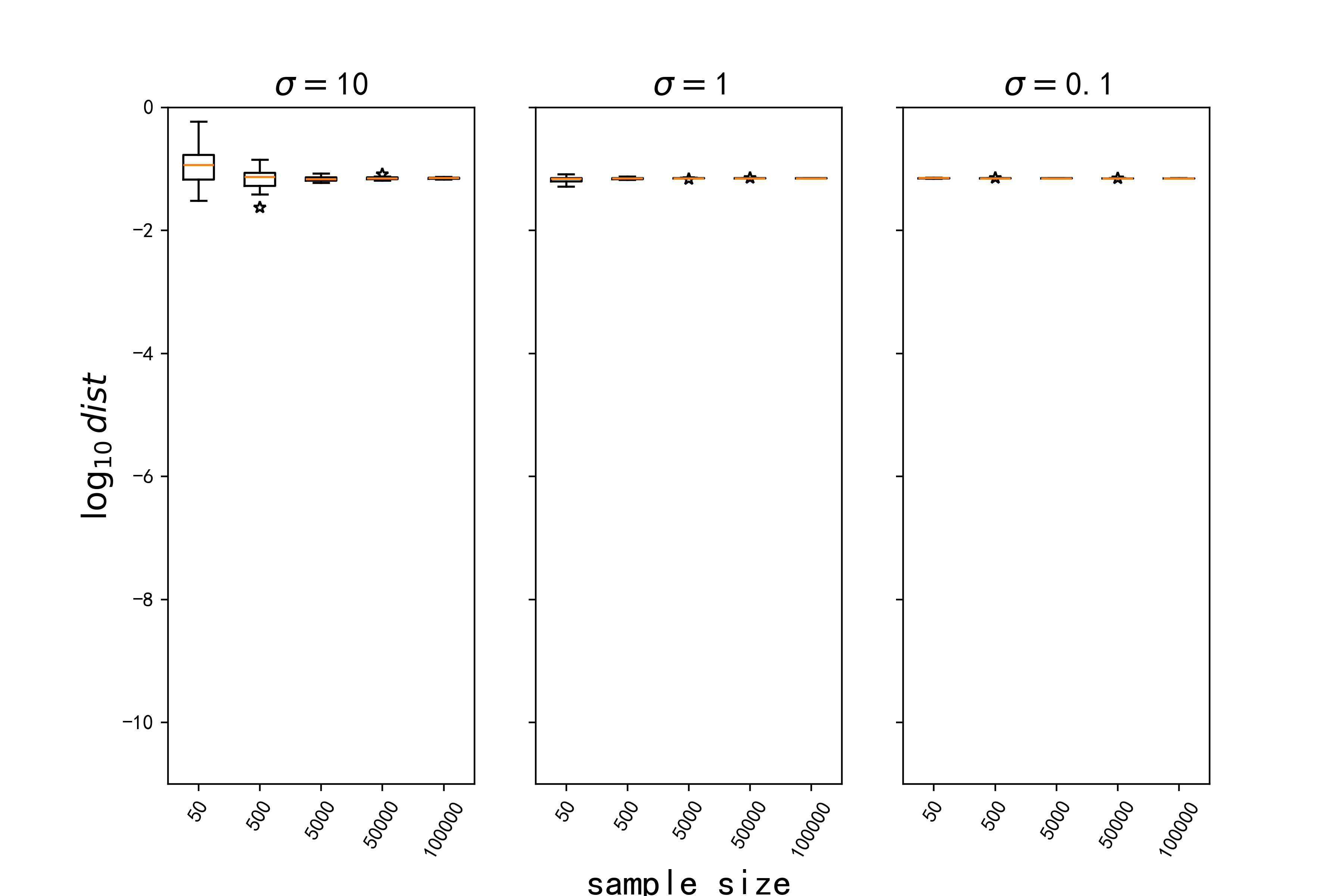}
    \caption{\scriptsize{Performance on S264 w.r.t after 50 iterations.}}
  \end{minipage}%
  \begin{minipage}[t]{0.5\linewidth}
    \centering
    \includegraphics[scale=0.35]{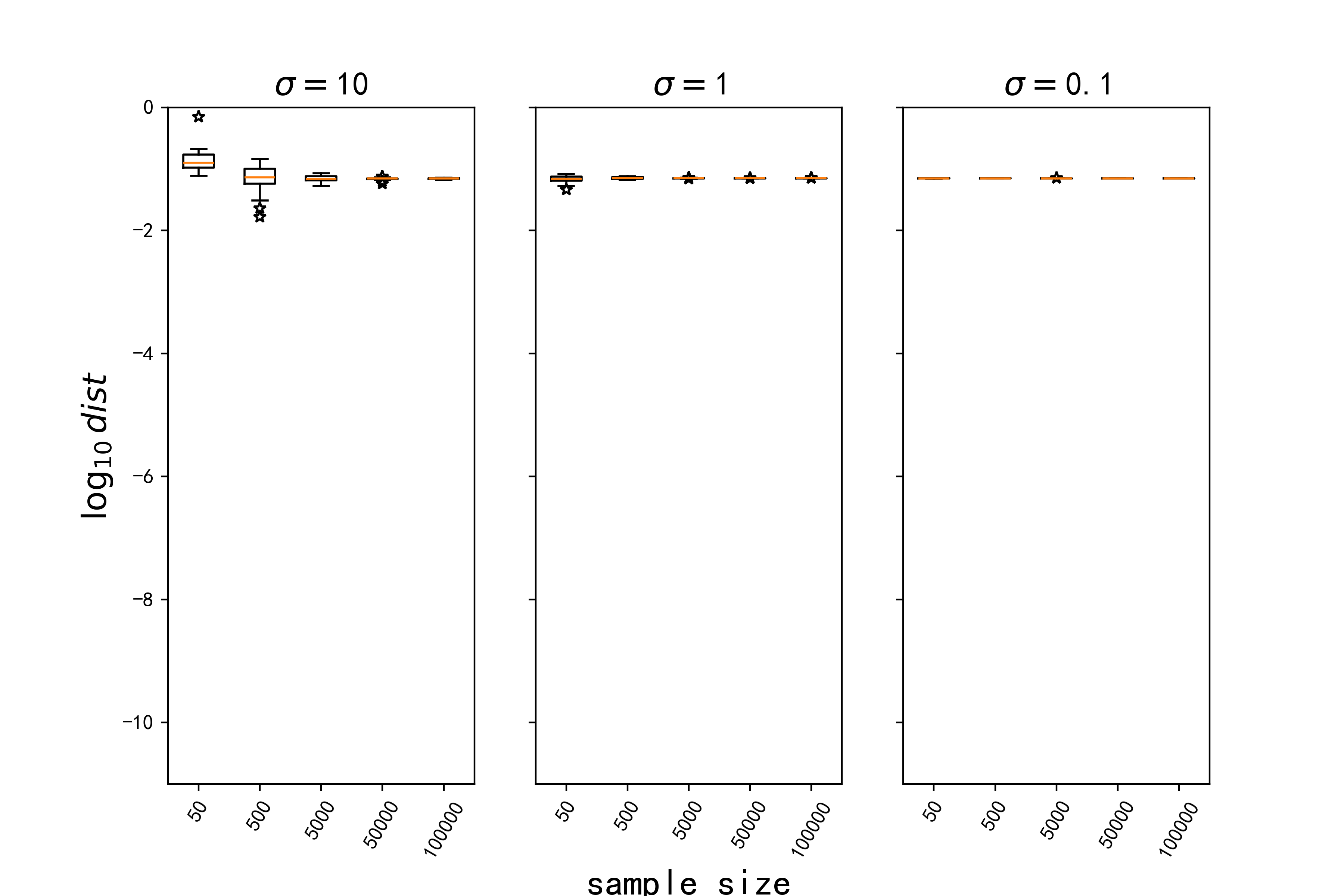}
    \caption{\scriptsize{Performance on S264 w.r.t after 1 500 iterations.}}
  \end{minipage}
\end{figure*}

\begin{figure*}[!h]
  \begin{minipage}[t]{0.5\linewidth}
    \centering
    \includegraphics[scale=0.35]{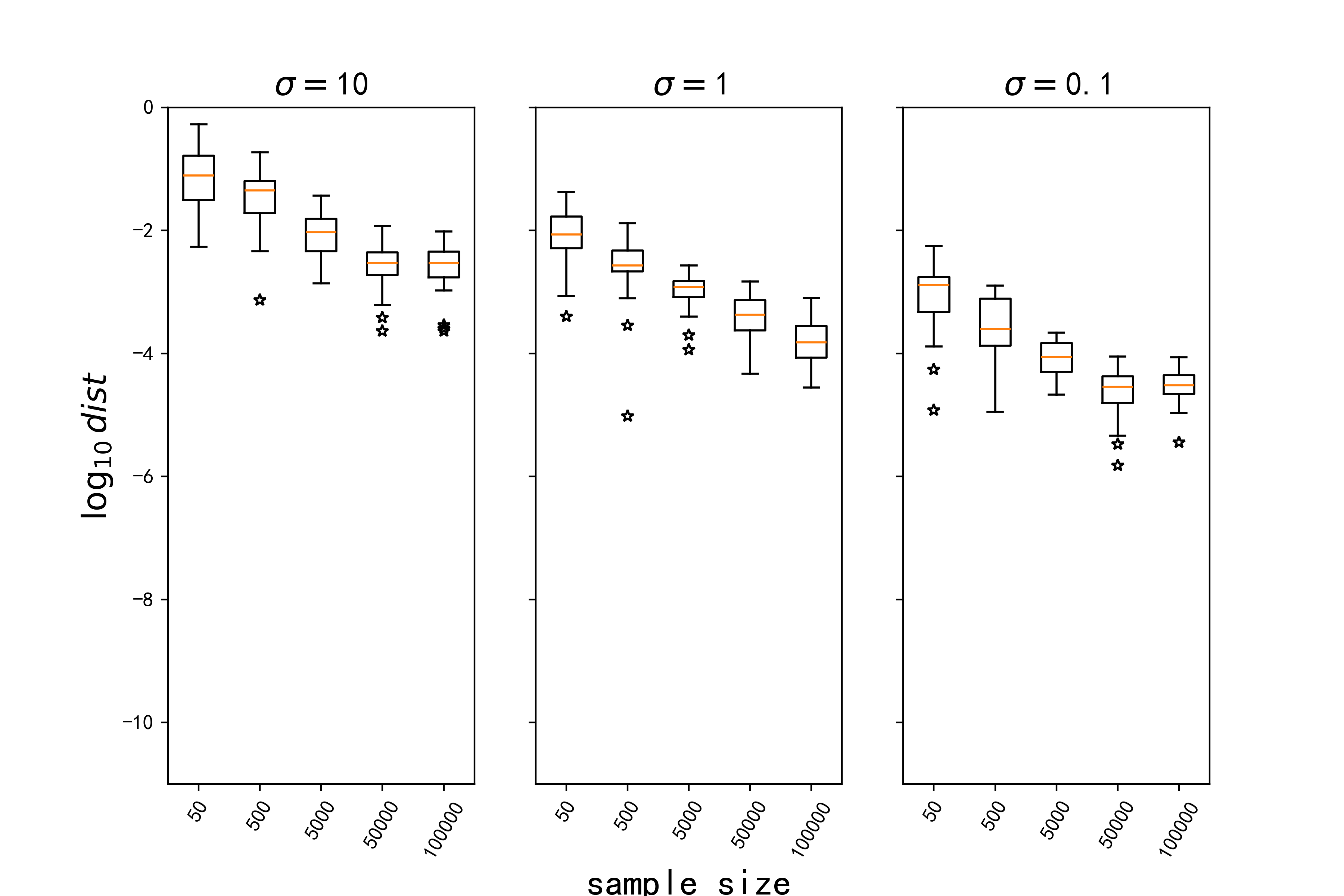}
    \caption{\scriptsize{Performance on S316 w.r.t after 50 iterations.}}
  \end{minipage}%
  \begin{minipage}[t]{0.5\linewidth}
    \centering
    \includegraphics[scale=0.35]{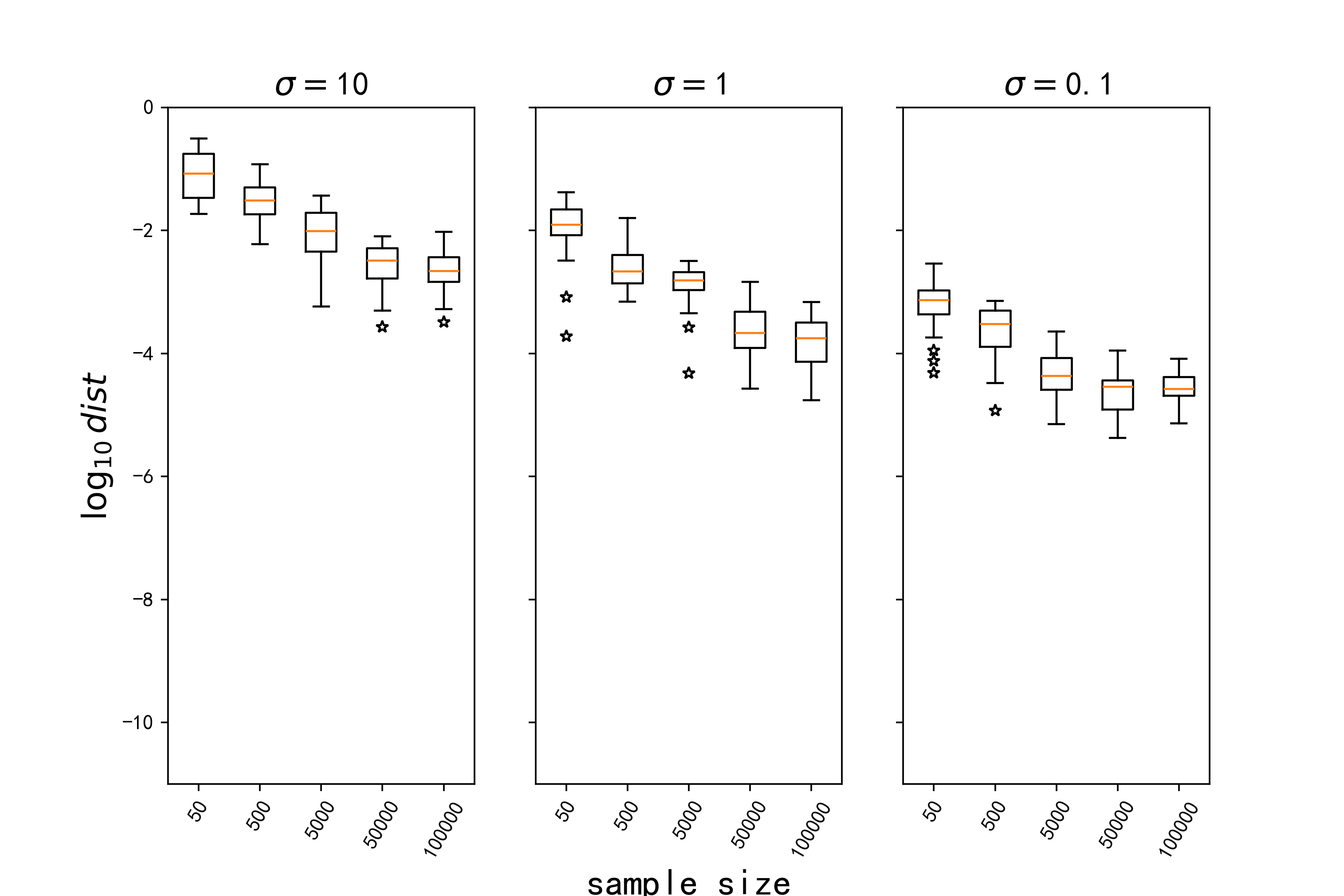}
    \caption{\scriptsize{Performance on S316 w.r.t after 1 500 iterations.}}
  \end{minipage}
\end{figure*}
\clearpage

\begin{figure*}[!h]
  \begin{minipage}[t]{0.5\linewidth}
    \centering
    \includegraphics[scale=0.35]{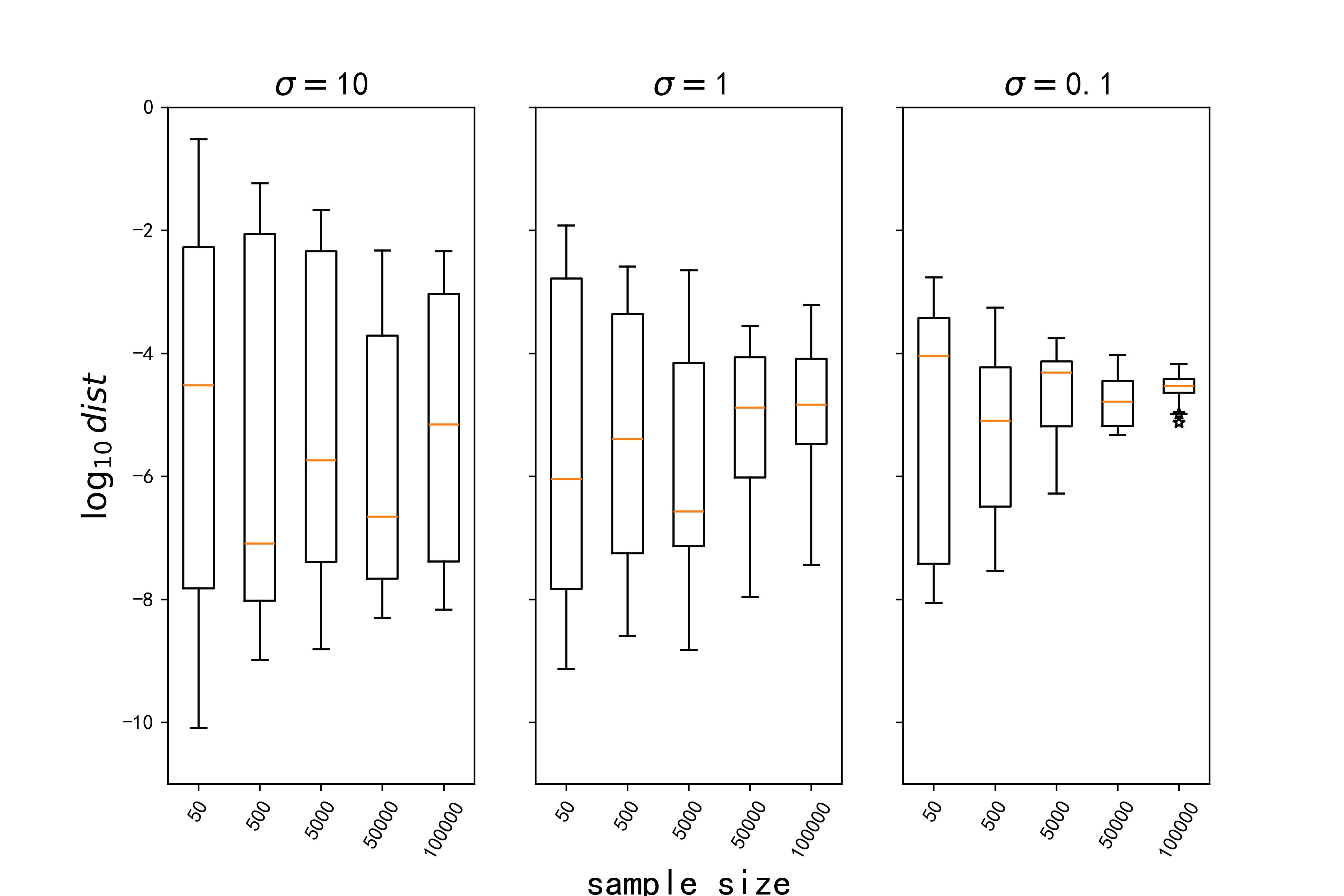}
    \caption{\scriptsize{Performance on S317 w.r.t after 50 iterations.}}
  \end{minipage}%
  \begin{minipage}[t]{0.5\linewidth}
    \centering
    \includegraphics[scale=0.35]{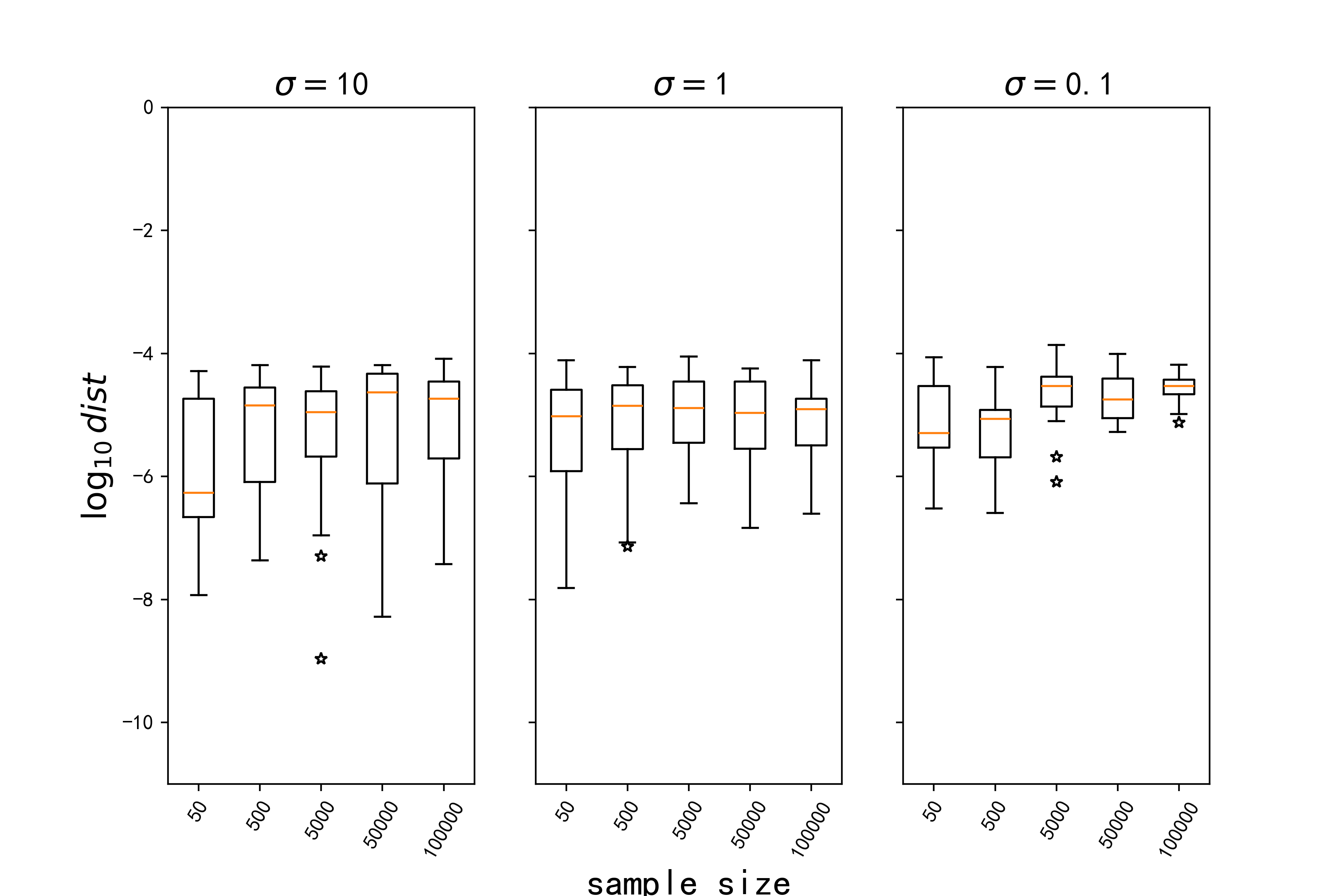}
    \caption{\scriptsize{Performance on S317 w.r.t after 1 500 iterations.}}
  \end{minipage}
\end{figure*}

\begin{figure*}[!h]
  \begin{minipage}[t]{0.5\linewidth}
    \centering
    \includegraphics[scale=0.35]{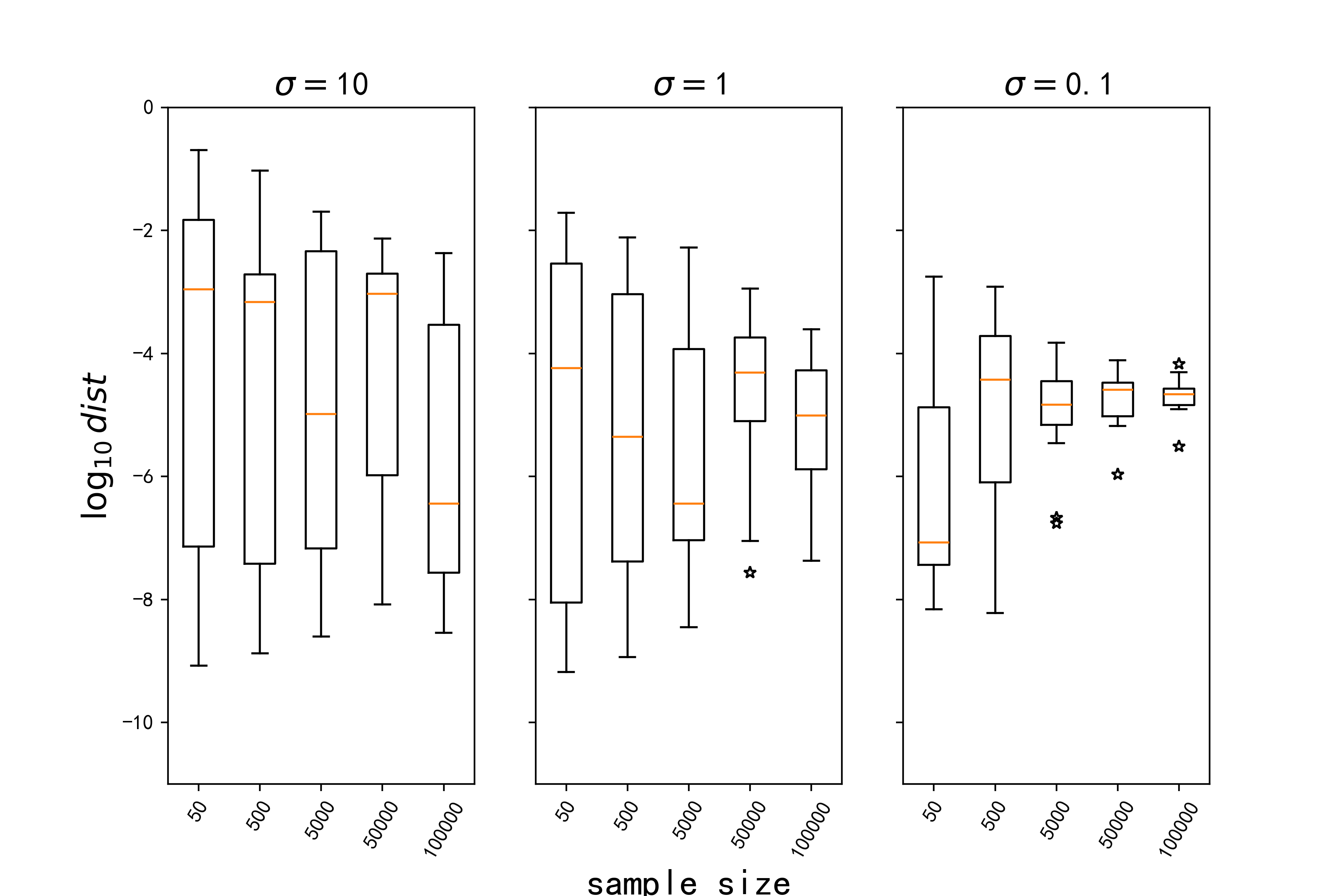}
    \caption{\scriptsize{Performance on S318 w.r.t after 50 iterations.}}
  \end{minipage}%
  \begin{minipage}[t]{0.5\linewidth}
    \centering
    \includegraphics[scale=0.35]{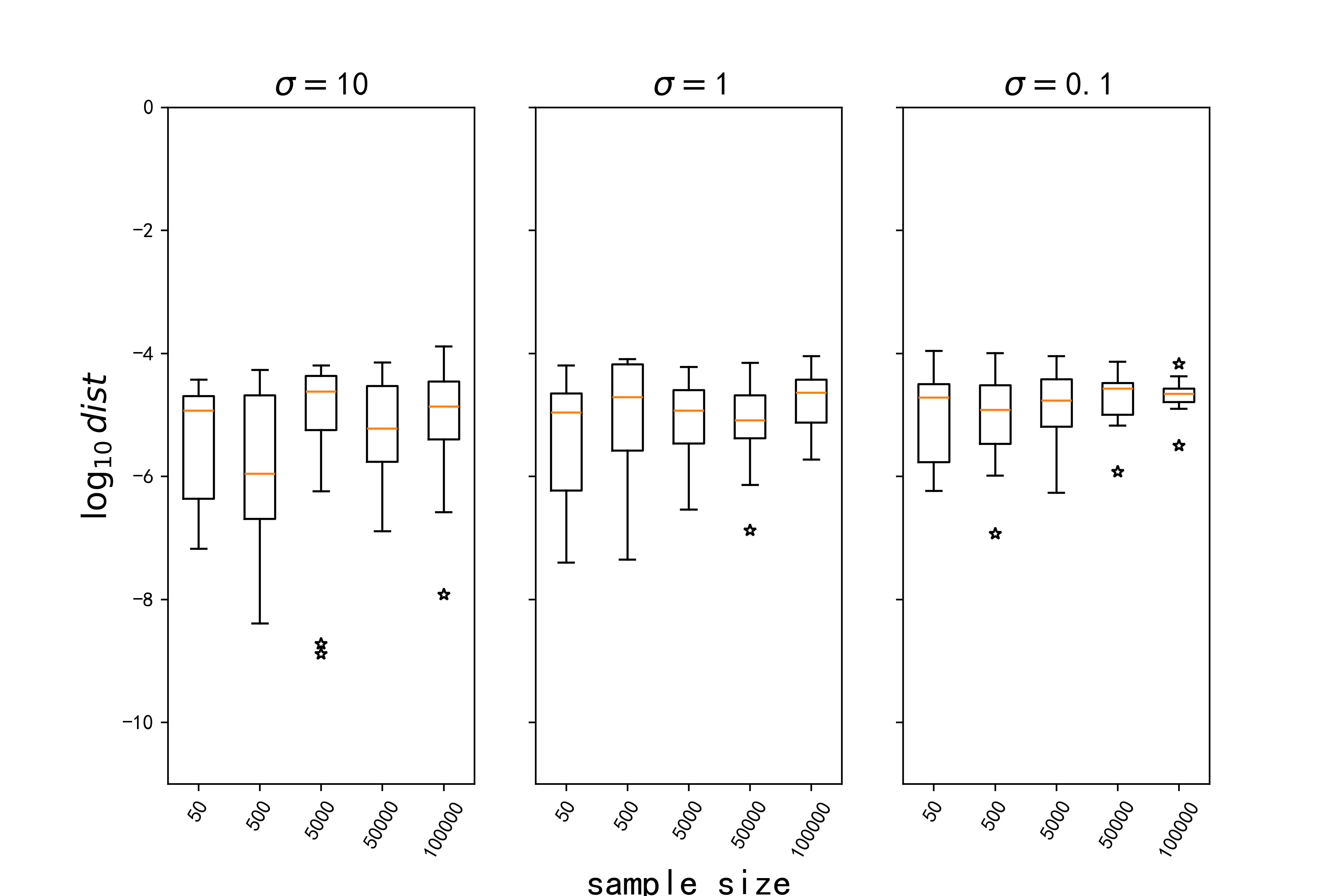}
    \caption{\scriptsize{Performance on S318 w.r.t after 1 500 iterations.}}
  \end{minipage}
\end{figure*}
\clearpage

\begin{figure*}[!h]
  \begin{minipage}[t]{0.5\linewidth}
    \centering
    \includegraphics[scale=0.35]{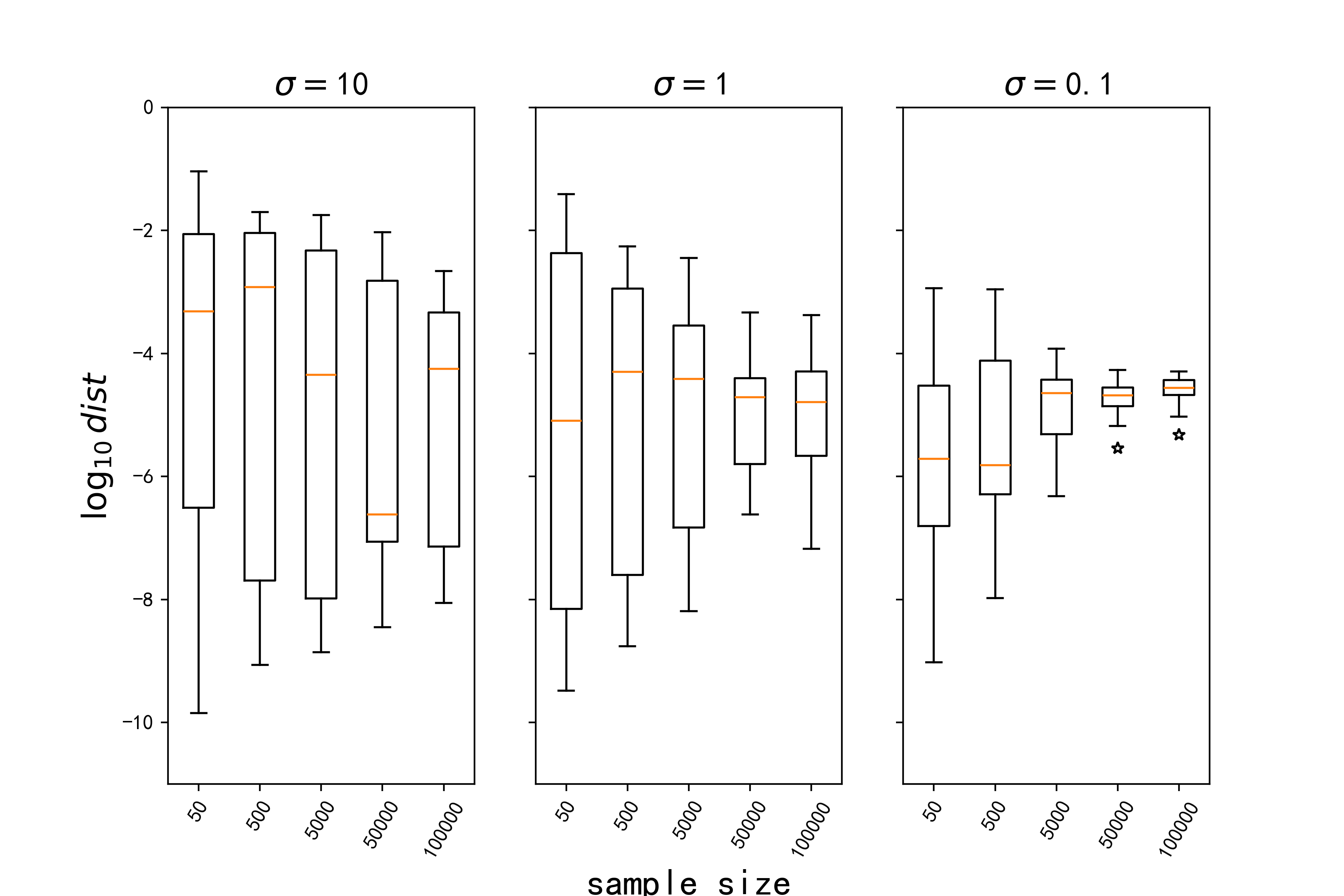}
    \caption{\scriptsize{Performance on S319 w.r.t after 50 iterations.}}
  \end{minipage}%
  \begin{minipage}[t]{0.5\linewidth}
    \centering
    \includegraphics[scale=0.35]{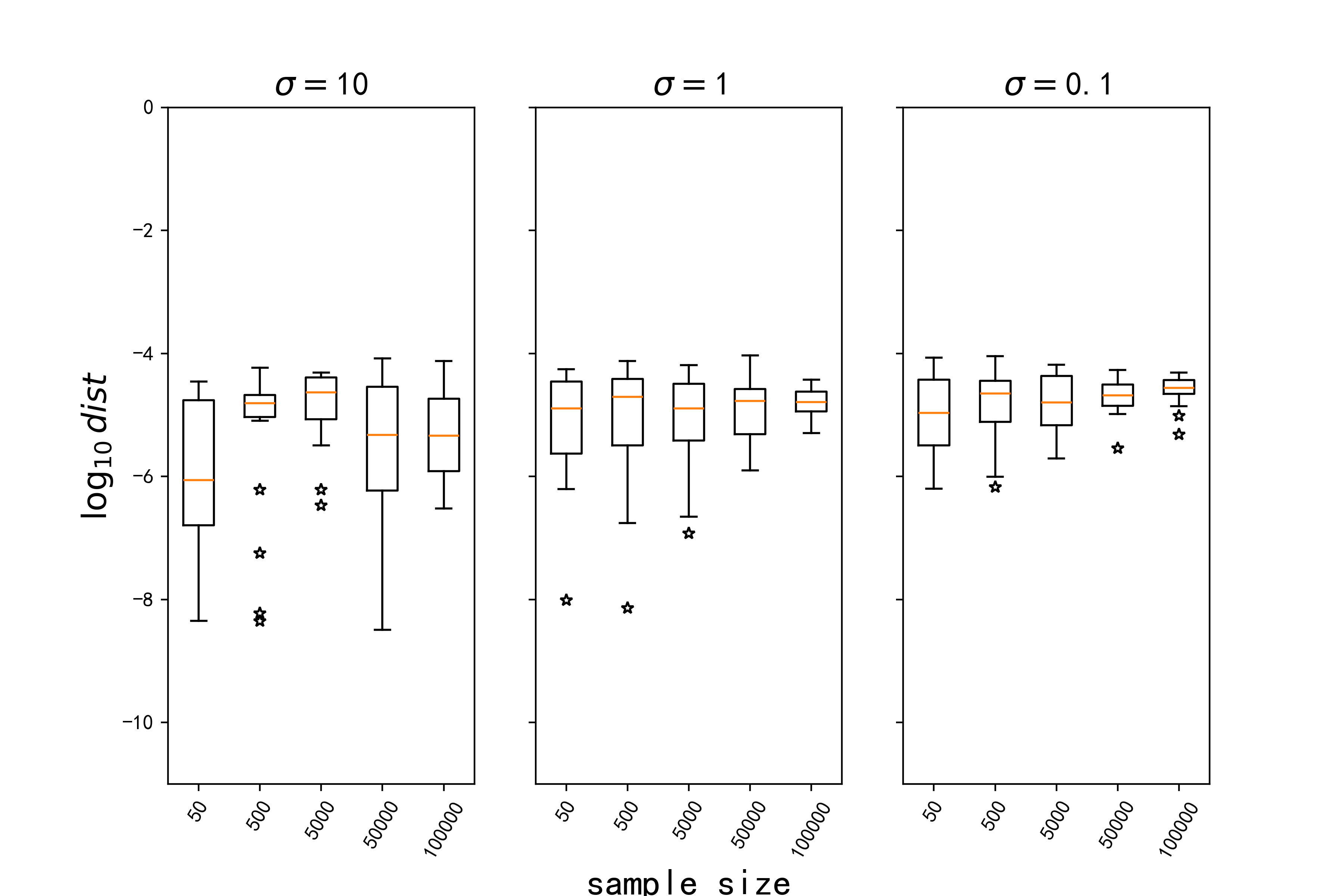}
    \caption{\scriptsize{Performance on S319 w.r.t after 1 500 iterations.}}
  \end{minipage}
\end{figure*}

\begin{figure*}[!h]
  \begin{minipage}[t]{0.5\linewidth}
    \centering
    \includegraphics[scale=0.35]{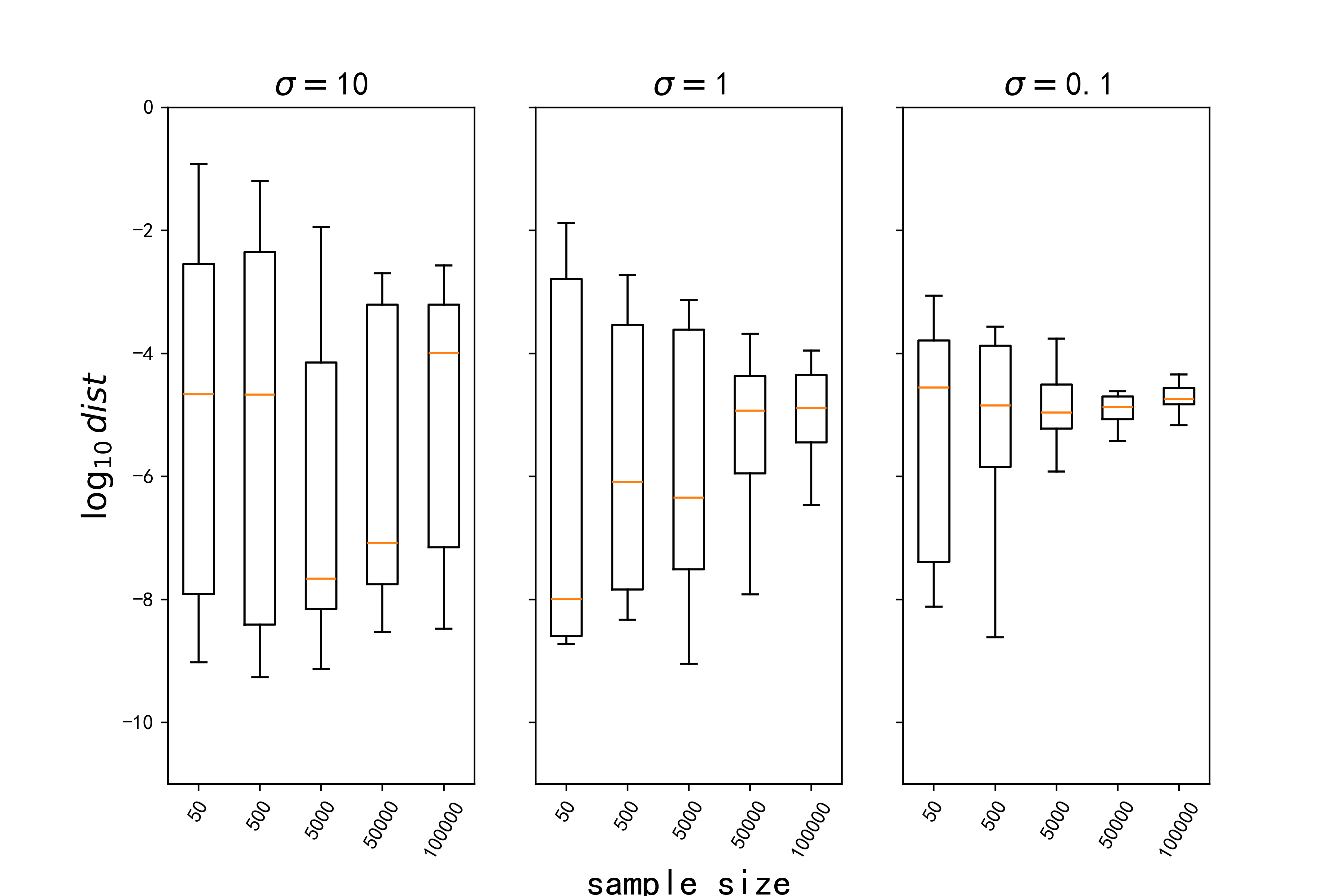}
    \caption{\scriptsize{Performance on S320 w.r.t after 50 iterations.}}
  \end{minipage}%
  \begin{minipage}[t]{0.5\linewidth}
    \centering
    \includegraphics[scale=0.35]{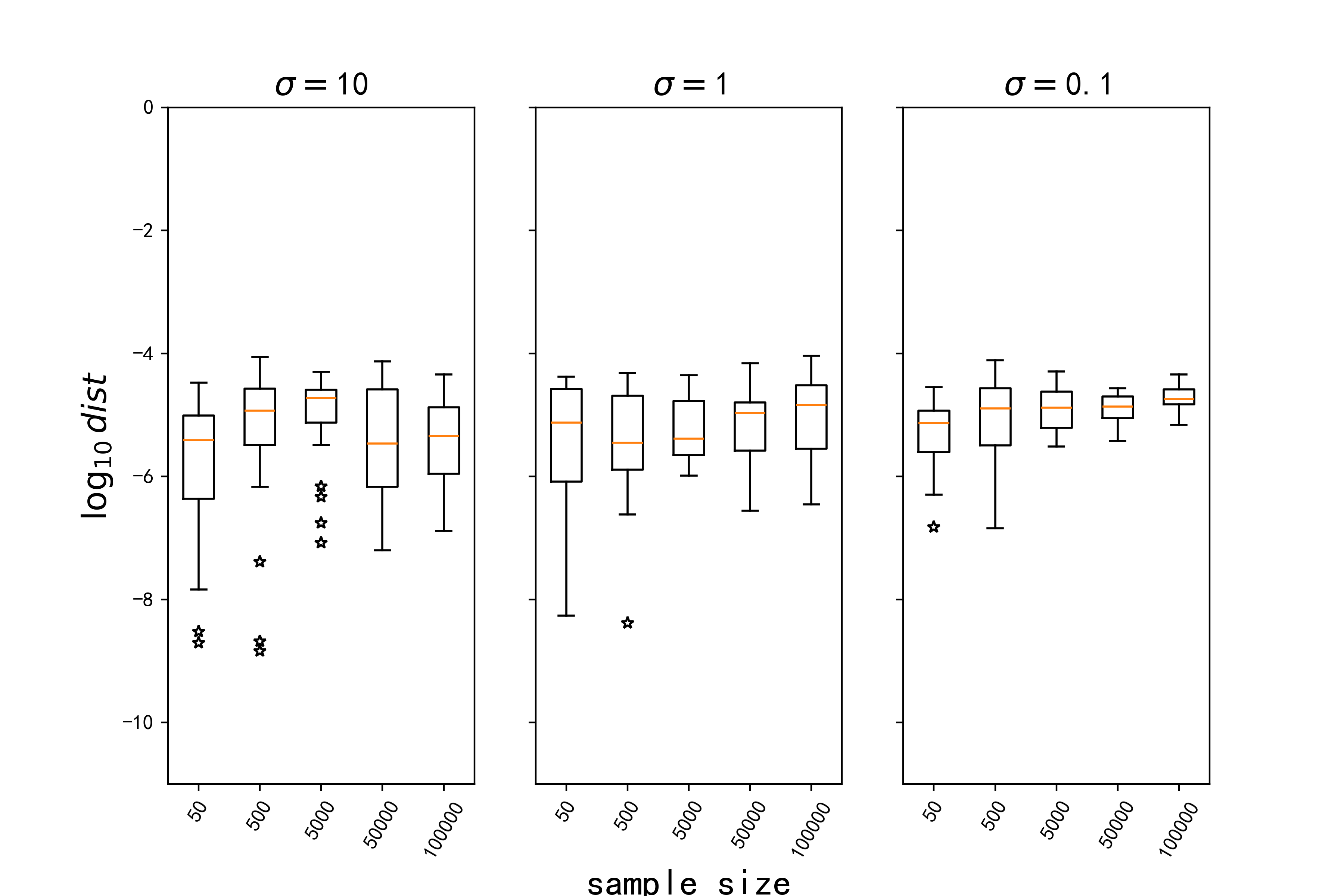}
    \caption{\scriptsize{Performance on S320 w.r.t after 1 500 iterations.}}
  \end{minipage}
\end{figure*}
\clearpage

\begin{figure*}[!h]
  \begin{minipage}[t]{0.5\linewidth}
    \centering
    \includegraphics[scale=0.35]{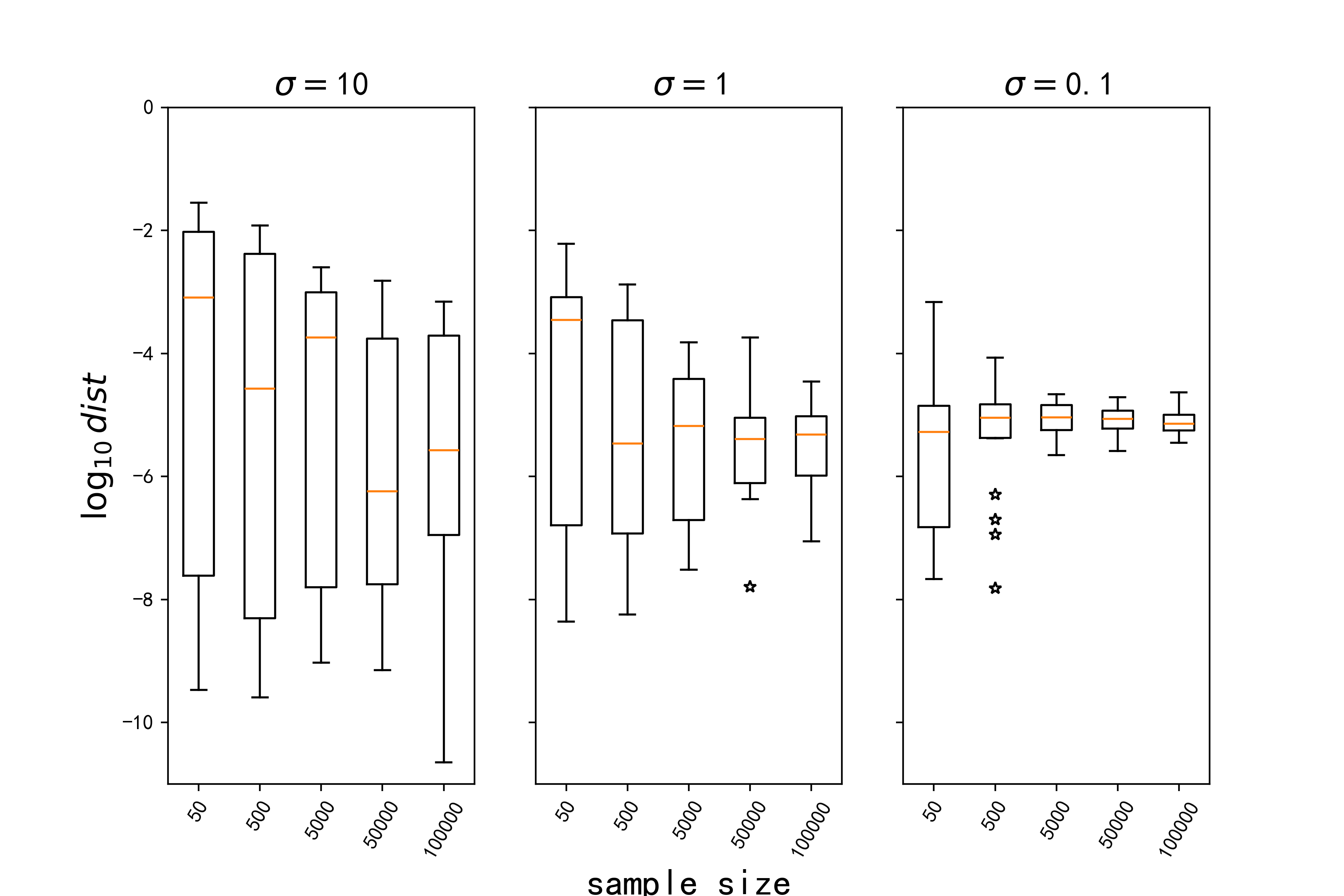}
    \caption{\scriptsize{Performance on S321 w.r.t after 50 iterations.}}
  \end{minipage}%
  \begin{minipage}[t]{0.5\linewidth}
    \centering
    \includegraphics[scale=0.35]{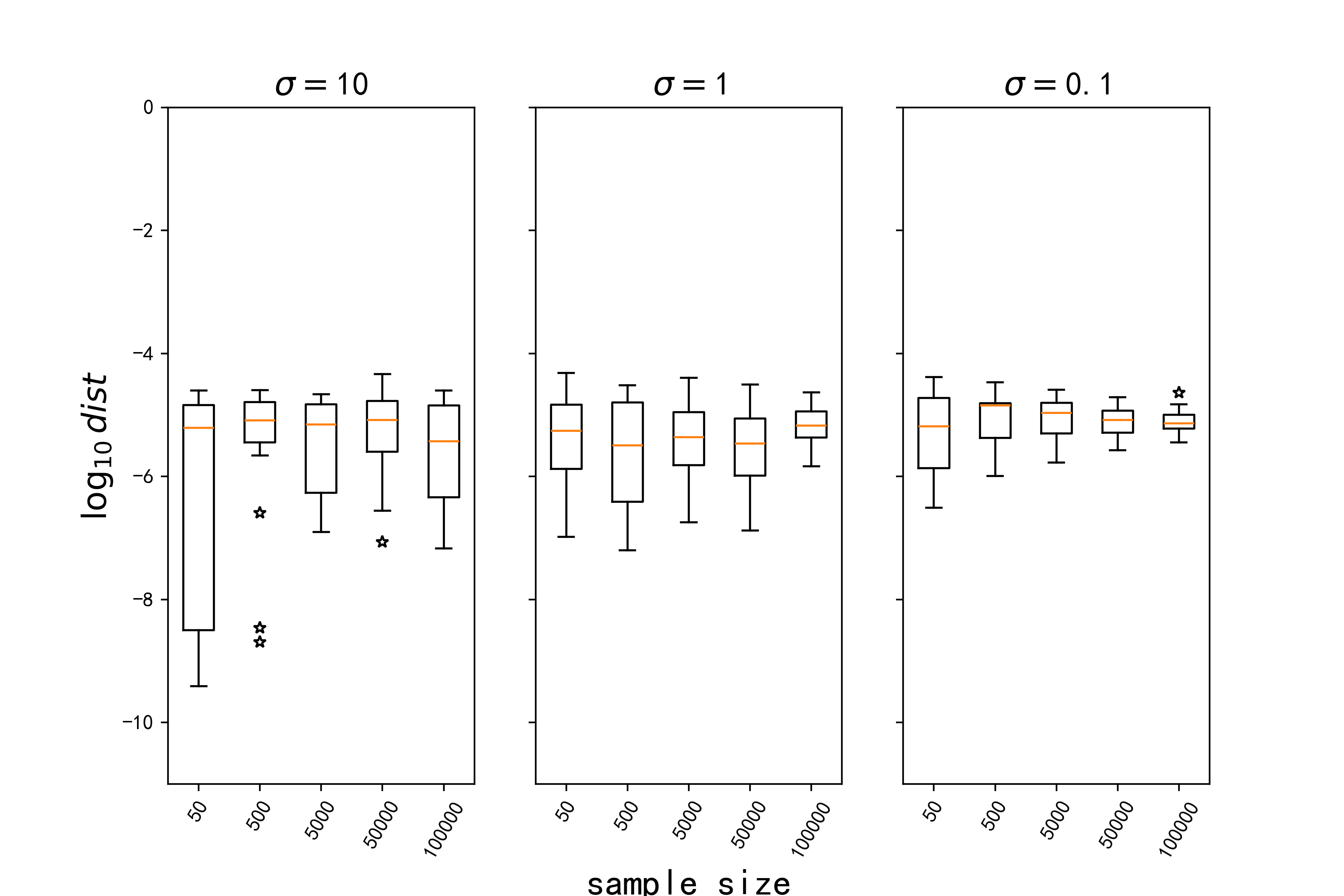}
    \caption{\scriptsize{Performance on S321 w.r.t after 1 500 iterations.}}
  \end{minipage}
\end{figure*}

\begin{figure*}[!h]
  \begin{minipage}[t]{0.5\linewidth}
    \centering
    \includegraphics[scale=0.35]{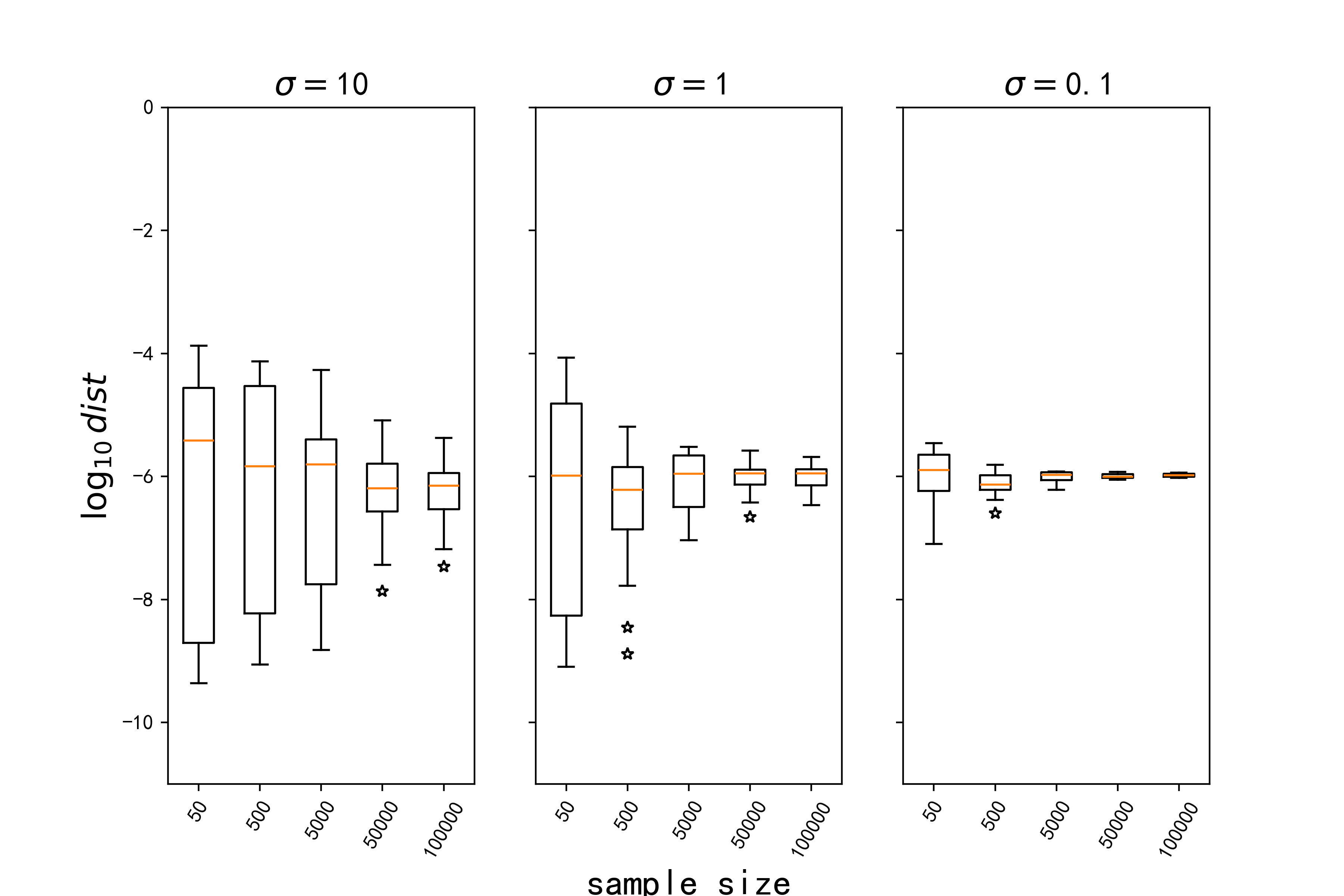}
    \caption{\scriptsize{Performance on S322 w.r.t after 50 iterations.}}
  \end{minipage}%
  \begin{minipage}[t]{0.5\linewidth}
    \centering
    \includegraphics[scale=0.35]{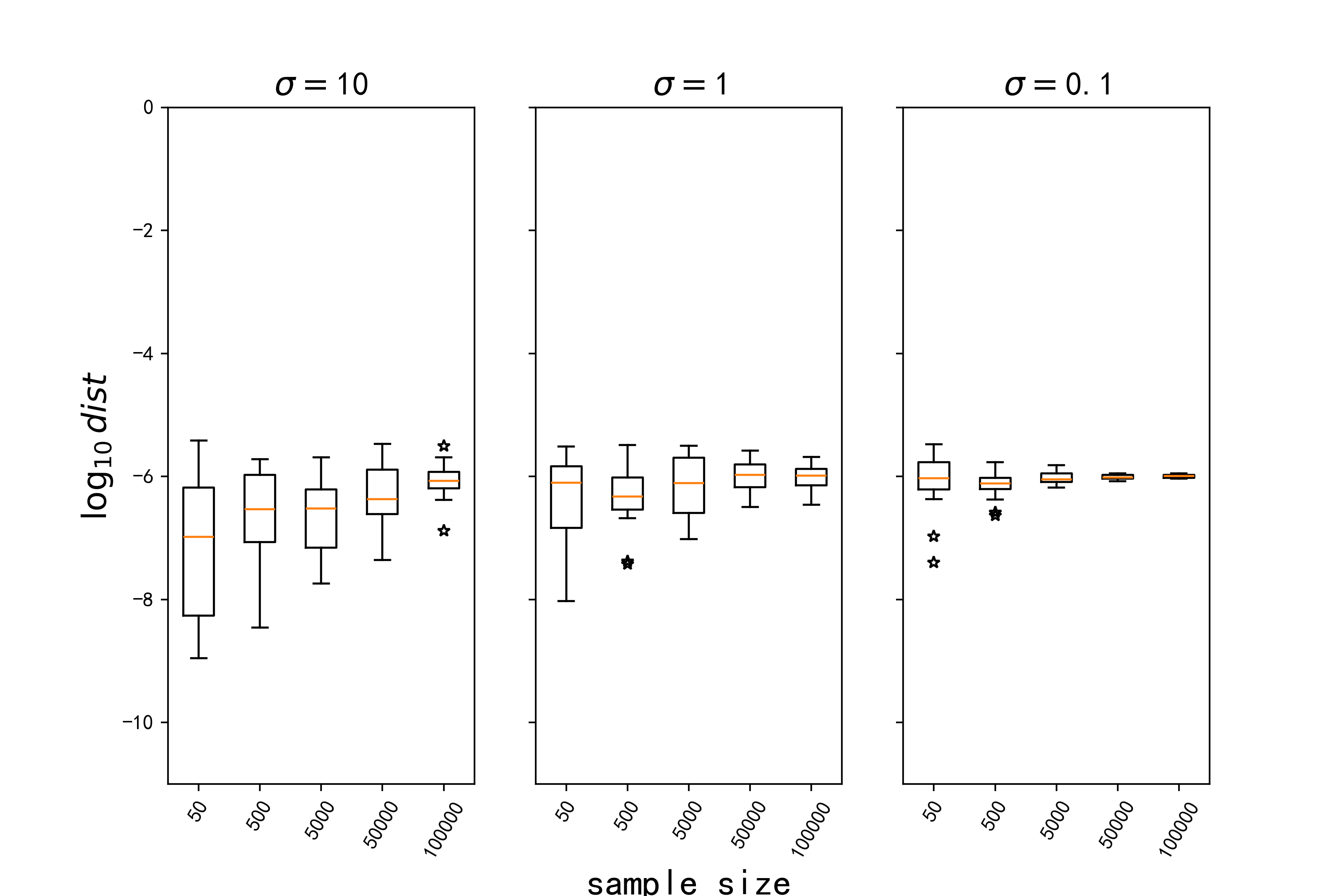}
    \caption{\scriptsize{Performance on S322 w.r.t after 1 500 iterations.}}
  \end{minipage}
\end{figure*}
\clearpage

\begin{figure*}[!h]
  \begin{minipage}[t]{0.5\linewidth}
    \centering
    \includegraphics[scale=0.35]{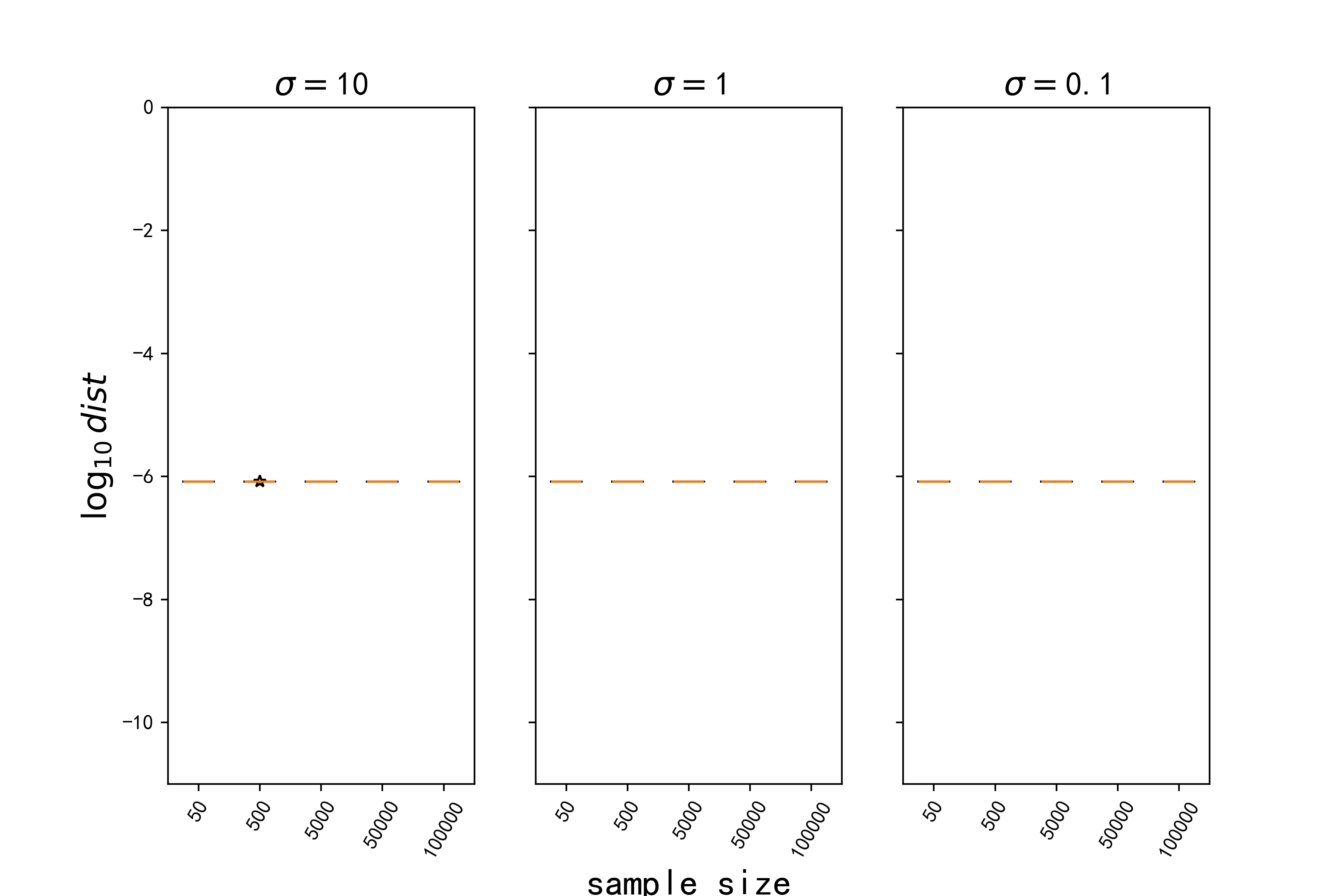}
    \caption{\scriptsize{Performance on S323 w.r.t after 50 iterations.}}
  \end{minipage}%
  \begin{minipage}[t]{0.5\linewidth}
    \centering
    \includegraphics[scale=0.35]{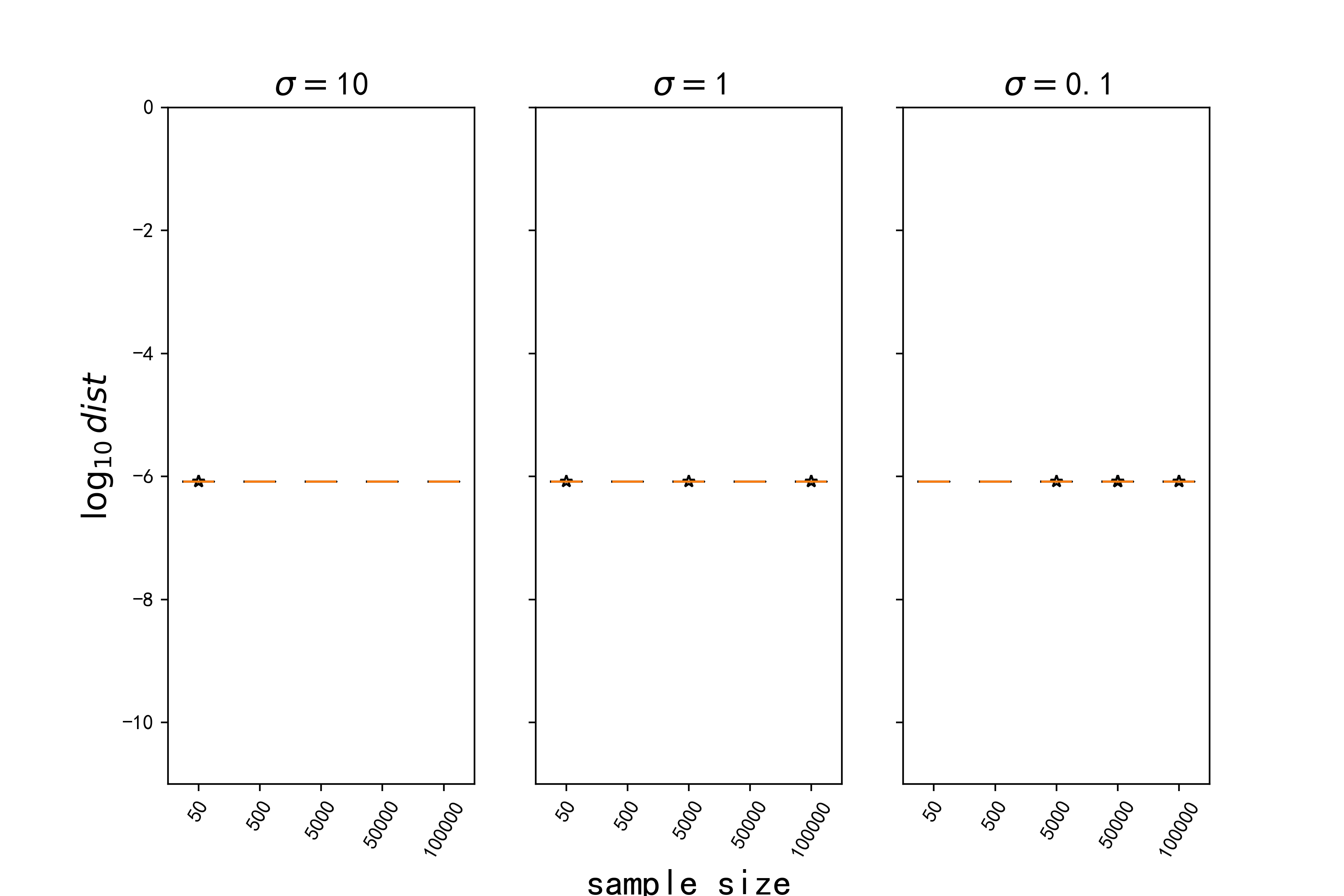}
    \caption{\scriptsize{Performance on S323 w.r.t after 1 500 iterations.}}
  \end{minipage}
  \end{figure*}

\begin{figure*}[!h]
  \begin{minipage}[t]{0.5\linewidth}
    \centering
    \includegraphics[scale=0.35]{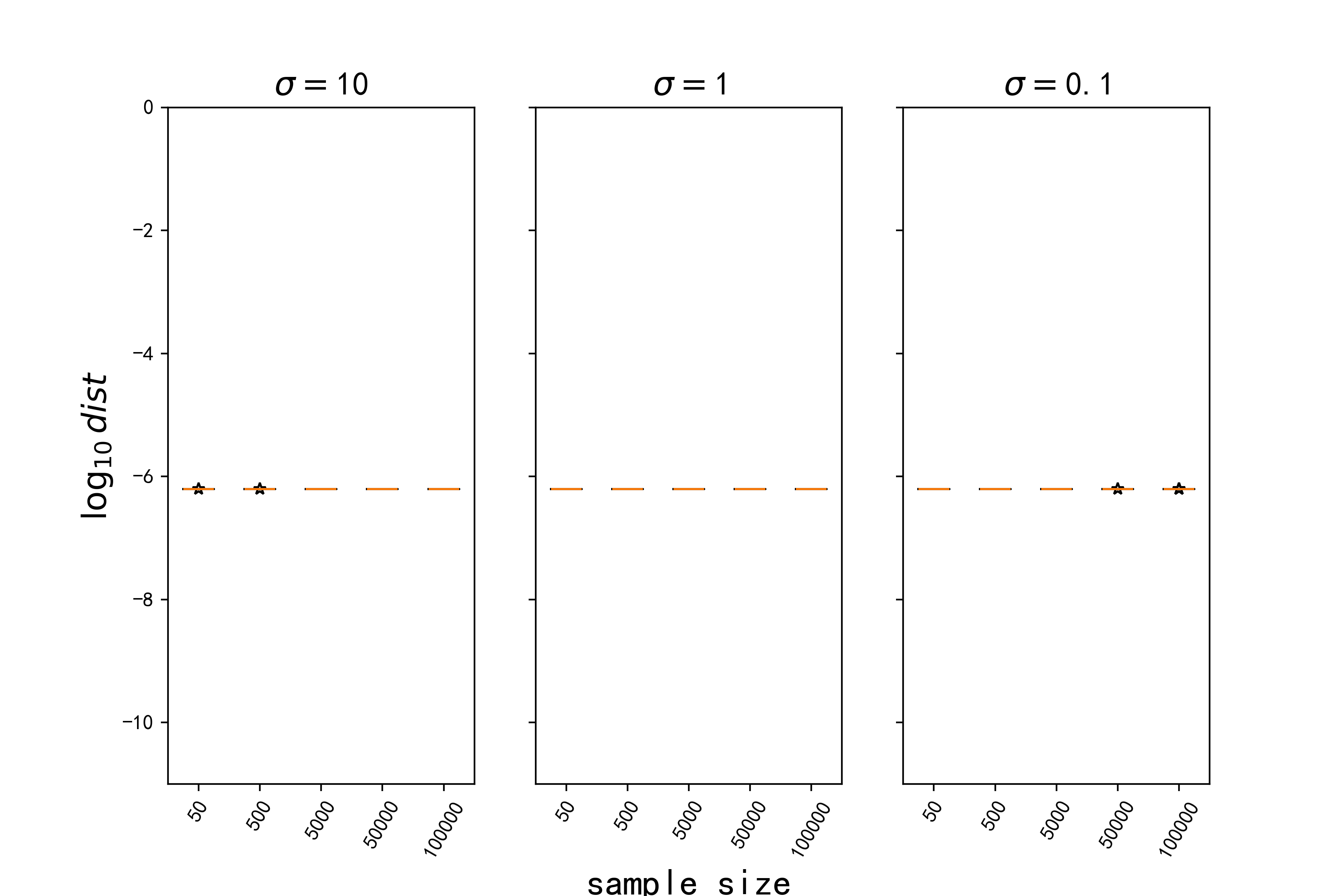}
    \caption{\scriptsize{Performance on S326 w.r.t after 50 iterations.}}
  \end{minipage}%
  \begin{minipage}[t]{0.5\linewidth}
    \centering
    \includegraphics[scale=0.35]{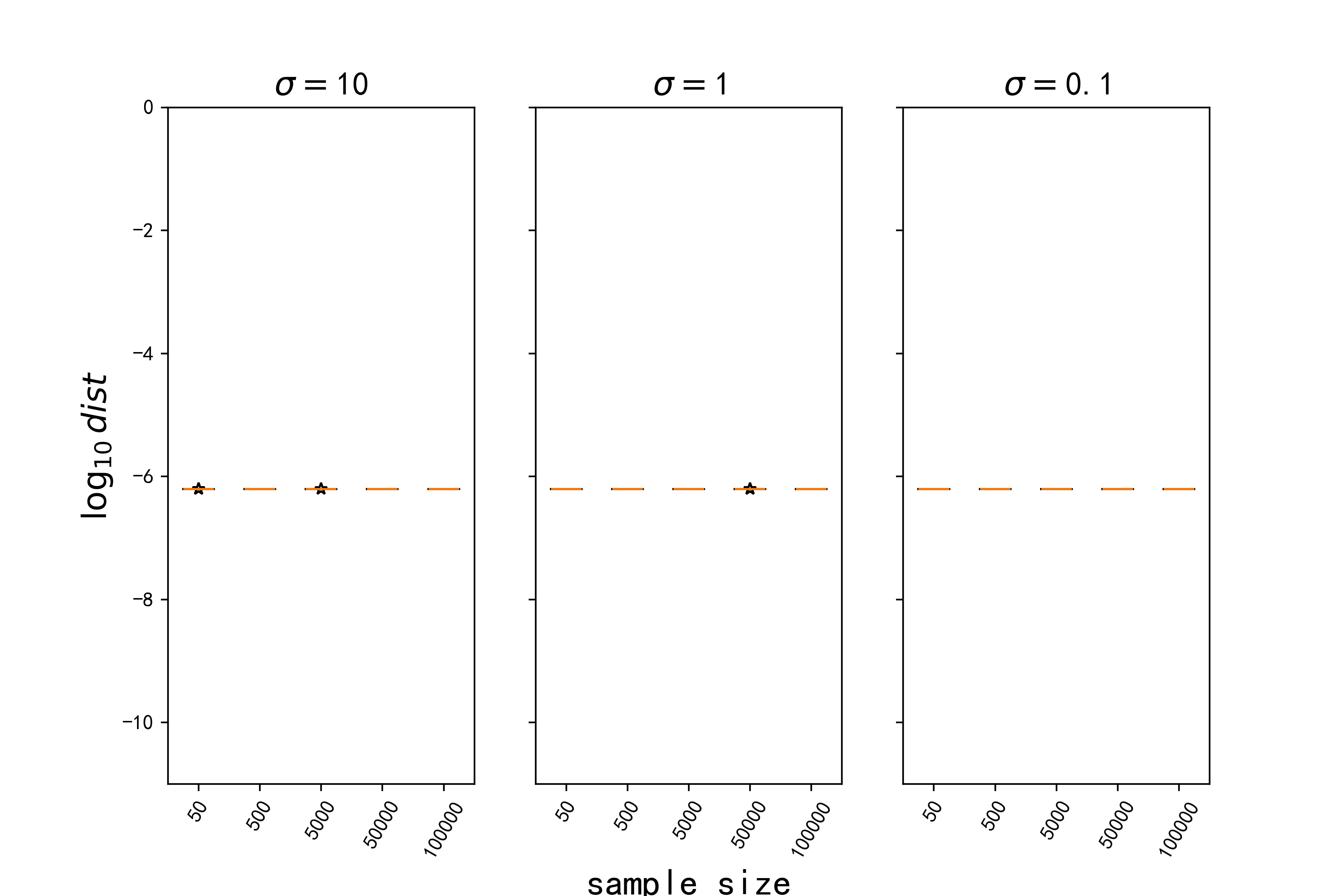}
    \caption{\scriptsize{Performance on S326 w.r.t after 1 500 iterations.}}
  \end{minipage}
\end{figure*}
\clearpage

\begin{figure*}[!h]
  \begin{minipage}[t]{0.5\linewidth}
    \centering
    \includegraphics[scale=0.35]{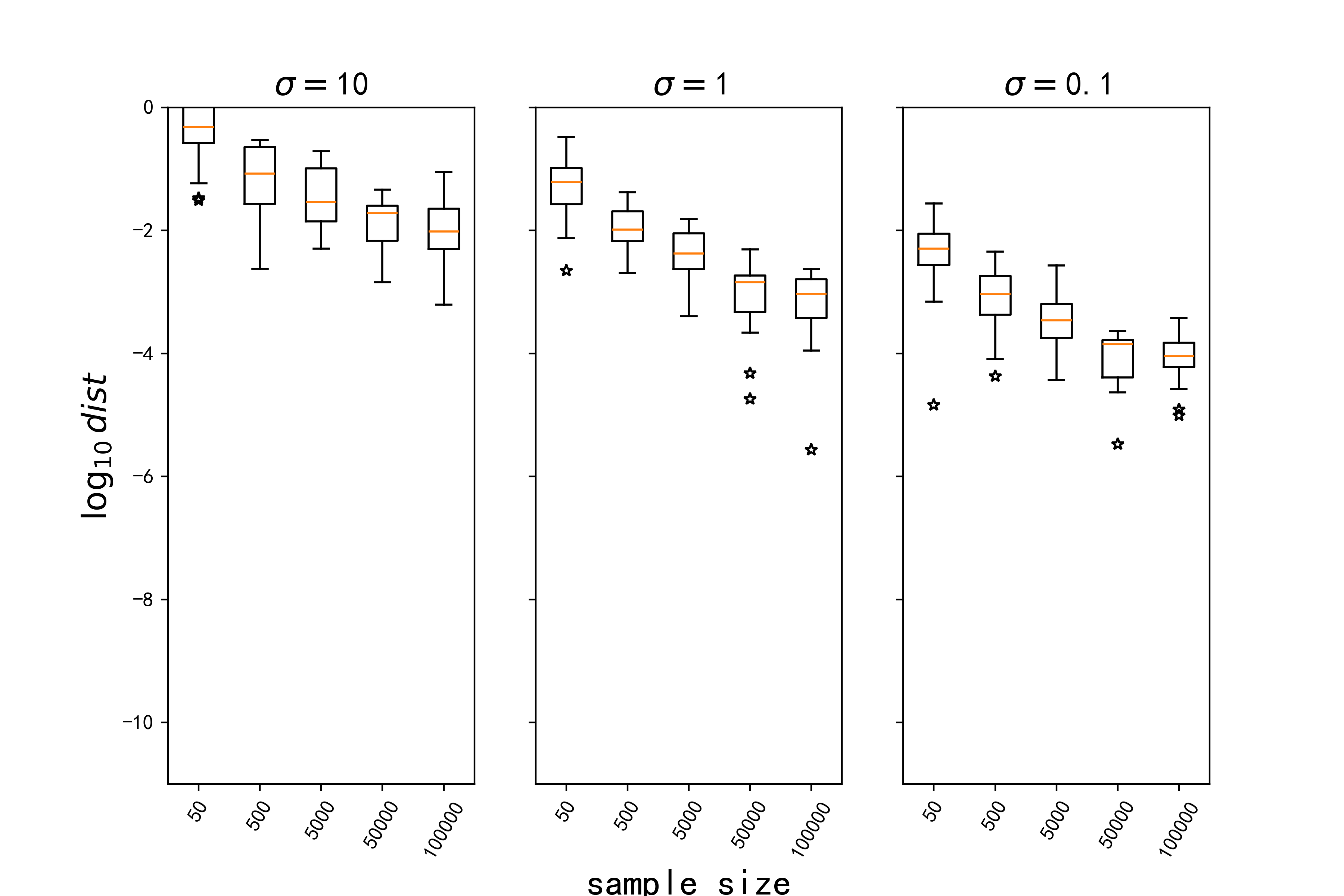}
    \caption{\scriptsize{Performance on S337 w.r.t after 50 iterations.}}
  \end{minipage}%
  \begin{minipage}[t]{0.5\linewidth}
    \centering
    \includegraphics[scale=0.35]{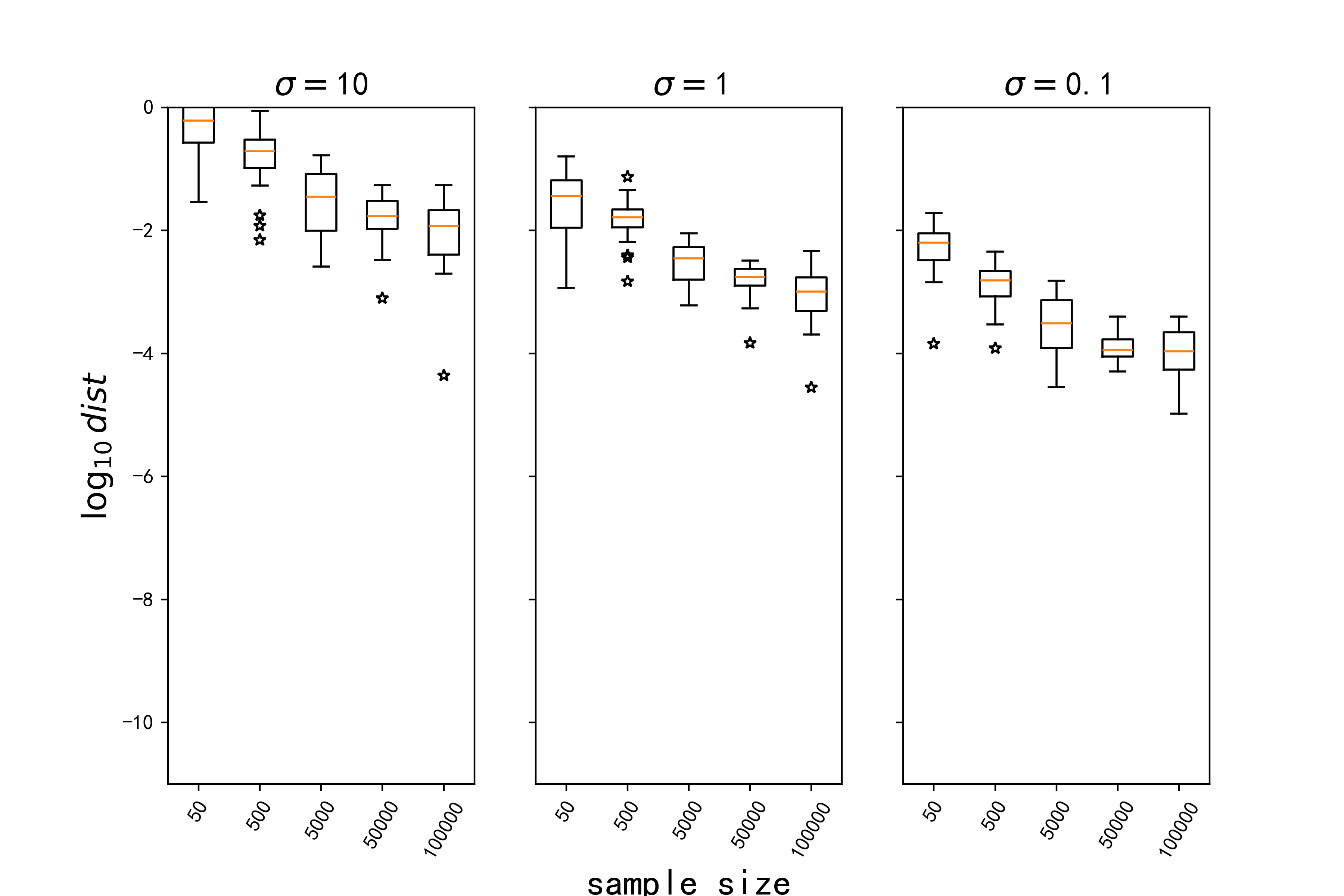}
    \caption{\scriptsize{Performance on S337 w.r.t after 1 500 iterations.}}
  \end{minipage}
\end{figure*}

\begin{figure*}[!h]
  \begin{minipage}[t]{0.5\linewidth}
    \centering
    \includegraphics[scale=0.35]{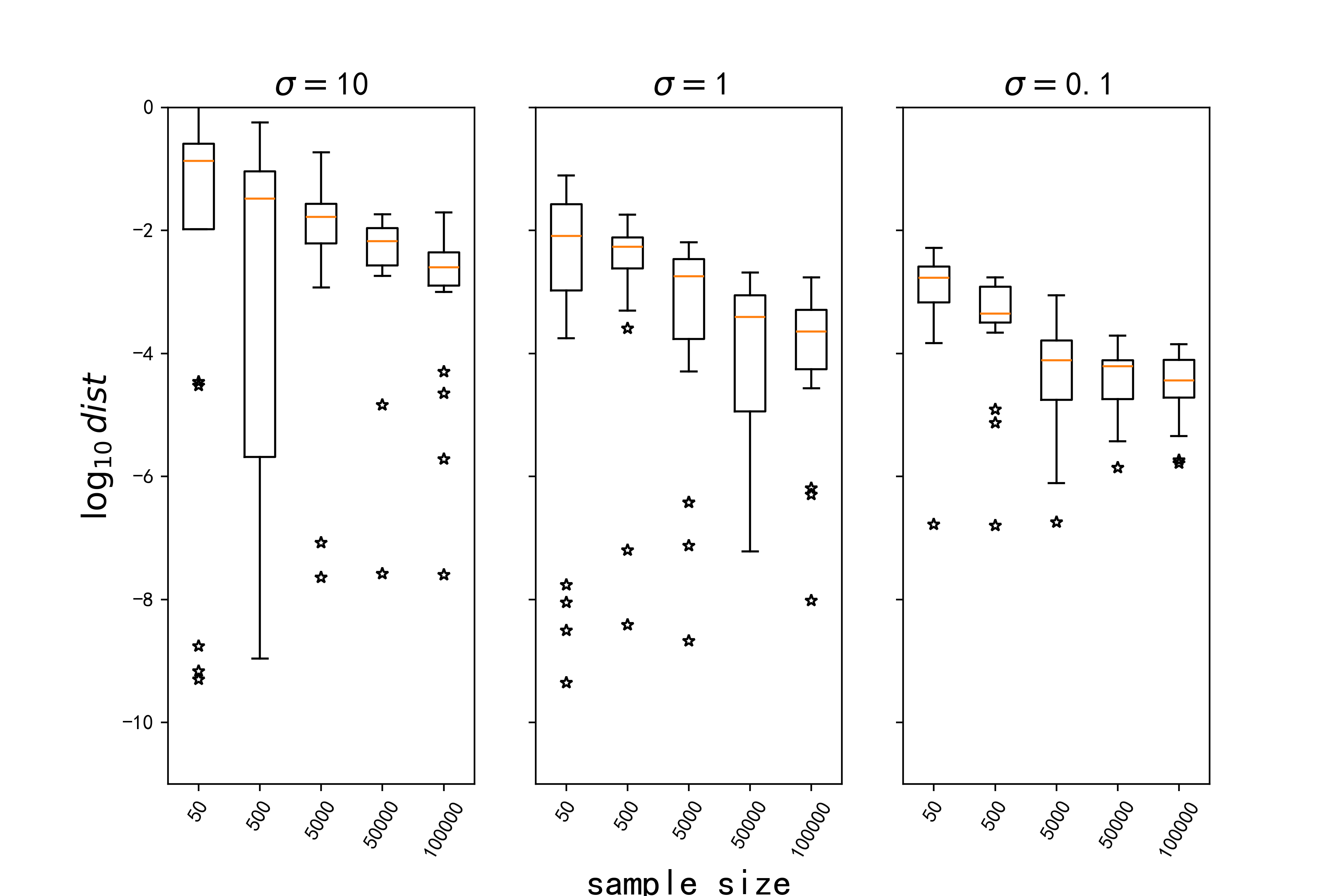}
    \caption{\scriptsize{Performance on S338 w.r.t after 50 iterations.}}
  \end{minipage}%
  \begin{minipage}[t]{0.5\linewidth}
    \centering
    \includegraphics[scale=0.35]{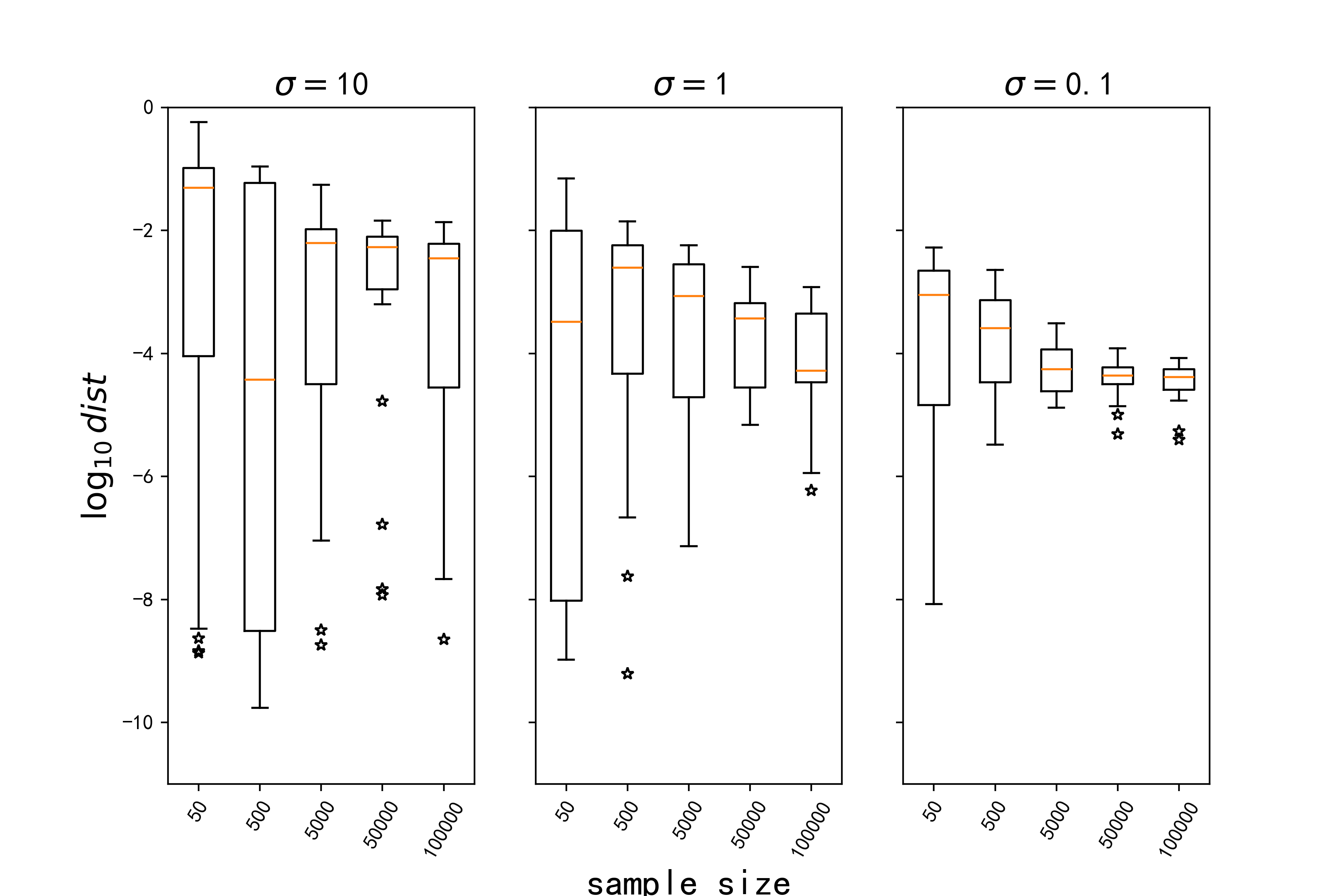}
    \caption{\scriptsize{Performance on S338 w.r.t after 1 500 iterations.}}
  \end{minipage}
\end{figure*}
\clearpage

\begin{figure*}[!h]
  \begin{minipage}[t]{0.5\linewidth}
    \centering
    \includegraphics[scale=0.35]{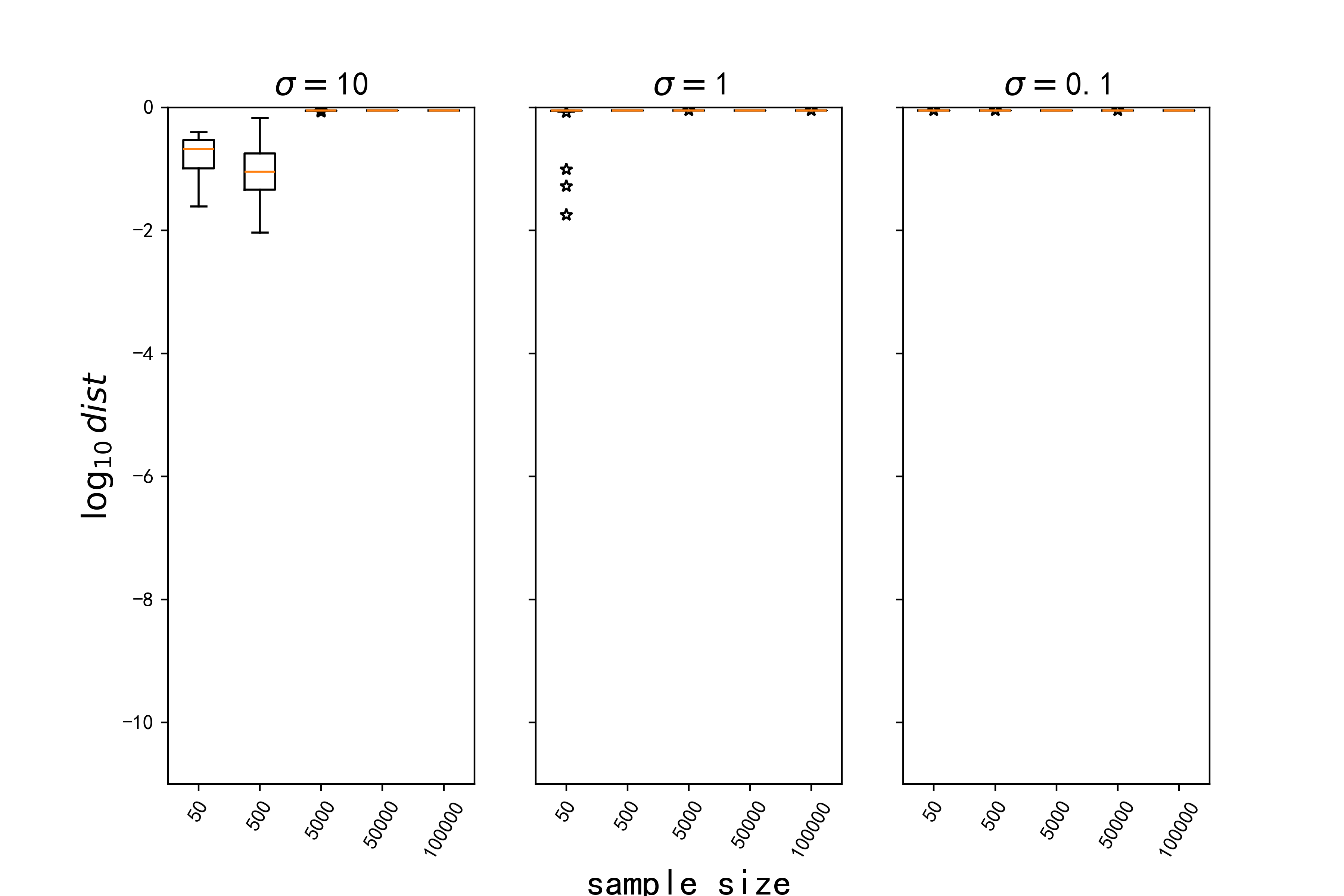}
    \caption{\scriptsize{Performance on S344 w.r.t after 50 iterations.}}
  \end{minipage}%
  \begin{minipage}[t]{0.5\linewidth}
    \centering
    \includegraphics[scale=0.35]{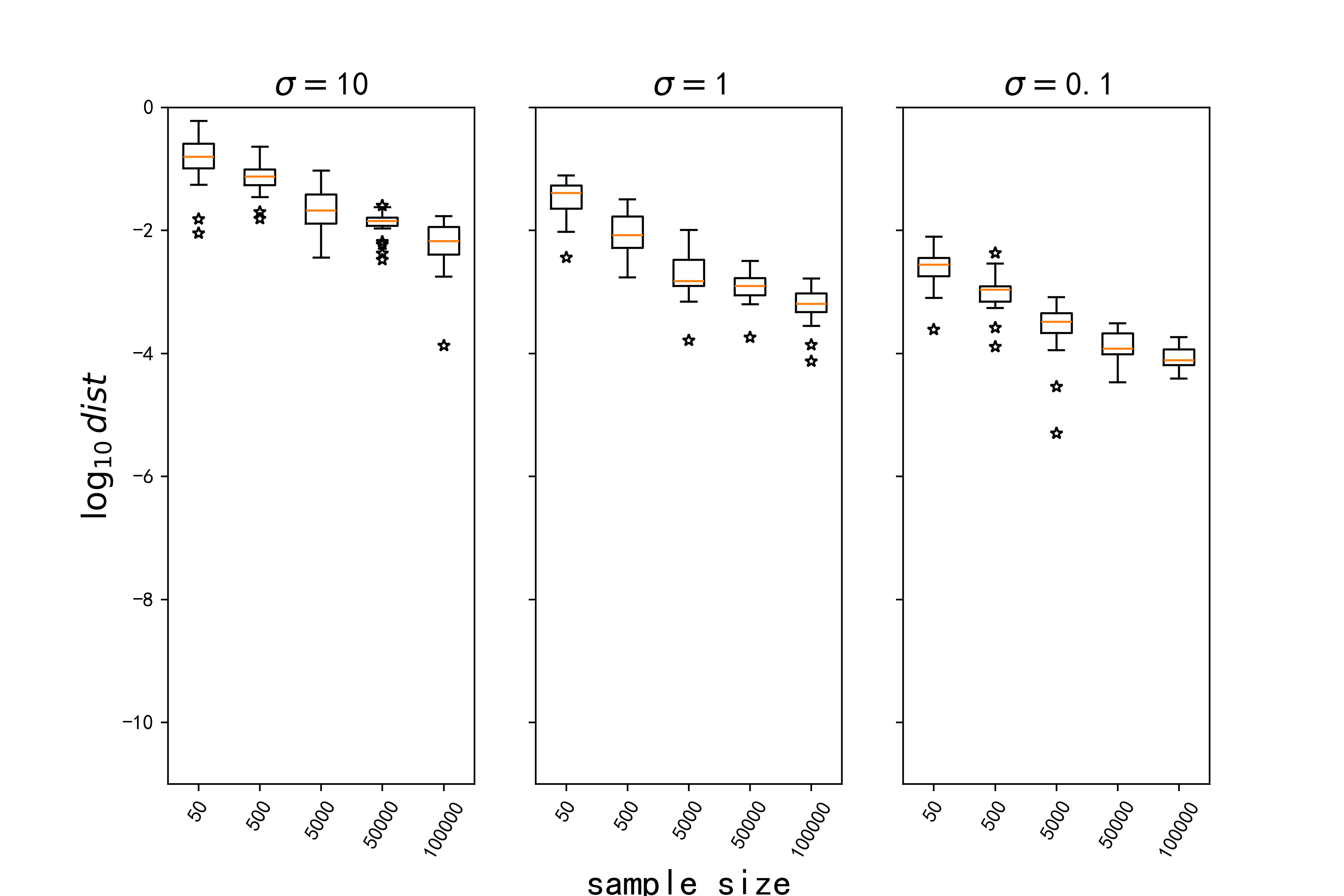}
    \caption{\scriptsize{Performance on S344 w.r.t after 1 500 iterations.}}
  \end{minipage}
\end{figure*}

\begin{figure*}[!h]
  \begin{minipage}[t]{0.5\linewidth}
    \centering
    \includegraphics[scale=0.35]{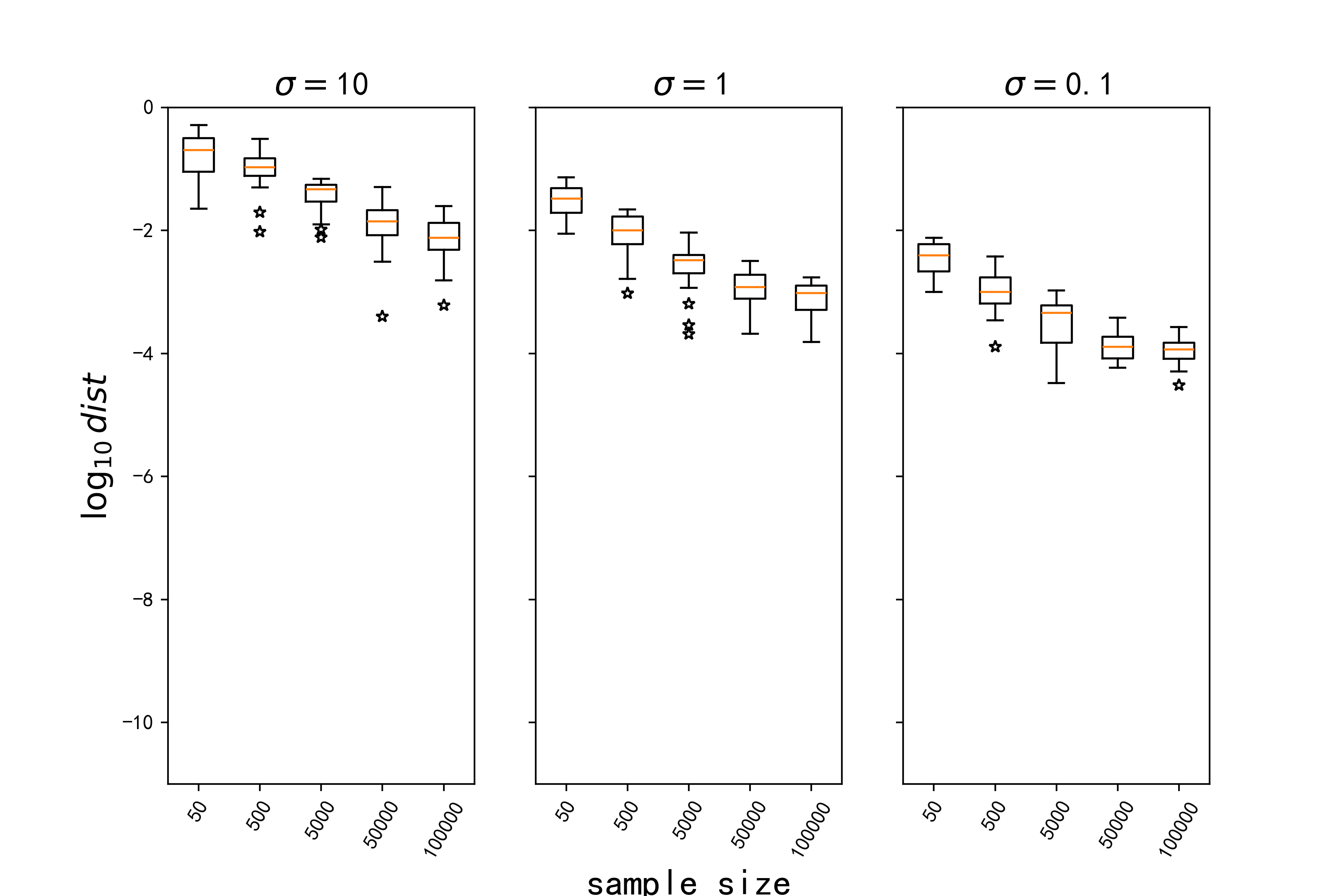}
    \caption{\scriptsize{Performance on S345 w.r.t after 50 iterations.}}
  \end{minipage}%
  \begin{minipage}[t]{0.5\linewidth}
    \centering
    \includegraphics[scale=0.35]{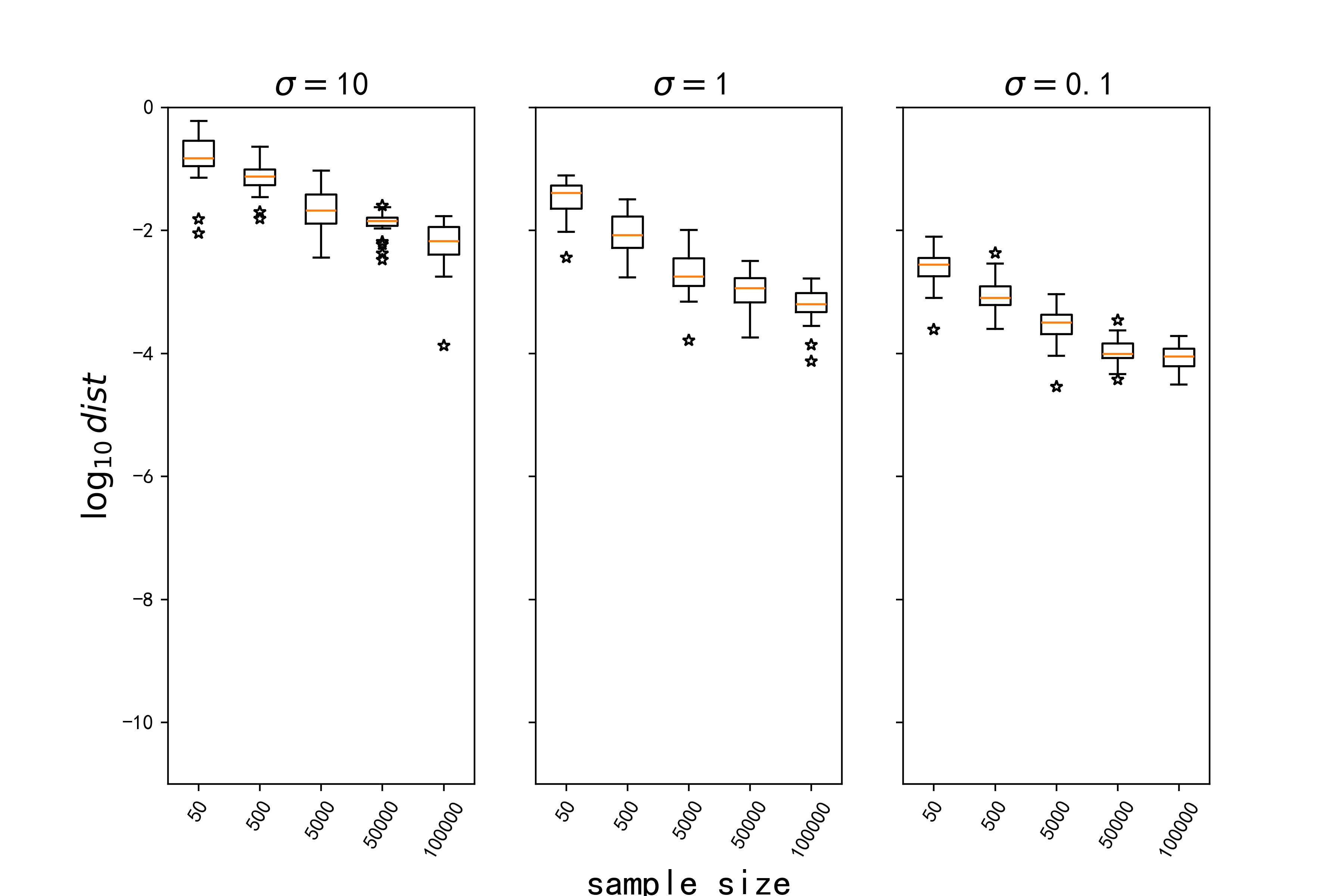}
    \caption{\scriptsize{Performance on S345 w.r.t after 1 500 iterations.}}
  \end{minipage}
\end{figure*}
\clearpage

\begin{figure*}[!h]
  \begin{minipage}[t]{0.5\linewidth}
    \centering
    \includegraphics[scale=0.35]{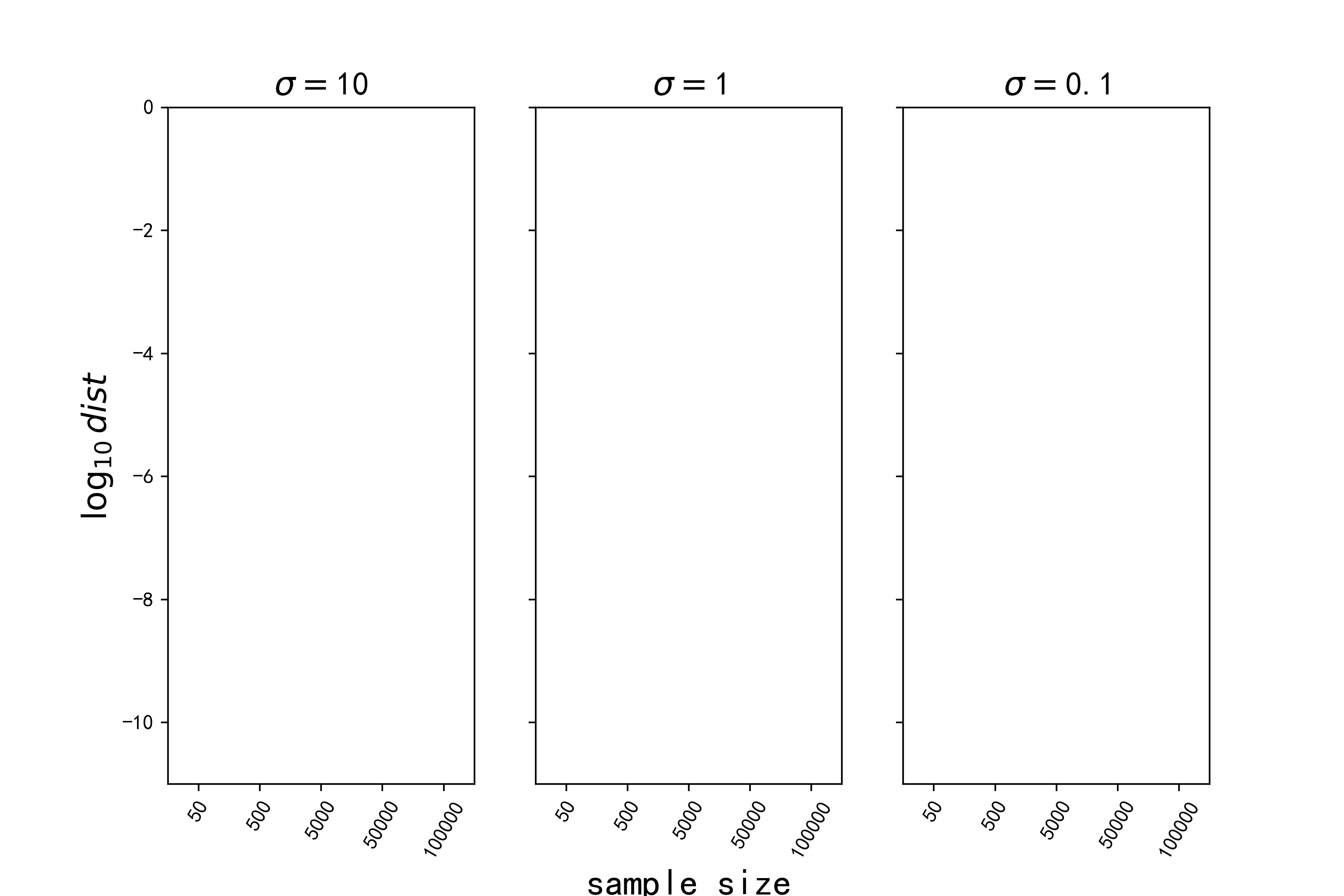}
    \caption{\scriptsize{Performance on S355 w.r.t after 50 iterations.}}
  \end{minipage}%
  \begin{minipage}[t]{0.5\linewidth}
    \centering
    \includegraphics[scale=0.35]{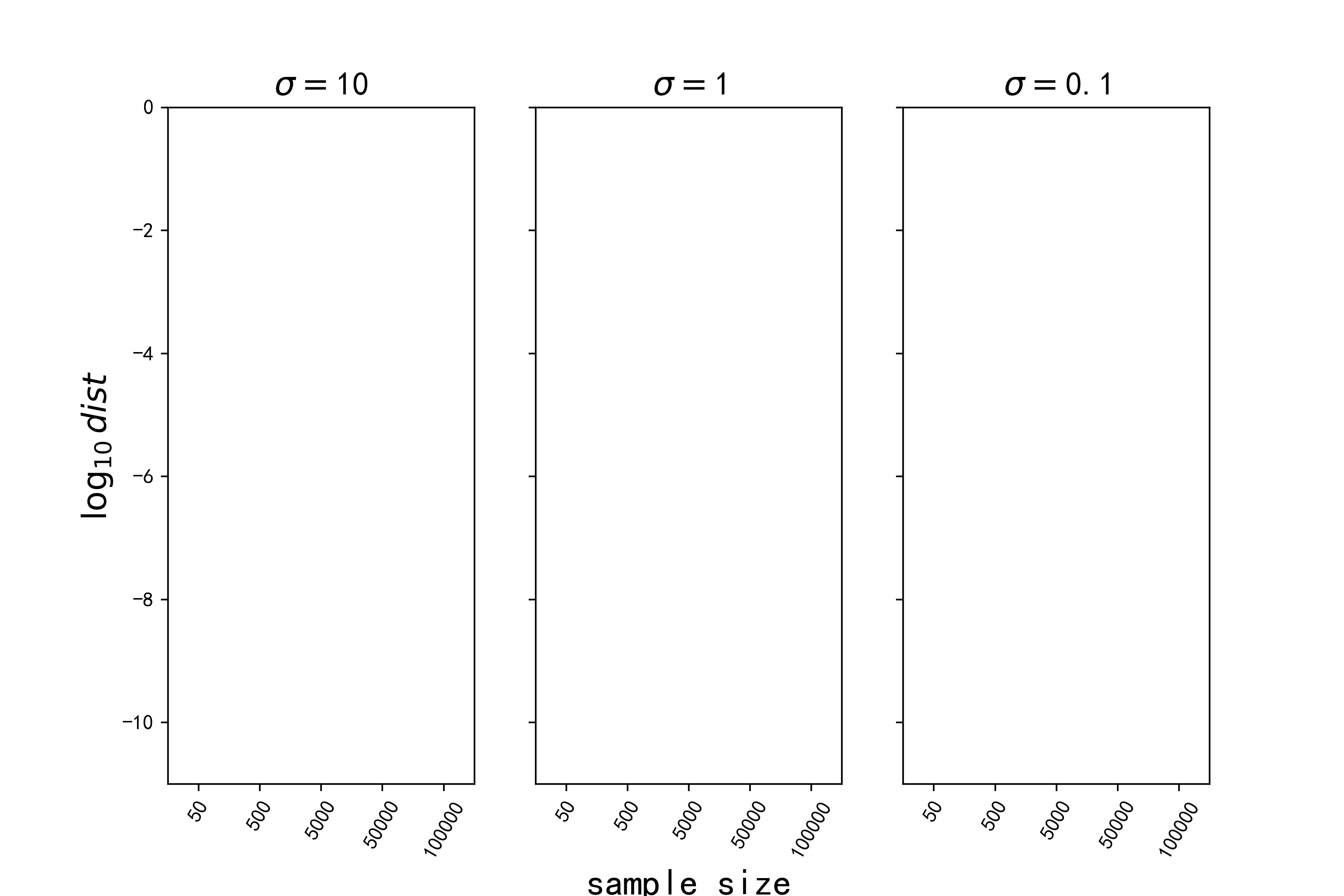}
    \caption{\scriptsize{Performance on S355 w.r.t after 1 500 iterations.}}
  \end{minipage}
\end{figure*}

\begin{figure*}[!h]
  \begin{minipage}[t]{0.5\linewidth}
    \centering
    \includegraphics[scale=0.35]{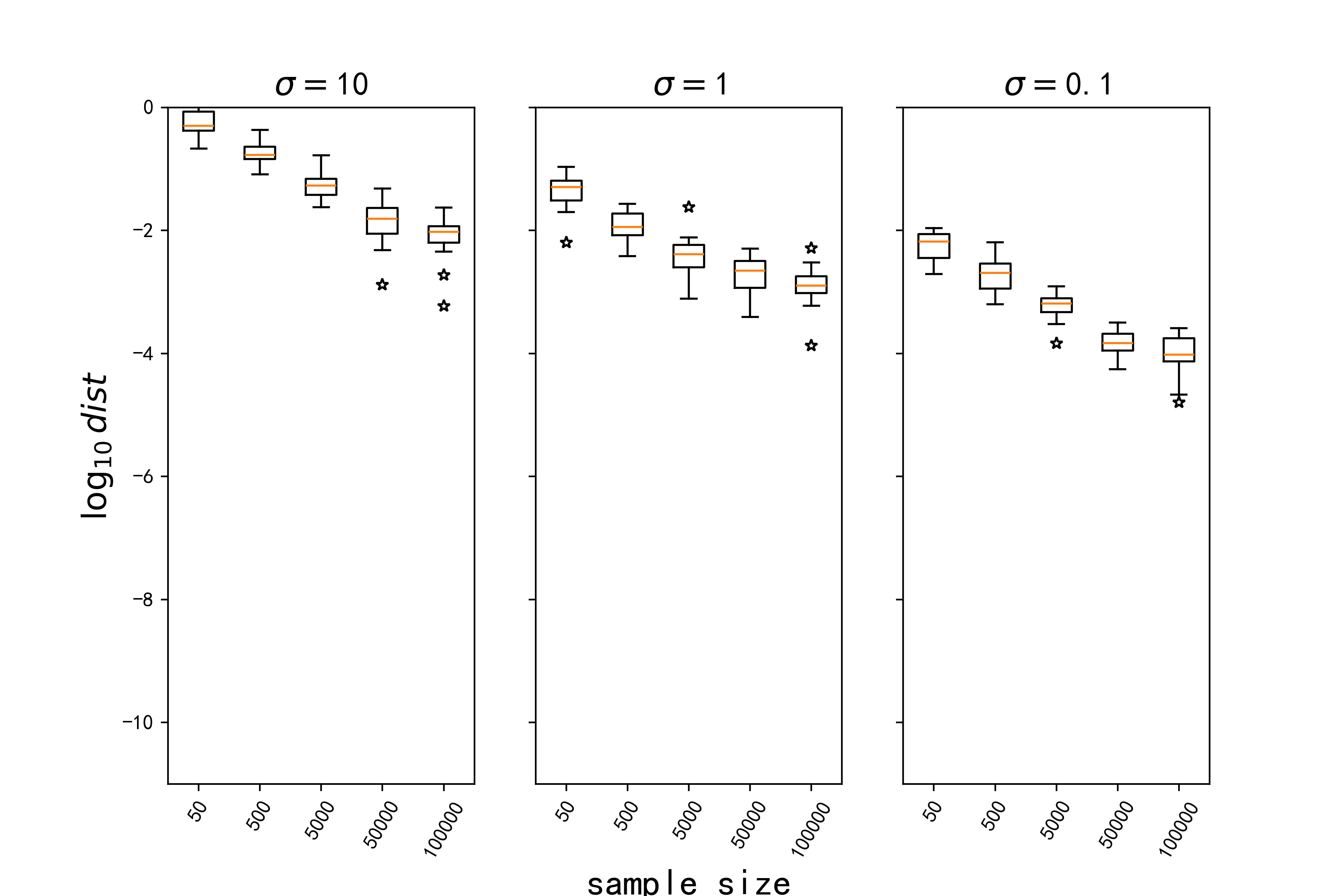}
    \caption{\scriptsize{Performance on S372 w.r.t after 50 iterations.}}
  \end{minipage}%
  \begin{minipage}[t]{0.5\linewidth}
    \centering
    \includegraphics[scale=0.35]{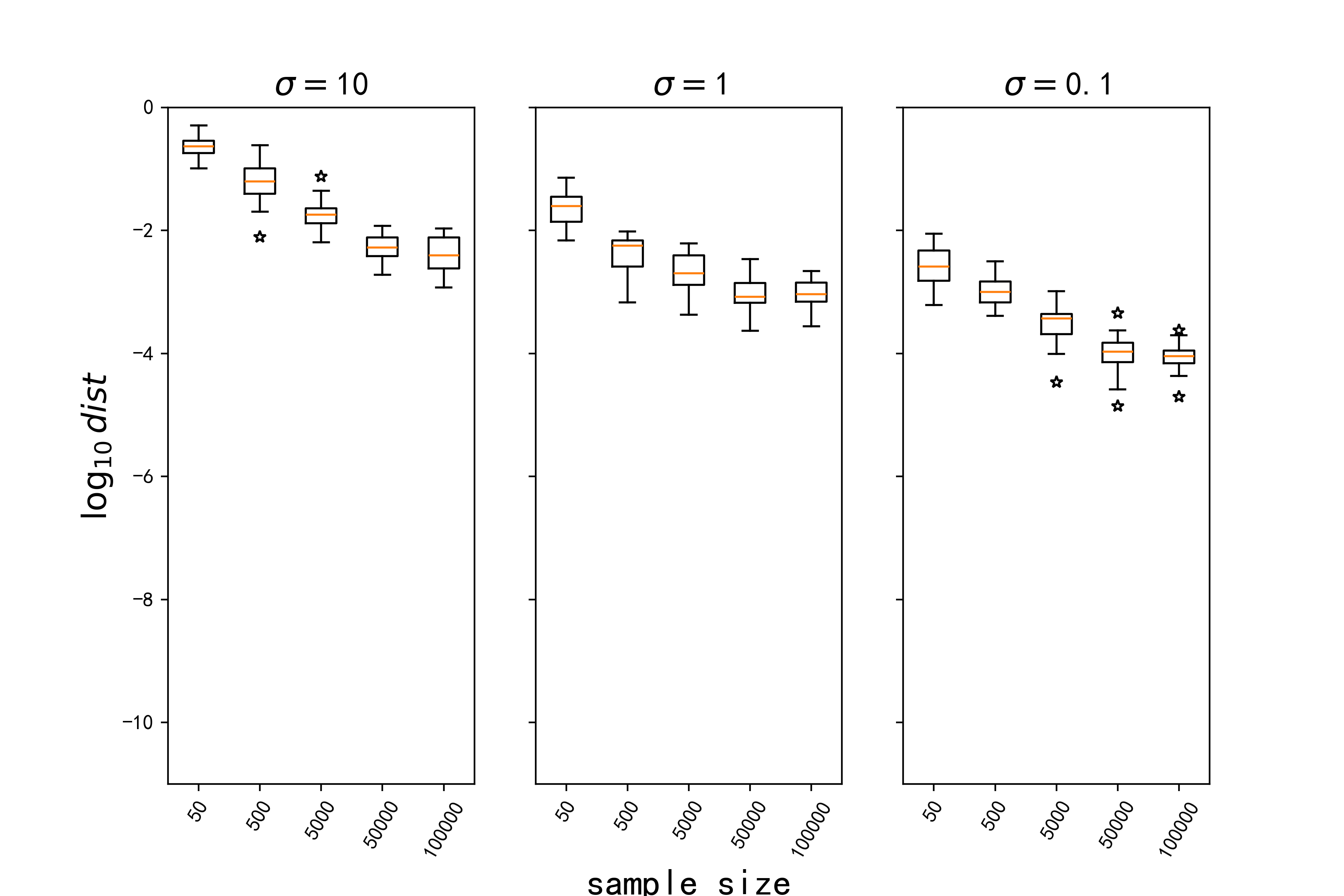}
    \caption{\scriptsize{Performance on S372 w.r.t after 1 500 iterations.}}
  \end{minipage}
\end{figure*}
\clearpage

\begin{figure*}[!h]
  \begin{minipage}[t]{0.5\linewidth}
    \centering
    \includegraphics[scale=0.35]{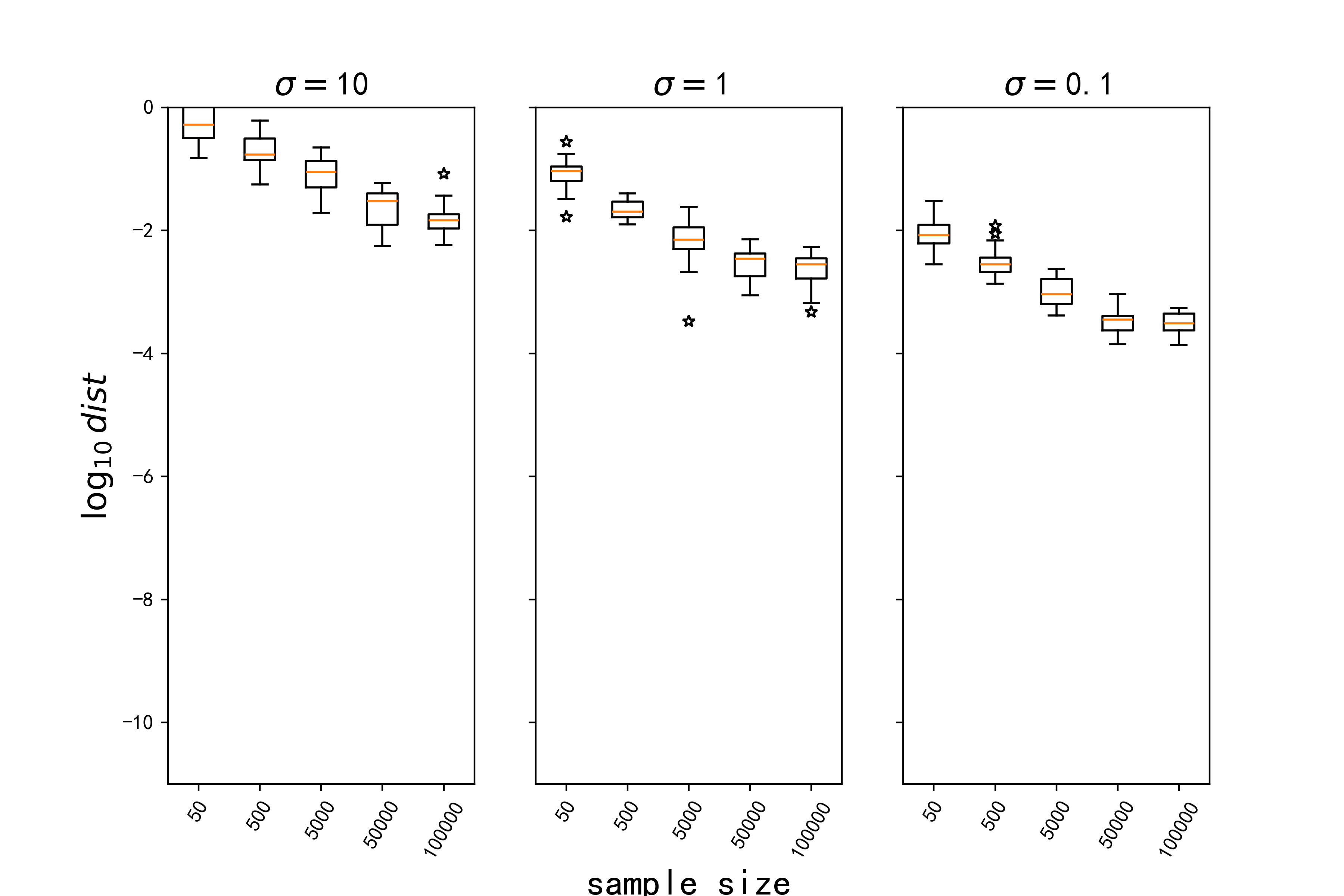}
    \caption{\scriptsize{Performance on S373 w.r.t after 50 iterations.}}
  \end{minipage}%
  \begin{minipage}[t]{0.5\linewidth}
    \centering
    \includegraphics[scale=0.35]{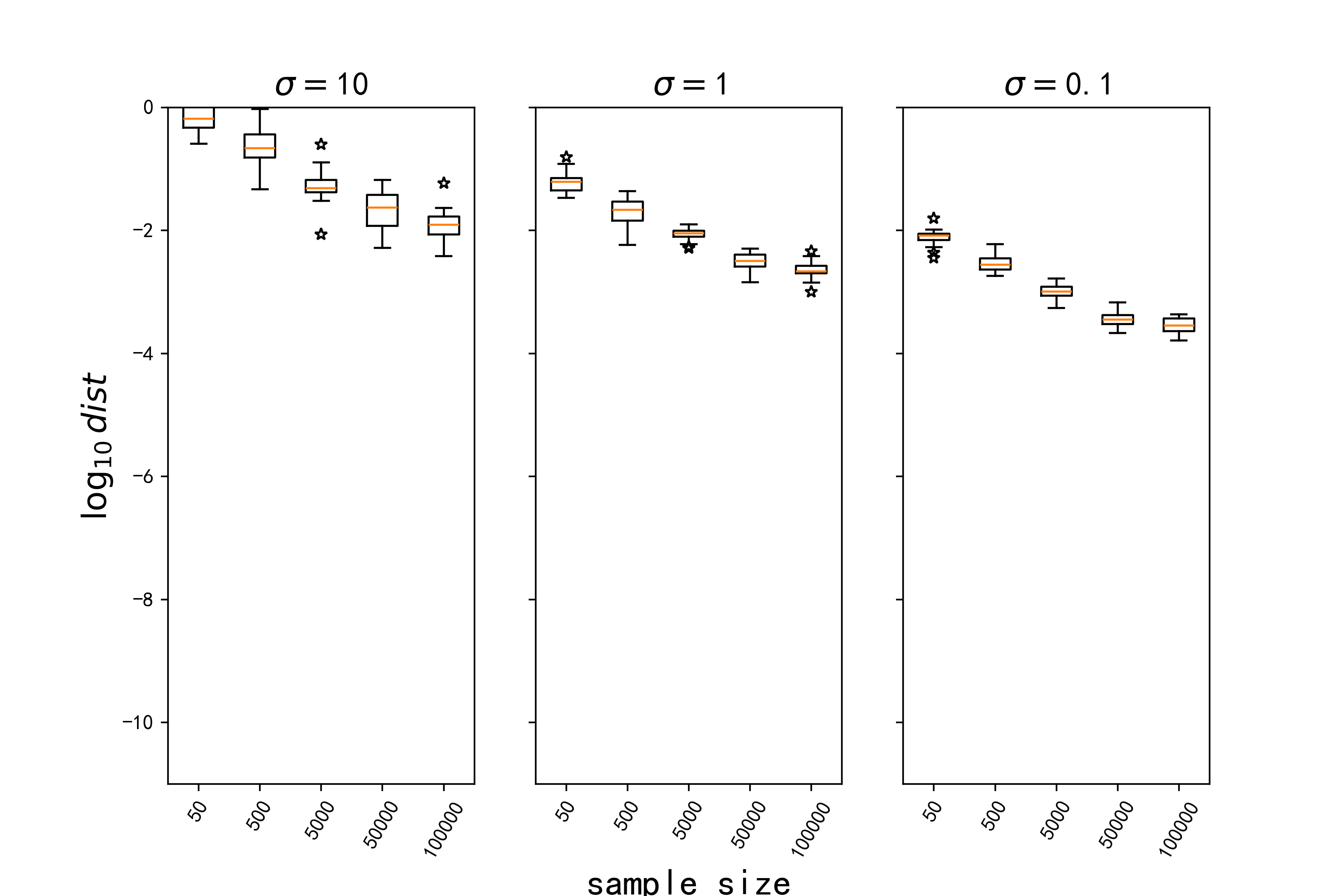}
    \caption{\scriptsize{Performance on S373 w.r.t after 1 500 iterations.}}
  \end{minipage}
\end{figure*}

\begin{figure*}[!h]
  \begin{minipage}[t]{0.5\linewidth}
    \centering
    \includegraphics[scale=0.35]{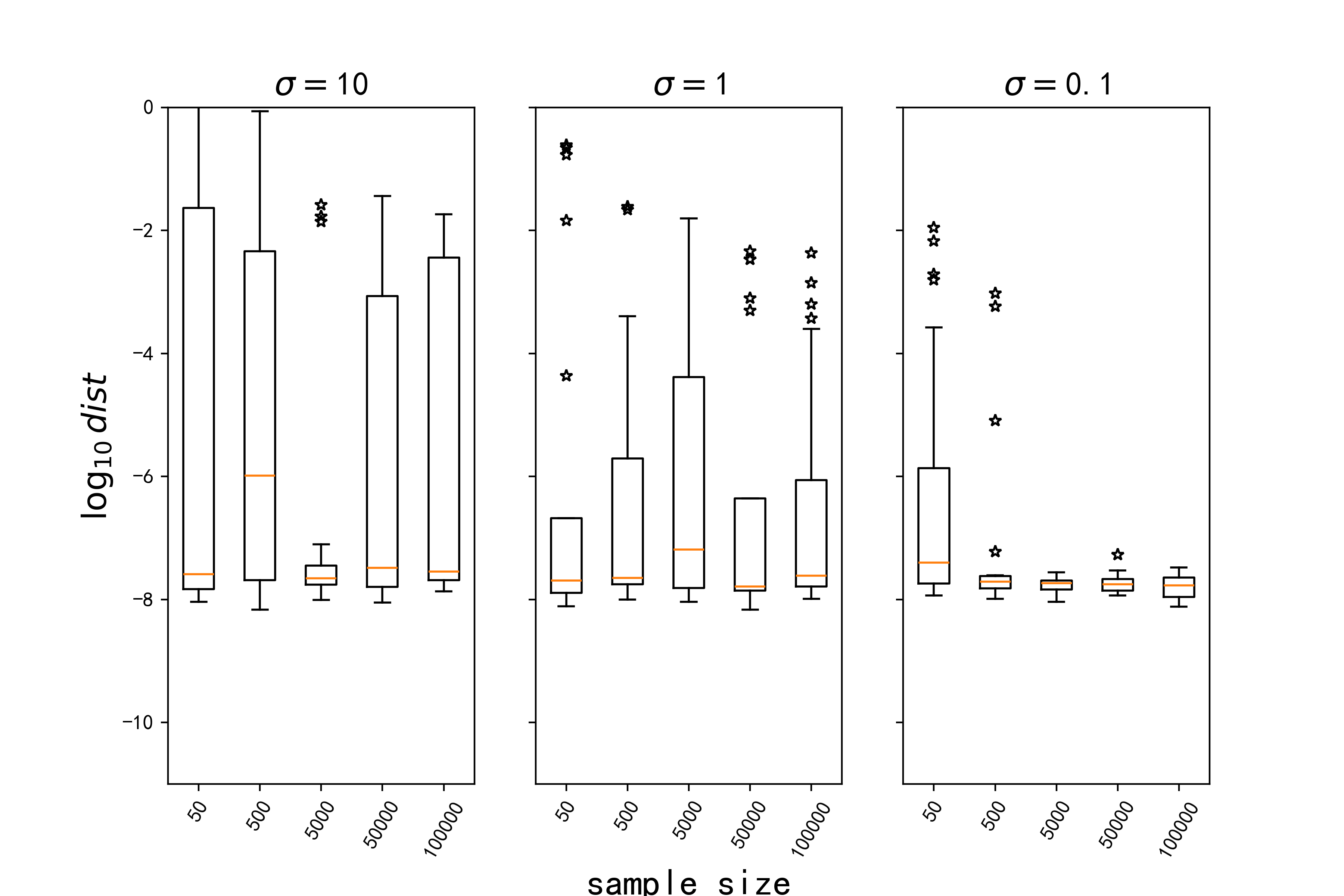}
    \caption{\scriptsize{Performance on S375 w.r.t after 50 iterations.}}
  \end{minipage}%
  \begin{minipage}[t]{0.5\linewidth}
    \centering
    \includegraphics[scale=0.35]{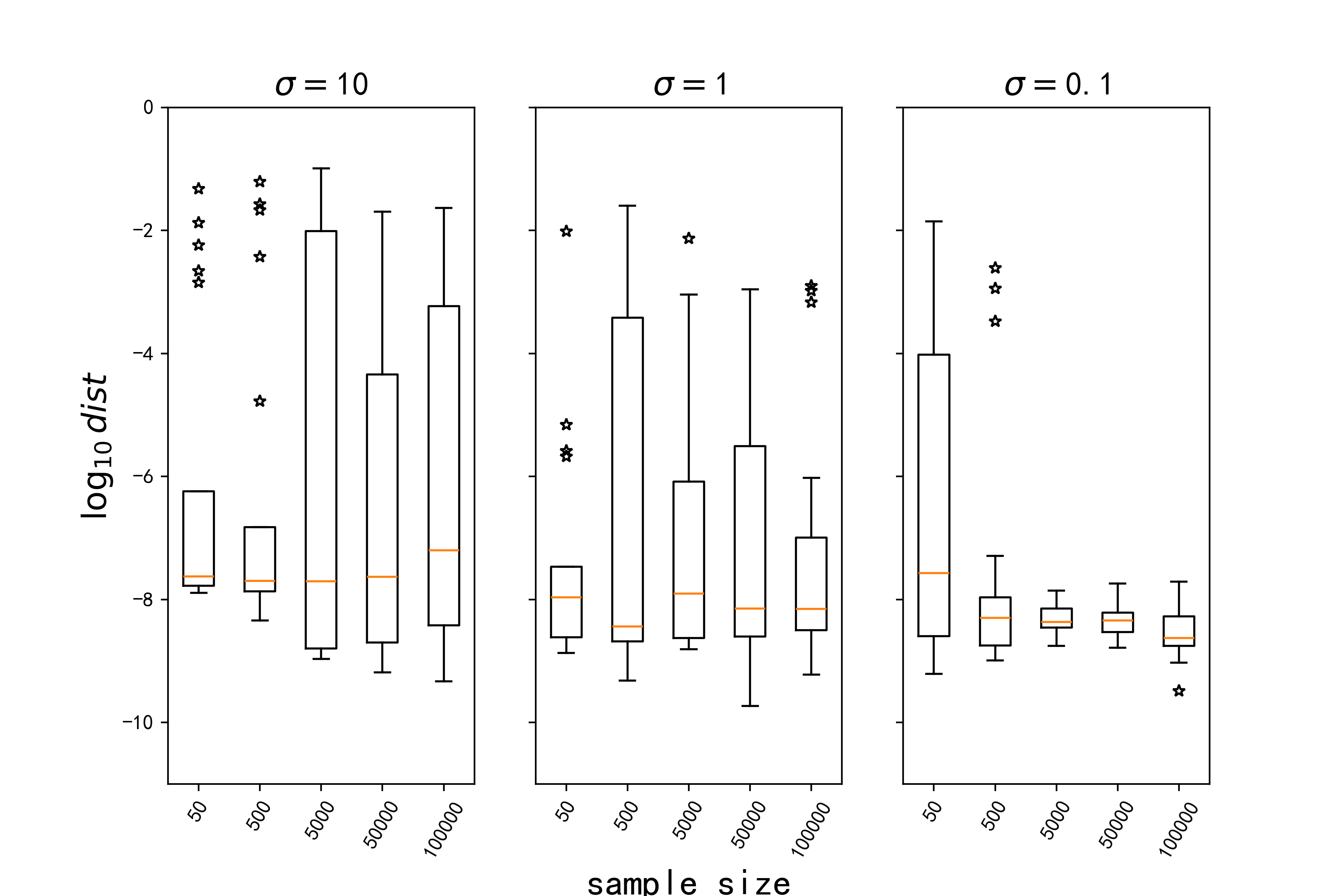}
    \caption{\scriptsize{Performance on S375 w.r.t after 1 500 iterations.}}
  \end{minipage}
\end{figure*}
\clearpage

\begin{figure*}[!h]
  \begin{minipage}[t]{0.5\linewidth}
    \centering
    \includegraphics[scale=0.35]{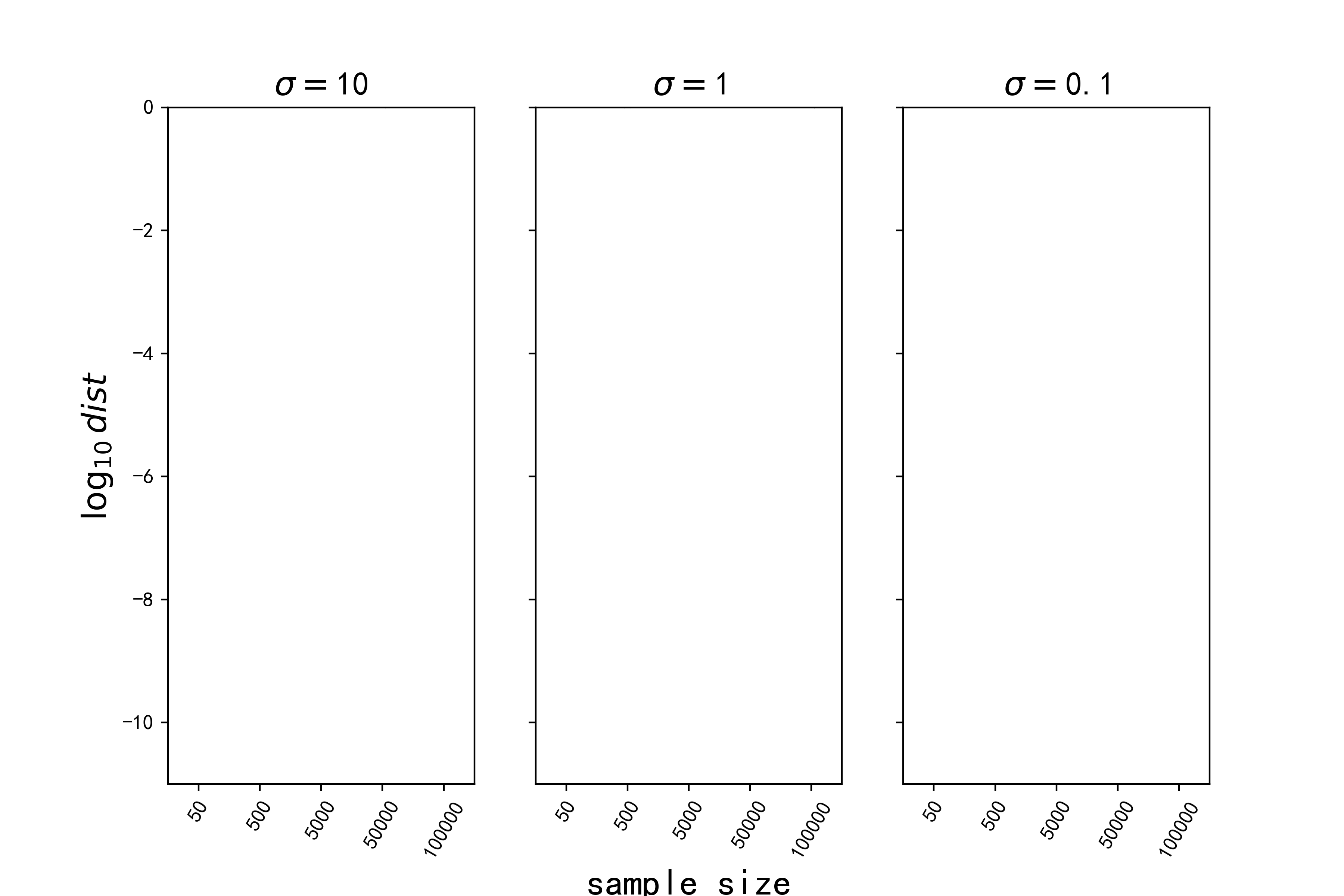}
    \caption{\scriptsize{Performance on S394 w.r.t after 50 iterations.}}
  \end{minipage}%
  \begin{minipage}[t]{0.5\linewidth}
    \centering
    \includegraphics[scale=0.35]{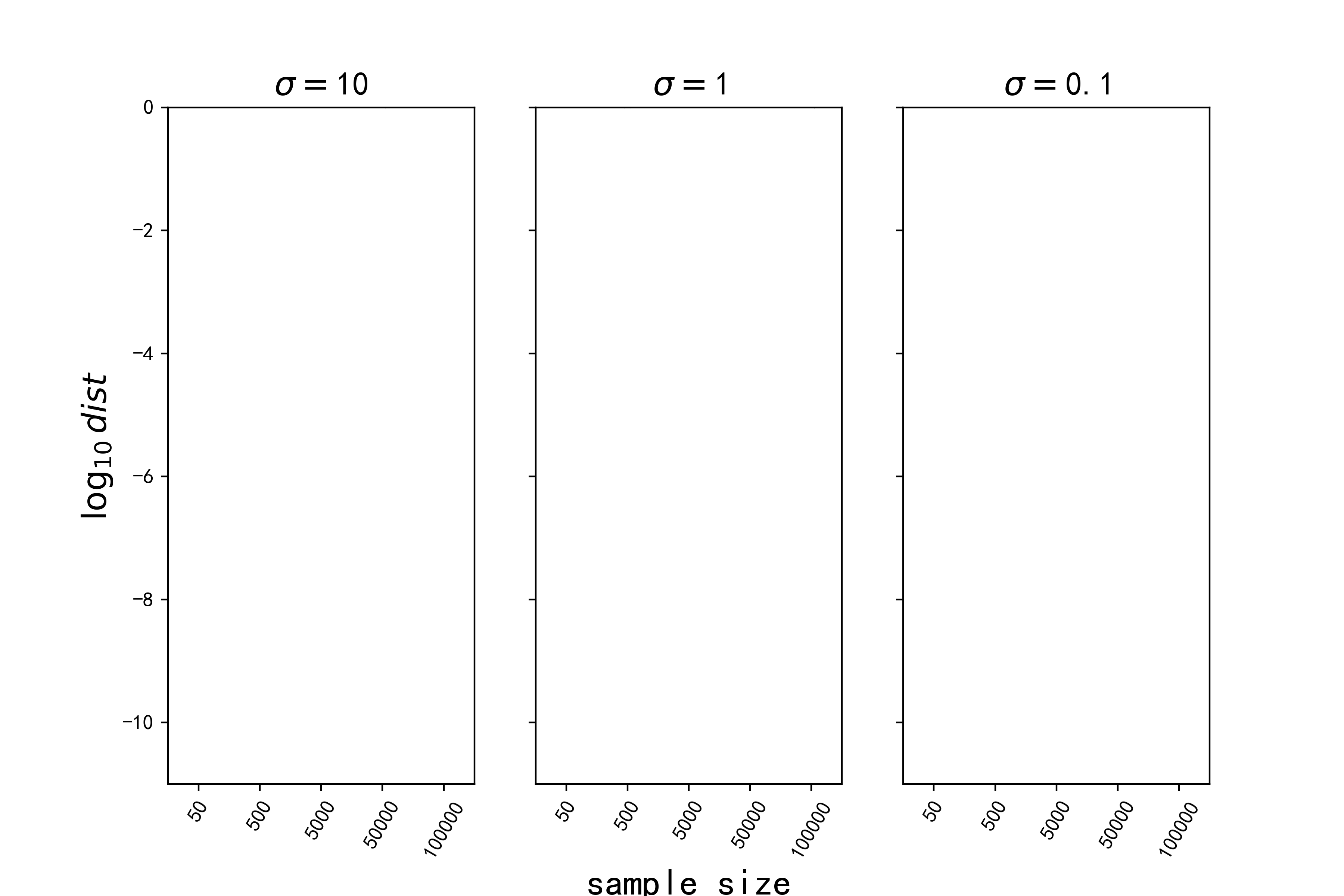}
    \caption{\scriptsize{Performance on S394 w.r.t after 1 500 iterations.}}
  \end{minipage}
\end{figure*}

\begin{figure*}[!h]
  \begin{minipage}[t]{0.5\linewidth}
    \centering
    \includegraphics[scale=0.35]{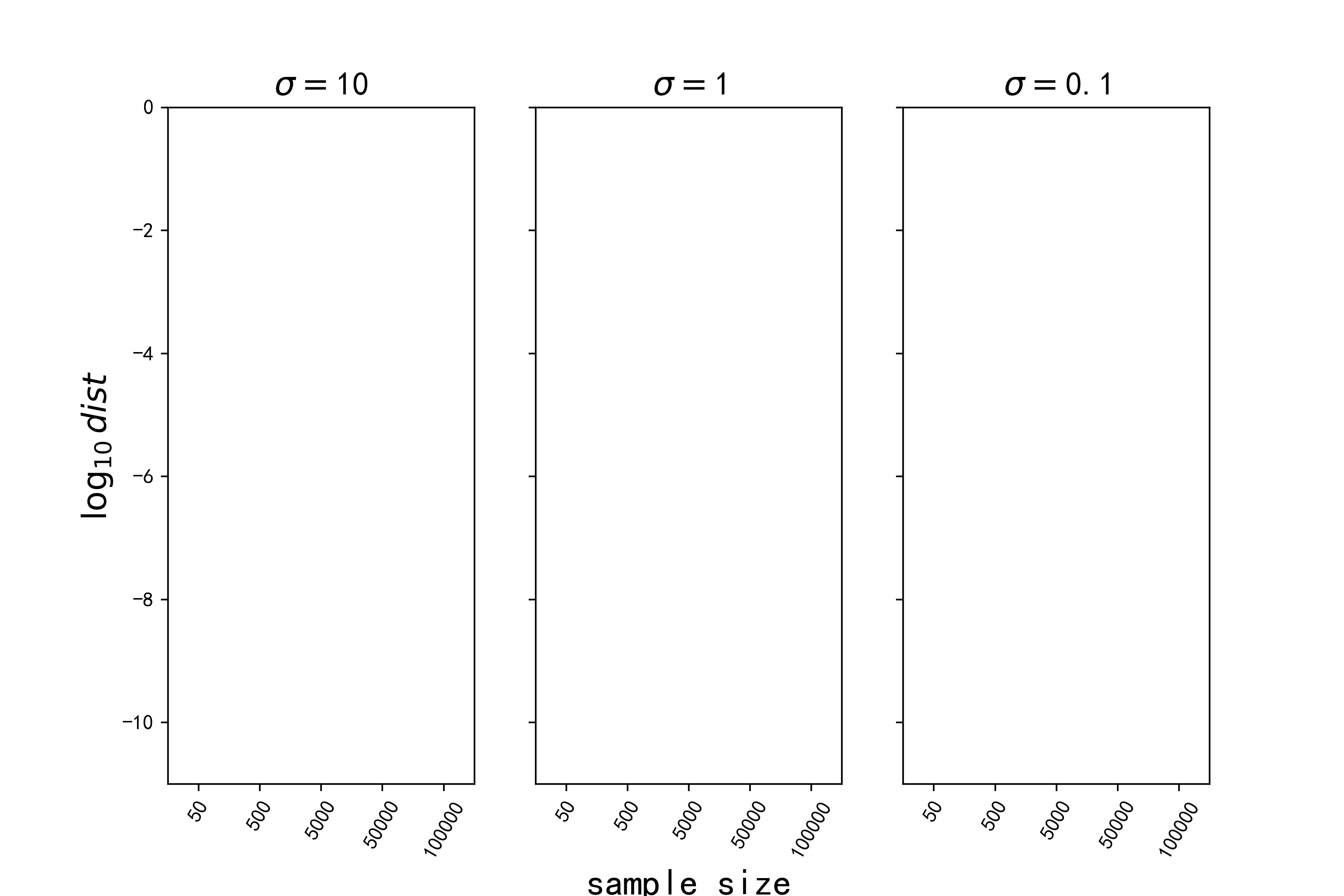}
    \caption{\scriptsize{Performance on S395 w.r.t after 50 iterations.}}
  \end{minipage}%
  \begin{minipage}[t]{0.5\linewidth}
    \centering
    \includegraphics[scale=0.35]{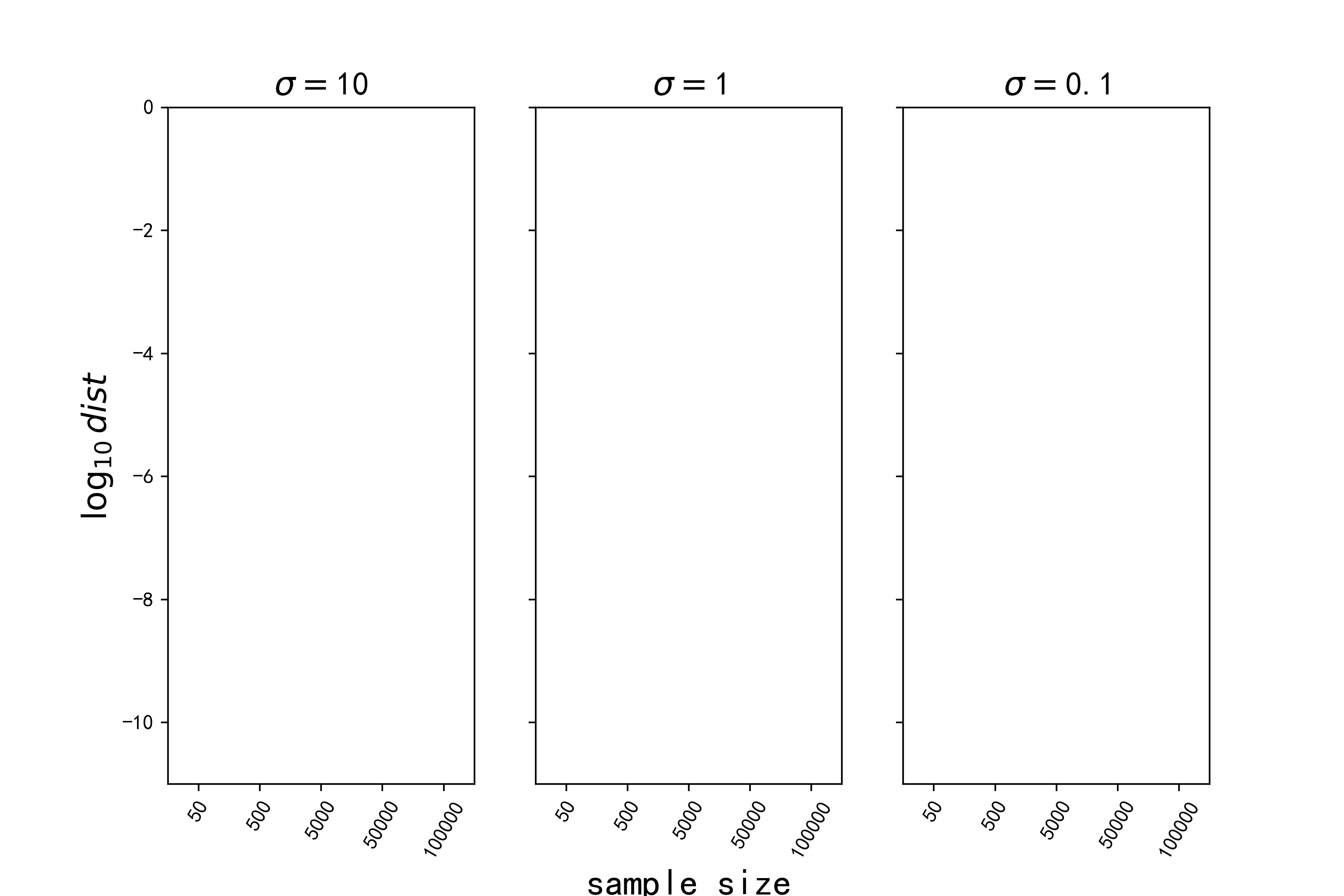}
    \caption{\scriptsize{Performance on S395 w.r.t after 1 500 iterations.}}
  \end{minipage}
\end{figure*}
\clearpage

\end{document}